\theoremstyle{plain}
\newtheorem{thm}{Theorem}[section]
\newtheorem{prop}[thm]{Proposition}
\newtheorem{lemma}[thm]{Lemma}
\theoremstyle{plain}
\newtheorem{mthm}{Theorem}
\theoremstyle{definition}
\newtheorem{remark}[thm]{Remark}
\Crefname{prop}{Proposition}{Propositions}
\Crefname{mthm}{Theorem}{Theorems}
\numberwithin{equation}{section} %Equation numbering
\DeclarePairedDelimiter\ipp{\langle}{\rangle}
\DeclarePairedDelimiter{\paren}{\lparen}{\rparen}
\DeclareMathOperator{\supp}{supp}
\newcommand{\p}{{\partial}}
\newcommand{\cb}{\color{blue}}
\newcommand{\cre}{\color{red}}
\renewcommand{\d}{\delta}
\newcommand{\R}{{\mathbb{R}}}
\newcommand{\N}{{\mathbb{N}}}
\newcommand{\Z}{{\mathbb{Z}}}
\newcommand{\T}{{\mathbb{T}}}
\newcommand{\g}{{\mathsf{g}}}
\newcommand{\hs}{\mathsf{h}}
\newcommand{\Eb}{\mathbb{E}}
\newcommand{\tl}{\tilde}
\newcommand{\D}{\Delta}
\newcommand{\ph}{\phantom{=}}
\newcommand{\nn}{\nonumber}
\newcommand{\ol}{\overline}
\newcommand{\ux}{X}
\newcommand{\XN}{X_N}
\newcommand{\ep}{\varepsilon}
\newcommand{\al}{\alpha}
\newcommand{\be}{\beta}
\newcommand{\ka}{\kappa}
\newcommand{\la}{\lambda}
\newcommand{\K}{\mathsf{K}}
\newcommand{\W}{{\mathbf{W}}}
\newcommand{\E}{{\mathbb{E}}}
\newcommand{\Tc}{\mathcal{T}}
\newcommand{\Dm}{|\nabla|}
\newcommand{\Fs}{\mathsf{F}}
\newcommand{\Fr}{{F}}
\newcommand{\Ec}{\mathcal{E}}
\renewcommand{\P}{\mathcal{P}}
\newcommand{\Te}{\mathrm{Term}}
\newcommand{\bmu}{\bar\mu}
\newcommand{\cdd}{\mathsf{c}_\ds}
\newcommand{\ga}{\gamma}
\newcommand{\Cc}{\mathcal{C}}
\newcommand{\Dc}{\mathcal{D}}
\newcommand{\muu}{\mu_{\mathrm{unif}}}
\newcommand{\mua}{\mu_{\mathrm{app}}}
\newcommand{\mum}{\mu_{\min}}
\newcommand{\Ra}{R_{\text{app}}}
\newcommand{\bec}{\beta_{\mathrm{c}}}
\newcommand{\bels}{\beta_{\mathrm{s}}}
\newcommand{\beu}{\beta_{\mathrm{u}}}
\newcommand{\bei}{\beta_{\mathrm{i}}}
\newcommand{\ds}{\mathsf{d}}
\newcommand{\Es}{\mathsf{E}}
\let\div\relax
\DeclareMathOperator{\div}{div}
\def\Xint#1{\mathchoice
{\XXint\displaystyle\textstyle{#1}}%
{\XXint\textstyle\scriptstyle{#1}}%
{\XXint\scriptstyle\scriptscriptstyle{#1}}%
{\XXint\scriptscriptstyle\scriptscriptstyle{#1}}%
\!\int}
\def\XXint#1#2#3{{\setbox0=\hbox{$#1{#2#3}{\int}$ }
\vcenter{\hbox{$#2#3$ }}\kern-.6\wd0}}
\def\dashint{\Xint-}
\def \hal{\frac{1}{2}}
\def\({\left(}
\def\){\right)}
\def \ep{\varepsilon}
\def\nab{\nabla}
\def\indic{\mathbf{1}}
\title[The attractive log gas]{The attractive log gas: stability, uniqueness, and propagation of chaos}
\author[A. Chodron de Courcel]{Antonin Chodron de Courcel}
\address{Antonin Chodron de Courcel, Institut des Hautes Etudes Scientifiques, Bures-sur-Yvette}
\thanks{~A. C. is supported by the Fondation CFM pour la Recherche.}
\email{decourcel@ihes.fr}
\author[~M. Rosenzweig]{Matthew Rosenzweig}
\address{Matthew Rosenzweig, Carnegie Mellon University, Department of Mathematical Sciences, Pittsburgh, PA} 
\email{mrosenz2@andrew.cmu.edu}
\thanks{~M. R. is supported by the Simons Foundation through the Simons Collaboration on Wave Turbulence and by NSF grants DMS-2052651, DMS-2206085.}
\author[~S. Serfaty]{Sylvia Serfaty}
\address{Sylvia Serfaty, Courant Institute of Mathematical Sciences, New York University, New York City, NY}
\email{serfaty@cims.nyu.edu}
\thanks{~S. S. is supported by the Simons Foundation through a Simons Investigator award and by NSF grant DMS-2247846.}
\begin{document}
\begin{abstract}
We consider overdamped Langevin dynamics for the attractive log gas on the torus $\T^\ds$, for $\ds\geq 1$. In dimension $\ds=2$, this model coincides with a periodic version of the parabolic-elliptic Patlak-Keller-Segel model of chemotaxis. The attractive log gas (for our choice of units) is well-known to have a critical inverse temperature $\bec={2\ds}$ corresponding to when the free energy is bounded from below. Moreover, it is well-known that the uniform distribution is always a  stationary state regardless of the temperature. We identify another temperature threshold $\bels$ sharply corresponding to the nonlinear stability of the uniform distribution. We show that for $\beta>\bels$, the uniform distribution does not minimize the free energy and moreover is nonlinearly unstable, while for $\beta<\bels$, it is stable. We also show that there exists $\beu$ for which uniqueness of equilibria holds for $\be<\beu$.

Related to the above findings, we establish a uniform-in-time rate for entropic propagation of chaos for a range of $\beta<\bels$. To our knowledge, this is the first such result for singular attractive interactions and affirmatively answers a question of Bresch et al. \cite{BJW2020}.
%\st{on the possibility of such a result}. 
The proof of the convergence is through the modulated free energy method, in particular relying on a \emph{modulated logarithmic Hardy-Littlewood-Sobolev (mLHLS) inequality}. Unlike \cite{BJW2020}, we show that such an inequality holds without truncation of the potential---the avoidance of the truncation being essential to a uniform-in-time result---at sufficiently high temperature and provide a counterexample to the mLHLS inequality when $\beta>\bels$. As a byproduct, we show that it is impossible to have a uniform-in-time rate of propagation of chaos if $\beta>\bels$. 
\end{abstract}
\maketitle

\section{Introduction}\label{sec:intro}
%\subsection{Background}\label{ssec:introback}
The microscopic \emph{attractive log gas} for $N$ particles at temperature $\frac1\beta\geq 0$ on the flat $\ds$-dimensional torus $\T^\ds$ for $\ds\geq 1$,\footnote{The motivation for considering $\T^\ds$, as opposed to Euclidean space $\R^\ds$, is explained below in \cref{rem:TvsR}.} which we identify with $[-\frac12,\frac12)^\ds$ under periodic boundary conditions, is described by the free energy
\begin{equation}\label{eq:mFE}
\frac1\beta\int_{(\T^\ds)^N}\log(f_N)df_N - \frac{1}{2N}\sum_{1\leq i\neq j\leq N}\int_{(\T^\ds)^N}\g(x_i-x_j)df_N,
\end{equation}
where $f_N$ is a probability density\footnote{We abuse notation here and throughout this article by using the same symbol for both the measure $df_N=f_Ndx_1\cdots dx_N$ and the density $f_N$.} on $(\T^\ds)^N$ such that the integrals make sense and $\g$ is the solution of
\begin{equation}\label{eq:gdefa}
\Dm^\ds \g = \cdd(\d_0-1), \qquad \cdd \coloneqq \frac{\Gamma(\ds/2) (4\pi)^{\ds/2}}{2}.
\end{equation}
Here, $\Dm \coloneqq (-\D)^{\frac{1}{2}}$ is the Fourier multiplier with symbol $2\pi |k|$ and $\Gamma$ is the usual gamma function. This definition of $\g$ is such that it is the zero-mean periodization of $-\log|x|$ on $\R^\ds$ and behaves like $-\log|x|$ as $|x|\rightarrow0$.   We note that if $\ds=2$, then $\frac{1}{\cdd}\g$ coincides with the periodic \emph{Coulomb} potential. 
%\st{The choice of sign $\ep$ corresponds to the whether the interaction is \emph{repulsive} ($\ep=1$) or \emph{attractive} ($\ep=-1$). We are interested in the attractive case, and {therefore} set $\ep=-1$ for the remainder of the paper.} 
The attractiveness  is reflected in the $-$ sign in front of the interaction energy term in \eqref{eq:mFE}. The attractive log gas is interesting because the logarithmic interaction is the most singular one for which the attraction can be balanced  by the  effectively repulsive logarithmic effect created by the entropic first  term in \eqref{eq:mFE}.
%, and the least singular one for which the two effects compete. 
In contrast, 
gases with attractions that are stronger than logarithmic do not even have finite free energy.

The \emph{overdamped Langevin dynamics} of the free energy \eqref{eq:mFE} is the following system of SDEs:
\begin{equation}\label{eq:SDEa}
\begin{cases}
dx_{i}^t = \displaystyle\frac{1}{N}\sum_{1\leq j\leq N : j\neq i} \nabla\g(x_i^t-x_j^t)dt + \sqrt{2/\beta}dW_i^t\\
x_i^t|_{t=0} = x_i^0
\end{cases}\qquad i\in\{1,\ldots,N\}.
\end{equation}
Above, $x_i^0\in\T^\ds$ are the pairwise distinct initial positions and $\{W_i\}_{i=1}^N$ are independent standard Brownian motions in $\T^\ds$. The potential $-\g$ is attractive, so that for $\beta=\infty$, the energy $\sum_{1\leq i\neq j\leq N}\g(x_i^t-x_j^t)$ is increasing for $t\in [0,\infty)$. Making sense of the system \eqref{eq:SDEa} is difficult due to collisions between particles with nonzero probability (e.g., see \cite{FJ2017, CP2016ks, FT2021ksc}), reflecting the attractive nature of the interaction. Therefore, we shall instead take as our starting point the associated \emph{Liouville/forward Kolmogorov equation}
\begin{equation}\label{eq:Lioua}
\p_t f_N = -\sum_{i=1}^N \div_{x_i}\paren*{f_N\frac{1}{N}\sum_{1\leq i\neq j\leq N}\nabla\g(x_i-x_j)} + \frac1\beta\sum_{i=1}^N\D_{x_i}f_N.
\end{equation}
Equation \eqref{eq:Lioua}, which follows from \eqref{eq:SDEa} through It\^o's formula, describes the evolution of the \emph{law} $f_N^t$ of the solution $\ux_N^t \coloneqq (x_1^t,\ldots,x_N^t) \in (\T^\ds)^N$  to \eqref{eq:SDEa} starting from an initial distribution $f_N^0$.

The choice of $1/N$ scaling of the potential corresponds to the \emph{mean-field regime}. In principle, the large $N$ behavior of the system \eqref{eq:SDEa} is effectively described by solutions of the PDE
\begin{equation}\label{eq:lima}
\begin{cases}
\p_t\mu^t = -\div(\mu^t\nabla\g\ast\mu^t) +\frac1\beta\D\mu^t  \\
\mu^t|_{t=0} = \mu^0.
\end{cases}
\end{equation}
This equation is formally the gradient flow of the macroscopic free energy
\begin{equation}\label{eq:introFE}
\Ec_\beta(\mu) \coloneqq \frac1\beta\int_{\T^\ds}\log(\mu)d\mu - \frac{1}{2}\int_{(\T^\ds)^2}\g(x-y)d\mu^{\otimes 2}(x,y)
\end{equation}
with respect to the 2-Wasserstein metric $W_2$ on the space $\P(\T^\ds)$ of probability measures on $\T^\ds$. The $\ds=2$ case of equation \eqref{eq:lima} is particularly interesting because when $\beta<\infty$, it corresponds to the well-known \emph{parabolic-elliptic Patlak-Keller-Segel (PKS) equation} with periodic boundary conditions, which is a model for the aggregation of cells by chemotaxis \cite{Patlak1953, KS1970, Nanjundiah1973}. {More generally, the version of equation \eqref{eq:lima} on $\R^\ds$ has been the object of many studies in its own right, as a PDE where the aggregation of mass (leading to possible blowup of solutions) competes with diffusion. Indeed, there is a temperature threshold $\beta_c = 2\ds$ under which diffusion is dominant, and above which blowup occurs. This critical $\beta_c =2\ds$ is precisely the value above which the free energy of \eqref{eq:introFE}, or rather its $\R^\ds$ version, ceases to be bounded from below. It is  worth noting that this value $\beta_c= 2\ds$ is also the power for which $\exp\( -\frac{ \beta}{2} \log |x-y|\)$ ceases to be integrable near the diagonal of $\R^\ds\times \R^{\ds}$, corresponding to the threshold for convergence of the partition function of the associated attractive log gas.}  

 If $\beta=\infty$, then the equation has been studied as a model for the evolution of vortex densities in superconductors \cite{CRS1996} and superfluids \cite{E1994} and as a model for adhesion dynamics \cite{NPS2001, Poupaud2002}. We also mention the $\ds=1$ case of the famous repulsive $\log$ gas toy model, which has attracted much attention over the years due to its connections with random matrix theory \cite{Forrester2010}. 

The connection between \eqref{eq:SDEa} and \eqref{eq:lima} can be seen in multiple ways. One way is through the {\it empirical measure}, the random probability measure on $\T^\ds$ defined by
\begin{equation}\label{munt}
\mu_N^t\coloneqq \frac1N \sum_{i=1}^N \delta_{x_i^t},
\end{equation}
associated to a solution $\ux_N^t$ of the system \eqref{eq:SDEa} at time $t$. If the points $x_i^0$, which themselves depend on $N$, are such that $\mu_N^0$ converges  to some sufficiently regular density $\mu^0$, then one expects from It\^o's lemma and formal computations that for $t>0$, $\mu_N^t$ converges as $N\rightarrow\infty$ to the solution of \eqref{eq:lima}. Convergence of the empirical measure in law is qualitatively equivalent to proving {\it propagation of molecular chaos}  \cite{HM2014}: if $f_N^0$ is the initial law of the distribution of the $N$ particles in $\T^\ds$ and if $f_N^0$ converges to some factorized law  $(\mu^0)^{\otimes N}$, then  the $k$-point marginals $f_{N;k}^t$ converge for all time to $(\mu^t )^{\otimes k}$. In general, quantitative results for one form of convergence do not \emph{a priori} carry over to the other, and we shall concern ourselves in this paper with rates for propagation of chaos, specifically measured in terms of relative entropy.
 
The topic of mean-field limits for singular interactions has experienced tremendous progress in recent years. As this article is exclusively concerned with the attractive log gas and general mean-field limits have been discussed extensively elsewhere, the reader will understand if we confine our discussion of the literature (in \cref{ssec:introMF} below) to the attractive log case. For a general review of the literature, we refer to the introductions of \cite{Serfaty2020, NRS2021, CdCRS2023} as well as the recent surveys \cite{CD2021, Golse2022ln}. 

\medskip

The main goal of the present paper is to establish \emph{uniform-in-time} propagation of chaos or convergence of solutions of \eqref{eq:SDEa} towards those of \eqref{eq:lima} in the setting of the torus. 
This has been previously studied by Bresch et al. in \cite{BJW2019crm, BJW2020} in the same  torus setting. By introducing a \emph{modulated free energy method}, combination of the relative entropy method of  \cite{JW2016, JW2018} and the modulated energy method of \cite{Serfaty2017,Duerinckx2016,Serfaty2020}, they were able to show local-in-time convergence in relative entropy, conditional on good estimates for the solutions of \eqref{eq:lima}.  Let us briefly define the main quantities here. We will discuss it in more detail in \cref{ssec:introMRrev}.

The  \emph{modulated free energy} defined by 
\begin{equation}\label{def:modulatedfreenrj}
E_N(f_N, \mu) \coloneqq \frac1\beta H_N\left(f_N \vert \mu^{\otimes N}\right) + \Eb_{f_N}\left[F_N(\ux_N, \mu)\right]
\end{equation} 
 is a combination of the normalized relative entropy 
\begin{equation}\label{eq:REdef}
H_N \left(f_N \vert \mu^{\otimes N}\right) \coloneqq \frac{1}{N} \int_{(\T^\ds)^N} \log\paren*{\frac{f_N}{\mu^{\otimes N}}} df_N,
\end{equation}
and the (expectation of the)  modulated energy from \cite{SS2015log, SS2015, RS2016, PS2017, Duerinckx2016, Serfaty2020, NRS2021},
\begin{equation}\label{def:modulatedenergy}
\Fr_N(\ux_N,\mu) \coloneqq \int_{(\T^\ds)^2\setminus\triangle} \g(x-y)d\paren*{\frac{1}{N}\sum_{i=1}^N\d_{x_i} - \mu}^{\otimes 2}(x,y),
\end{equation}
where $\triangle \coloneqq \{(x,x) \in (\T^\ds)^2\}$. Above, the modulated energy is averaged with respect to the joint law $f_N$ of the positions $\ux_N$. The mean-field convergence proof consists in showing a control of the time-derivative of $E_N(f_N^t , \mu^t)$ by $E_N(f_N^t, \mu^t)$ itself and applying  Gr\"onwall's lemma. The convergence in relative entropy is then deduced provided one can absorb the modulated energy into the relative entropy. This is the main part of the analysis, which in \cite{BJW2020} is done by splitting the interaction into a cutoff (regular) part and a  short range singular part. 
Working on the torus is helpful as it provides lower bounds for the density $\mu^t$, which in turn allows to easily control velocity terms appearing in the modulated free energy evolution.
 
  As the rates of convergence these authors obtain  deteriorate exponentially in time, one of the questions  posed in \cite{BJW2020}, which we answer, is the possibility of a uniform-in-time rate, hence global convergence.
  
Proving this requires us to  revisit quite thoroughly the analysis of equation \eqref{eq:lima} and 
establish for the first time a  global-in-time theory and asymptotic behavior of the solutions to this equation on the torus in any dimension, which is the second main accomplishment of our paper. We demonstrate that the behavior on the torus is quite different from that of the full space. First, on the torus  the equation always  has the uniform density $\muu\equiv 1$  as  a particular stationary solution, which has no equivalent on $\R^\ds$. 
 Moreover,  the critical temperature $\beta_c$, defined as the  threshold for lower boundedness of the free energy \eqref{eq:introFE} is no longer the threshold for blow up of all solutions.  We exhibit three more thresholds $\bels, \beu$, and $\bei$, whose values depend on the dimension.
 
The threshold $\bels= \frac{(2\pi)^\ds}{\cdd}$ is the exact critical $\beta$ for which the uniform solution $\muu$ loses both its linear and nonlinear stability, as we show in \cref{thm:mainlim} and \cref{thm:mainUT}.  For $\beta<\bels$ we show that solutions that are initially close enough to $\muu$ converge exponentially fast to $\muu$, see Theorem \ref{thm:mainlim}.
This threshold $\bels$ equals $\bec$ if $\ds=1$, is strictly larger than $\bec$ if $2\leq\ds\leq 10$, and is strictly smaller than $\bec$ if $\ds\geq 11$. If $\bels>\bec$, then for $\beta \in (\bec,\bels) $,   the uniform solution thus remains nonlinearly stable, which means that being above $\bec$ does not necessarily imply that all solutions blow up, in sharp contrast with the $\R^\ds$ case!  
 
The third threshold $\beu \le \min (\bels,\bec)$ is the inverse temperature below which $\muu$ is the unique minimizer of \eqref{eq:introFE}; above $\beu$ there may be others (and if so there is an infinite number of them by translation invariance). In particular, above $\beu$, there exists a nonuniform stationary solution to \eqref{eq:lima}.

Finally, the last threshold $\bei$ is the functional inequality threshold: we prove that for $\beta<\bei$, the modulated energy can be controlled by the relative entropy in the form of the \emph{modulated logarithmic Hardy-Littlewood-Sobolev} (or mLHLS) inequality
\begin{equation} 
\E_{f_N}\Big[ F_N(\XN, \mu)\Big] \le \frac1\beta H_N(f_N|\mu^{\otimes N} ) + o_N(1),
\end{equation}
valid for all probability densities $f_N$ on $(\T^\ds)^N$, and $\mu$ on $\T^\ds$ with $\|\log \mu\|_{L^\infty}$ small enough.
This functional inequality was not proved as such in \cite{BJW2020}. Instead, they proved it for a localized version of the interaction,  leaving another term which prevents the obtention of uniform-in-time convergence. The proof of the inequality relies on the analysis of what we may call a macroscopic \emph{modulated} free energy (compare to \eqref{eq:introFE}) 
\begin{equation}
\frac{1}{\beta} \int_{\T^{\ds}} \log\left( \frac{\nu}{\mu} \right)d\nu- \frac12\int_{\T^{\ds}} \g(x-y) d(\mu-\nu)^{\otimes 2}(x,y).
\end{equation}
The existence and uniqueness of minimizers of this functional, according to $\beta$, is an intriguing question which is directly related to the uniqueness of solutions to Kazhdan-Warner equations in the prescribed curvature problem and to several works in the literature. See \cref{sec:MLHLS} for a discussion. 

%In the next section, we present the statements of the main results. 

%\st{The purpose of the present article is two-fold: to study the well-posedness and long-time dynamics of the limiting equation \eqref{eq:lima} and to use this detailed knowledge of the limiting equation to determine over what time scales one can establish rates for entropic propagation of chaos, in particular whether one can establish rates which are \emph{uniform in time}. To the best of our knowledge, equation in our periodic setting and for arbitrary dimension has not been specifically studied. As we shall show, it is misleading to naively extrapolate results on $\R^\ds$ to $\T^\ds$. }

%\st{Propagation of chaos for the attractive log gas has been studied by Bresch et al. \cite{BJW2019crm, BJW2020}; we postpone a more detailed discussion of their result until \cref{ssec:introMF}. As the rates of convergence in \cite{BJW2019crm, BJW2020} deteriorate exponentially in time, one of the questions  posed in \cite{BJW2020}, which we answer, is the possibility of a uniform-in-time rate. Answering this question is, of course, contingent on having a satisfactory global theory for solutions of the limiting equation} 

\section{Main results}
In this section, we present the main results of this paper and comment on their proofs. The section is organized into several subsections. \cref{ssec:introlim} concerns the dynamics of the limiting equation, \cref{ssec:introMRrev} detours to discuss in detail the modulated free energy method and review the work \cite{BJW2019crm, BJW2020}, while \cref{ssec:introFI} concerns the mLHLS inequality, and \cref{ssec:introMF} concerns the mean-field convergence problem. %These subsections are intended to be read in the order presented.

\subsection{The limiting equation}\label{ssec:introlim}
On $\R^\ds$, it is an elementary computation (e.g., see the survey \cite{Blanchet2013}) involving the time derivative of the second moment that any nonnegative classical solution of \eqref{eq:lima} with finite second moment must blow up in finite time if $\beta>{2\ds}$.\footnote{The cited reference normalizes $\beta=1$ but allows the mass to vary, whereas we normalize the mass to be one and allow $\beta$ vary. These are equivalent through rescaling, as described in \cref{rem:timers}. Also, their interaction potential is $\frac{1}{\cdd}\g$, not $\g$ as here, but this again can be accounted for by rescaling.}\footnote{Recently, it has been shown by the second author and Staffilani \cite{RS2023} that this finite-time blow up disappears under certain stochastic perturbation of the dynamics and, in fact, one has exponential-in-time convergence to the uniform distribution \cite{ARS2023}.} On $\T^\ds$, this computation does not quite work, since the potential $\g$ is no longer exactly $-\log|x|$. However, by a localization argument and under an additional condition on the concentration of the density around a point, one can show a similar blow up result \cite{Nagai2001}. Moreover, it is not difficult to show if the temperature $\frac1\beta$ is too low, then finite-time blow up must occur (see \cite[Appendix I]{KX2016}). On $\T^{\ds}$, as we will show, it is \emph{not true} that all classical nonstationary solutions blow up in finite time if $\beta>{2\ds}$.% We also mention that for $\theta=0$, one has a sharp bound for the time of existence for sufficiently strong solutions to \eqref{eq:lima}, as well as exact solutions that provide an explicit example of finite-time collapse to a nontrivial measure \cite{BLL2012}.

As mentioned above, equation \eqref{eq:lima} is the gradient flow of the \emph{free energy} \eqref{eq:introFE} and therefore the free energy $\Ec_\beta$ decreases along the flow of solutions. But \emph{a priori} it is not even clear that the free energy is bounded from below---hence a useful quantity---since there is competition, as measured by the size of $\beta$, between the entropy (the repulsive part) and the interaction potential (the attractive part). The balance between the two parts is determined by the sharp constant in the \emph{logarithmic Hardy-Littlewood-Sobolev (HLS) inequality} (see \cref{lem:logHLS} below), and the free energy is bounded from below if and only if $\beta\leq 2\ds$. We will discuss this point more in \cref{sec:aLogFE}. %{\cb \st{Given the relevance of the threshold $\bec\coloneqq {2\ds}$ for the free energy and for global existence vs. finite-time blowup and following existing terminology in the literature, we classify the temperature as \textit{subcritical} ($\beta<\bec$), \textit{critical} ($\beta=\bec$), and \textit{supercritical} ($\beta>\bec$). The reader may view the subcritical and supercritical regimes as the high- and low-temperature regimes, respectively.}}

There has been extensive study of the equation \eqref{eq:lima} on $\R^2$, corresponding to the aforementioned PKS equation. A detailed discussion of these results is beyond the scope of the article. We briefly mention that finite-time blow up for strong solutions if $\be >\be_c$ is classical \cite{JL1992}. One has local well-posedness of mild solutions with finite free energy \cite{Blanchet2013gf, FM2016} and in fact for general probability measures with small atoms \cite{BM2014}. For initial data in $L^1$, one has a unique, global mild solution if and only if $\beta\leq \bec$ \cite{Wei2018}. For the asymptotic behavior of solutions, we refer to \cite{BDP2006, CD2014} ($\beta<\bec$), \cite{BCM2008, GM2018} ($\beta=\bec$), and \cite{Velazquez2002, Velazquez2004i, Velazquez2004ii} ($\beta>\bec$), and references therein. %In the case $\nu=0$, one has a sharp bound for the time of existence for compactly supported $L^\infty$ weak solutions to \eqref{eq:KS}, which are necessarily unique, as well as exact solutions that provide an explicit example of finite-time collapse to a nontrivial measure \cite{BLL2012}.

There are few works of which we are aware that even consider equation \eqref{eq:lima} posed on $\R^\ds$. \cite{CC2012} considers the $\ds=1$ and radial $\ds=2$ cases, and \cite{CPS2007} generalizes to arbitrary $\ds\geq 1$ properties of the free energy and the existence of certain global so-called free energy solutions. While there are some works (e.g., \cite{CPZ2004, BCL2009, CCE2012, OW2016, BKZ2018, SW2019, Naito2021}) considering high-dimensional versions of the parabolic-elliptic PKS equation, this corresponds to when $\g$ is the Coulomb, not log, potential, and therefore these are not relevant for us. This is an unfortunate gap in the literature, and part of the goal of this work is to address this gap for $\T^\ds$.

\medskip
%The local well-posedness is straightforward by a fixed point argument similar to the one used in the proof of \cref{prop:lwp}, which is ...Given initial datum $\mu^0\in L^1(\T^\ds)$, there is a unique mild solution $\mu$ to \eqref{eq:lima} in a certain a class, which moreover is $C^\infty$ for positive times. Moreover, as we are interested in probability density solutions, the sign and mass of $\mu^0$ are preserved by the evolution. Concerning long-time dynamics, if $\theta\geq \frac{1}{2d}$, then the solution $\mu$ is global. These results are summarized in the following proposition, the proof of which is divided over \ref{?}.

%We summarize the results contained in the preceding discussion in the statement of the following theorem.

By a contraction mapping argument, it is not difficult to show (see \cref{prop:LWPa}) that given an initial datum $\mu^0\in \P_{ac}(\T^\ds)$, the space of probability measures on $\T^{\ds}$ that are absolutely continuous with respect to the Lebesgue measure, there is a unique mild solution to \eqref{eq:lima} (see definition below) on some interval $[0,T]$ in the class of weakly continuous $\P_{ac}$-valid maps with the property that
\begin{align}
\sup_{0< t\leq T} (\beta t)^{\frac14}\|\mu^t\|_{L^{\frac{2\ds}{2\ds-1}}}<\infty.
\end{align}
The choice of an exponent in the last condition is not so important; it is convenient for reasons of scaling. An iteration argument implies an $L^\infty$ gain of integrability (see \cref{lem:hypequiv}), and moreover a gain of regularity to arbitrary order (see \cref{lem:greg}). In other words, solutions are instantaneously smooth. To show that the solution is global, one needs to show the norm $\|\mu^t\|_{L^{\frac{2\ds}{2\ds-1}}}$ cannot blow up in finite time. We accomplish this for $\be<\bec$ by exploiting \emph{a priori} bounds provided by the free energy and dissipation functionals on the Fisher information, discussed in \cref{sec:aLogFE}, which allows us to show that the $L^2$ norm of a solution can grow at most linearly in time (see \cref{lem:aLrbnd}). An $L^2$ bound suffices because $\frac{2\ds}{2\ds-1}\leq 2$. While the local theory makes no assumptions on the size of $\be$, in this last part, we crucially use that $\be<\bec$ to exploit the coercivity of the free energy.

The preceding allows us to conclude that solutions are globally smooth, but it does not say anything about their asymptotic dynamics as $t\rightarrow\infty$. Using that the free energy $\Ec_\beta$ is nonincreasing along the flow and a compactness argument (see \cref{lem:mutasyss}), it is not too hard to show that $\mu^t$ converges weakly to a probability density $\mu_\beta$ on which the dissipation functional vanishes. The latter implies that $\mu_\beta$ satisfies the equation, sometimes called the \emph{Kirkwood-Monroe equation} \cite{KM1941},
\begin{equation}\label{eq:introKM}
\mu_{\beta} = \dfrac{e^{\beta\g\ast\mu_{\beta}}}{Z_{\beta}}, \qquad Z_{\beta} \coloneqq \int_{\T^\ds}e^{\beta\g\ast\mu_\beta}dx,
\end{equation}
and therefore is a stationary solution to \eqref{eq:lima}. Define the \emph{uniqueness threshold}
\begin{align}\label{eq:beudef}
\beu \coloneqq \sup\{\be_0\geq 0: \forall  \be\in [0,\be_0], \ \text{$\muu$ is the unique solution} \eqref{eq:introKM}\}.
\end{align}
Note that by definition, for $\be<\beu$, uniqueness holds for equation \eqref{eq:introKM}, while there exists $\be>\beu$ for which equation \eqref{eq:introKM} has a nonuniform solution. Through a contraction argument (see \cref{prop:ssunq}), we prove that $\beu>0$. An explicit bound for $\beu$ is presented in \cref{ssec:Instabssunq}. In particular, for $\be<\beu$, this implies that $\muu$ is the unique minimizer of the free energy. If $\ds=2$, then by previous works, it is known that $\beu=\bec=4$ and that uniqueness holds for $\be\leq 4$ \cite{LL2006, GM2019, GGHL2021} and fails for all $\be>4$ \cite{RT1998, ST1998}. With uniqueness in hand, one can then show that both terms in $\Ec_\beta(\mu^t)$ converge to their respective values in $\Ec_\beta(\mu_\beta)$ as $t\rightarrow\infty$, which implies the vanishing of the relative entropy $H(\mu^t\vert\mu_\be)$. In turn, this implies the vanishing of $\|\mu^t-\mu_\be\|_{L^1}$ as $t\rightarrow\infty$ by Pinsker's inequality. Under the assumption that $\be<\bels$, we then show in \cref{lem:aL2exp} that if $\mu^t$ is close enough in $L^1$ to $\muu$ for all $t\geq T_0$, for some time $T_0\geq 0$, the quantity $\|\mu^t-\muu\|_{L^2}$ decays exponentially fast. In fact, we also show that under the stronger assumption that $\mu^t$ is close enough to $\muu$ in $L^2$ at some time $t=T_0$, then this is true for all future times $t\geq T_0$. Combining this result with \cref{lem:mutasyss}, we conclude that for $\be<\beu$, all solutions eventually become close enough in $L^1$ distance to $\muu$ for our $L^2$ exponential-in-time convergence result to apply. Through a time translation trick, we may then combine this with the local smoothing result \cref{lem:greg} to obtain exponential-in-time decay of $\|\nabla^{\otimes n}\mu^t\|_{L^\infty}$ for arbitrary $n\geq 1$ (see \cref{lem:nabLinfexp}). On the other hand, we can show (see \cref{prop:NLinstab}) that for $\be>\bels$ and $\ep>0$ arbitrarily small, there exists an $O(\ep)$ perturbation $\mu_\ep^0$ of $\muu$ such that in time $O(\log\frac1\ep)$, the associated solution $\mu_\ep$ to \eqref{eq:lima} is $\frac12$ distance from $\muu$. We summarize the above discussion with the following theorem. Explicit forms of the error $O(e^{-ct})$, in particular how it depends on the initial datum, and the time $t_\ep$ are available in \cref{sec:glob} and \cref{sec:Instab}, respectively.

\begin{mthm}[Global existence and trend to equilibrium]\label{thm:mainlim}
Let $\ds\geq 1$. Given $\mu^0\in \P_{ac}(\T^\ds)$, there exists a unique maximal lifespan mild solution $\mu$ to \eqref{eq:lima} in the class $C_{w}([0,T_{\max}), \P_{ac})$, the space of weakly continuous functions with values in $\P_{ac}$, such that
\begin{align}
\sup_{0<t\leq T} t^{\frac{1}{4}}\|\mu^t\|_{L^{\frac{2\ds}{2\ds-1}}} < \infty, \qquad \forall T\in (0,T_{\max}).
\end{align}
This solution has the property that $\mu\in C^\infty((0,T_{\max})\times \T^\ds)$. If $\beta< \bec$, then $T_{\max}=\infty$, and there exists a solution $\mu_\beta$ of \eqref{eq:introKM} such that the relative entropy $H(\mu^t\vert\mu_\beta)\rightarrow 0$ as $t\rightarrow\infty$.

If $\be<\bels \coloneqq \frac{(2\pi)^\ds}{\cdd}$, then there exists $\delta_\be>0$, depending on $\ds,\be$, such that if there exists $T_0\geq 0$ for which
\begin{align}
\|\mu^{t}-\muu\|_{L^1} \leq \delta_\be, \qquad \forall t\geq T_0,
\end{align}
or there exists $T_0\geq 0$ such that
\begin{align}
\|\mu^{T_0} - \muu\|_{L^2} \leq \delta_\be,
\end{align}
then
\begin{align}\label{eq:mainlimnabLinf}
\forall n\geq 0, \qquad \|\nabla^{\otimes n}(\mu^t-\muu)\|_{L^\infty} = O(e^{-c t}), \qquad {\text{as $t\rightarrow\infty$}},
\end{align}
for some $c>0$ depending on $\ds,\be$. Additionally, for any $\be<\beu$, the relative entropy $H(\mu^t\vert \muu)\rightarrow 0$ as $t\rightarrow\infty$ and \eqref{eq:mainlimnabLinf} holds. In particular, for $\ds=2$, \eqref{eq:mainlimnabLinf} holds for all $\beta<\bec$.

Additionally, {if $\be>\bels$}, then for all $\ep>0$ sufficiently small depending on $\ds,\be$, there exists a $C^\infty$ solution $\mu_\ep$ to \eqref{eq:lima} with $\|\mu_\ep^0-\muu\|_{W^{n,\infty}} = O(\ep)$, for any $n$, such that for time $t_\ep = O(\log\frac{1}{\ep})$,
\begin{align}
\frac12 \leq \|\mu_{\ep}^{t_\ep} - \muu\|_{L^1}.
\end{align} 
\end{mthm}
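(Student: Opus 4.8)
The plan is to prove the five assertions separately: (i) local well-posedness and instantaneous smoothing; (ii) global existence and convergence to a Kirkwood--Monroe state for $\be<\bec$; (iii) exponential relaxation to $\muu$ under the $L^1$ or $L^2$ proximity hypotheses, for $\be<\bels$; (iv) the unconditional relaxation to $\muu$ for $\be<\beu$; and (v) the nonlinear instability for $\be>\bels$. Parts (i)--(ii) are soft, relying only on heat-kernel smoothing and coercivity of the free energy, whereas the role of the threshold $\bels$ enters through linearising \eqref{eq:lima} about $\muu\equiv1$, which drives both (iii) and (v).

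For (i) I would run a contraction on the Duhamel formula $\mu^t=e^{(t/\be)\D}\mu^0-\int_0^t e^{((t-s)/\be)\D}\div(\mu^s\,\nab\g\ast\mu^s)\,ds$ in the scaling-critical space with norm $\sup_{0<t\le T}(\be t)^{1/4}\|\mu^t\|_{L^{2\ds/(2\ds-1)}}$, the one-derivative loss in the Duhamel term being absorbed by the $\sqrt t$ gain of $e^{t\D}$; a standard bootstrap then yields $L^\infty_tL^\infty_x$ on compact subintervals and, iterating, $C^\infty((0,T_{\max})\times\T^\ds)$. For (ii), $\Ec_\be$ is bounded below iff $\be\le\bec$ by the sharp logarithmic HLS inequality (\cref{lem:logHLS}) and decreases along the flow with dissipation a weighted Fisher information; converting this into an a priori bound one shows $\|\mu^t\|_{L^2}$ grows at most linearly, which (as $\tfrac{2\ds}{2\ds-1}\le2$) forbids blow-up of the critical norm and gives $T_{\max}=\infty$ for $\be<\bec$. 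Time-integrability of the dissipation plus compactness force $\mu^t\rightharpoonup\mu_\be$ along a time sequence, with $\mu_\be$ killing the dissipation, hence solving \eqref{eq:introKM}; upgrading this to $H(\mu^t\vert\mu_\be)\to0$ requires that \emph{both} terms of $\Ec_\be(\mu^t)$ converge, which follows from convergence of the interaction $\int\g\,d(\mu^t)^{\otimes2}$ (using the smoothing) and monotonicity of $\Ec_\be$.

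For (iii), set $\mu^t=1+h^t$ with $\int h^t=0$. The linearisation of \eqref{eq:lima} is $\p_t h=\tfrac1\be\D h-\D(\g\ast h)$, diagonal in Fourier: the mode $k\in\Z^\ds\setminus\{0\}$ evolves by $e^{\la_k t}$ with $\la_k=(2\pi|k|)^2(\widehat\g(k)-\tfrac1\be)=-\tfrac{(2\pi|k|)^2}{\be}+\cdd(2\pi|k|)^{2-\ds}$, since $\widehat\g(k)=\cdd(2\pi|k|)^{-\ds}$. Thus $\la_k<0$ for all $k\ne0$ exactly when $\be<\min_{k\ne0}\widehat\g(k)^{-1}=(2\pi)^\ds/\cdd=\bels$, and then $\sup_{k\ne0}\la_k=:-2c_\be<0$ is attained (as $\la_k\to-\infty$). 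An $L^2$ estimate reads $\tfrac12\tfrac{d}{dt}\|h\|_{L^2}^2=\sum_k\la_k|\widehat h(k)|^2+(\text{cubic})\le-2c_\be\|h\|_{L^2}^2+C\|h\|_{L^2}^2\|h\|_Y$, the cubic term ($\propto\int h^2|\nab|^{2-\ds}h$, using $\D\g\ast h=-\cdd|\nab|^{2-\ds}h$) being controlled by $\|h\|_{L^2}^2$ times an auxiliary norm $Y$ made small by the smoothing bounds once $h$ is small; this closes to $\|h^t\|_{L^2}^2\le e^{-c_\be(t-T_0)}\|h^{T_0}\|_{L^2}^2$. The $L^1$ hypothesis reduces to the $L^2$ one by interpolation against the (global, since $\be<\bec$) solution's higher-Sobolev bounds; a continuity argument shows the small-$L^2$ ball is forward invariant; and \eqref{eq:mainlimnabLinf} follows by applying the local smoothing estimate on $[t-1,t]$ to the exponentially small data at time $t-1$. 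Part (iv) is then immediate: for $\be<\beu$ the only solution of \eqref{eq:introKM} is $\muu$, so the $\mu_\be$ from (ii) equals $\muu$, whence $H(\mu^t\vert\muu)\to0$ and, by Pinsker, $\|\mu^t-\muu\|_{L^1}\to0$; since $\beu\le\min(\bels,\bec)$, the hypothesis of (iii) is eventually met. For $\ds=2$, $\beu=\bec=4<2\pi=\bels$, giving the last claim.

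For (v), let $\be>\bels$: pick $k_\ast$ maximising $\la_k$, so $\la_\ast:=\la_{k_\ast}>0$, with real eigenfunction $v\propto\cos(2\pi k_\ast\cdot x)$ normalised in a fixed $X=H^s$, $s>\ds/2+1$. Take $\mu_\ep^0=1+\ep v$, a smooth probability density $O(\ep)$-close to $\muu$ in every $W^{n,\infty}$ for $\ep$ small, and write $\mu_\ep^t=1+h^t$, $h=\ell+w$ with $\ell^t=\ep e^{\la_\ast t}v$ and $w^t=\int_0^t e^{(t-s)\L}N(h^s)\,ds$, $N(h)=-\div(h\,\nab\g\ast h)$; from $\|e^{t\L}\|_{X\to X}=e^{\la_\ast t}$ and $\|N(h)\|_X\lesssim\|h\|_X^2$ a bootstrap gives $\|w^t\|_X\lesssim\la_\ast^{-1}\ep^2e^{2\la_\ast t}$ so long as $\ep e^{\la_\ast t}$ stays below a fixed $\delta_\ast(\ds,\be)$, so at $t_\ep:=\la_\ast^{-1}\log(\delta_\ast/\ep)=O(\log\tfrac1\ep)$ the profile $\ell^{t_\ep}=\delta_\ast v$ dominates $w^{t_\ep}$ and $\mu_\ep^{t_\ep}$ has left a fixed neighbourhood of $\muu$; to reach precisely $\tfrac12$ I would continue past the linear regime, using that for $\be>\bels$ the quadratic form $\sum_k(\tfrac1\be-\widehat\g(k))|\widehat h(k)|^2$ in $\Ec_\be(1+h)-\Ec_\be(1)$ has a strictly negative direction, so $\Ec_\be(\mu_\ep^0)<\Ec_\be(\muu)$, and since $\Ec_\be$ is a strict Lyapunov functional the solution cannot return to $\muu$: its $\om$-limit (or blow-up profile when $\be>\bec$) sits strictly below the level $\Ec_\be(\muu)$, from which one extracts an $L^1$-separation $\ge\tfrac12$ with elapsed time still $O(\log\tfrac1\ep)$. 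The main obstacle I anticipate is precisely this last point in (v): propagating the nonlinear estimate up to the scale where the unstable mode has become order one—so the departure from $\muu$ is a genuine $O(1)$ effect rather than a small fixed constant—while keeping the time $O(\log\tfrac1\ep)$ and the solution smooth. By comparison, the stable-regime estimates in (iii) are routine once the sign of $\sup_{k\ne0}\la_k$ at $\be=\bels$ is pinned down and the solution's higher regularity is known (which is where global existence, hence $\be<\bec$ in the unconditional statements, is used), and a secondary subtlety is the weak-to-relative-entropy upgrade in (ii)/(iv), i.e.\ convergence of $\int_{(\T^\ds)^2}\g\,d(\mu^t)^{\otimes2}$, which leans on the instantaneous smoothing of \eqref{eq:lima}.
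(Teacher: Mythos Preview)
Your overall strategy matches the paper, but two steps have genuine gaps.

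\textbf{Part (iii), the cubic term and the $L^1$ hypothesis.} Your estimate ``cubic $\lesssim\|h\|_{L^2}^2\|h\|_Y$ with $Y$ made small by smoothing'' does not close the Gr\"onwall on its own, and your reduction of the $L^1$ hypothesis to the $L^2$ one via ``global (since $\be<\bec$) higher-Sobolev bounds'' smuggles in the constraint $\be<\bec$, which is \emph{not} part of the hypothesis of (iii). The statement is for all $\be<\bels$, and for $2\le\ds\le10$ one has $\bec<\bels$; the regime $\be\in(\bec,\bels)$ is precisely where (iii) produces global solutions near $\muu$ \emph{without} any free-energy coercivity, so you cannot invoke it. The paper's fix is the specific interpolation (Gagliardo--Nirenberg/HLS, see \cref{lem:FIests})
\[
\Big|\int_{\T^\ds}h^2\,|\nabla|^{2-\ds}h\,dx\Big|\ \lesssim\ \|h\|_{L^1}\,\|\nabla h\|_{L^2}^2,
\]
so the cubic term is absorbed directly into the dissipation $-\bigl(\tfrac{2}{\be}-\tfrac{2}{\bels}\bigr)\|\nabla h\|_{L^2}^2$ whenever $\|h\|_{L^1}$ is small. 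Both hypotheses then feed in with no reduction: the $L^1$ condition is used verbatim, and the $L^2$ condition propagates by continuity since $\|h\|_{L^1}\le\|h\|_{L^2}$ on $\T^\ds$.

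\textbf{Part (v), reaching $\tfrac12$.} Your linear-plus-remainder bootstrap $\|w^t\|_X\lesssim\la_\ast^{-1}\ep^2e^{2\la_\ast t}$ only carries the solution out to a fixed \emph{small} $\delta_\ast=\delta_\ast(\ds,\be)$, and the Lyapunov continuation you propose gives no quantitative time bound for reaching $\tfrac12$---you correctly flag this as the obstacle, and it is real. The paper closes it with Grenier's higher-order scheme: build $\mua=\sum_{j=0}^{n}\ep^j\nu_j$ where $\nu_1$ is the unstable mode at $|k|=1$ and each $\nu_j$ ($j\ge2$) solves a linear inhomogeneous problem with source quadratic in lower $\nu$'s, so $\|\nu_j^t\|\lesssim e^{j\la_{\be,1} t}$ (the spectral inequality $\la_{\be,m}<m\,\la_{\be,1}$ rules out resonance). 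Then $\|\mu^t-\mua^t\|_{L^2}\lesssim\ep^{n+1}e^{((n+1)\la_{\be,1}+C)t}$, and already with $n=2$ the leading term $\frac{4\ep}{\pi}e^{\la_{\be,1}t}$ in $\|\mua^t-1\|_{L^1}$ reaches $\tfrac12$ at time $O(\log\tfrac1\ep)$ while the error remains $o(1)$. No post-linear continuation argument is needed.
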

\medskip
We emphasize that we do not need to assume $\beta <\bec$ for the second part of the result. The threshold $\bels$, which we call the \emph{stability threshold} is such that for $\be<\bels$, the linearization $L_{\beta}$ of the right-hand side of equation \eqref{eq:lin} about the uniform distribution $\muu$ has no eigenvalues with positive real part, while for $\be>\bels$, $L_{\beta}$ has eigenvalues with positive real part, implying linear instability of the uniform distribution. We refer to \cref{sec:Instab} for the detailed calculation. Recall that a stationary state $\mu_*$ of a PDE is said to be \emph{nonlinearly stable} if for every $\ep>0$, there is a $\delta>0$ such that if one takes initial data $\mu^0$ such that $\|\mu^0-\mu_*\|_1<\delta$, for some norm $\|\cdot\|_1$, then $\sup_{t\geq 0} \|\mu^t-\mu_*\|_2 < \ep$, for some norm $\|\cdot\|_2$. We say that $\bmu$ is \emph{nonlinearly unstable} if it is not nonlinearly stable. \cref{thm:mainlim} implies that $\muu$ is nonlinearly stable if $\be<\bels$ and unstable if $\be>\bels$. %\st{}
To our knowledge, such a quantitative instability result for $\be>\bels$ has not been previously demonstrated, even for the $\ds=2$ case.

Instability of $\muu$ is intimately tied to the fact that $\muu$ \emph{does not minimize} the free energy $\Ec_{\be}$ when $\be>\bels$. If $\bec\leq\bels$, then this is implied by the unboundedness from below of the free energy (see \cref{lem:FEsc}). When $\be>\min(\bels,\bec)$, the minimal free energy $\Ec_\beta^*$ is strictly negative, and we compute an---admittedly suboptimal---upper bound for it in \cref{prop:ssnounq}. The minimal value of the free energy is also tied to the uniqueness of solutions of \eqref{eq:introKM} (i.e., stationary solutions of \eqref{eq:lima}). If $\bec<\bels$, then we show through the mountain pass geometry of $\Ec_\be$ that for $\be\in (\bec,\bels)$, there is a nonuniform solution (see \cref{prop:ssnounq2}). While if $\bels<\bec$, then for every $\be\in (\bels,\bec)$, the minimizer of $\Ec_\be$ exists and is nonuniform (see \cref{prop:ssnounq}). The $\ds=1$ case where $\bec=\bels$ will be treated in a separate work. See \cref{rem:1D} below for further comments. These results imply that the maximal uniqueness threshold $\beu\leq \min(\bels,\bec)$. We summarize our discussion on uniqueness with the next theorem.

\begin{mthm}\label{thm:mainstab}
If $\be>\bels$, then the minimal free energy, viewed as an element of $[-\infty,0]$, satisfies the upper bound
\begin{align}
\Ec_\beta^*\coloneqq \inf_{\mu \in \mathcal{P}_{ac}(\T^\ds)} \Ec_\beta(\mu)\leq -\frac{\be}{432}\left|\frac1\bels-\frac1\be\right|^3 < 0 = \Ec_\beta(\muu).
\end{align}
If $\be>\bec$, then $\Ec_\beta^* = -\infty$. If $\bels<\bec$, then for any $\be\in (\bels,\bec)$, the minimizer of $\Ec_\beta$ is nonuniform. If $\bec<\bels$, then for $\be\in (\bec,\bels)$, equation \eqref{eq:introKM} admits a nonuniform solution. {Therefore}, $\beu\leq \min(\bec,\bels)$.
\end{mthm}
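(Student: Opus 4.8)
The plan is to prove the four assertions in order and then read off $\beu\le\min(\bec,\bels)$. For the quantitative bound when $\beta>\bels$, I would test $\Ec_\beta$ on the one‑parameter family $\mu_t\coloneqq 1+t\cos(2\pi k\cdot x)$, $k\in\Z^\ds$, $|k|=1$, $t\in(0,1)$, which is a genuine probability density since $\cos(2\pi k\cdot x)$ has zero mean and is $\ge-1$. One has $\Ec_\beta(\muu)=0$ because $\widehat\g(0)=0$; the interaction term is \emph{exactly} quadratic, $\int_{(\T^\ds)^2}\g\,d\mu_t^{\otimes2}=\tfrac{t^2}{2\bels}$, using $\widehat\g(k)=\cdd/(2\pi|k|)^\ds$ (this is precisely where $\bels=(2\pi)^\ds/\cdd$ enters); and, Taylor expanding $s\mapsto(1+s)\log(1+s)$ and using $\int\cos=\int\cos^3=0$ together with $(1+s)\log(1+s)-s-\tfrac{s^2}{2}\le\tfrac{s_+^3}{6}$, one gets $\int_{\T^\ds}\mu_t\log\mu_t=\tfrac{t^2}{4}+E(t)$ with $0\le E(t)\le c_0 t^3$ for an absolute constant $c_0$. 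Hence for $\beta>\bels$,
\[
\Ec_\beta(\mu_t)\le-\tfrac14\Big|\tfrac1\bels-\tfrac1\beta\Big|t^2+\tfrac{c_0}{\beta}t^3,
\]
and minimizing this cubic over $t$ (a cubic $-At^2+Bt^3$ has minimum $-\tfrac{4A^3}{27B^2}$, which with $A=\tfrac14|\tfrac1\bels-\tfrac1\beta|$ is the origin of the constant $\tfrac1{432}=\tfrac{4}{27}(\tfrac14)^3$) yields the stated negative upper bound; in particular $\Ec_\beta^*<0=\Ec_\beta(\muu)$. This trial‑function argument is only needed when $\bels<\beta\le\bec$ (where the optimal $t$ stays in $(0,1)$), since for $\beta>\bec$ the bound is subsumed by the next point.

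For $\Ec_\beta^*=-\infty$ when $\beta>\bec$, I would use a concentrating family, e.g.\ $\mu_\epsilon$ a periodized normalized indicator of $B_\epsilon(0)$: then $\int\mu_\epsilon\log\mu_\epsilon=\ds\log(1/\epsilon)+O(1)$ and, since $\g(x)=-\log|x|+O(1)$ near $0$, $\int_{(\T^\ds)^2}\g\,d\mu_\epsilon^{\otimes2}=\log(1/\epsilon)+O(1)$, so $\Ec_\beta(\mu_\epsilon)=(\tfrac\ds\beta-\tfrac12)\log(1/\epsilon)+O(1)\to-\infty$ exactly when $\beta>2\ds=\bec$ (cf.\ \cref{lem:FEsc}, \cref{sec:aLogFE}). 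For the nonuniform minimizer when $\bels<\bec$ and $\beta\in(\bels,\bec)$, existence of a minimizer of $\Ec_\beta$ follows from the direct method: the strict logarithmic HLS inequality (\cref{lem:logHLS}) bounds $\tfrac12\int\g\,d\mu^{\otimes2}$ by $\tfrac1\bec\int\mu\log\mu$ up to a constant, so $\Ec_\beta(\mu)\ge(\tfrac1\beta-\tfrac1\bec)\int\mu\log\mu-C_\ds$ and a minimizing sequence has bounded entropy; de la Vallée‑Poussin gives equi‑integrability, hence weak $L^1$‑precompactness, and equi‑integrability also forbids near‑diagonal concentration of $\mu_n^{\otimes2}$, upgrading lower semicontinuity of the (l.s.c., bounded‑below) interaction to continuity along the sequence. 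Thus a minimizer exists, and by the first point it is not $\muu$; being a critical point, it solves the Euler–Lagrange / Kirkwood–Monroe equation \eqref{eq:introKM}.

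For the nonuniform solution of \eqref{eq:introKM} when $\bec<\bels$ and $\beta\in(\bec,\bels)$, there is no minimizer, so I would run a mountain pass argument in the potential variable $v=\g\ast\mu$ (so $\int v=0$, $\mu=1+\tfrac1\cdd\Dm^\ds v$): solutions of \eqref{eq:introKM} are critical points, on $\{v\in H^{\ds/2}(\T^\ds):\int v=0\}$, of the Kazhdan–Warner/mean‑field functional $I_\beta(v)=\tfrac1{2\cdd}\|\Dm^{\ds/2}v\|_{L^2}^2-\tfrac1\beta\log\int_{\T^\ds}e^{\beta v}$. At $v=0$ ($\leftrightarrow\muu$), $I_\beta(0)=0$ and $\delta^2 I_\beta(0)[\psi,\psi]=\sum_{k\ne0}\big(\tfrac{(2\pi|k|)^\ds}{\cdd}-\beta\big)|\widehat\psi(k)|^2\ge(\bels-\beta)\|\psi\|_{L^2}^2$, which for $\beta<\bels$ is positive and, combined with the $\Dm^{\ds/2}$ term, coercive on $H^{\ds/2}$; since $v\mapsto\log\int e^{\beta v}$ is smooth on $H^{\ds/2}$ with cubic‑and‑higher Taylor remainder $O(\|v\|_{H^{\ds/2}}^3)$ there (Moser–Trudinger–Adams), $v=0$ is a strict local minimum and $I_\beta\ge\eta>0$ on a small sphere. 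Taking $v_\epsilon=\g\ast\mu_\epsilon$ with $\mu_\epsilon$ as above, $\|\Dm^{\ds/2}v_\epsilon\|_{L^2}^2=\cdd\int\g\,d\mu_\epsilon^{\otimes2}=\cdd\log(1/\epsilon)+O(1)$ while $\log\int e^{\beta v_\epsilon}\ge(\beta-\ds)\log(1/\epsilon)+O(1)$, so $I_\beta(v_\epsilon)\le(\tfrac\ds\beta-\tfrac12)\log(1/\epsilon)+O(1)\to-\infty$ because $\beta>\bec$; the mountain pass theorem then gives a critical point at level $\ge\eta>0$, hence $\ne\muu$. Combining this with the minimizer case, for every $\beta$ in a nonempty interval with left endpoint $\min(\bec,\bels)$ there is a nonuniform solution of \eqref{eq:introKM}, which by \eqref{eq:beudef} forces $\beu\le\min(\bec,\bels)$.

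The hard part is the mountain pass step: $H^{\ds/2}(\T^\ds)$ is the critical Sobolev space for the exponential nonlinearity, so $I_\beta$ is not weakly lower semicontinuous and Palais–Smale (or Cerami) sequences may concentrate/bubble — which is exactly why a threshold appears. One must verify the Palais–Smale condition throughout $\beta\in(\bec,\bels)$, i.e.\ check that $\beta\cdd$ stays below the first ``bad'' multiple of the Moser–Trudinger–Adams critical constant on this interval; for $\ds=2$ this is the classical mean‑field equation on the torus, where the Palais–Smale condition holds off an explicit discrete set, and for $3\le\ds\le10$ the fractional analogue must be handled in the same spirit. The analogous but softer technical point in the minimizer case is passing from lower semicontinuity to continuity of the interaction term along minimizing sequences.
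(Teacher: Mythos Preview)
Your approach coincides with the paper's in all four parts: the same trial function $1+t\cos(2\pi k\cdot x)$ for the quantitative bound (the paper takes $t=2\ep$, obtains $\Ec_\beta(\mu_\ep)\le\ep^2((1+8\ep)/\be-1/\bels)$, and minimizes this cubic to get the $1/432$), the same concentration argument for $\be>\bec$, the direct method for a minimizer when $\bels<\be<\bec$, and the same dual-variable mountain pass for $\bec<\be<\bels$ via $\Es_\be(\hs)=\tfrac1{2\cdd}\|\hs\|_{\dot H^{\ds/2}}^2-\be\log\int e^{\hs}$ (your $I_\beta$ is this up to rescaling $\hs=\be v$).

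The one place the paper's execution differs from what you anticipate is the Palais--Smale verification. You expect to need a threshold/bubbling analysis \`a la the mean-field literature, but the paper bypasses this: for a \emph{bounded} Palais--Smale sequence $\{\hs_n\}$, weak compactness in $\dot H^{\ds/2}$ plus Rellich--Kondrachov gives a weak limit $\hs$; pairing $D\Es_\be(\hs_n)$ against $\hs_n-\hs$ and using Onofri ($\ds=1$) or Moser--Trudinger ($\ds\ge2$) to bound $\int e^{2\hs_n}$ uniformly then upgrades weak to strong convergence directly, with no concentration-compactness machinery. What this buys the paper is a short, dimension-uniform argument; what your viewpoint would buy is a clearer picture of exactly where (in $\be$) the mountain pass could fail.
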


\begin{remark}
\cite{BM2014} shows for equation \eqref{eq:lima} on $\R^2$ that if $\beta<4$, then for initial data $\mu^0\in \P(\R^2)$, there is a unique solution to the mild form of the equation. Combining this with the results of \cite{FM2016}, these solutions are global and, after passing to self-similar variables, converge in relative entropy to the unique minimizer of the free energy. We suspect that with substantially more work, one could prove global well-posedness on $\T^\ds$, general $\ds\geq 1$, for measure initial data and convergence to a steady state. The uniqueness of this steady state, though, is less clear. We have chosen not to pursue this direction as for applications, we are only interested in absolutely continuous initial data.
\end{remark}

\begin{remark}\label{rem:1D}
The $\ds=1$ case is actually exactly solvable in terms of the system of differential equations for the Fourier coefficients of the solution. Because this result is of interest in its own right, the details will be reported elsewhere \cite{CdCRS20231d}.
\end{remark}

\begin{remark}
\cref{thm:mainstab} leaves a gap concerning nonuniqueness of solutions to equation \eqref{eq:introKM} for dimensions $2\leq \ds\leq 10$ where $\bec<\bels$ and therefore, we do not know nonuniqueness for $\be\geq \bels$. The $\ds=2$ case has been treated for all $\be>\bec$ \cite{RT1998, ST1998}, and it would be interesting to determine if nonuniqueness holds for all $\be>\bec$.
\end{remark}

\subsection{The modulated free energy method}\label{ssec:introMRrev}
As previously mentioned, Bresch et al. \cite{BJW2019crm, BJW2020} considered the problem of mean-field convergence/propagation of chaos for the system \eqref{eq:SDEa}, proving the first quantitative result for the full range $\be<\bec$. Previous results \cite{CP2016ks, FJ2017} only show a weak form of mean-field convergence starting from the system \eqref{eq:SDEa}. Namely, \cite{FJ2017} shows that for sufficiently  small $\be$, any $N\rightarrow\infty$ limit point of the sequence of empirical measures to the system \eqref{eq:SDEa} is a type of weak solution to equation \eqref{eq:lima}. This result has been recently extended (subsequent to \cite{BJW2020}) up to and including the critical temperature in \cite{Tardy2022}. However, these results are not quantitative, do not characterize the limit point due to its being too weak a notion of solution to \eqref{eq:lima}---in principle, uniqueness may fail in the class which the limit point is shown to belong---nor do they imply propagation of chaos due to the necessity of a subsequence extraction. As one would expect, more results are known for mean-field convergence/propagation of chaos starting from regularized versions of the SDE system \eqref{eq:SDEa} \cite{HS2011, GQ2015, ORT2020}. We emphasize that all of the aforementioned results only give convergence which holds \emph{locally in time}.

The proof of \cite{BJW2019crm, BJW2020} is through their \emph{modulated free energy method}, discussed at some length in \cite[Subsection 1.2]{CdCRS2023}. We briefly recall that this method proceeds by showing a Gr\"onwall relation for the modulated free energy  defined in \eqref{def:modulatedfreenrj}.
%\st{\begin{equation}\label{def:modulatedfreenrj}
%E_N(f_N, \mu) \coloneqq \frac1\beta H_N\left(f_N \vert \mu^{\otimes N}\right) + \Eb_{f_N}\left[F_N(\ux_N, \mu)\right].
%\end{equation}
%which is a combination of the normalized relative entropy from \cite{JW2016, JW2018},
%\begin{equation}\label{eq:REdef}
%H_N \left(f_N \vert \mu^{\otimes N}\right) \coloneqq \frac{1}{N} \int_{(\T^\ds)^N} \log\paren*{\frac{f_N}{\mu^{\otimes N}}} df_N,
%\end{equation}
%and the modulated energy from \cite{SS2015log, SS2015, RS2016, PS2017, Duerinckx2016, Serfaty2020, NRS2021},
%\begin{equation}\label{def:modulatedenergy}
%\Fr_N(\ux_N,\mu) \coloneqq \int_{(\T^\ds)^2\setminus\triangle} \g(x-y)d\paren*{\frac{1}{N}\sum_{i=1}^N\d_{x_i} - \mu}^{\otimes 2}(x,y),
%\end{equation}
%where $\triangle \coloneqq \{(x,x) \in (\T^\ds)^2\}$. Since we work in the statistical setting of the Liouville equation \eqref{eq:Lioua}, the modulated energy $\Fr_N(\ux_N,\mu)$ is averaged with respect to the joint law $f_N$ of the positions $\ux_N$.}
As observed recently in \cite{RS2023lsi}, one can rewrite the modulated free energy as
\begin{align}
E_N(f_N, \mu) = \frac{1}{\beta} \( H_N(f_N| \mathbb{Q}_{N,\beta}(\mu))+ \frac{\log K_{N,\beta}(\mu)}{N}\),
\end{align}
where
\begin{align}\label{defQ}
\mathbb{Q}_{N,\beta}(\mu) &\coloneqq \frac{1}{K_{N,\beta}(\mu)} e^{\beta  N F_N(\XN, \mu)} d\mu^{\otimes N}(\XN),\\
K_{N,\beta}(\mu) &\coloneqq \E_{\mu^{\otimes N}}\Big[e^{\beta N F_N(\XN, \mu)}\Big] \label{defKNbe}
\end{align}
are the modulated Gibbs measure and partition function, respectively. This reformulation makes it clear that the modulated free energy is itself a relative entropy between two probability measures up to an error that is negligible under a certain smallness of free energy {\cb (that is $N^{-1}\log K_{N,\be}(\mu)$)} condition. 

Given a suitable solution (so-called entropy solutions discussed below) $f_N^t$ of the forward Kolmogorov equation \eqref{eq:Lioua} and solution $\mu^t$ of the limiting equation \eqref{eq:lima}, the modulated free energy satisfies the differential inequality
\begin{multline}\label{eq:introdtEN}
\frac{d}{dt} E_N(f_N^t, \mu^t) \leq  \frac{1}{2} \E_{f_N^t}\Bigg[\int_{(\T^\ds)^2\setminus\triangle} (u^{t} (x)-u^{t}(y))\cdot \nabla\g(x-y) d\left(\frac1N\sum_{i=1}^N\d_{x_i} - \mu^t\right)^{\otimes 2}(x,y)\Bigg]\\
-\frac{1}{\beta^2 N}  \int_{(\R^\ds)^N } \left |\nab \sqrt{   \frac{f_N^t}{\mathbb{Q}_{N,\beta} (\mu^t) } }\right|^2 d \mathbb{Q}_{N,\beta}(\mu^t),
\end{multline}
where $u^t \coloneqq \frac1\beta\nabla\log\mu^t - \nabla \g \ast \mu^t$ is the velocity field associated to the mean-field dynamics \eqref{eq:lima}.

At first pass, the  second term on the right-hand side of \eqref{eq:introdtEN}, which is a multiple of the relative Fisher information between $f_N^t$ and $\mathbb{Q}_{N,\beta} (\mu^t)$ and is nonpositive,  may be discarded. Expressions having the form of the first term on the right-hand side of \eqref{eq:introdtEN} appear naturally when computing the variations of the modulated energy along a transport field. These expressions  have been called ``commutator terms" because they have the structure of a quadratic form associated to a commutator. Functional inequalities have been developed to estimate such expressions and show they are bounded in terms of the modulated energy for general log/Riesz-type potentials \cite{LS2018, Serfaty2020, BJW2019edp, Serfaty2023, Rosenzweig2021ne, NRS2021, RS2022}. For the logarithmic-type potential we consider here, we are in an easier situation since, assuming $u^{t}$ is Lipschitz, we have
\begin{equation}
\paren*{u^{ t}(x)-u^{ t }(y)}\cdot\nabla\g(x-y) \in L_{x,y}^\infty((\T^\ds)^2),
\end{equation}
by the mean-value theorem. These functional inequalities can then be avoided in favor of a cruder argument combining the Donsker-Varadhan lemma with large deviation type estimates originating in \cite{JW2018} (see \cref{lem:logFI}). This way one can show that the right-hand side of \eqref{eq:introdtEN} is bounded by
\begin{equation}\label{eq:introMFErhs}
C^t\paren*{H_N(f_N^t \vert (\mu^t)^{\otimes N}) +\frac{C^t}{N}},
\end{equation}
for some constant $C^t>0$ depending on $\|\log\mu^\tau\|_{W^{2,\infty}}$. To close the Gr\"onwall loop, one needs to bound the relative entropy in terms of the modulated free energy $E_N$. Since the interaction is attractive, this is far from obvious and is the main difficulty of \cite{BJW2020}.

%The Gr\"onwall-Bellman lemma then allows to conclude that
%\begin{equation}\label{eq:introENgron}
%|E_N(f_N^t,\mu^t)| \leq \paren*{|E_N(f_N^0,\mu^0)| +  \frac{1}{N}\int_0^t C^\tau d\tau }
%\end{equation}

\begin{remark}\label{rem:TvsR}
Controlling  $\|\log\mu^t\|_{W^{2,\infty}}$ is delicate on Euclidean space due to the  decay of $\mu^t $ to $0$ at infinity. This issue, of course, disappears on the torus (likely more generally a bounded domain with appropriate boundary conditions)---and therefore motivates the restriction to the periodic case in all previous works using the modulated free energy---since the solution to \eqref{eq:lima} can be ensured to remain bounded from below (at least locally in time) provided its initial datum is. We refer to \cite{FW2023, Rosenzweig2023re} for some recent progress on extending entropy methods for propagation of chaos to Euclidean space. %Without some confining potential $\Vext$ added to the dynamics \eqref{eq:SDE} and \eqref{eq:lim} and for solutions which start near equilibrium (which is no longer uniform), we have doubts that the modulated free energy method can be implemented on $\R^\ds$ \cite{HRS2022}.
\end{remark}

\emph{A priori} it is not clear that the modulated free energy is a coercive quantity in the sense that it controls the relative entropy (up to some error vanishing as $N\rightarrow\infty$), since there is competition between the relative entropy and the modulated energy. Such coercivity is needed to obtain a closed estimate from \eqref{eq:introdtEN}, \eqref{eq:introMFErhs} and to show that modulated free energy metrizes propagation of chaos. In fact, Bresch et al. were unable to prove this coercivity. Instead, they introduce a truncation of the potential $\g$ into ``short-range'' (sr) and ``long-range'' (lr) parts:
\begin{equation}
\g = \g\chi_r + \g(1-\chi_r) \eqqcolon \g_{sr} + \g_{lr},
\end{equation}
where $\chi$ is a $C^\infty$ bump function supported in a neighborhood of the origin and $\chi_r(x)\coloneqq \chi(x/r)$, for $r>0$. Provided $r$ is sufficiently small depending on $\ds,\be$, for any $\beta<\bec$, Bresch et al. show that there is a constant $C>0$ depending on $\ds,\beta$ and norms of $\mu$ and an exponent $\ga>0$ depending on $\ds$ such that for any $N$ sufficiently large, $f_N \in \P_{ac}((\T^\ds)^N)$ and sufficiently regular $\mu\in \P_{ac}(\T^\ds)$ which is bounded from below, one has 
\begin{equation}\label{eq:introLMHLS}
\beta\Eb\left[\Fr_{N,sr}(\ux_N,\mu)\right] \leq H_N(f_N \vert \mu^{\otimes N}) + CN^{-\ga},
\end{equation}
where $\Fr_{N,sr}$ is the modulated energy \eqref{def:modulatedenergy} with $\g$ replaced by $\g_{sr}$. In analogy to the classical log HLS inequality (see \cref{lem:logHLS} below), we refer to the functional inequality \eqref{eq:introLMHLS} as a \emph{modulated logarithmic HLS (mLHLS) inequality}.

Their proof of the functional inequality \eqref{eq:introLMHLS} proceeds through large deviation arguments similar to those appearing in previous work of Bodineau and Guionnet \cite{BG1999}. The freedom to make $r$ small is essential. At one point, one has to show that the large deviation rate functional
\begin{equation}\label{eq:introRF}
\mathscr{I}(\mu) \coloneqq \sup_{\rho\in\P(\T^\ds)} \paren*{\be\int_{(\T^\ds)^2}\tl\g_{sr}(x-y)d(\rho-\mu)^{\otimes 2}(x,y) - \int_{\T^\ds}\log\paren*{\frac{d\rho}{d\mu}}d\rho} =0,
\end{equation}
where $\tl\g_{sr}$ is a modification of $\g_{sr}$ to remove the singularity at the origin. This is done by computing the Euler-Lagrange equation and showing uniqueness of solutions through a contraction argument; and this step requires that $\|\g_{sr}\|_{L^1}$ is sufficiently small, which can, of course, be guaranteed by taking $r$ sufficiently small. 

There is still the long-range contribution of $\g_{lr}$ that has to be accounted for. This is treated in \cite{BJW2020} directly: $\g_{lr}$ is $C^\infty$, and therefore one can directly perform a Gr\"onwall argument on the quantity
\begin{equation}
\Eb_{f_N^t}\left[\Fr_{N,lr}(\ux_N,\mu^t)\right],
\end{equation}
where $\Fr_{N,lr}$ is obtained from \eqref{def:modulatedenergy} by replacing $\g$ with $\g_{lr}$ defined above. %Using equation \eqref{eq:}, one computes (cf. \cite[]{RS2021})
%\begin{equation}
%\frac{d}{dt}\int_{(\T^\ds)^N}\Fr_{N,lr}(\ux_N,\mu^t)df_N^t(\ux_N) = 
%\end{equation}
%The right-hand side may be bounded an expression of the form \eqref{eq:introMFErhs} by the same arguments.
Unfortunately, this part is the source of trouble in obtaining a uniform-in-time estimate from their proof. Namely, one encounters a term of the form
\begin{align}
\E_{f_N^t}\Bigg[\int_{(\T^\ds)^2}\Delta\g_{lr}(x-y)d\left(\frac1N\sum_{i=1}^N \delta_{x_i}-\mu^t\right)^{\otimes 2}(x,y)\Bigg],
\end{align}
the bound for which (equation (2.16) in the cited work) has no clear means of decay in time.

\subsection{Functional inequalities}\label{ssec:introFI}
We observe and show that it is possible to prove this inequality \eqref{eq:introLMHLS} \emph{without truncation}, provided the temperature is \emph{sufficiently high}. One encounters a similar problem of uniqueness of minimizers of a large deviation functional; but the freedom to take the temperature high replaces the freedom to take $r$ small.

Perhaps even more interesting---and unanticipated---we show that a functional inequality of the form \eqref{eq:introLMHLS} \emph{cannot hold if the temperature is too low}. Namely, if $\beta>\min(\bels,\bec)$, then one can use the strict negativity of the minimal free energy to show that for the reference measure $\muu$, the expected modulated energy dominates the temperature-weighted relative entropy.

We summarize the results contained in the preceding discussion with the following theorem. An explicit form of the rather complicated error in \eqref{eq:mainMLHLS1} below is given in \cref{ssec:MLHLSprf}.

\begin{mthm}[mLHLS inequality] \label{thm:mainMLHLS}
Let $\ds\geq 1$. There exists a maximal $\bei>0$, such that for every $0\leq \be <\bei$, there exists a $\delta_{\be}>0$, tending to $\infty$ as $\be\rightarrow 0$, such that for all $\mu\in\P_{ac}(\T^\ds)$ with $\|\log\mu\|_{L^\infty}\leq \delta_{\be}$ and $f_N\in \P_{ac}((\T^\ds)^N)$, it holds that for all $N=N(\ds,\be)$ sufficiently large,
\begin{align}\label{eq:mainMLHLS1}
\E_{f_N}\left[\Fr_N(\XN,\mu)\right] \leq \frac{1}{\be}H_N(f_N \vert \mu^{\otimes N}) + \mathbf{C}(\beta,\|\log\mu\|_{W^{1,\infty}}) N^{-\ga},
\end{align}
where $\ga>0$ depends only on $\ds$ and $\mathbf{C}:[0,\infty)^2\rightarrow [0,\infty)$ is a continuous, increasing function of its arguments, vanishing if any of them are zero.
%If $\ds=2$, then there exist constants $C_1,C_2>0$, which depend only on $\ds$, such that the following holds. For any $\be\leq 2\ds$, $f_N \in \P_{ac}((\T^\ds)^N)$, and $\mu \in \P(\T^\ds)$, with $\log\mu \in L^\infty$ and ?, we have

\medskip
Conversely, if $\be> \min(\bec,\bels)$, then there exists a $\eta_\be>0$ such that the following holds: for any $N$, there is an $f_N \in \P_{ac}((\T^\ds)^N)\cap C^\infty((\T^\ds)^N)$ such that
\begin{align}\label{eq:mainMLHLS2}
\frac{1}{\be} H_N(f_N\vert \muu^{\otimes N})  \leq  \E_{f_N}\left[\Fr_N(\ux_N,\muu)\right] - \eta_\be.
\end{align}
\end{mthm}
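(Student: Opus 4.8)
Both halves of the theorem are naturally phrased through the \emph{modulated partition function} $K_{N,\be}(\mu)=\E_{\mu^{\otimes N}}\big[e^{\be N \Fr_N(\XN,\mu)}\big]$ via the Gibbs variational principle: for every $\mu\in\P_{ac}(\T^\ds)$,
\begin{equation*}
\sup_{f_N\in\P_{ac}((\T^\ds)^N)}\Big(\E_{f_N}\big[\Fr_N(\XN,\mu)\big]-\tfrac1\be H_N(f_N\vert\mu^{\otimes N})\Big)=\tfrac1{\be N}\log K_{N,\be}(\mu).
\end{equation*}
Hence \eqref{eq:mainMLHLS1} is \emph{equivalent} to the quantitative bound $\tfrac1{\be N}\log K_{N,\be}(\mu)\le\mathbf{C}(\be,\|\log\mu\|_{W^{1,\infty}})N^{-\ga}$, while for \eqref{eq:mainMLHLS2} it suffices to exhibit, for each $N\ge2$, a single $f_N$ with $\E_{f_N}[\Fr_N(\XN,\muu)]-\tfrac1\be H_N(f_N\vert\muu^{\otimes N})\ge\eta_\be$.

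\emph{Positive direction, Step 1: quantitative large deviations.} The plan is to bound $\tfrac1N\log K_{N,\be}(\mu)$ by the macroscopic rate functional $\mathcal{J}_\be(\mu)\coloneqq\sup_{\nu\in\P(\T^\ds)}\big(\be\int_{(\T^\ds)^2}\g(x-y)\,d(\nu-\mu)^{\otimes2}(x,y)-H(\nu\vert\mu)\big)$, up to a polynomial-in-$N$ error. Split $\g=\g_{\le r}+\g_{>r}$ at scale $r=N^{-a}$, where $\g_{\le r}\ge0$ is supported in $\{|x|\le r\}$ and $\g_{>r}$ is bounded with $\|\nabla\g_{>r}\|_{L^\infty}=O(1/r)$. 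After a H\"older decoupling of the two exponentials (which increases the coupling by a fixed factor, so one works with $\mathcal{J}_{\be'}(\mu)$ for some $\be'\asymp\be$), the singular piece is treated by a \emph{pair decomposition}: subadditivity of entropy together with the dual variational formula reduce $\tfrac1N\log\E_{\mu^{\otimes N}}\big[\exp\big(\tfrac{c}{N}\sum_{i\ne j}\g_{\le r}(x_i-x_j)\big)\big]$ to estimating $\E_{\mu\otimes\mu}\big[e^{c\g_{\le r}(x-y)}\big]=1+O(\|\mu\|_{L^\infty}^2r^\ds)$, the crucial input being the local integrability of $e^{c\be\g}\sim|x|^{-c\be}$ near the diagonal (valid while $\be$ stays below an integrability threshold comparable to $\bec$); the remaining terms of $\Fr_N^{\le r}$ are bounded pointwise by $O(\be\|\g_{\le r}\|_{L^1}\|\mu\|_{L^\infty})=O(\be r^\ds\log(1/r))$ since $\g_{\le r}\ge0$. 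The smooth piece is handled by a Sanov-type large-deviation upper bound, whose error is controlled by a metric-entropy covering of $\P(\T^\ds)$ in $W_1$ and the Lipschitz norm of $\g_{>r}$, yielding $\tfrac1N\log\E_{\mu^{\otimes N}}\big[e^{c\be N\Fr_N^{>r}}\big]\le\sup_\nu\big(c\be\!\int\!\g_{>r}\,d(\nu-\mu)^{\otimes2}-H(\nu\vert\mu)\big)+O_\be(N^{-\ga})$; and since $\g_{\le r}\ge0$, this supremum is $\le\mathcal{J}_{c\be}(\mu)+O(\be\|\g_{\le r}\|_{L^1}\|\mu\|_{L^\infty})$. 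Collecting terms and choosing $a=a(\ds)$ to balance $r^\ds\log(1/r)$ against the Sanov error gives $\tfrac1{\be N}\log K_{N,\be}(\mu)\le\tfrac1{c\be}\mathcal{J}_{c\be}(\mu)+\mathbf{C}(\be,\|\log\mu\|_{W^{1,\infty}})N^{-\ga}$ for some $\ga(\ds)>0$.

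\emph{Positive direction, Step 2: the macroscopic variational problem.} It remains to show $\mathcal{J}_{c\be}(\mu)=0$ for $\be$ below a threshold and $\|\log\mu\|_{L^\infty}$ small; this \emph{defines} $\bei$ (up to the fixed factor $c$). Since $\g$ is positive definite modulo its mean ($\widehat\g(k)=\cdd(2\pi|k|)^{-\ds}>0$ for $k\ne0$, $\widehat\g(0)=0$), one has $\int\g\,d(\nu-\mu)^{\otimes2}\ge0$, so $\nu=\mu$ is a candidate maximizer with value $0$. The logarithmic HLS inequality (\cref{lem:logHLS}) gives $\be\int\g\,d(\nu-\mu)^{\otimes2}-H(\nu\vert\mu)\le(\tfrac\be\bec-1)H(\nu\vert1)+C(\be,\|\log\mu\|_{L^\infty})$, so for $\be<\bec$ the functional is bounded above and tends to $-\infty$ with $H(\nu\vert1)$; hence the supremum is attained at a density of bounded entropy, which a bootstrap in the Euler--Lagrange relation $\nu=Z^{-1}\mu\,e^{c\be\,\g\ast(\nu-\mu)}$ shows is bounded in $L^\infty$ in terms of $\be,\|\log\mu\|_{L^\infty}$. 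On the corresponding ball of bounded densities the fixed-point map $\nu\mapsto Z^{-1}\mu\,e^{c\be\,\g\ast(\nu-\mu)}$ is a contraction once $\be\|\g\|_{L^1}e^{\|\log\mu\|_{L^\infty}}$ is small---this is where the smallness of $\be$ substitutes for the smallness of the truncation radius in \cite{BJW2020}---and its unique fixed point $\nu=\mu$ forces $\mathcal{J}_{c\be}(\mu)=0$. Tracking the smallness requirement yields $\delta_\be\sim\log(1/\be)\to\infty$ as $\be\to0$. At $\mu=\muu$ the genuine obstruction is the second variation: with $\nu=1+\phi$, $\int\phi=0$, the quadratic form $c\be\sum_{k\ne0}\widehat\g(k)|\widehat\phi(k)|^2-\tfrac12\sum_{k\ne0}|\widehat\phi(k)|^2$ is negative semidefinite iff $c\be\le\tfrac12\widehat\g(1)^{-1}$, which up to the normalization absorbed in $c$ is the spectral threshold governing $\bels$; hence $\bei\le\min(\bec,\bels)$.

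\emph{Negative direction and main difficulty.} Fix $\be>\min(\bec,\bels)$ and take $f_N=\nu^{\otimes N}$ with $\nu\in\P_{ac}(\T^\ds)\cap C^\infty$. Since $\g$ has zero mean, $\g\ast\muu\equiv0$, so $\Fr_N(\XN,\muu)=\tfrac1{N^2}\sum_{i\ne j}\g(x_i-x_j)$; using $H_N(\nu^{\otimes N}\vert\muu^{\otimes N})=H(\nu\vert1)$, $\langle\g\ast\nu,\nu\rangle=\sum_{k\ne0}\widehat\g(k)|\widehat\nu(k)|^2\ge0$, $\int\g\,d\nu^{\otimes2}=\int\g\,d(\nu-\muu)^{\otimes2}$, and $\Ec_\be(\muu)=0$, we obtain for $N\ge2$
\begin{equation*}
\E_{\nu^{\otimes N}}[\Fr_N(\XN,\muu)]-\tfrac1\be H_N(\nu^{\otimes N}\vert\muu^{\otimes N})=\tfrac{N-1}{N}\langle\g\ast\nu,\nu\rangle-\tfrac1\be H(\nu\vert1)\ \ge\ \tfrac12\langle\g\ast\nu,\nu\rangle-\tfrac1\be H(\nu\vert1)=-\Ec_\be(\nu).
\end{equation*}
By \cref{thm:mainstab}, $\be>\min(\bec,\bels)$ forces $\Ec_\be^*<0=\Ec_\be(\muu)$, and a near-minimizer may be taken smooth---the elliptic-regular minimizer of \eqref{eq:introKM} when $\bels<\be<\bec$, the explicit test density underlying the bound of \cref{thm:mainstab} when $\be>\bels$, or an explicit concentrating profile $\nu_\ep\propto 1+\ep^{-\ds}\chi(\cdot/\ep)$ with $\Ec_\be(\nu_\ep)\to-\infty$ when $\be>\bec$---so there is $\nu\in\P_{ac}(\T^\ds)\cap C^\infty$ with $\Ec_\be(\nu)<0$; setting $\eta_\be\coloneqq-\Ec_\be(\nu)>0$ yields \eqref{eq:mainMLHLS2}. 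The crux of the whole proof is Step 1 of the positive direction: a large-deviation upper bound for $K_{N,\be}(\mu)$ with the \emph{untruncated} logarithmic potential, a polynomial rate, and uniformity over $\mu$ near $\muu$---it is precisely the smallness of $\be\|\g\|_{L^1}$ that drives both the pair-decomposition estimate and the contraction resolving $\mathcal{J}_{c\be}(\mu)=0$. By contrast, given \cref{thm:mainstab}, the negative direction is the one-line product-measure computation above.
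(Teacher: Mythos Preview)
Your overall architecture coincides with the paper's: Donsker--Varadhan reduces \eqref{eq:mainMLHLS1} to bounding $\tfrac{1}{N}\log K_{N,\be}(\mu)$; one truncates $\g$ at a small scale, controls the short-range singularity through the local integrability of $e^{c\g}$ near the diagonal, passes the bounded long-range part through a quantitative large-deviation estimate to reach the rate functional $\sup_\nu(\be'\bar F_\eta(\nu,\mu)-H(\nu\vert\mu))$, and shows the latter vanishes for small $\be'$ and near-uniform $\mu$ via existence of a maximizer plus the Euler--Lagrange contraction. Your Step~2 is exactly the paper's \cref{lem:Ivan}, and your ``Sanov plus $W_1$-covering'' for the long-range part is effectively the paper's coarse-graining: \cref{lem:LDcg} is precisely a finite-alphabet quantitative Sanov bound after discretizing the empirical measure at scale $M^{-1/\ds}$. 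The negative direction is likewise the same product-measure computation; your use of $\tfrac{N-1}{N}\ge\tfrac12$ together with $\langle\g\ast\nu,\nu\rangle\ge0$ is in fact slightly cleaner than the paper, which instead absorbs the $O(1/N)$ remainder by taking $N$ large.

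The one genuine technical divergence is the short-range step. You H\"older-decouple the short- and long-range exponentials and then reduce the singular pair sum via entropy subadditivity (monotonicity of $k\mapsto\tfrac1k H(f_{N;k}\vert\mu^{\otimes k})$) to a two-body integral $\E_{\mu^{\otimes 2}}[e^{2p\be\,\g_{\le r}}]$; this is sound, but the decoupling exponent $p>1$ and the subadditivity factor~$2$ compound, so integrability forces roughly $\be\lesssim\ds/2$. The paper instead keeps the pieces together and proves the more elaborate \cref{lem:expfacbnd}: a first H\"older over particle labels isolates particle~$1$; a two-scale decomposition ($\eta\ll\ep$) sorts the remaining particles by distance to $x_1$, replaces the nearby ones by $x_1$ inside the truncated energy (at the cost of controlled errors), and integrates out the singularities $(\eta/|x_1-x_j|)^{\be/2}$ directly, whence the sharper threshold $\be<\bec$; a second H\"older with exponents $\be'/\be$ and $\be'/(\be'-\be)$ then factors out $K_{N,\be',\eta}(\mu)^{\be/\be'}$. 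Both routes deliver some $\bei>0$ with a polynomial rate $N^{-\ga}$; the paper's buys a sharper lower bound on $\bei$ at the price of a considerably heavier lemma.
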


In principle, there is a gap between $\bei$ and $\min(\bels,\bec)$. We expect that the affirmative portion \eqref{eq:mainMLHLS1} of the theorem should hold for $\be<\min(\bels,\bec)$, but are unable to prove this case, except for when $\ds=2$ and $\mu=\muu$. The difficulty is showing that the rate functional \eqref{eq:introRF} is zero by showing that $\muu$ is the unique solution of the Euler-Lagrange equation (see \eqref{eq:minmeasid}).

\begin{remark}\label{rem:MLHLSunif}
If $\ds=2$ and $\mu=\muu$, then \eqref{eq:mainMLHLS1} holds for any $\be<\bec=4$. This is because the rate functional \eqref{eq:introRF} with $\tl{\g}_{sr}$ replaced by $\g$ becomes equivalent to the minimal value of the free energy \eqref{eq:introFE}. By the sharp uniqueness result for the Kirkwood-Monroe equation \eqref{eq:introKM} discussed above, the minimal free energy is zero.
\end{remark}

\begin{comment}
{\cre
\begin{remark}\label{rem:MLHLSbeu}
If we define
\begin{align}
\bei\coloneqq \sup\{\be_0 : \forall \be\leq \be_0, \ \text{equation \eqref{eq:mainMLHLS1} holds for some $\gamma>0$}\},
\end{align}
then for all $\be<\bei$, we have uniqueness of solutions to equation \eqref{eq:introKM}. Indeed, given two solutions $\mu_\be^1,\mu_\be^2$, setting $\mu=\mu_\be^1$ and $f_N = (\mu_\be^2)^{\otimes N}$, we find from inserting into \eqref{eq:mainMLHLS1} and taking the limit as $N\rightarrow\infty$ of both sides that
\begin{align}
\int_{(\T^\ds)^2}\g(x-y)d(\mu_\be^1-\mu_\be^2)^{\otimes 2}(x,y) \leq \frac1\be H(\mu_\be^1\vert\mu_\be^2).
\end{align}
Using that $\mu_\be^1,\mu_\be^2$ are critical points of $\Ec_\be$, this implies that
\begin{align}
\Ec_\be(\mu_\be^1) - \Ec_\be(\mu_\be^2) =  \frac1\be H(\mu_\be^1\vert\mu_\be^2) - \int_{(\T^\ds)^2}\g(x-y)d(\mu_\be^1-\mu_\be^2)^{\otimes 2}(x,y) \geq 0.
\end{align}
Swapping the roles of $\mu_\be^1$ and $\mu_\be^2$ yields
\end{remark}
}
\end{comment}

%\begin{remark}
%The choice of norms of $\mu$ in the negative assertion of \cref{thm:mainMLHLS} is not really important. One can replace them with arbitrarily strong norms (e.g., Gevrey, analytic, etc.) and the inequality would still be false. 
%\end{remark}

\subsection{Uniform-in-time propagation of chaos}\label{ssec:introMF}
As alluded to above, there is an underlying reason---beyond aesthetics---for our revisiting the proof of the mLHLS lemma, which is the problem of uniform-in-time convergence. The possibility of such a result was posed as a question in \cite{BJW2020}, as their proof did not exploit the diffusive effect of temperature on the long-time dynamics of the solution to \eqref{eq:lima}---namely, the relaxation to steady states.
% Avoiding truncating the potential is essential because the treatment of the long-range contribution in \cite{BJW2020} spoils the possibility of a uniform-in-time estimate.

By using our results from the previous subsection for the relaxation of the solution $\mu^t$ to \eqref{eq:lima} to equilibrium, and following the strategy of our companion work \cite{CdCRS2023} for incorporating relaxation rates into the modulated free energy method, we are able to obtain a uniform-in-time bound for the modulated free energy. To the best of our knowledge, this is the first result of uniform-in-time propagation of chaos for systems with attractive, singular interactions. The coercivity of the modulated free energy then implies a uniform-in-time bound for the relative entropy, which by subadditivity and Pinsker's inequality also yields a rate for propagation of chaos in total variation distance. However, this rate is expected to be suboptimal (cf. \cite{Lacker2023, LlF2023, BJS2022,hCR2023}).

Additionally, we use our quantitative instability result of \cref{thm:mainstab} to show that if $\beta>\bels$, it is not possible to have a uniform-in-time relative entropy estimate of the following form:
\begin{align}\label{eq:EntUT}
\forall t\geq 0,\qquad H_N(f_N^t \vert (\mu^t)^{\otimes N}) \leq C\paren*{o_N(1) + H_N(f_N^0 \vert (\mu^0)^{\otimes N})},
\end{align}
where $f_N^t$ is any entropy solution to the $N$-particle forward Kolmogorov equation, $\mu^t$ is any smooth solution to equation \eqref{eq:lima}, $C>0$ is a constant depending only on $\ds,\be,\mu^0$, and $o_N(1)$ is some quantity, depending on $\ds,\be,\mu^0$, which vanishes as $N\rightarrow\infty$.

The above discussion leads us to our final main result. The reader may consult \cref{sec:MFE} for the explicit form of the errors in \eqref{eq:mainUTent} below. The value of the exponent $\gamma$ corresponds to the rate of convergence and is almost certainly not sharp. The restrictions on $\beta$ and $\mu$ in \cref{thm:mainUT} are inherited from \cref{thm:mainMLHLS}. Improving the range of applicability of the latter would improve the range of applicability of the former.

\begin{mthm}[Uniform and nonuniform propagation of chaos]\label{thm:mainUT}
Let $\ds\geq 1$ and $\beta<\bei$, where $\bei$ is as in \cref{thm:mainMLHLS}. There exists $\delta_\be>0$, depending on $\ds,\be$ and tending to $\infty$ as $\be\rightarrow 0$, and functions $\mathbf{C}_{1},\mathbf{C}_2: [0,\infty)^2\rightarrow [0,\infty)$, continuous, increasing, and vanishing if any of them are zero, and constant $\ga>0$, depending on $\ds$, such that the following holds. Let $\mu\in L^\infty([0,\infty), W^{2,\infty})$ be a solution to \eqref{eq:lima}  with $\|\log\mu^0\|_{L^\infty} \leq \delta_\be$. Let $f_N \in L^\infty([0,\infty), \P_{ac})$ be an entropy solution to the forward Kolmogorov equation \eqref{eq:Lioua}.\footnote{The existence of such a solution is sketched in \cite[Appendix 4.2]{BJW2020}.} Then
\begin{align}\label{eq:mainUTent}
\forall t\geq 0, \qquad H_N(f_N^t\vert (\mu^t)^{\otimes N}) \leq \mathbf{C}_1(\beta,\|\log\mu^0\|_{W^{2,\infty}})\left(E_N(f_N^0, \mu^0) + \mathbf{C}_2(\beta, \|\log \mu^0\|_{W^{2,\infty}}) N^{-\gamma}\right).
\end{align}

If $\be>\bels$, then for all $\ep>0$ sufficiently small depending on $\ds,\be$, there exists a $C^\infty$ solution $\mu_\ep$ to \eqref{eq:lima} with $\|\mu_\ep^0-\muu\|_{W^{n,\infty}} = O(\ep)$, for any $n$, such that for time $t_\ep = O(\log\frac{1}{\ep})$,
\begin{align}
\|f_{N;1}^{t_\ep} - \muu\|_{L^1} + \|f_{N;1}^{t_\ep}-\mu_{\ep}^{t_\ep}\|_{L^1} \geq \frac12,
\end{align}
where $f_{N}$ is an entropy solution of the forward Kolmogorov equation \eqref{eq:Lioua} with initial datum $(\mu_\ep^0)^{\otimes N}$. In particular, since $H_N(f_N^0\vert (\mu_\ep^0)^{\otimes N}) = 0$ and $H_N(f_N^0\vert \muu^{\otimes N}) = O(\ep)$, we see that \eqref{eq:EntUT} cannot hold.
\end{mthm}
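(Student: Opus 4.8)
The plan is to split the statement into its two halves: the uniform-in-time bound \eqref{eq:mainUTent}, and the nonuniform instability assertion. For the first half, the strategy follows the modulated free energy method combined with the relaxation input of \cref{thm:mainlim}. First I would start from the differential inequality \eqref{eq:introdtEN} for $E_N(f_N^t,\mu^t)$ and, discarding the nonpositive relative Fisher information term, bound the commutator term using the Donsker--Varadhan/large deviation estimate (\cref{lem:logFI}) to obtain
\begin{align*}
\frac{d}{dt} E_N(f_N^t,\mu^t) \leq C^t\left(H_N(f_N^t\vert (\mu^t)^{\otimes N}) + \frac{C^t}{N}\right),
\end{align*}
where $C^t$ depends on $\|\log\mu^t\|_{W^{2,\infty}}$. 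The key new ingredient compared to \cite{BJW2020} is that \cref{thm:mainlim} (applied with the smallness hypothesis $\|\log\mu^0\|_{L^\infty}\leq\delta_\be$, which via the smoothing estimates \cref{lem:greg} and the exponential decay \eqref{eq:mainlimnabLinf} forces $\|\log\mu^t\|_{W^{2,\infty}}$ to decay like $O(e^{-ct})$) makes the prefactor $C^t$ integrable in time: $C^t \leq C_0 e^{-ct}$ for $t$ large, and uniformly bounded on compact intervals. Following the strategy of \cite{CdCRS2023}, I would then combine the modulated free energy coercivity from \cref{thm:mainMLHLS} — namely $\frac1\be H_N(f_N^t\vert(\mu^t)^{\otimes N}) = E_N(f_N^t,\mu^t) - \E_{f_N^t}[F_N(\ux_N,\mu^t)] + \frac1\be H_N \geq \frac{1}{2\be} H_N - \mathbf{C}N^{-\ga}$ (rearranging \eqref{eq:mainMLHLS1} to absorb half the entropy) — to bound $H_N$ by $\be(E_N + \mathbf{C}N^{-\ga})$ pointwise in $t$. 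Feeding this back into the Gr\"onwall inequality with integrable prefactor $\int_0^\infty C^t\,dt <\infty$ closes the loop and yields $\sup_{t\geq 0} E_N(f_N^t,\mu^t) \leq \mathbf{C}_1(E_N(f_N^0,\mu^0) + \mathbf{C}_2 N^{-\ga})$; applying the coercivity once more converts this into the claimed bound \eqref{eq:mainUTent} on the relative entropy.

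For the second half, I would take the solution $\mu_\ep$ to \eqref{eq:lima} furnished by the instability part of \cref{thm:mainlim} (equivalently \cref{thm:mainstab}'s quantitative instability), which satisfies $\|\mu_\ep^0-\muu\|_{W^{n,\infty}} = O(\ep)$ yet $\|\mu_\ep^{t_\ep}-\muu\|_{L^1}\geq\frac12$ at time $t_\ep = O(\log\frac1\ep)$. Let $f_N$ be an entropy solution with initial datum $(\mu_\ep^0)^{\otimes N}$; then $H_N(f_N^0\vert(\mu_\ep^0)^{\otimes N}) = 0$. I would first establish local-in-time (qualitative) propagation of chaos on the fixed finite interval $[0,t_\ep]$ — which one gets from the modulated free energy method without needing uniformity, exactly as in \cite{BJW2020}, since $\mu_\ep$ is smooth and bounded below on this interval — to conclude $f_{N;1}^{t_\ep}\to\mu_\ep^{t_\ep}$ in $L^1$ as $N\to\infty$. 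Hence for $N$ large, $\|f_{N;1}^{t_\ep}-\mu_\ep^{t_\ep}\|_{L^1}$ is small, so by the triangle inequality $\|f_{N;1}^{t_\ep}-\muu\|_{L^1} \geq \|\mu_\ep^{t_\ep}-\muu\|_{L^1} - \|f_{N;1}^{t_\ep}-\mu_\ep^{t_\ep}\|_{L^1} \geq \frac12 - o_N(1)$, and in particular $\|f_{N;1}^{t_\ep}-\muu\|_{L^1}+\|f_{N;1}^{t_\ep}-\mu_\ep^{t_\ep}\|_{L^1}\geq\frac12$. Finally, since $H_N(f_N^0\vert\muu^{\otimes N}) = H(\mu_\ep^0\vert\muu) = O(\ep^2)= O(\ep)$ by subadditivity and Pinsker (or directly), a bound of the form \eqref{eq:EntUT} would force $H_N(f_N^{t_\ep}\vert(\mu_\ep^{t_\ep})^{\otimes N})$ — hence by Pinsker and subadditivity $\|f_{N;1}^{t_\ep}-\mu_\ep^{t_\ep}\|_{L^1}^2$ — to be $O(\ep) + o_N(1)$, contradicting the lower bound $\frac12$ once $\ep$ is small and $N$ is large; equally one contradicts it via the $\muu$-reference version. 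This rules out \eqref{eq:EntUT} for $\be>\bels$.

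The main obstacle I expect is in the first half: making precise that the relaxation estimate from \cref{thm:mainlim} genuinely yields a \emph{time-integrable} prefactor $C^t$ that is also \emph{uniform in $N$}, and that the coercivity constants in \cref{thm:mainMLHLS} do not degrade as one iterates the Gr\"onwall argument over $[0,\infty)$. Concretely, one must carefully track that the smallness hypothesis $\|\log\mu^0\|_{L^\infty}\leq\delta_\be$ propagates so that $\mu^t$ stays in the regime where both \cref{lem:aL2exp}-type exponential decay and the mLHLS inequality apply for \emph{all} $t\geq 0$, and that the $N$-dependent error $\mathbf{C}N^{-\ga}$ accumulated in the Gr\"onwall loop remains $O(N^{-\ga})$ after integrating against $e^{-ct}$ rather than blowing up. This bookkeeping — essentially importing the machinery of \cite{CdCRS2023} into the present singular-attractive setting and checking every constant is dimension/$\beta$-dependent but not $N$- or $t$-dependent — is the technical heart of the argument; the instability half is comparatively soft, relying only on qualitative local propagation of chaos plus the already-established quantitative instability of $\muu$.
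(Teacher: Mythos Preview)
Your proposal for the first half is essentially the paper's argument: dissipate $E_N$, discard the Fisher term, bound the commutator via the Donsker--Varadhan/large deviation lemma (\cref{lem:logFI}), use the mLHLS inequality (\cref{thm:mainMLHLS}) for coercivity of $E_N$ over $H_N$, and close the Gr\"onwall loop with the time-integrable prefactor coming from the exponential relaxation of $\mu^t$ (\cref{rem:W2infglob}). One small technical point: the paper implements the coercivity step by applying the mLHLS inequality at an auxiliary inverse temperature $\be' \in (\be,\bei)$, which yields $E_N \geq (\tfrac{1}{\be}-\tfrac{1}{\be'})H_N - \mathbf{C}(\be')N^{-\ga}$ and hence the clean bound $H_N \leq \tfrac{\be\be'}{\be'-\be}\mathcal{E}_N$ for the shifted quantity $\mathcal{E}_N = E_N + \mathbf{C}N^{-\ga}$. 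Your ``absorb half the entropy'' description is morally the same (it corresponds to a particular choice of $\be'$), though the chain of equalities you wrote is garbled.

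For the second half, your argument is correct but more elaborate than necessary. You invoke local-in-time propagation of chaos from \cite{BJW2020} to first make $\|f_{N;1}^{t_\ep}-\mu_\ep^{t_\ep}\|_{L^1}$ small, and then deduce the lower bound on the other term. The paper skips this entirely: the triangle inequality gives
\[
\|f_{N;1}^{t_\ep}-\mu_\ep^{t_\ep}\|_{L^1} + \|f_{N;1}^{t_\ep}-\muu\|_{L^1} \geq \|\mu_\ep^{t_\ep}-\muu\|_{L^1} \geq \tfrac12
\]
directly, for \emph{every} $N$, with no appeal to any propagation-of-chaos result. This already establishes the displayed inequality in the theorem statement. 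The failure of \eqref{eq:EntUT} then follows since both initial relative entropies $H_N(f_N^0\vert(\mu_\ep^0)^{\otimes N})=0$ and $H_N(f_N^0\vert\muu^{\otimes N})=O(\ep^2)$ are small, yet the sum of the $L^1$ deviations at time $t_\ep$ is bounded below independently of $N$ and $\ep$. Your route works too, but the paper's is shorter and avoids an unnecessary dependency on the local-in-time result.
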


\begin{remark}
If $\ds=2$ and $\mu^0=\muu$, then \eqref{eq:mainUTent} holds for all $\be<\bec=4$ as a consequence of \cref{rem:MLHLSunif}.
\end{remark}

%\begin{remark}
%As is by now well-known, the vanishing of the relative entropy or modulated energy, and therefore the modulated free energy, as $N\rightarrow\infty$, implies propagation of chaos. %We refer to \cref{rem:MFEcoer} below for further explanation.
%\end{remark}

\begin{remark}
It is also interesting to consider flows with both a repulsive Riesz and attractive log part. This can be treated by combining the results of this section with the results of \cite{CdCRS2023}. We leave this extension to the reader. We caution, though, that for such mixed repulsive-attractive systems, the critical temperature \emph{a priori} is not the same as in the purely attractive case.
\end{remark}

\subsection{Comparison of the thresholds}
Let us explain now the ordering between the various $\be_{(\cdot)}$ parameters introduced in the preceding subsections. We recall that $\bec=2\ds$ is the critical inverse temperature corresponding to the boundedness from below of the free energy $\Ec_\be$. $\bels =\frac{(2\pi)^\ds}{\cdd}$ is the threshold for stability of the uniform distribution. $\beu$ is the threshold for uniqueness of solutions to \eqref{eq:introKM}, i.e. uniqueness holds $\be<\beu$, while there exists at least one $\be>\beu$ for which uniqueness fails. $\bei$ is the maximal value such that for all $\be<\bei$ the mLHLS inequality holds for sufficiently small $\log\mu$ and sufficiently large $N$.

In low dimensions $\bels\geq \bec$, while in high dimensions, $\bec>\bels$. Indeed, recalling the definition of $\cdd$ from \eqref{eq:gdefa}, we see that
\begin{align}
\be<\bels =  \frac{2(2\pi)^{\ds}}{\Gamma(\ds/2) (4\pi)^{\ds/2}}= \frac{2\pi^{\ds/2}}{\Gamma(\ds/2)}.
\end{align}
%Since
%\begin{equation}
%\Gamma(\ds/2) = 
%\begin{cases}
%(\frac{\ds}{2}-1)!, & {\ds \ \text{even}} \\ \frac{(\ds-1)!}{4^\ds (\frac{\ds-1}{2})!}\sqrt{\pi}, & {\ds \ \text{odd}},
%\end{cases}
%\end{equation}
Using the lower bound $\Gamma(x)\geq (x/e)^{x-1}$ for all real $x\geq 2$, we see that for $\ds\geq 4$, 
\begin{align}
\frac{2\pi^{\ds/2}}{\Gamma(\ds/2)} \leq \frac{2\pi^{\ds/2}}{(\ds/2e)^{\frac{\ds}{2}-1}} = \frac{(2\pi e)^{\ds/2}}{e \ds^{\ds/2}}.
\end{align}
So for $\ds\geq 2\pi e$, we have
\begin{align}
\bels \leq \frac{1}{e} < 2\ds = \bec.
\end{align}
If $\ds$ is an integer and $\ds\leq 2\pi e$, then $\ds\leq 17$. By direct computation,\footnote{We used Mathematica to compute these finitely many cases.}  we obtain the following:
\begin{center}
\begin{tabular}{|cccccccccccccccccc|}
\hline
$\ds$ & 1 & 2 & 3 & 4 & 5 & 6 & 7 & 8 & 9 & 10 & 11 & 12 & 13 & 14 & 15 & 16 & 17\\
\hline
$\bels$ & $2$ & $2\pi$ & $4\pi$ & $2\pi^2$ & $\frac{8\pi^2}{3}$ & $\pi^3$ & $\frac{16\pi^3}{15}$ & $\frac{\pi^4}{3}$ & $\frac{32\pi^4}{105}$ & $\frac{\pi^5}{12}$ & $\frac{64\pi^5}{945}$ & $\frac{\pi^6}{60}$ & $\frac{128\pi^6}{10395}$ & $\frac{\pi^7}{360}$ & $\frac{256\pi^7}{135135}$ & $\frac{\pi^8}{2520}$ & $\frac{512\pi^8}{2027025}$ \\
\hline
\end{tabular}
\end{center}

Thus, for $\ds =1$, we have $\bels=\bec$; for $2\leq \ds \leq 10$, we have $\bels>\bec$; and  for $\ds > 10$, we have $\bels<\bec$. In particular, for $2\leq\ds\leq 10$, our \cref{thm:mainlim} yields global existence of solutions for initial data close in $L^2$ to $\muu$ for $\be \in (\bec,\bels)$. This is in sharp contrast to the behavior on $\R^2$, where all classical solutions with finite second moment blow up in finite time for $\beta>\bec$.

From \cref{thm:mainstab}, we know that $\beu\leq \min(\bec,\bels)$. We do not know the ordering between $\bei$ and $\beu$, but we know that $\min(\bels,\bec)$ is also an upper bound for $\bei$.  Thus, for dimension $\ds =1$, we have the ordering
\begin{align}
0<\bei \ {?} \ \beu\leq \bec=\bels < \infty.
\end{align}
For dimensions $2\leq \ds\leq 10$, we have the ordering
\begin{align}
0<\bei \ {?} \ \beu  \leq \bec < \bels < \infty.
\end{align}
While for dimensions $\ds>10$, we have the ordering
\begin{align}
0<\bei \ {?} \ \beu \leq \bels < \bec < \infty.
\end{align}
For the special case $\ds=2$, we can be explicit and sharp with respect to all the thresholds, except $\bei$.
\begin{center}
\begin{tabular}{|c c c c|}
\hline
$\bei$ & $\beu$ &  $\bec$ & $\bels$\\
\hline
? & $4$ & $4$  & $2\pi$\\
\hline
\end{tabular}

\end{center}

\subsection{Organization of paper}\label{ssec:introorg}
Let us briefly comment on the organization of the remaining body of the paper.

In \cref{sec:WP}, we study the basic local well-posedness and smoothing properties of the equation \eqref{eq:lima}. The main results are \cref{prop:LWPa}, \Cref{lem:hypequiv,lem:greg}. In \cref{sec:aLogFE}, we review the gradient flow structure of equation \eqref{eq:lima} and \emph{a priori} bounds provided by the free energy and dissipation functionals. The main results are \Cref{lem:FEcoer,lem:FEests,lem:DFI}. In \cref{sec:glob}, we combine the results of  \Cref{sec:WP,sec:aLogFE} to prove global existence and trend to equilibrium for the equation \eqref{eq:lima}. The main results are \Cref{lem:aLrbnd,lem:mutasyss,lem:nabLinfexp}, which together complete the proof of the global existence and trend to equilibrium portion of \cref{thm:mainlim}. In \cref{sec:Instab}, we study the role of the temperature in determining the linear and nonlinear stability  of the uniform distribution as a stationary solution of \eqref{eq:lima}. The main result is \cref{prop:NLinstab}, which together with the results of \cref{sec:glob} completes the proof of \cref{thm:mainlim}. In \cref{sec:Unq}, we study the role of temperature in determining the uniqueness of stationary solutions. The main results are \Cref{prop:ssnounq,prop:ssnounq2} (nonuniqueness) and \cref{prop:ssunq} (uniqueness), which together yield \cref{thm:mainstab}. In \cref{sec:MLHLS}, we consider the validity of the modulated log HLS inequality: \cref{ssec:MLHLSdprf} treats the counterexample for $\be>\min(\bels,\bec)$ and \cref{ssec:MLHLSprf} treats the proof for sufficiently small $\be \in (0,\min(\bels,\bec))$ (i.e., the proof that $\bei>0$). Together, this then yields \cref{thm:mainMLHLS}. Finally, in \cref{sec:MFE}, we present the main application to the modulated free energy method, giving a uniform-in-time bound for the relative entropy for $\beta<\bei$ and showing uniform-in-time entropic propagation of chaos is impossible if $\beta>\bels$. This then completes the proof of our last main result, \cref{thm:mainUT}.

\subsection{Acknowledgments}\label{ssec:introack}
This work was completed as a result of the first author's internship visit to the Courant Institute of Mathematical Sciences, NYU, which he thanks for its hospitality. The second author thanks Pierre-Emmanuel Jabin and Toan T. Nguyen for engaging discussion during a visit to Penn State that inspired several results in this paper. He also thanks Amir Moradifam for helpful discussion. 

\subsection{Notation}\label{ssec:intronot}
We close the introduction with the basic notation used throughout the article without further comment. We mostly follow the conventions of \cite{NRS2021,RS2021, CdCRS2023}.

Given nonnegative quantities $A$ and $B$, we write $A\lesssim B$ if there exists a constant $C>0$, independent of $A$ and $B$, such that $A\leq CB$. If $A \lesssim B$ and $B\lesssim A$, we write $A\sim B$. Throughout this paper, $C$ will be used to denote a generic constant which may change from line to line. Also, $(\cdot)_+$ denotes the positive part of a number, and $x\pm$ denotes $x\pm\ep$, for arbitrarily small $\ep>0$.

Given $N\in\N$ and points $x_{1,N},\ldots,x_{N,N}$ in some set $S$, $\ux_N=(x_{1,N},\ldots,x_{N,N})\in S^N$. Given $x\in\T^\ds$ and $r>0$, $B(x,r)$ and $\p B(x,r)$ respectively denote the ball and sphere centered at $x$ of radius $r$. Given a function $f$, we denote the support of $f$ by $\supp f$. The notation $\nabla^{\otimes k}f$ denotes the $k$-tensor field with components $(\p_{i_1\cdots i_k}^k f)_{1\leq i_1,\ldots,i_k\leq d}$.

$\P(\T^\ds)$ denotes the space of Borel probability measures on $\T^\ds$, with subscript $ac$ to denote absolutely continuous with respect to Lebesgue measure. $C(\T^\ds)$ denotes the Banach space of  continuous, bounded functions on $\T^\ds$ equipped with the uniform norm $\|\cdot\|_{\infty}$. $C^k(\T^\ds)$ denotes the Banach space of $k$-times continuously differentiable functions with bounded derivatives up to order $k$ equipped with the natural norm, and $C^\infty \coloneqq \bigcap_{k=1}^\infty C^k$. The subspace of smooth functions with compact support is denoted with a subscript $c$.

$\Dm=(-\D)^{-\frac12}$ denotes the Fourier multiplier with symbol $2\pi|k|$. Functions of $\Dm$ can be defined through the Fourier transform. For integers $n\in \N_0$ and exponents $1\leq p\leq\infty$, $W^{n,p}$ denotes the standard inhomogeneous Sobolev space. For general $\al\in \R$ and $1<p<\infty$, $W^{\al,p}$ denotes the Bessel potential space defined by
\begin{equation}
\left\{\mu \in \mathcal{D}'(\T^\ds) : \|(I-\D)^{\al/2}\mu\|_{L^p} < \infty\right\},
\end{equation}
i.e., the space of distributions $\mu$ such that $(I-\D)^{\al/2}\mu$ is an $L^p$ function. When $\al$ is a positive integer, then $W^{\al,p}$ coincides with the classical Sobolev space above. When $p=2$, we instead use the customary notation $H^\al$. A superscript $\dot{}$ indicates the corresponding homogeneous space.

\section{Well-posedness of the limiting equation}\label{sec:WP}
In this section, we study the well-posedness and regularity properties of the mean-field equation \eqref{eq:lima}. We show that for $L^1$ initial data, there exists a unique maximal lifespan local solution and instantaneously, this solution is $C^\infty$ on $(0,T_{\max})\times\T^\ds$. Later in \cref{sec:glob}, we show that these solutions are, in fact, global. Most of the results in this section are probably known on $\R^\ds$, at least for $\ds=2$; but comparable statements for the case of $\T^\ds$ do not appear present. In any case, our presentation perhaps differs from the existing literature.

\subsection{Periodic log potential and heat kernel}\label{ssec:WPhk}
Before proceeding to discuss well-posedness and regularity, we review some basic properties of $\g$ and the heat kernel $e^{t\D}$ on $\T^\ds$.

We recall from the introduction that $\g$ is the unique distributional solution to the equation
\begin{equation}
|\nabla|^{\ds}\g = \cdd(\d_0-1), \qquad x\in \T^\ds,
\end{equation}
subject to the constraint that $\int_{\T^\ds}\g=0$. Equivalently, $\g$ is the distribution with Fourier coefficients $\hat{\g}(k) = \cdd(2\pi|k|)^{-\ds}\indic_{k\neq 0}$ for $k\in\Z^\ds$. One can show that \cite{HSSS2017}
\begin{equation}\label{eq:ggE}
\g \in C^\infty(\T^\ds\setminus\{0\}) \quad \text{and} \quad \g+\log|x|\in C^\infty\paren*{B(0,\frac{1}{4})}.
\end{equation}
In particular,
\begin{equation}
\forall n\geq 1 , \ x\in\T^\ds\setminus\{0\}, \qquad |\nabla^{\otimes n}\g(x)| \lesssim_n |x|^{-n} + 1.
\end{equation}

\medskip
Let $e^{t\D}$ denote the Fourier multiplier on $\T^\ds$ with coefficients $(e^{-4\pi^2 t|k|^2})_{k\in\Z^\ds}$, and let $\K_t$ denote the convolution kernel of $e^{t\D}$. From the Fourier representation, one checks that $\K_t \in C^\infty(\T^\ds)$  and $\int_{\T^\ds}\K_t=1$, for every $t>0$. Furthermore, one can explicitly write $\K_t$ as the periodization of the Euclidean heat kernel \cite[Section 10.3]{BHS2019},
\begin{equation}
\K_t(x) = (4\pi t)^{-\ds/2}\sum_{n\in\Z^\ds} e^{-\frac{|x-n|^2}{4 t}}.
\end{equation}
Using these properties, one can show that if $m(D)$ is a Fourier multiplier, homogeneous of degree $\ka$, with symbol $m(k)\in C^\infty(\R^\ds\setminus\{0\})$, then
\begin{equation}
\|m(D)\paren*{\K_t-1}\|_{L^p} \lesssim_{n,\ds,p,m(D)} \min(t,1)^{-\frac{\ds}{2}\paren*{1-\frac{1}{p}}-\frac{\ka}{2}}e^{-C_{p,m(D)}\max(t,1)}.
\end{equation}
This implies that if $\int_{\T^\ds}\mu=0$, then
\begin{equation}\label{eq:mDhk}
\forall t>0 \qquad \|m(D) e^{t\D}\mu\|_{L^p} \lesssim_{n,\ds,p,q,m(D)} \|\mu\|_{L^q}\min(t,1)^{-\frac{\ds}{2}\paren*{\frac{1}{q}-\frac{1}{p}}-\frac{\ka}{2}}e^{-C_{p,m(D)}\max(t,1)}
\end{equation}
for any $1\leq q\leq p\leq \infty$.  We refer to \cite[Section 2]{CdCRS2023} for details. We will use \eqref{eq:mDhk}, called \emph{hypercontractivity}, in the remaining body of the paper without further comment.

\subsection{Local well-posedness}\label{ssec:WPloc}
We now show the local well-posedness of equation \eqref{eq:lima} using a fixed point argument (cf. \cite[Section 3.1]{CdCRS2023}) for the mild formulation of the equation,
\begin{equation}\label{eq:milda}
   \mu^t = e^{t\Delta/\beta}\mu^0 - \int_0^t e^{(t-\tau) \Delta/\beta} \div\paren*{\mu^\tau\nabla\g \ast \mu^\tau}d\tau.
\end{equation}
Although for our main application in \cref{sec:MFE}, we work with $W^{2,\infty}$ solutions, as it may be of independent use/interest, we prove here a more general local well-posedness result for initial data in $L^1$. If the initial data has additional integrability/regularity, then this persists along the evolution. We note that on $\R^\ds$, the $L^1$ norm is invariant under the scaling of equation \eqref{eq:lima},
\begin{equation}
\la>0, \qquad \mu^t(x) \mapsto \mu_\la^t(x) \coloneqq \lambda^\ds \mu^{\la^2 t}(\la x),
\end{equation}
and therefore $L^1$ is a \emph{critical} function space for the well-posedness of the equation.

We first record a lemma used in the proof of local well-posedness.
\begin{lemma}\label{lem:LWPlimsup}
Let $\ds\geq 1$, $\beta \in (0,\infty)$, and $\mu^0\in L^1(\T^\ds)$. Then
\begin{align}
\lim_{T\rightarrow 0^+} \sup_{0<t\leq T} (t/\beta)^{\frac14}\|e^{t\D/\beta}\mu^0\|_{L^{\frac{2\ds}{2\ds-1}}}=0.
\end{align}
\end{lemma}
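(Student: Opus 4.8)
The plan is to exploit the smoothing (hypercontractivity) estimate \eqref{eq:mDhk} together with a density argument, since the limit is obvious for smooth data but the gain of integrability degenerates as the $L^1$ data becomes singular.

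First I would reduce to mean-zero data. Write $\mu^0 = 1 + (\mu^0 - 1)$, where $\mu^0 - 1$ has zero mean (as $\mu^0$ is a probability density). Since $e^{t\D/\beta}1 = 1$ and $\|1\|_{L^{\frac{2\ds}{2\ds-1}}}$ is a fixed finite constant, the prefactor $(t/\beta)^{1/4}$ kills this contribution as $T\to 0^+$. So it suffices to prove the claim for $g \coloneqq \mu^0 - 1 \in L^1(\T^\ds)$ with $\int_{\T^\ds} g = 0$, i.e.
\begin{align}
\lim_{T\rightarrow 0^+} \sup_{0<t\leq T} (t/\beta)^{\frac14}\|e^{t\D/\beta}g\|_{L^{\frac{2\ds}{2\ds-1}}}=0.
\end{align}

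Next I would establish the uniform bound and then upgrade it to a limit. Applying \eqref{eq:mDhk} with $m(D) = I$ ($\ka = 0$), $q=1$, $p = \frac{2\ds}{2\ds-1}$, one gets
\begin{align}
\|e^{t\D/\beta}g\|_{L^{\frac{2\ds}{2\ds-1}}} \lesssim_{\ds} \|g\|_{L^1}\min(t/\beta,1)^{-\frac{\ds}{2}(1-\frac{2\ds-1}{2\ds})} e^{-C\max(t/\beta,1)} = \|g\|_{L^1}\min(t/\beta,1)^{-\frac14}e^{-C\max(t/\beta,1)},
\end{align}
since $\frac{\ds}{2}\cdot\frac{1}{2\ds} = \frac14$. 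Hence for small $t$, $(t/\beta)^{1/4}\|e^{t\D/\beta}g\|_{L^{\frac{2\ds}{2\ds-1}}} \lesssim_{\ds} \|g\|_{L^1}$, a uniform bound but not yet a vanishing one. To get the limit, fix $\eta > 0$ and pick $h \in C^\infty(\T^\ds)$ with $\int h = 0$ and $\|g - h\|_{L^1} < \eta$ (smooth mean-zero functions are dense in the mean-zero subspace of $L^1$). Split $e^{t\D/\beta}g = e^{t\D/\beta}h + e^{t\D/\beta}(g-h)$. For the smooth piece, $\|e^{t\D/\beta}h\|_{L^{\frac{2\ds}{2\ds-1}}} \leq \|h\|_{L^{\frac{2\ds}{2\ds-1}}} \lesssim \|h\|_{L^\infty}$ is bounded uniformly in $t$, so $(t/\beta)^{1/4}\|e^{t\D/\beta}h\|_{L^{\frac{2\ds}{2\ds-1}}} \to 0$ as $t\to 0^+$; for the remainder, the displayed estimate gives $(t/\beta)^{1/4}\|e^{t\D/\beta}(g-h)\|_{L^{\frac{2\ds}{2\ds-1}}} \lesssim_\ds \|g-h\|_{L^1} < C_\ds\,\eta$ uniformly in $t$ small. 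Therefore $\limsup_{T\to 0^+}\sup_{0<t\leq T}(t/\beta)^{1/4}\|e^{t\D/\beta}g\|_{L^{\frac{2\ds}{2\ds-1}}} \leq C_\ds\,\eta$, and letting $\eta\to 0$ finishes the proof.

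The only mild subtlety — the step I would be most careful about — is making sure the hypercontractivity bound \eqref{eq:mDhk} is applied with mean-zero data (it requires $\int_{\T^\ds}\mu = 0$), which is exactly why the reduction to $g = \mu^0 - 1$ in the first paragraph is needed; everything else is routine. One could alternatively avoid the density argument by noting $\|e^{t\D/\beta}g\|_{L^{\frac{2\ds}{2\ds-1}}}$ is controlled by interpolating the crude $L^1\to L^{\frac{2\ds}{2\ds-1}}$ bound against $\|e^{t\D/\beta}g\|_{L^1}\to\|g\|_{L^1}$, but the approximation argument is cleanest.
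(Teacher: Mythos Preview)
Your proof is correct and follows essentially the same density-plus-hypercontractivity argument as the paper: approximate by a smooth function, use \eqref{eq:mDhk} on the $L^1$-small remainder, and use the uniform $L^{\frac{2\ds}{2\ds-1}}$ bound on the smooth piece. Your explicit mean-subtraction step is actually a bit more careful than the paper (which applies \eqref{eq:mDhk} directly to $\mu^0-\mu_\ep^0$ without checking the zero-mean hypothesis); the only slip is that the lemma assumes $\mu^0\in L^1$, not that it is a probability density, so you should subtract $\bar\mu^0\coloneqq\int_{\T^\ds}\mu^0$ rather than $1$---a cosmetic fix.
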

\begin{proof}
Since $C^\infty(\T^\ds)$ is dense in $L^1(\T^\ds)$, given $\mu^0$ and $\ep>0$, let $\mu_\ep^0\in C^\infty(\T^\ds)$ such that $\|\mu^0-\mu_\ep^0\|_{L^1}<\ep$. By the triangle inequality,
\begin{align}
\left\|e^{t\D/\beta}\mu^0\right\|_{L^{\frac{2\ds}{2\ds-1}}} &\leq \left\|e^{t\D/\beta}\left(\mu^0-\mu_\ep^0\right)\right\|_{L^{\frac{2\ds}{2\ds-1}}}  + \left\|e^{t\D/\beta}\mu_\ep^0\right\|_{L^{\frac{2\ds}{2\ds-1}}} \nn\\
&\leq C\|\mu^0-\mu_\ep^0\|_{L^1}\min(t/\beta,1)^{-\frac14}e^{-C\max(t/\beta,1)} + \left\|\mu_\ep^0\right\|_{L^{\frac{2\ds}{2\ds-1}}} \nn\\
&<C\ep \min(t/\beta,1)^{-\frac14}e^{-C\max(t/\beta,1)}+ \left\|\mu_\ep^0\right\|_{L^{\frac{2\ds}{2\ds-1}}},
\end{align}
where the second line follows from applying \eqref{eq:mDhk} and Young's inequality to the first and second terms, respectively, of the first line. Hence,
\begin{align}
\lim_{T\rightarrow 0^+}\sup_{0<t\leq T} (t/\beta)^{\frac14}\|e^{t\D/\beta}\mu^0\|_{L^{\frac{2\ds}{2\ds-1}}} \leq C\ep + \lim_{T\rightarrow 0^+}  (T/\beta)^{\frac14}\left\|\mu_\ep^0\right\|_{L^{\frac{2\ds}{2\ds-1}}} =  C\ep.
\end{align}
Since $\ep>0$ was arbitrary, the proof is complete.
\end{proof}

Our main local well-posedness result is the following proposition.

\begin{prop}\label{prop:LWPa}
Let $\ds\geq 1$, $\beta<\infty$, and $\mu^0\in L^1(\T^\ds)$. There exists a constant $C>0$ depending only on $\ds$, such that if $T>0$ satisfies
\begin{equation}\label{eq:LWPaIC}
\sup_{0<t\leq T} (t/\beta)^{\frac14}\|e^{t\D/\beta}\mu^0\|_{L^{\frac{2\ds}{2\ds-1}}} \leq \frac{1}{C\beta},
\end{equation}
then there exists a unique solution $\mu$ to \eqref{eq:milda} in the class
\begin{equation}
\{\mu \in C_w([0,T], L^1(\T^\ds)) : \sup_{0< t\leq T}(t/\beta)^{\frac14}\|\mu^t\|_{L^{\frac{2\ds}{2\ds-1}}}<\infty\}
\end{equation}
and which satisfies
\begin{equation}
\sup_{0<t\leq T} (t/\beta)^{\frac14}\|\mu^t\|_{L^{\frac{2\ds}{2\ds-1}}} \leq C'\sup_{0<t\leq T} (t/\beta)^{\frac14}\|e^{t\D/\beta}\mu^0\|_{L^{\frac{2\ds}{2\ds-1}}},
\end{equation}
for some $C'>0$ depending only on $\ds$. Moreover, if $\mu_1, \mu_2$ are two solutions to \eqref{eq:milda} with initial data $\mu_1^0,\mu_2^0$, respectively, on $[0,T]$, then
\begin{equation}
\sup_{0<t\leq T} (t/\beta)^{\frac14} \|\mu_1^t-\mu_2^t\|_{L^{\frac{2\ds}{2\ds-1}}} \leq C'\left[\|\mu_1^0-\mu_2^0\|_{L^1} + \sup_{0<t\leq T}(t/\beta)^{\frac14}\|e^{\tau\D/\be}(\mu_1^0-\mu_2^0)\|_{L^{\frac{2\ds}{2\ds-1}}}\right].
\end{equation}
\end{prop}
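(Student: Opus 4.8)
The plan is a contraction-mapping argument for the mild formulation \eqref{eq:milda}, carried out in the scale-critical space singled out by the exponent $p\coloneqq\frac{2\ds}{2\ds-1}$: since $\frac{\ds}{2}\bigl(1-\frac1p\bigr)=\frac14$, the quantity $t^{\frac14}\|\mu^t\|_{L^p}$ is invariant under the scaling $\mu^t(x)\mapsto\lambda^\ds\mu^{\lambda^2 t}(\lambda x)$ of the equation (the drift carrying coefficient $1$ rather than $1/\beta$ being what forces a power of $\beta$ to reappear below). Accordingly I would work in the complete metric space
\begin{equation*}
X_T\coloneqq\Bigl\{\mu\in C_w([0,T],L^1)\ :\ \|\mu\|_{X_T}\coloneqq\sup_{0<t\leq T}(t/\beta)^{\frac14}\|\mu^t\|_{L^p}<\infty\Bigr\}
\end{equation*}
(appending $\sup_{t}\|\mu^t\|_{L^1}$ to the norm if one wants the weak-continuity constraint built into the space) and with the map
\begin{equation*}
\Phi(\mu)^t\coloneqq e^{t\Delta/\beta}\mu^0-\int_0^t e^{(t-\tau)\Delta/\beta}\div\bigl(\mu^\tau\nabla\g\ast\mu^\tau\bigr)\,d\tau,
\end{equation*}
whose fixed points are exactly the mild solutions. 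The two analytic inputs are hypercontractivity \eqref{eq:mDhk} for $e^{t\Delta/\beta}$, applied to the mean-zero part of the nonlinearity (legitimate since $\div$ annihilates constants), and a bilinear bound on that nonlinearity.

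The crucial estimate is $\|\mu\,\nabla\g\ast\mu\|_{L^1}\lesssim\|\mu\|_{L^p}^2$. By \eqref{eq:ggE}, $|\nabla\g(x)|\lesssim_\ds|x|^{-1}$ on $\T^\ds\setminus\{0\}$ (absorbing the additive constant via $|x|\lesssim_\ds1$), equivalently $\nabla\g\ast\mu=\cdd\,\nabla|\nabla|^{-\ds}\mu$. For $\ds\geq2$ the Hardy--Littlewood--Sobolev inequality gives $\|\nabla\g\ast\mu\|_{L^{2\ds}}\lesssim\|\mu\|_{L^p}$, as $\tfrac1{2\ds}=\tfrac1p-\tfrac{\ds-1}{\ds}$ with $1<p<2\ds$; for $\ds=1$, $\nabla\g\ast$ is (a multiple of) the Hilbert transform, hence bounded on $L^2=L^p$, with the same conclusion. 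Hölder's inequality with $\tfrac1p+\tfrac1{2\ds}=1$ then gives the claim. Feeding this into $\Phi$, using \eqref{eq:mDhk} for the degree-one multiplier $\div$ in the two regimes $L^1\to L^p$ (time-decay exponent $-\tfrac\ds2(1-\tfrac1p)-\tfrac12=-\tfrac34$) and $L^1\to L^1$ (exponent $-\tfrac12$), together with $\|\mu^\tau\|_{L^p}\leq(\tau/\beta)^{-\frac14}\|\mu\|_{X_T}$ and the convergent integrals $\int_0^1(1-s)^{-3/4}s^{-1/2}\,ds$ and $\int_0^1(1-s)^{-1/2}s^{-1/2}\,ds$, and bookkeeping the powers of $\beta$, I obtain
\begin{equation*}
\bigl\|\Phi(\mu)-e^{\cdot\Delta/\beta}\mu^0\bigr\|_{X_T}\lesssim\beta\|\mu\|_{X_T}^2,\qquad\sup_{0<t\leq T}\bigl\|\Phi(\mu)^t-e^{t\Delta/\beta}\mu^0\bigr\|_{L^1}\lesssim\beta\|\mu\|_{X_T}^2,
\end{equation*}
and, from $\mu_1\nabla\g\ast\mu_1-\mu_2\nabla\g\ast\mu_2=(\mu_1-\mu_2)\nabla\g\ast\mu_1+\mu_2\nabla\g\ast(\mu_1-\mu_2)$, the difference bound $\|\Phi(\mu_1)-\Phi(\mu_2)\|_{X_T}\lesssim\beta(\|\mu_1\|_{X_T}+\|\mu_2\|_{X_T})\|\mu_1-\mu_2\|_{X_T}$. \emph{This is the heart of the proof: the fact that Hölder and HLS close precisely on $L^1$ is exactly what dictates the choice $p=\tfrac{2\ds}{2\ds-1}$, and tracking the factor of $\beta$ is what makes the smallness hypothesis come out as \eqref{eq:LWPaIC}; the $\ds=1$ endpoint, where $p=2\ds$ rules out HLS, is the only place the alternative Calder\'on--Zygmund input is needed.}

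It remains to close the fixed point. Put $A\coloneqq\|e^{\cdot\Delta/\beta}\mu^0\|_{X_T}=\sup_{0<t\leq T}(t/\beta)^{\frac14}\|e^{t\Delta/\beta}\mu^0\|_{L^p}$, which is the left-hand side of \eqref{eq:LWPaIC}; by \cref{lem:LWPlimsup}, $A\to0$ as $T\to0^+$, so \eqref{eq:LWPaIC} is attainable. On the ball $\{\|\mu\|_{X_T}\leq2A\}$ one has $\|\Phi(\mu)\|_{X_T}\leq A+C\beta(2A)^2\leq2A$ exactly when $A\leq\tfrac1{4C\beta}$, and the Lipschitz constant $4C\beta A\leq1$ under the same (mildly adjusted) smallness, so Banach's theorem yields the unique fixed point $\mu$ in this ball; it automatically satisfies the a priori bound $\|\mu\|_{X_T}\leq2\|e^{\cdot\Delta/\beta}\mu^0\|_{X_T}$ and, by the $L^1$ Duhamel bound, $\sup_t\|\mu^t\|_{L^1}<\infty$, with total mass preserved because the nonlinear term is a divergence. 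Weak $L^1$-continuity, in fact strong continuity at $t=0$, follows since $e^{t\Delta/\beta}\mu^0\to\mu^0$ in $L^1$ and the Duhamel term is $O\bigl(\beta\|\mu\|_{X_{[0,t]}}^2\bigr)=o(1)$, with interior strong $L^p$-continuity read off the formula the same way. For uniqueness in the entire class I would use a connectedness argument: two class-solutions that agree on $[0,t_0]$ agree on a slightly longer interval, by local uniqueness applied to the equation restarted at $t_0$---for $t_0>0$ one uses $\|\mu_i^{t_0}\|_{L^p}<\infty$ to make the restarted weighted norm small on a short interval---and the base case $t_0=0$ holds because any class-solution has $g(0^+)\coloneqq\lim_{t\to0^+}\|\mu\|_{X_{[0,t]}}\leq C\beta\,g(0^+)^2$ by the Duhamel bound, forcing $g(0^+)=0$ under \eqref{eq:LWPaIC}; compare \cite[Section~3.1]{CdCRS2023}. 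Finally, the stability estimate follows by writing $\mu_1^t-\mu_2^t=e^{t\Delta/\beta}(\mu_1^0-\mu_2^0)-\bigl(\text{Duhamel}(\mu_1)^t-\text{Duhamel}(\mu_2)^t\bigr)$, bounding the bracketed difference in $X_T$ by $\tfrac12\|\mu_1-\mu_2\|_{X_T}$ via the difference estimate together with the a priori bounds $\|\mu_i\|_{X_T}\leq2\|e^{\cdot\Delta/\beta}\mu_i^0\|_{X_T}$ and \eqref{eq:LWPaIC}, and absorbing; this gives $\|\mu_1-\mu_2\|_{X_T}\leq2\|e^{\cdot\Delta/\beta}(\mu_1^0-\mu_2^0)\|_{X_T}$, and the stated form---with the harmless extra $\|\mu_1^0-\mu_2^0\|_{L^1}$ term---follows since that linear-flow norm is $\lesssim\|\mu_1^0-\mu_2^0\|_{L^1}$ by \eqref{eq:mDhk}.
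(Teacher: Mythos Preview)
Your proposal is correct and follows essentially the same route as the paper: a Banach fixed-point argument in the weighted space $X_T$ with norm $\sup_{0<t\leq T}(t/\beta)^{1/4}\|\mu^t\|_{L^{2\ds/(2\ds-1)}}$, using the bilinear estimate $\|\mu\,\nabla\g\ast\mu\|_{L^1}\lesssim\|\mu\|_{L^p}^2$ via HLS for $\ds\geq 2$ and the Hilbert transform for $\ds=1$, then closing on the ball of radius $2A$ under the smallness condition $A\lesssim 1/\beta$. Your treatment is somewhat more explicit than the paper's in justifying weak continuity, mass conservation, and uniqueness in the full class (your connectedness/restart argument spells out what the paper leaves implicit in ``By the Banach fixed point theorem, there is a unique solution''), but the core estimates and the choice of $R=2A$ match exactly.
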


\begin{proof}[Proof of \cref{prop:LWPa}]
Fix $\mu^0\in L^1(\T^\ds)$, and let $\Tc$ denote the operator defined by the right-hand side of \eqref{eq:milda}. For $T,R>0$ appropriately chosen, we aim to show that $\Tc$ is a contraction on the ball $B_R$ of the Banach space
\begin{equation}
\begin{split}
X \coloneqq \{\mu \in C_w([0,T], L^1(\T^\ds)) : \sup_{0<t\leq T} (t/\beta)^{1/4}\|\mu^t\|_{L^{\frac{2\ds}{2\ds-1}}} < \infty\}, \\
\|\mu\|_{X} \coloneqq \sup_{0<t\leq T}(t/\beta)^{1/4}\|\mu^t\|_{L^{\frac{2\ds}{2\ds-1}}}.
\end{split}
\end{equation}
For $t>0$, observe from the triangle/Minkowski's inequality plus the heat semigroup property \eqref{eq:mDhk},
\begin{align}
\|(\Tc\mu)^t\|_{L^{\frac{2\ds}{2\ds-1}}} &\leq \|e^{ t\D/\beta}\mu^0\|_{L^{\frac{2\ds}{2\ds-1}}} + \int_0^t \left\|e^{(t-\tau)\D/\beta}\div\paren*{\mu^\tau \nabla\g\ast\mu^\tau}\right\|_{L^{\frac{2\ds}{2\ds-1}}}d\tau \nn\\
&\leq \|e^{ t\D/\beta}\mu^0\|_{L^{\frac{2\ds}{2\ds-1}}}  + C\int_0^t \paren*{(t-\tau)/\beta}^{-\frac{3}{4}} \|\mu^\tau \nabla\g\ast\mu^\tau\|_{L^1} d\tau. \label{eq:muL2d2d-1}
\end{align}
By H\"older's inequality,
\begin{align}\label{eq:Holmunabgmu}
\|\mu^\tau \nabla\g\ast\mu^\tau\|_{L^1} \leq \|\mu^\tau\|_{L^{\frac{2\ds}{2\ds-1}}} \|\nabla\g\ast\mu^\tau\|_{L^{2\ds}} \lesssim \|\mu^\tau\|_{L^{\frac{2\ds}{2\ds-1}}}^2,
\end{align}
where if $\ds=1$, we have used that $\nabla\g\ast$ is the Hilbert transform, which is bounded on $L^2$, and if $\ds\geq 2$, then we have used the HLS lemma to obtain the final line. Therefore,
\begin{align}
\|(\Tc\mu)^t\|_{L^{\frac{2\ds}{2\ds-1}}} &\leq \|e^{ t\D/\beta}\mu^0\|_{L^{\frac{2\ds}{2\ds-1}}}  + C\int_0^t ((t-\tau)/\beta)^{-\frac34}(\tau/\beta)^{-\frac12} \paren*{\sup_{0< \tau'\leq t} (\tau'/\beta)^{\frac14}\|\mu^{\tau'}\|_{L^{\frac{2\ds}{2\ds-1}}}}^2d\tau \nn\\
&=\|e^{ t\D/\beta}\mu^0\|_{L^{\frac{2\ds}{2\ds-1}}} + C\beta(t/\beta)^{-\frac14}\int_0^1 (1-\tau)^{-\frac34}\tau^{-\frac12} \|\mu\|_{X}^2d\tau, \label{eq:fpgaincomp}
\end{align}
where the final line is by making the change of variable $\tau\mapsto \tau/t$. %Also, observe that for $t_2\geq t_1\geq 0$,
%\begin{multline}
%\left\|(\Tc\mu)^{t_2} - (\Tc\mu)^{t_1}\right\|_{L^1} \leq \|e^{t_2\Delta/\beta}\mu^0 - e^{t_1\Delta/\beta}\mu^0\|_{L^1} + \int_{t_1}^{t_2} \|e^{(t_2-\tau)\D/\be}\div(\mu^\tau\nabla\g\ast\mu^\tau)\|_{L^1}d\tau \\
%+ \int_0^{t_1} \|e^{(t_2-\tau)\D/\be}\div(\mu^\tau\nabla\g\ast\mu^\tau) - e^{(t_1-\tau)\D/\be}\div(\mu^\tau\nabla\g\ast\mu^\tau)\|d\tau
%\end{multline}
The preceding implies
\begin{align}
\|\Tc(\mu)\|_{X} \leq \sup_{0<t\leq T} (t/\be)^{\frac14}\|e^{ t\D/\beta}\mu^0\|_{L^{\frac{2\ds}{2\ds-1}}}  + C\beta\|\mu\|_{X}^2,
\end{align}
for some constant $C>0$ depending on $\ds$. Similar reasoning also shows that
\begin{align}
\|\Tc(\mu_1)-\Tc(\mu_2)\|_{X} \leq 2C\beta\paren*{\|\mu_1\|_{X} + \|\mu_2\|_{X}}\|\mu_1-\mu_2\|_{X}.
\end{align}

%Next, we claim that as $T\rightarrow 0^+$, $\sup_{0<t\leq T} (t/\beta)^{\frac14} \|e^{t\D/\beta}\mu^0\|_{L^{\frac{2\ds}{2\ds-1}}} \rightarrow 0$. Indeed, given any $\ep>0$, we may find $\varphi\in C_c^\infty(\T^\ds)$ with $\|\varphi-\mu^0\|_{L^1}\leq \ep$. Trivially,
%\begin{align}
%(t/\beta)^{\frac14} \|e^{t\D/\beta}\varphi\|_{L^{\frac{2\ds}{2\ds-1}}} \leq (t/\beta)^{\frac14} \|\varphi\|_{L^{\frac{2\ds}{2\ds-1}}}.
%\end{align}
%Hence by triangle inequality and using that $\|e^{ t\D/\beta}(\varphi-\mu^t)\|_{L^{\frac{2\ds}{2\ds-1}}} \lesssim (t/\beta)^{-\frac14} \|\varphi-\mu^t\|_{L^1}$,
%\begin{equation}	
%\limsup_{T\rightarrow 0^+}\sup_{0<t\leq T} (t/\beta)^{\frac14} \|e^{ t\D/\beta}\mu^0\|_{L^{\frac{2\ds}{2\ds-1}}} \lesssim\ep,
%\end{equation}
%which, since $\ep$ was arbitrary, implies the claim.

Let $R \coloneqq 2\sup_{0<t\leq T} (t/\be)^{\frac14} \|e^{t\D/\beta}\mu^0\|_{L^{\frac{2\ds}{2\ds-1}}}$, and appealing to \cref{lem:LWPlimsup}, suppose that $T$ is sufficiently small so that $4C\be R = \frac12$. Then we see that $\Tc(B_R) \subset B_R$ and
\begin{align}
\|\Tc(\mu_1) - \Tc(\mu_2)\|_{X} \leq \frac12\|\mu_1-\mu_2\|_{X},
\end{align}
showing that $\Tc$ is a contraction on $B_R$. By the Banach fixed point theorem, there is a unique solution $\mu$ of equation \eqref{eq:milda} in $X$. 

An analysis of the estimates above also shows that we have the Lipschitz continuity of the data-to-solution map: if $\mu_1,\mu_2$ are two solutions to \eqref{eq:milda} on $[0,T]$ with initial datum $\mu_1^0,\mu_2^0$, respectively, and such that $R:=\|\mu_i\|_{X} \leq \frac{1}{8C\be}$, then
\begin{equation}
\|\mu_1-\mu_2\|_{X} \leq 2C\left[\|\mu_1^0-\mu_2^0\|_{L^1} + \sup_{0<t\leq T}(t/\beta)^{\frac14}\|e^{\tau\D/\be}(\mu_1^0-\mu_2^0)\|_{L^{\frac{2\ds}{2\ds-1}}}\right].
\end{equation}
This completes the proof.
\end{proof}

\begin{remark}\label{rem:ablowup}
If $\mu$ is a solution to \eqref{eq:milda} with maximal lifespan $[0,T_{\max})$ for $T_{\max}<\infty$, then
\begin{equation}
\lim_{T\rightarrow T_{\max}^{-}} \sup_{0<t\leq T} (t/\beta)^{\frac14} \|\mu^t\|_{L^{\frac{2\ds}{2\ds-1}}} = \infty.
\end{equation}
Otherwise, the solution could be continued beyond $T_{\max}$ by applying \cref{prop:LWPa}.
%\st{
%there is an increasing sequence of times $t_n\rightarrow T_{\max}^{-}$ such that $\|\mu^{t_n}\|_{L^{\frac{2\ds}{2\ds-1}}} \leq \frac{C_0}{(T_{\max}/\beta)^{\frac 14}}$. By Young's inequality,
%\begin{align}
%\forall t\geq t_n,\qquad \|e^{(t-t_n)\D/\beta}\mu^{t_n}\|_{L^{\frac{2\ds}{2\ds-1}}} \leq \frac{C_0}{(T_{\max}/\beta)^{\frac 14}}.
%\end{align}
%Making the time translation, $\nu_n^\tau \coloneqq \mu^{t_n + \tau}$, let $T>0$ be such that
%\begin{equation}
%\sup_{0\leq \tau \leq T} (\tau/\beta)^{\frac14}\|e^{\tau\D/\beta}\nu_n^0\|_{L^{\frac{2\ds}{2\ds-1}}} \leq \frac{C_0 (T/\beta)^{\frac14}}{(T_{\max}/\beta)^{\frac 14}} = \frac{1}{C\beta},
%\end{equation}
%where $C$ is the same constant as in \eqref{eq:LWPaIC}. Having fixed $T$, we may choose $n$ sufficiently large so that $T_{\max}-t_n< T$. But then applying \cref{prop:LWPa} with initial datum $\nu_n^0$, we have extended the lifespan of the solution $\mu$ to $[0,t_{n}+T]\supset [0, T_{\max})$, contradicting the definition of $T_{\max}$. }
\end{remark}

\begin{remark}\label{rem:aclass}
From inspection of the proof of \cref{prop:LWPa}, we also see that for any integer $n\geq 0$, given $\mu^0\in W^{n,\infty}(\T^\ds)$, there is a unique solution $\mu \in W^{n,\infty}(\T^\ds)$ on some interval $[0,T]$. We can also construct a solution $\nu$ in the class $X$ introduced above with the same initial datum $\mu^0$ on some interval $[0,T']$. Letting $T_0\coloneqq \min(T,T')$, we see from uniqueness, that $\mu=\nu$. In particular, the Lipschitz continuity of the solution map implies that we can always approximate solutions in $X$ by classical solutions. This fact will be useful in the sequel to justify computations.
\end{remark}

\begin{remark}
Solutions obey conservation of mass and sign on their lifespans, implying that probability density initial data lead to probability density solutions. See \cite[Lemma 4.6]{CdCRS2023}.
\end{remark}

\begin{remark}\label{rem:timers}
By rescaling time, we may always normalize the mass to be {1} up to a change of temperature. More precisely, suppose that $\mu$ is a solution to \eqref{eq:lima}. Letting $\bar\mu = \int_{\T^\ds}\mu^0$, set $\nu^t \coloneqq \bmu^{-1}\mu^{t/\bmu}$. Then using the chain rule,
\begin{equation}
\p_t\nu^t = -\bmu^{-2}\div\paren*{\mu^{t/\bmu}\nabla\g\ast\mu^{t/\bmu} } + \frac{\bmu^{-2}}{\beta}\D\mu^{t/\bmu} = -\div\paren*{\nu^t\nabla\g\ast\nu^t} + \frac{1}{\tl\beta} \D\nu^t,
\end{equation}
where $\tl\beta \coloneqq \beta\bmu$. 
\end{remark}

\subsection{Gain of integrability}
The solutions constructed by \cref{prop:LWPa} have the  $L^1$-$L^{\frac{2\ds}{2\ds-1}}$ hypercontractive estimate
\begin{equation}
\forall t\in (0,T], \qquad \|\mu^t\|_{L^{\frac{2\ds}{2\ds-1}}} \lesssim t^{-\frac{1}{4}}\|\mu^0\|_{L^1}.
\end{equation}
This estimate, in fact, implies, for any $1\leq p\leq \infty$, the estimate
\begin{equation}
\forall t\in (0,T], \qquad \|\mu^t\|_{L^p} \lesssim t^{-\frac{\ds}{2}\left(1-\frac1p\right)} \|\mu^0\|_{L^1}.
\end{equation}
This has been shown on $\R^2$, for instance, in \cite[Section 3.2]{BM2014}; but their argument uses a self-similarity transformation that seems to have no analogue on the torus. Instead, we present an alternative proof---which would also work on $\R^\ds$---based on a simple iteration scheme.

\begin{lemma}\label{lem:hypequiv}
Let $\mu \in C_w([0,T_{\max}), L^1(\T^\ds))$ be a maximal lifespan solution to equation \eqref{eq:milda} in the sense of \cref{prop:LWPa}. Then
\begin{equation}\label{eq:hypeequiv}
\forall 1\leq p\leq\infty, \ 0<T<T_{\max}, \qquad \sup_{0<t\leq T} (t/\beta)^{\frac{\ds}{2}\left(1-\frac1p\right)} \|\mu^t\|_{L^p} < \infty.
\end{equation}
\end{lemma}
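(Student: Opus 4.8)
The plan is to bootstrap integrability via a finite iteration, starting from the $L^1$–$L^{\frac{2\ds}{2\ds-1}}$ bound already built into the solution by \cref{prop:LWPa}, and upgrading it step by step to higher $L^p$ exponents by running the Duhamel formula \eqref{eq:milda} once more at each stage. The engine is the same pair of estimates used in the proof of \cref{prop:LWPa}: hypercontractivity \eqref{eq:mDhk} for $\div\, e^{(t-\tau)\D/\be}$, which costs $((t-\tau)/\beta)^{-\frac12-\frac{\ds}{2}(\frac1q-\frac1p)}$ when mapping $L^q\to L^p$, together with H\"older/HLS to estimate the nonlinearity $\|\mu^\tau\nabla\g\ast\mu^\tau\|_{L^q}$ in terms of suitable $L^r$ norms of $\mu^\tau$ (recall $\nabla\g\ast$ gains essentially one derivative, i.e.\ maps $L^a\to L^b$ with $\frac1b = \frac1a - \frac1\ds$ for $1<a<\ds$, and is $L^2$-bounded when $\ds\le 2$).

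Concretely, I would proceed as follows. First fix $0<T<T_{\max}$ and set $M_p \coloneqq \sup_{0<t\le T}(t/\beta)^{\frac{\ds}{2}(1-\frac1p)}\|\mu^t\|_{L^p}$; by \cref{prop:LWPa} we know $M_1<\infty$ (trivially, as $\|\mu^t\|_{L^1}$ is conserved) and $M_{\frac{2\ds}{2\ds-1}}<\infty$. The claim is that $M_{p_0}<\infty$ for some $p_0>1$ implies $M_{p_1}<\infty$ for a strictly larger $p_1$, with the gain in $\frac1p$ bounded below by a fixed amount depending only on $\ds$, so that finitely many steps reach $p=\infty$ (and then all intermediate $p$ follow by interpolation, or directly). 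For the inductive step, write $\|\mu^t\|_{L^{p_1}}\le \|e^{t\D/\beta}\mu^0\|_{L^{p_1}} + \int_0^t \|e^{(t-\tau)\D/\beta}\div(\mu^\tau\nabla\g\ast\mu^\tau)\|_{L^{p_1}}\,d\tau$. The linear term is $\lesssim (t/\beta)^{-\frac{\ds}{2}(1-\frac1{p_1})}\|\mu^0\|_{L^1}$ by \eqref{eq:mDhk}. For the nonlinear term, choose an intermediate exponent $q$ with $\|\mu^\tau\nabla\g\ast\mu^\tau\|_{L^q}\lesssim \|\mu^\tau\|_{L^{p_0}}\|\nabla\g\ast\mu^\tau\|_{L^{q'}}\lesssim \|\mu^\tau\|_{L^{p_0}}\|\mu^\tau\|_{L^{s}}$ for appropriate H\"older-conjugate $q'$ and HLS exponent $s\le p_0$; bounding the two factors by $M_{p_0}$ and $M_s$ times the corresponding powers of $\tau/\beta$, then applying \eqref{eq:mDhk} from $L^q$ to $L^{p_1}$ and changing variables $\tau\mapsto t\tau$ exactly as in \eqref{eq:fpgaincomp}, produces a bound $\lesssim \beta (t/\beta)^{-\frac{\ds}{2}(1-\frac1{p_1})}M_{p_0}M_s B$ where $B$ is a Beta-function integral $\int_0^1(1-\tau)^{-a}\tau^{-b}\,d\tau$. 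The whole scheme works provided at each step the exponents can be chosen so that (i) $q\ge 1$ and the map $L^q\to L^{p_1}$ is admissible, (ii) the HLS/Hilbert-transform bound on $\nabla\g\ast$ is legitimate, and (iii) the two exponents $a,b$ in $B$ are each $<1$ so the time integral converges; all of these can be arranged with a gain of order $\min(1,\tfrac1\ds)$ or so in $\frac1p$ per step, which is what I would verify by an explicit (but routine) choice, e.g.\ doubling the exponent when $\ds\le 2$ and taking $p_1$ with $\frac1{p_1}=\frac1{p_0}-c_\ds$ in general.

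The main obstacle — and the only place requiring genuine care rather than bookkeeping — is making the iteration uniform and terminating: one must check that the admissible gain in $\frac1p$ at each step does not shrink to zero as $p_0$ grows, and that near the endpoints ($p_0$ close to $1$ where the nonlinearity's $\tau$-singularity is worst, and $p_1$ close to $\infty$ where the $\div$ multiplier is barely controllable) the Beta integral stays finite. For $\ds\ge 3$ there is the extra subtlety that $\nabla\g\ast$ only maps $L^a\to L^b$ with $\frac1b=\frac1a-\frac1\ds$ in a restricted range of $a$, so one may need an intermediate step going through an exponent below $\frac{2\ds}{2\ds-1}$... — but $\nabla\g\in L^{p}$ for $p<\frac{\ds}{\ds-1}$ on the torus (it behaves like $|x|^{-(\ds-1)}$ near $0$ and is smooth elsewhere), so Young's inequality $\|\nabla\g\ast\mu^\tau\|_{L^{q'}}\lesssim\|\nabla\g\|_{L^{r}}\|\mu^\tau\|_{L^{p_0}}$ with $1+\frac1{q'}=\frac1r+\frac1{p_0}$ gives a cleaner and universally valid substitute for HLS, at the cost of a slightly worse (but still positive and bounded-below) gain. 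I would use Young's inequality as the primary tool, falling back on HLS only if a larger gain is convenient, and this removes the dimension-dependent casework almost entirely. Once $M_\infty<\infty$ is reached, the full range $1\le p\le\infty$ in \eqref{eq:hypeequiv} follows by interpolating the endpoint bounds $\|\mu^t\|_{L^1}=\|\mu^0\|_{L^1}$ and $\|\mu^t\|_{L^\infty}\lesssim (t/\beta)^{-\ds/2}$.
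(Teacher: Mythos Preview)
Your approach is essentially the same as the paper's: a finite bootstrap on the mild formulation, using heat kernel smoothing together with H\"older/HLS to climb in integrability. Two corrections are worth flagging.

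First, a minor slip: since $\g$ is the \emph{log} potential, $|\nabla\g(x)|\sim|x|^{-1}$ near the origin (not $|x|^{-(\ds-1)}$, which would be the Coulomb case), so for $\ds\ge 2$ one has $\nabla\g\in L^p$ for all $p<\ds$; your stated range $p<\tfrac{\ds}{\ds-1}$ is a proper subset of this and the argument is unaffected, just suboptimal.

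Second, and more substantively: the naive iteration cannot reach $p=\infty$ in a single step when $\ds\ge 2$. With the optimal scaling, the two Beta exponents $a=\tfrac12+\tfrac{\ds}{2q}$ (from $\div\,e^{(t-\tau)\D/\be}:L^q\to L^\infty$) and $b=\tfrac12+\tfrac{\ds}{2}\bigl(1-\tfrac1q\bigr)$ (from the $\tau$-decay of the nonlinearity in $L^q$) satisfy $a+b=1+\tfrac{\ds}{2}\ge 2$, so they cannot both be $<1$ for any choice of $q$. The paper closes this gap by splitting the Duhamel integral into $[0,\tfrac{t}{2}]$ and $[\tfrac{t}{2},t]$: on the first half one puts the nonlinearity in $L^1$ (which keeps $b=\tfrac12$) and uses $t-\tau\ge\tfrac{t}{2}$ to absorb the large power $a=\tfrac{\ds+1}{2}$; on the second half one puts the nonlinearity in $L^q$ with $q>\ds$ (so $a<1$) and uses $\tau\ge\tfrac{t}{2}$ to absorb the large $b$. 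You will need this device, or an equivalent one, to get to $L^\infty$.
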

\begin{proof}
By \cref{rem:aclass}, we may assume without loss of generality that $\mu$ is a classical solution to \eqref{eq:milda}. Suppose that there is some $r_0\geq \frac{2\ds}{2\ds - 1}$ such that
\begin{equation}\label{eq:IHgainint}
\forall 1\leq r\leq r_0, \qquad \sup_{0<t\leq T} (t/\beta)^{\frac{\ds}{2}\left(1-\frac1r\right)}\|\mu^t\|_{L^r} < \infty.
\end{equation}
Arguing similarly as to in the proof of \eqref{eq:fpgaincomp}, we find for $1\leq p\leq q\leq\infty$,
\begin{align}
\|\mu^t\|_{L^{q}} &\lesssim (t/\beta)^{-\frac{\ds}{2}\left(1-\frac{1}{q}\right)} \|\mu^0\|_{L^1} + \int_0^t ((t-\tau)/\beta)^{-\frac{3}{4}-\frac{\ds}{2}\left(\frac1p-\frac{1}{q}\right)} \|\mu^\tau\nabla\g\ast\mu^\tau\|_{L^p}d\tau \nn\\
&\lesssim (t/\beta)^{-\frac{\ds}{2}\left(1-\frac{1}{q}\right)} \|\mu^0\|_{L^1} + \int_0^t ((t-\tau)/\beta)^{-\frac{3}{4}-\frac{\ds}{2}\left(\frac1p-\frac{1}{q}\right)}  \|\mu^\tau\|_{L^{r_1}} \|\mu^\tau\|_{L^{r_2}} d\tau,
\end{align}
where $\frac{1}{r_2} - (\frac1p - \frac{1}{r_1}) = \frac{\ds-1}{\ds}$. Choose $r_1=r_2=r_0$, which fixes $p$. Then we may choose $q$ arbitrarily large, subject to the constraint $\ds(\frac1p-\frac1q)<\frac12$. A little algebra shows that by doing so, we may take $q>r_0$. By iteration {on the value of $r_0$}, one sees that \eqref{eq:IHgainint} holds for any $r_0<\infty$.

To obtain the $L^\infty$ estimate $\sup_{0<t\leq T} (t/\beta)^{\frac{\ds}{2}} \|\mu^t\|_{L^\infty}<\infty$, we argue {using \eqref{eq:mDhk} that} for $q>\ds$,
\begin{align}
\|\mu^t\|_{L^\infty} &\lesssim (t/\beta)^{-\frac{\ds}{2}}\|\mu^0\|_{L^1} + \int_0^{\frac{t}{2}} ((t-\tau)/\beta)^{-\frac{\ds+1}{2}} \|\mu^\tau\nabla\g\ast\mu^\tau\|_{L^1}d\tau \nn\\
&\ph + \int_{\frac{t}{2}}^t ((t-\tau)/\beta)^{-\frac{1}{2}-\frac{\ds}{2q}} \|\mu^\tau\nabla\g\ast\mu^\tau\|_{L^q} d\tau \nn\\
&\lesssim (t/\beta)^{-\frac{\ds}{2}}\|\mu^0\|_{L^1} + \paren*{\sup_{0<\tau\leq \frac{t}{2}}(\tau/\beta)^{\frac14}\|\mu^\tau\|_{L^{\frac{2\ds}{2\ds-1}}}}^2 ( t/\beta)^{-\frac{\ds+1}{2}}\int_0^{\frac{t}{2}} (\tau/\beta)^{-\frac{1}{2}} d\tau \nn\\
&\ph + \int_{\frac{t}{2}}^t ((t-\tau)/\beta)^{-\frac{1}{2}-\frac{\ds}{2q}} \|\mu^\tau\nabla\g\ast\mu^\tau\|_{L^q} d\tau.
\end{align}
Making a change of variable $\tau \mapsto \tau/t$,
\begin{equation}
(t/\beta)^{-\frac{\ds+1}{2}}\int_0^{\frac{t}{2}}(\tau/\beta)^{-\frac12}d\tau = \beta^{\frac{\ds+2}{2}}t^{-\frac{\ds}{2}}\int_0^{\frac12}\tau^{-\frac12}d\tau.
\end{equation}
By H\"older's inequality and the HLS lemma ($\ds\geq 2$) or boundedness of Hilbert transform ($\ds=1$),
\begin{align}
\|\mu^\tau\nabla\g\ast\mu^\tau\|_{L^q} \leq \|\mu^\tau\|_{L^{r}} \|\nabla\g\ast\mu^\tau\|_{L^{r'}} \lesssim \|\mu^\tau\|_{L^{r}}^2,
\end{align}
provided we choose $r = \frac{2\ds q}{\ds q-q+\ds}$. Now,
\begin{align}
\int_{\frac{t}{2}}^t ((t-\tau)/\beta)^{-\frac{1}{2}-\frac{\ds}{2q}} \|\mu^\tau\|_{L^{r}}^2d\tau &\leq \paren*{\sup_{\frac{t}{2}\leq \tau\leq t} (\tau/\beta)^{\frac{\ds}{2}\left(1-\frac{1}{r}\right)} \|\mu^\tau\|_{L^{r}}}^2 \int_{\frac{t}{2}}^t  ((t-\tau)/\beta)^{-\frac{1}{2}-\frac{\ds}{2q}} (\tau/\beta)^{-\ds\left(1-\frac{1}{r}\right)}d\tau \nn\\
&\lesssim \paren*{\sup_{\frac{t}{2}\leq \tau\leq t} (\tau/\beta)^{\frac{\ds}{2}\left(1-\frac{1}{r}\right)} \|\mu^\tau\|_{L^{r}}}^2 \beta^{\frac{\ds+2}{2}} t^{-\frac{\ds}{2}}\int_{\frac12}^1  (1-\tau)^{-\frac12-\frac{\ds}{2q}} \tau^{-\frac{\ds+1}{2}+\frac{\ds}{2q}}d\tau.
\end{align}
After a little bookkeeping, we conclude that
\begin{multline}
\sup_{0<\tau \leq t}( t/\beta)^{\frac{\ds}{2}}\|\mu^t\|_{L^\infty} \lesssim \|\mu^0\|_{L^1} + \beta\paren*{\sup_{0<\tau\leq \frac{t}{2}}(\tau/\beta)^{\frac14}\|\mu^\tau\|_{L^{\frac{2\ds}{2\ds-1}}}}^2  \\
+\paren*{\sup_{\frac{t}{2}\leq \tau\leq t} (\tau/\beta)^{\frac{\ds}{2}\left(1-\frac{1}{r}\right)} \|\mu^\tau\|_{L^{r}}}^2 \beta\int_{\frac12}^1  (1-\tau)^{-\frac12-\frac{\ds}{2q}} \tau^{-\frac{\ds+1}{2}+\frac{\ds}{2q}}d\tau,
\end{multline}
which completes the proof.
\end{proof}

%With \cref{lem:hypequiv} in hand, an argument similar to the proof of \cref{lem:derdcys<} shows that if $\mu$ is a solution to \eqref{eq:milda} with maximal lifespan $[0,T_{\max})$, then for any $t_0>0$, $\mu \in C^\infty([t_0,T_{\max}) \times\T^\ds)$, \emph{a fortiori} $\mu$ is classical for positive times. We leave the proof of this assertion as an exercise to the reader.

\subsection{Gain of regularity}
We now show that the solutions belong to $C^\infty( (0,T_{\max})\times\T^\ds)$. In other words, solutions are instantaneously smooth. From \cref{lem:hypequiv}, we know that for any $0<T_1<T_2<T_{\max}$, a solution $\mu$ belongs to $C([T_1,T_2], L^\infty(\T^\ds))$. So by time translation, it suffices to show that if a solution $\mu \in C([0,T], L^\infty(\T^\ds))$, then $\mu \in C^\infty((0,T], \T^\ds)$. To show this, we use an induction argument similar to {our}  analysis in \cite[Section 4.1]{CdCRS2023}, which exploits the mild formulation of the equation and the smoothing properties of the heat kernel.

\begin{lemma}\label{lem:greg}
For each $n\in\N$, there exists a function $\W_{n}: [0,\infty)^2 \rightarrow [0,\infty)$ continuous, increasing in its arguments, and vanishing if any of its arguments are zero, such that the following holds. If $\mu$ is a smooth solution to equation \eqref{eq:lima} on $[0,T]$, then
\begin{align}
\forall t\in (0,\min(\beta, T)], \qquad \|\nabla^{\otimes n}\mu^t\|_{L^\infty} \leq (t/\beta)^{-\frac{n}{2}}\W_{n}(\beta,\sup_{0\leq \tau\leq t}\|\mu^\tau\|_{L^\infty}).
\end{align}
\end{lemma}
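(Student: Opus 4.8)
\emph{Proposed approach.} I would argue by strong induction on $n$, the case $n=0$ being contained in the standing hypothesis that $\mu$ is smooth. Fix $n\ge 1$, assume the asserted bound with functions $\W_1,\dots,\W_{n-1}$ already in hand, fix $t\in(0,\min(\beta,T)]$, and write the mild equation \eqref{eq:milda} started from time $t/2$:
\begin{equation*}
\mu^t=e^{(t/2)\Delta/\beta}\mu^{t/2}-\int_{t/2}^{t}e^{(t-\tau)\Delta/\beta}\div\big(\mu^\tau\,\nabla\g\ast\mu^\tau\big)\,d\tau .
\end{equation*}
Applying $\nabla^{\otimes n}$ and using that $\nabla^{\otimes n}$ kills constants, the linear term equals $\nabla^{\otimes n}e^{(t/2)\Delta/\beta}(\mu^{t/2}-\bmu^{t/2})$, with $\bmu^{t/2}$ the mean of $\mu^{t/2}$; since $t/2\le\beta/2$, hypercontractivity \eqref{eq:mDhk} bounds it by $\lesssim_n (t/\beta)^{-n/2}\|\mu^{t/2}\|_{L^\infty}$. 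For the Duhamel term I would commute $\nabla^{\otimes n}$ and $\div$ through the heat semigroup, writing with $V^\tau\coloneqq\nabla\g\ast\mu^\tau$
\begin{equation*}
\nabla^{\otimes n}e^{(t-\tau)\Delta/\beta}\div(\mu^\tau V^\tau)=e^{(t-\tau)\Delta/\beta}\div\big(\nabla^{\otimes n}(\mu^\tau V^\tau)\big),
\end{equation*}
so that \emph{exactly one} derivative (the $\div$, an order-$1$ multiplier) stays on the kernel—this is what makes the ensuing $\tau$-integral convergent—and the remaining $n$ derivatives are distributed onto the product.

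\emph{Controlling $\nabla^{\otimes n}(\mu^\tau V^\tau)$.} Expanding by Leibniz, each term is $\nabla^{\otimes a}\mu^\tau\otimes\nabla^{\otimes(n-a)}V^\tau$, and since convolution commutes with differentiation on $\T^\ds$ one has $\nabla^{\otimes(n-a)}V^\tau=\nabla\g\ast\nabla^{\otimes(n-a)}\mu^\tau$, so that the singular kernel $\nabla\g$ is never hit by a derivative. Then for $\ds\ge 2$ one has $\nabla\g\in L^1(\T^\ds)$ by \eqref{eq:ggE}, so Young's inequality gives $\|\nabla^{\otimes(n-a)}V^\tau\|_{L^\infty}\lesssim\|\nabla^{\otimes(n-a)}\mu^\tau\|_{L^\infty}$; for $\ds=1$, $\nabla\g\ast$ is a multiple of the Hilbert transform, bounded on $L^p$ for $1<p<\infty$, and one works in $L^p$ instead. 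Thus, for a fixed $p\in(\ds,\infty]$ ($p=\infty$ if $\ds\ge 2$),
\begin{equation*}
\|\nabla^{\otimes n}(\mu^\tau V^\tau)\|_{L^p}\lesssim_n\sum_{a=0}^{n}\|\nabla^{\otimes a}\mu^\tau\|_{L^\infty}\|\nabla^{\otimes(n-a)}\mu^\tau\|_{L^\infty}.
\end{equation*}
For $1\le a\le n-1$ the inductive hypothesis controls the summand by $(\tau/\beta)^{-n/2}\W_a\W_{n-a}$ (evaluated at $(\beta,\sup_{[0,\tau]}\|\mu\|_{L^\infty})$), while the two extreme terms $a\in\{0,n\}$ are $\lesssim\|\mu^\tau\|_{L^\infty}\|\nabla^{\otimes n}\mu^\tau\|_{L^\infty}\le P\,(\tau/\beta)^{-n/2}Q_n(t)$, where $P\coloneqq\sup_{0\le\tau\le t}\|\mu^\tau\|_{L^\infty}$ and $Q_n(t)\coloneqq\sup_{0<\tau\le t}(\tau/\beta)^{n/2}\|\nabla^{\otimes n}\mu^\tau\|_{L^\infty}$, which is finite since $\mu$ is smooth on $[0,T]$.

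\emph{Closing the estimate.} Feeding this into \eqref{eq:mDhk} for $\div\,e^{(t-\tau)\Delta/\beta}$ ($L^p\to L^\infty$, order $1$), integrating over $\tau\in[t/2,t]$, and using $\int_0^{t/2}(u/\beta)^{-1/2-\ds/(2p)}\,du\lesssim\beta^{1/2+\ds/(2p)}t^{1/2-\ds/(2p)}\le\beta$ for $t\le\beta$, one reaches an inequality of the form
\begin{equation*}
Q_n(t)\le C_n\big(P+\beta\,R_n(\beta,P)\big)+C_n\,\beta^{1/2+\ds/(2p)}t^{1/2-\ds/(2p)}\,P\,Q_n(t),
\end{equation*}
where $R_n$ is a polynomial in $P$ and the $\W_k(\beta,P)$, $k<n$. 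The self-referential term is the obstacle: its coefficient contains the uncontrolled product $\beta P$, so it cannot be absorbed uniformly for $t$ up to $\min(\beta,T)$. I would resolve this in two steps. First, prove the bound on a short interval $(0,t_*]$, with $t_*=t_*(\beta,P)>0$ chosen small enough (and $\le\beta$) that $C_n\,\beta^{1/2+\ds/(2p)}t_*^{1/2-\ds/(2p)}(1+P)\le\tfrac12$; absorbing $Q_n(t)$ gives $Q_n(t)\le 2C_n(P+\beta R_n(\beta,P))$ there. Second, upgrade to all $t\in(0,\min(\beta,T)]$ by the time-translation trick: for $t_*<t\le\min(\beta,T)$ apply the short-time bound to $\tilde\mu^s\coloneqq\mu^{s+t-t_*}$ at $s=t_*$, which only costs a factor $(t/t_*)^{n/2}\le(\beta/t_*)^{n/2}$ since $t\le\beta$. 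Setting $\W_n(\beta,P)\coloneqq 2C_n(\beta/t_*(\beta,P))^{n/2}\big(P+\beta R_n(\beta,P)\big)$ then yields the claim; $\W_n$ is continuous and increasing in both variables and vanishes at $P=0$ (each summand carries a factor of $P$ or of some $\W_k$), while the $\beta=0$ slot is vacuous since then the allowed range of $t$ is empty.

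\emph{Where the difficulty lies.} The conceptual crux is that $\nabla^{\otimes n}\mu$ inevitably reappears on the right-hand side through the top Leibniz terms of $\nabla^{\otimes n}(\mu\,\nabla\g\ast\mu)$—this is intrinsic to the nonlinearity—and is handled by the small-time/time-translation bootstrap above rather than by a naive fixed point. The rest is careful bookkeeping: keeping precisely one derivative on the heat kernel, moving all others onto $\mu$ via the convolution identity on $\T^\ds$ so that $\nabla\g$ is only ever convolved against derivatives of $\mu$ (never differentiated), and, for $\ds=1$ only, routing the argument through $L^p$ for some $p>1$ because $\nabla\g\notin L^1$ in that case.
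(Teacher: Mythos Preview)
Your argument is correct, and the overall scaffolding---strong induction on $n$, the mild formulation, Leibniz expansion of the bilinear term with all derivatives routed onto copies of $\mu$ (never onto $\nabla\g$), and the $\ds=1$ detour through $L^p$ because $\nabla\g\ast$ is only $L^p$-bounded there---matches the paper's. The genuine difference is in how the self-referential top-order Leibniz contribution is absorbed. You start Duhamel from $t/2$, obtain a coefficient $\sim\beta^{1/2+\ds/(2p)}t^{1/2-\ds/(2p)}P$ in front of $Q_n(t)$, make it $\le\tfrac12$ by restricting to $t\le t_*(\beta,P)$, and then time-translate to cover $(t_*,\min(\beta,T)]$. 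The paper instead starts Duhamel from $0$ and splits the integral into $[0,(1-\ep)t]$ (its $J_2$) and $[(1-\ep)t,t]$ (its $J_3$): on $J_2$ it keeps \emph{all} $n{+}1$ derivatives on the heat kernel, so there is no Leibniz and hence no self-reference, at the price of a constant $A_\ep$ that diverges as $\ep\to0$; only on $J_3$ does Leibniz appear, and its top-order coefficient $B_\ep=\int_{1-\ep}^1(1-\tau)^{-1/2-\ds/(2r)}\tau^{-n/2}\,d\tau$ vanishes as $\ep\to0$. Choosing $\ep=\ep(\beta,P)$ so that $C\beta B_\ep P\le\tfrac12$ absorbs the top order directly on the whole interval, with no subsequent time-translation. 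Both routes yield admissible $\W_n$; the paper's $\ep$-splitting is a bit more streamlined, while your short-time/translation bootstrap is a standard and equally valid alternative. Your closing remark that $\W_n(0,\cdot)=0$ is a vacuous convention rather than a genuine limit applies to the paper's construction as well.
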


\begin{proof}
Let $\al = (\al_1,\ldots,\al_\ds) \in \N_0^\ds$ be a multi-index of order $|\al| = 1$.  The general case $|\al| \geq 1$ will be handled by induction. From the mild formulation, we see that
\begin{equation}\label{eq:mildLeibniz}
\p_{\al}\mu^t = e^{t\D/\beta}\p_{\al}\mu^0 +\int^t_0 e^{(t-\tau)\D/\beta} \div \p_\al \left(\mu^\tau \nabla\g\ast\mu^\tau  \right)d\tau.
\end{equation}
To deal with the singularity of the heat kernel at $\tau=t$, we divide the integration over $[0,t]$ into $[0,(1-\ep)t]$ and $[(1-\ep)t,t]$, for some $\ep \in (0,1)$ to be specified. Applying triangle and Minkowski's inequalities to the right hand side of \eqref{eq:mildLeibniz} gives 
\begin{multline}\label{eq:defJ123}
    \|\p_\al\mu^t\|_{L^\infty} \le \|e^{t\D/\beta}\p_\al\mu^0\|_{L^\infty} + \int_0^{t(1-\ep)} \left\| e^{(t-\tau)\D/\beta} \div \p_\al \left(\mu^\tau\nabla\g\ast\mu^\tau  \right)\right\|_{L^\infty} d\tau \\ 
    + \int_{t(1-\ep)}^t \left\| e^{(t-\tau)\D/\beta} \div \p_\al \left(\mu^\tau \nabla\g\ast\mu^\tau  \right)\right\|_{L^\infty} d\tau \eqqcolon J_1(t) + J_2(t) + J_3(t).
\end{multline}
%We respectively denote by $J_1(t), J_2(t), J_3(t)$ the three terms in the right hand side of the previous inequality and proceed to estimate each of them individually. 

$J_1(t)$ is straightforward consequence of heat kernel estimate \eqref{eq:mDhk} and Young's inequality:
\begin{equation}
\label{eq:J1estim}
    J_1(t) \lesssim \paren*{t/\beta}^{- \frac{1}{2}}\|\mu^0-1\|_{L^\infty},
\end{equation}
where we have implicitly used (and will subsequently use) that $t/\beta\leq 1$ by assumption.

Consider now $J_2(t)$. By \eqref{eq:mDhk} and H\"older's inequalities, we have for any $1\le r \leq\infty$,
\begin{align}
\left\| \p_\al e^{(t-\tau)\D/\beta} \div \left(\mu^\tau \nabla\g\ast\mu^\tau  \right)\right\|_{L^\infty} &\lesssim 	\paren*{(t-\tau)/\beta}^{-\frac{1}{2r}- 1}  \|\mu^\tau \nabla\g\ast\mu^\tau \|_{L^r} \nn \\
&\lesssim \paren*{(t-\tau)/\beta}^{-\frac{1}{2r}- 1}  \|\mu^\tau\|_{L^\infty} \| \nabla\g\ast\mu^\tau \|_{L^{r}}. \label{eq:J2t1}
\end{align}
If $\ds\geq 2$, then using that $\nabla\g\in L^1$, we may take $r=\infty$ and bound $\|\nabla\g\ast\mu^\tau\|_{L^\infty} \lesssim \|\mu^\tau-1\|_{L^\infty}$. If $\ds=1$, the operator $\nabla\g\ast$ is the Hilbert transform, which is unbounded on $L^\infty$. Instead, we take $1<r<\infty$, and bound $\|\nabla\g\ast\mu^\tau\|_{L^r} \lesssim \|\mu^\tau-1\|_{L^r}$. Thus, we have shown that for any $1<r<\infty$, 
\begin{multline}
\left\| \p_\al e^{(t-\tau)\D/\beta} \div \left(\mu^\tau \nabla\g\ast\mu^\tau  \right)\right\|_{L^\infty} \lesssim \paren*{(t-\tau)/\beta}^{- 1}\|\mu^\tau\|_{L^\infty}\\
    \times \Big(\|\mu^\tau-1 \|_{L^\infty}\indic_{\ds\geq 2}  +\paren*{(t-\tau)/\beta}^{-\frac{1}{2r}} \|\mu^\tau-1\|_{L^r} \indic_{\ds=1}\Big) .
\end{multline}
Recalling the definition of $J_2(t)$ from \eqref{eq:defJ123} and making the change of variable $\tau\mapsto \tau/t$, we arrive at
\begin{equation}\label{eq:J2estim}
J_2(t) \lesssim \beta{A_\ep} \sup_{0\leq\tau \leq t} \|\mu^\tau\|_{L^\infty}^2\left(\indic_{\ds\geq 2} + (t/\beta)^{-\frac{1}{2r}}\indic_{\ds=1}\right),
\end{equation}
where $1<r<\infty$ is arbitrary, and
\begin{equation}\label{eq:Aepdefs<bc}
    A_\ep \coloneqq \int_0^{1-\ep} (1-\tau)^{ -1} \left(\indic_{\ds \ge 2} + (1-\tau)^{-\frac{1}{2r}}\indic_{\ds=1}\right) d\tau.
\end{equation}

Finally, for $J_3(t)$, we have by \eqref{eq:mDhk}, product rule, and triangle inequality that for any $1\leq r\leq \infty$,
\begin{multline}
    \left\| \p_\al e^{(t-\tau)\D/\beta} \div \left(\mu^\tau \nabla\g\ast\mu^\tau  \right)\right\|_{L^\infty} \\
    \lesssim ((t-\tau)/\beta)^{-\hal -\frac{\ds}{2r}} \paren*{\|\mu^\tau \nabla\g\ast\p_\al\mu^\tau\|_{L^r} + \|\p_\al\mu^\tau \nabla\g\ast\mu^\tau\|_{L^r}} .
\end{multline}
If $\ds > 1$, then since $\nabla\g\in L^1$, we may take $r=\infty$ and crudely estimate
\begin{align}\label{eq:J3t0}
\|\mu^\tau \nabla\g\ast\p_\al\mu^\tau\|_{L^r} + \|\p_\al\mu^\tau \nabla\g\ast\mu^\tau\|_{L^r} \lesssim \|\mu^\tau\|_{L^\infty}\|\p_{\al}\mu^\tau\|_{L^\infty}.
\end{align}
If $\ds=1$, then we take $1< r< \infty$ and use H\"older's inequality and the boundedness of the Hilbert transform to estimate
\begin{align}
    \| \mu^\tau \nabla\g\ast\p_\al\mu^\tau  \|_{L^r} \lesssim \| \nabla\g\ast\p_\al\mu^\tau \|_{L^r} \| \mu^\tau \|_{L^\infty} \lesssim \|\p_\al\mu^\tau \|_{L^r}\| \mu^\tau \|_{L^\infty} \leq \|\p_\al\mu^\tau \|_{L^\infty}\| \mu^\tau \|_{L^\infty} \label{eq:J3t1}, \\
    \| \p_\al\mu^\tau \nabla\g\ast\mu^\tau  \|_{L^r} \le \| \p_\al\mu^\tau\|_{L^\infty} \|\nabla\g\ast\mu^\tau  \|_{L^r} \lesssim \| \p_\al\mu^\tau\|_{L^\infty} \|\mu^\tau -1\|_{L^{r}}.\label{eq:J3t2}
\end{align}
Combining the estimates  \eqref{eq:J3t0}, \eqref{eq:J3t1}, \eqref{eq:J3t2} and taking $r>\ds$ (so the integral in $\tau$ converges), we arrive at
\begin{equation}\label{eq:J3estim}
    J_3(t) \lesssim {\sup_{0\leq \tau\leq t}\|\mu^\tau\|_{L^\infty}} \int_{t(1-\ep)}^t  ((t-\tau)/\beta)^{-\hal}  \|\p_\al\mu^\tau \|_{L^\infty}\left( \indic_{\ds\geq 2} + \paren*{(t-\tau)/\beta}^{0-} \indic_{\ds=1}\right) d\tau.
\end{equation}
Combining the estimates \eqref{eq:J1estim}, \eqref{eq:J2estim}, \eqref{eq:J3estim}, we obtain
\begin{multline}\label{eq:prelemma}
	 \| \p_\al\mu^t \|_{L^\infty} \lesssim \|\mu^0-1\|_{L^\infty}(t/\beta)^{-\frac12} + \beta{A_\ep} \sup_{0\leq\tau \leq t} \|\mu^\tau\|_{L^\infty}^2\left(\indic_{\ds\geq 2} + (t/\beta)^{0-}\indic_{\ds=1}\right)\\	
	 +{\sup_{0\leq \tau\leq t}\|\mu^\tau\|_{L^\infty}} \int_{t(1-\ep)}^t  ((t-\tau)/\beta)^{-\hal}  \|\p_\al\mu^\tau \|_{L^\infty}\left( \indic_{\ds\ge 2} + \paren*{(t-\tau)/\beta}^{0-} \indic_{\ds=1}\right) d\tau,
\end{multline}
where $0- \coloneqq -\ep$, for $\ep>0$ arbitrarily small. To close the estimate for $\|\p_\al\mu^t\|_{L^\infty}$, we define
\begin{equation}
\forall 0<t\leq \min(\beta,T), \qquad \phi(t)\coloneqq \sup_{0<\tau\le t} (\tau/\beta)^{\frac{1}{2}} \| \p_\al\mu^\tau \|_{L^\infty}.
\end{equation}
Using this notation, we rearrange \eqref{eq:prelemma} and make the change of variable $\tau \mapsto \tau/t$ to obtain the inequality
\begin{multline}\label{eq:taddfac}
\phi(t) \le C\|\mu^0-1\|_{L^\infty} +  {C\beta A_\ep(t/\beta)^{\frac12}} \sup_{0\leq\tau \leq t} \|\mu^\tau\|_{L^\infty}^2\left(\indic_{\ds\geq 2} + (t/\beta)^{0-}\indic_{\ds=1}\right)\\
+{C\beta B_\ep(t/\beta)^\hal}\phi(t){\sup_{0\leq \tau\leq t}\|\mu^\tau\|_{L^\infty}} \left( \indic_{\ds\ge 2} + \paren*{t/\beta}^{0-} \indic_{\ds=1}\right),
\end{multline}
where $C>0$ depends only on $\ds$ and
\begin{equation}
B_\ep \coloneqq \int_{1-\ep}^1 (1-\tau)^{-\hal} \tau^{-\frac12}  \left( \indic_{\ds\ge 2} + (1-\tau)^{0-}\indic_{\ds=1}\right) d\tau.
\end{equation}
Since the integral in the definition of $B_\ep$ decreases monotonically to zero as $\ep\rightarrow 1^{-}$ and $t/\beta\leq 1$ by assumption, we may choose $\ep$ sufficiently small so that $C\beta B_\ep\sup_{0\leq\tau\leq t}\|\mu^\tau\|_{L^\infty} \le \frac12$. 
Thus, 
\begin{equation}\label{eq:estimstep2}
\phi(t) \le  2C\|\mu^0-1\|_{L^\infty} +{2C\beta A_\ep(t/\beta)^{\frac12}}\sup_{0\leq\tau \leq t} \|\mu^\tau\|_{L^\infty}^2\left(\indic_{\ds\ge 2} + (t/\beta)^{0-}\indic_{\ds=1}\right).
\end{equation}

\medskip 

Let us now induct from the case $|\alpha|=1$ to the case $n=|\al|\geq 1$. As our induction hypothesis, assume that
\begin{equation}\label{eq:s<ih}
    \forall |\ka|\le n-1, \ t\in (0,\min(\beta,T)], \qquad  \sup_{0<\tau\leq t} (\tau/\beta)^{\frac{|\ka|}{2}}\| \p_\ka\mu^\tau\|_{L^\infty} \leq \W_{k}(\beta, \sup_{0\leq \tau\leq t}\|\mu^\tau\|_{L^\infty})
\end{equation}
where $\W_{k}: [0,\infty)^2\rightarrow [0,\infty)$ is a continuous, increasing function, vanishing if any of its arguments are zero. %By shifting the initial time by an $\ep$, we may assume without loss of generality that $\sup_{0\leq \tau\leq \theta^{-1}}\|\p_\beta\mu^\tau\|_{L^p}<\infty$ for every $|\beta|\leq n-1$ and $1\leq p\leq \infty$.
Analogous to \eqref{eq:defJ123}, we have
\begin{multline}
    \|\p_\al\mu^t\|_{L^\infty} \le \|e^{t\D/\beta}\p_\al\mu^0\|_{L^\infty} + \int_0^{t(1-\ep)} \left\| e^{ (t-\tau)\D/\beta} \div \p_\al \left(\mu^\tau \nabla\g\ast\mu^\tau  \right)\right\|_{L^\infty} d\tau \\ 
    + \int_{t(1-\ep)}^t \left\| e^{(t-\tau)\D/\beta} \div \p_\al \left(\mu^\tau \nabla\g\ast\mu^\tau  \right)\right\|_{L^\infty} d\tau.
\end{multline}
Repeating the arguments for $J_1(t)$ and $J_2(t)$ above, we have 
\begin{align}
    J_1(t) \lesssim \paren*{ t/\beta}^{- \frac{n}{2}}\|\mu^0-1\|_{L^\infty},\label{eq:J1estimboot}
\end{align}
\begin{equation}
J_2(t) \lesssim  \frac{A_\ep\beta}{( t/\beta)^{\frac{n-1}{2}}} \sup_{0\leq\tau \leq t} \|\mu^\tau\|_{L^\infty}^2\left(\indic_{\ds\ge 2} + ( t/\beta)^{-\frac{1}{2r}}\indic_{\ds=1}\right) , \label{eq:J2estimboot}
\end{equation}
where now
\begin{equation}
A_\ep \coloneqq\int_0^{1-\ep} (1-\tau)^{ -\frac{n+1}{2}} \left(\indic_{\ds\ge 2} + (1-\tau)^{-\frac{1}{2r}}\indic_{\ds=1 }\right) d\tau.
\end{equation}
For $J_3$, we apply the Leibniz rule,
\begin{equation}
    \p_\al(\mu\nab\g\ast\mu) = \sum_{\ka\le\al} {{\al}\choose{\ka}} \p_\ka\mu\nab\g\ast\p_{\al-\ka}\mu,
\end{equation}
and note that estimates \eqref{eq:J3t0}, \eqref{eq:J3t1}, \eqref{eq:J3t2} also hold for the $\ka=0,\ka=\alpha$ terms. For the terms with $\ka\notin \{0,\alpha\}$, we use the induction hypothesis \eqref{eq:s<ih}. For $r>\ds$, we argue using H\"older's inequality and $\nabla\g\in L^1$ ($\ds\geq 2$) or the boundedness of the Hilbert transform ($\ds=1$),
\begin{align}
&\int_{t(1-\ep)}^t \left\| e^{(t-\tau)\D/\beta} \div\left(\p_\ka\mu^\tau\nabla\g\ast\p_{\al-\ka}\mu^\tau\right) \right\|_{L^\infty} d\tau  \nn\\
&\lesssim \int_{t(1-\ep)}^t((t-\tau)/\beta)^{-\hal -\frac{\ds}{2r}} \|\p_{\ka}\mu^\tau\|_{L^\infty} \|\p_{\al-\ka}\mu^\tau\|_{L^\infty}d\tau  \nn\\
&\leq \W_{|\ka|}(\beta,\sup_{0\leq \tau\leq t} \|\mu^\tau\|_{L^\infty})\W_{n-|\ka|}(\beta,\sup_{0\leq \tau\leq t} \|\mu^\tau\|_{L^\infty})\int_{t(1-\ep)}^{t}((t-\tau)/\beta)^{-\frac12-\frac{\ds}{2r}}(\tau/\beta)^{-\frac{n}{2}}d\tau \nn\\
&= \W_{|\ka|}(\beta,\sup_{0\leq \tau\leq t} \|\mu^\tau\|_{L^\infty})\W_{n-|\ka|}(\beta,\sup_{0\leq \tau\leq t} \|\mu^\tau\|_{L^\infty})\beta (t/\beta)^{-\frac{n}{2}}{( t/\beta)^{\frac12-\frac{\ds}{2r}}} \nn\\
&\ph\qquad\times\int_{1-\ep}^1 (1-\tau)^{-\frac12-\frac{\ds}{2r}}\tau^{-\frac{n}{2}}d\tau,
\end{align}
where we may take $r=\infty$ if $\ds\geq 2$ and arbitrary $r\in (\ds,\infty)$ if $\ds=1$. 

Combining the estimates above and following the same reasoning used to obtain \eqref{eq:estimstep2} in the case $|\al|=1$, one completes the proof of the induction step. Therefore, the proof of \cref{lem:greg} is complete.
\end{proof}

\section{Dissipation of free energy}\label{sec:aLogFE}
We recall from the introduction that the \emph{free energy} associated to equation \eqref{eq:lima} is
\begin{equation}\label{eq:FE}
\Ec_\beta(\mu) \coloneqq \frac{1}{\beta}\int_{\T^\ds}\log(\mu)d\mu - \frac{1}{2}\int_{(\T^\ds)^2}\g(x-y)d\mu^{\otimes 2}(x,y).
\end{equation}
Important questions {to understand}  are when the free energy is bounded from below and when it is coercive, in the sense that it controls the entropy. These questions are intimately linked with the sharp constant and extremals for the log HLS inequality (\cref{lem:logFI} below), first obtained independently by Carlen and Loss \cite{CL1992} and Beckner \cite{Beckner1993} on $\mathbb{S}^\ds, \R^\ds$, with an alternate proof later given in \cite{CCL2010}. There is also a log HLS inequality for compact manifolds \cite{SW2005}, which covers the flat torus $\T^\ds$, but we can always appeal to the Euclidean version by identifying measures on $\T^\ds$ as measures on $[-\frac{1}{2},\frac{1}{2}]^\ds \subset \R^\ds$.

\begin{lemma}\label{lem:logHLS}
Let $\ds\geq 1$. Then for any  $f\in \P_{ac}(\R^\ds)$ with $\int_{\R^\ds}\log(1+|x|^2)df(x)<\infty$, it holds that
\begin{equation}\label{eq:logHLS}
-\int_{(\R^\ds)^2}\log|x-y|df^{\otimes 2}(x,y) \leq \frac{1}{\ds}\int_{\R^\ds}\log(f)df + C_0,
\end{equation}
where
\begin{equation}\label{eq:C0logHLS}
C_0 \coloneqq \frac{\log \pi}{2}+\frac{1}{\ds}\log\paren*{\frac{\Gamma(\ds/2)}{\Gamma(\ds)} } + \frac{\psi(\ds)-\psi(\ds/2)}{2}
\end{equation}
and $\psi$ is the logarithmic derivative of the $\Gamma$ function.
\end{lemma}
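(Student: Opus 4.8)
The plan is to obtain \eqref{eq:logHLS} as the $\lambda\to 0^+$ endpoint of Lieb's sharp Hardy--Littlewood--Sobolev inequality, following the limiting argument of Beckner. In its diagonal form, the sharp HLS inequality states that for $0<\lambda<\ds$ and the conjugate exponent $p=\tfrac{2\ds}{2\ds-\lambda}$, every $f\in \P_{ac}(\R^\ds)$ satisfies
\begin{equation}
\int_{(\R^\ds)^2}\frac{df^{\otimes2}(x,y)}{|x-y|^\lambda}\ \le\ C_{\ds,\lambda}\,\|f\|_{L^p(\R^\ds)}^2,\qquad C_{\ds,\lambda}=\pi^{\lambda/2}\,\frac{\Gamma\!\big(\tfrac{\ds-\lambda}{2}\big)}{\Gamma\!\big(\ds-\tfrac{\lambda}{2}\big)}\Big(\tfrac{\Gamma(\ds/2)}{\Gamma(\ds)}\Big)^{\frac{\lambda}{\ds}-1},
\end{equation}
and one checks $C_{\ds,0}=1$. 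The idea is to expand both sides to first order in $\lambda$, divide by $\lambda>0$, and send $\lambda\to 0^+$: the linear coefficient on the left is $-\int_{(\R^\ds)^2}\log|x-y|\,df^{\otimes2}$, on the right it is $\tfrac1\ds\int f\log f$ plus $\tfrac{d}{d\lambda}\big|_0\log C_{\ds,\lambda}$, and a direct differentiation of the displayed constant (using $\Gamma'/\Gamma=\psi$) shows that the latter equals exactly the constant $C_0$ of \eqref{eq:C0logHLS}.

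First I would dispose of the degenerate cases. Under the hypothesis $\int\log(1+|x|^2)\,df<\infty$ the negative part of $f\log f$ is integrable, so $\int f\log f\in(-\infty,+\infty]$ is well defined; if it is $+\infty$ there is nothing to prove, so assume it is finite. The same hypothesis controls $\int_{(\R^\ds)^2}(\log|x-y|)_+\,df^{\otimes2}$ from above, so the left side of \eqref{eq:logHLS} is well defined in $(-\infty,+\infty]$. Next I would reduce to a convenient dense class: by truncating $f$ to a large ball, capping it from above, and renormalizing, one produces $f_n\in\P_{ac}(\R^\ds)$ that are bounded and compactly supported, with $\int f_n\log f_n\to\int f\log f$ and $\int_{(\R^\ds)^2}\log|x-y|\,df_n^{\otimes2}\to\int_{(\R^\ds)^2}\log|x-y|\,df^{\otimes2}$; the convergences follow from monotone and dominated convergence using the moment hypothesis (for the tails) and boundedness of $f_n$ together with local integrability of $(\log|\cdot|)^2$ (near the diagonal). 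It then suffices to prove \eqref{eq:logHLS} for bounded, compactly supported $f$, for which $\int f(\log f)^2<\infty$ and $\int_{(\R^\ds)^2}(\log|x-y|)^2\,df^{\otimes2}<\infty$ both hold.

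For an $f$ in this class I would Taylor-expand. Writing $|x-y|^{-\lambda}=1-\lambda\log|x-y|+\big(e^{-\lambda\log|x-y|}-1+\lambda\log|x-y|\big)$ and integrating, the remainder is bounded by $\tfrac{\lambda^2}{2}\int_{(\R^\ds)^2}(\log|x-y|)^2|x-y|^{-\lambda}\,df^{\otimes2}$, which is $O(\lambda^2)$ uniformly for small $\lambda$ on this class, so the left side is $1-\lambda\int_{(\R^\ds)^2}\log|x-y|\,df^{\otimes2}+O(\lambda^2)$. On the right, $p-1=\tfrac{\lambda}{2\ds}+O(\lambda^2)$ and $f^p=f\,e^{(p-1)\log f}$ with the elementary bound $|e^t-1-t|\le \tfrac{t^2}{2}$ for $t\le 0$ give $\|f\|_p^p=\int f^p=1+\tfrac{\lambda}{2\ds}\int f\log f+O(\lambda^2)$; since $\tfrac2p=2-\tfrac{\lambda}{\ds}$ this yields $\|f\|_p^2=1+\tfrac{\lambda}{\ds}\int f\log f+O(\lambda^2)$, and combined with $\log C_{\ds,\lambda}=\lambda C_0+O(\lambda^2)$ the right side is $1+\lambda\big(C_0+\tfrac1\ds\int f\log f\big)+O(\lambda^2)$. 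Subtracting $1$, dividing by $\lambda>0$, and sending $\lambda\to 0^+$ gives $-\int_{(\R^\ds)^2}\log|x-y|\,df^{\otimes2}\le C_0+\tfrac1\ds\int f\log f$, i.e.\ \eqref{eq:logHLS} for $f$ in the dense class; the general case follows by the reduction above.

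I expect the main obstacle to be the rigorous justification of the two Taylor expansions and of the reduction step, namely uniform control of the $O(\lambda^2)$ remainders --- which is exactly where the moment hypothesis $\int\log(1+|x|^2)\,df<\infty$ enters and what forces the intermediate restriction to $f$ for which $(\log f)^2$ and $(\log|x-y|)^2$ are integrable. As self-contained alternatives one may bypass Lieb's inequality: either via the competing-symmetries method of Carlen--Loss \cite{CL1992} --- transfer the inequality to $\Ss^\ds$ by stereographic projection and alternate symmetric decreasing rearrangement with a conformal transformation, whose composition contracts every admissible $f$ to the conformally covariant extremal --- or via the fast-diffusion flow-monotonicity argument of \cite{CCL2010}. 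The limiting argument above is nevertheless attractive because differentiating Lieb's constant already delivers the sharp value \eqref{eq:C0logHLS} with no additional computation. Finally, the torus statement is immediate: identifying $\P_{ac}(\T^\ds)$ with probability densities on $\R^\ds$ supported in $[-\tfrac12,\tfrac12]^\ds$, the moment hypothesis is automatic and \eqref{eq:logHLS} applies verbatim.
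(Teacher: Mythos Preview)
The paper does not prove this lemma; it simply records the statement and cites Carlen--Loss \cite{CL1992}, Beckner \cite{Beckner1993}, and \cite{CCL2010} for the proof. Your proposal reconstructs Beckner's differentiation argument from Lieb's sharp HLS inequality, which is one of the references the paper invokes, so you are supplying exactly what the paper omits.

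The argument is correct in substance. Two small imprecisions are worth noting but do not affect the outcome. First, the pointwise bound $|e^{t}-1-t|\le \tfrac{t^2}{2}|x-y|^{-\lambda}$ for $t=-\lambda\log|x-y|$ fails when $|x-y|>1$ (there $|x-y|^{-\lambda}<1$ while the remainder is $\le t^2/2$); the correct uniform bound is $\tfrac{t^2}{2}\max(1,|x-y|^{-\lambda})$, and for bounded compactly supported $f$ either piece integrates to $O(1)$. Second, in the entropy expansion you invoke $|e^t-1-t|\le t^2/2$ ``for $t\le 0$'', but $t=(p-1)\log f$ is positive where $f>1$; on the dense class $f\le\|f\|_{L^\infty}$, so $t$ is bounded above by $(p-1)\log\|f\|_{L^\infty}=O(\lambda)$ and the remainder is still $O(\lambda^2)\int f(\log f)^2$. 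With these cosmetic fixes the limiting procedure goes through, and your computation of $\tfrac{d}{d\lambda}\big|_{0}\log C_{\ds,\lambda}$ indeed reproduces the constant $C_0$ in \eqref{eq:C0logHLS}.
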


Using \cref{lem:logHLS}, we obtain that the free energy $\Ec_\beta(\mu)$ is bounded from below provided $\beta\leq \bec={2\ds}$  and controls the relative entropy if $\beta<\bec$.

\begin{lemma}\label{lem:FEcoer}
Let $\ds\geq 1$ and $f\in \P_{ac}(\T^\ds)$ with $\int_{\T^\ds}\log(f)df<\infty$. There exists a constant $C>0$ depending only on $\ds$, such that
\begin{equation}\label{eq:FEcoer}
\paren*{\frac1\beta-\frac{1}{\bec}}\int_{\T^\ds}\log(f)df \leq \Ec_\beta(f) + C.
\end{equation}
Moreover, letting $\log_+(\cdot) \coloneqq \max(\log(\cdot),0)$,
\begin{equation}\label{eq:posentFE}
\paren*{\frac1\beta-\frac{1}{\bec}}\int_{\T^\ds}\log_+(f)df \leq C_1\paren*{\frac1\beta-\frac{1}{\bec}} + \paren*{\Ec_\beta(f) + C_2},
\end{equation}
for $C_1>0$ independent of $\ds$ and $C_2>0$ dependent on $\ds$.
\end{lemma}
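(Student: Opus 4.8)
The plan is to reduce \eqref{eq:FEcoer} to the sharp logarithmic HLS inequality transferred to the torus, using that $\g$ equals $-\log$ of the geodesic distance up to a bounded error. Let $d_{\T^\ds}$ denote the geodesic distance on $\T^\ds$ and set $\g_0(z)\coloneqq-\log d_{\T^\ds}(z,0)$. The local expansion \eqref{eq:ggE}, together with the smoothness of $\g$ away from the origin and the compactness of $\T^\ds$, shows that $h\coloneqq\g-\g_0$ is bounded with $\|h\|_{L^\infty(\T^\ds)}\leq C_\ds$, a constant depending only on $\ds$. Hence, for $f\in\P_{ac}(\T^\ds)$,
\begin{equation}\label{eq:plangsplit}
-\frac12\int_{(\T^\ds)^2}\g(x-y)\,df^{\otimes 2}(x,y)=\frac12\int_{(\T^\ds)^2}\log d_{\T^\ds}(x,y)\,df^{\otimes 2}(x,y)-\frac12\int_{(\T^\ds)^2}h(x-y)\,df^{\otimes 2}(x,y),
\end{equation}
with the last term bounded in absolute value by $C_\ds/2$.

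The key step is then the logarithmic HLS inequality on the torus,
\begin{equation}\label{eq:plantorusHLS}
-\int_{(\T^\ds)^2}\log d_{\T^\ds}(x,y)\,df^{\otimes 2}(x,y)\leq\frac1\ds\int_{\T^\ds}\log(f)\,df+C_\ds,
\end{equation}
valid whenever $\int_{\T^\ds}\log(f)\,df<\infty$. This is the flat-torus case of the sharp log HLS inequality on compact manifolds \cite{SW2005}; alternatively, it can be deduced from the Euclidean inequality \cref{lem:logHLS} --- whose moment hypothesis is automatic here since $\T^\ds$ is bounded --- by a localization argument. Substituting \eqref{eq:plantorusHLS} into \eqref{eq:plangsplit} gives
\begin{equation}\label{eq:plangbnd}
-\frac12\int_{(\T^\ds)^2}\g(x-y)\,df^{\otimes 2}(x,y)\geq-\frac1{2\ds}\int_{\T^\ds}\log(f)\,df-C_\ds,
\end{equation}
and adding $\frac1\beta\int_{\T^\ds}\log(f)\,df$ to both sides yields $\Ec_\beta(f)\geq\bigl(\frac1\beta-\frac1{2\ds}\bigr)\int_{\T^\ds}\log(f)\,df-C_\ds$, which is \eqref{eq:FEcoer} since $\bec=2\ds$. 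Note this holds for every $\beta>0$; the sharp constant in \eqref{eq:plantorusHLS} is what makes the leading terms of the two sides of \eqref{eq:FEcoer} match.

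For \eqref{eq:posentFE}, decompose $\int_{\T^\ds}\log_+(f)\,df=\int_{\T^\ds}\log(f)\,df+\int_{\T^\ds}\log_-(f)\,df$ with $\log_-(\cdot)\coloneqq\max(-\log(\cdot),0)$. Since $-t\log t\leq e^{-1}$ on $(0,1]$ and $|\T^\ds|=1$, one has $0\leq\int_{\T^\ds}\log_-(f)\,df=\int_{\{f<1\}}(-f\log f)\,dx\leq e^{-1}$, hence $\int_{\T^\ds}\log_+(f)\,df\leq\int_{\T^\ds}\log(f)\,df+e^{-1}$. Multiplying by $\frac1\beta-\frac1\bec$ (nonnegative in the relevant range $\beta\leq\bec$) and applying \eqref{eq:FEcoer} produces \eqref{eq:posentFE} with $C_1=e^{-1}$ and $C_2=C_\ds$.

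The only substantive obstacle is \eqref{eq:plantorusHLS} with the \emph{sharp} constant $1/\ds$, which is genuinely needed at the endpoint $\beta=\bec$, where \eqref{eq:FEcoer} is precisely the lower boundedness of $\Ec_{\bec}$. A naive restriction of $f$ to a fundamental domain followed by \cref{lem:logHLS} only controls $-\int\log|x-y|\,df^{\otimes 2}$ computed with the ambient Euclidean distance, which differs from $-\int\log d_{\T^\ds}$ precisely for pairs lying on opposite sides of the boundary of the fundamental domain, while a covering/partition-of-unity workaround degrades the leading constant; the clean route is therefore to invoke the manifold version. All remaining steps are elementary bookkeeping.
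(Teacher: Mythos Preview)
Your argument is correct, and its skeleton matches the paper's: peel off a bounded remainder from $\g$, apply a sharp logarithmic HLS inequality to get \eqref{eq:FEcoer}, then bound $\int\log_-(f)\,df$ by an absolute constant to pass to \eqref{eq:posentFE}. The difference is in which log HLS is invoked. The paper identifies $f$ with a density on the fundamental domain $[-\tfrac12,\tfrac12]^\ds\subset\R^\ds$ and appeals directly to the Euclidean inequality \cref{lem:logHLS}, asserting that $|\g(x-y)+\log|x-y||$ is bounded on the product of fundamental domains by \eqref{eq:ggE}. You instead compare $\g$ to $-\log d_{\T^\ds}$ and cite the compact-manifold version \cite{SW2005}. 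Your route is actually the safer one, and the concern you raise in your final paragraph is on point: when $x$ and $y$ sit near opposite faces of the fundamental domain, the Euclidean $|x-y|$ is of order one while the torus representative of $x-y$ is near the origin, so $\g(x-y)+\log|x-y|$ is \emph{not} pointwise bounded above --- the paper's remainder estimate glosses over exactly the boundary effect you describe. Working with the geodesic distance and invoking \cite{SW2005} avoids this cleanly. For \eqref{eq:posentFE} your use of $-t\log t\le e^{-1}$ on $(0,1]$ is a slightly sharper variant of the paper's $|t\log t|\le C t^{1/2}$, yielding the explicit $C_1=e^{-1}$; both arguments tacitly assume $\beta\le\bec$ when multiplying through by $\tfrac1\beta-\tfrac1\bec$, which is the only regime where the estimate is subsequently used.
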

\begin{proof}
Identifying $f$ {with} a probability density on $[-\frac12,\frac12]^\ds$, we have
\begin{multline}
\int_{(\T^\ds)^2}\g(x-y)df^{\otimes 2}(x,y) =- \int_{(\R^\ds)^2}\log|x-y|df^{\otimes 2}(x,y) \\
+  \int_{([-\frac12,\frac12]^\ds)^2}\paren*{\g(x-y)+\log|x-y|}df^{\otimes 2}(x,y).
\end{multline}
By bounding directly the integrand in the second line with \eqref{eq:ggE} and using that $f$ is a probability density,
\begin{align}
\int_{([-\frac12,\frac12]^\ds)^2}\left|\g(x-y)+\log|x-y|\right|df^{\otimes 2}(x,y) \leq C_{\ds}.
\end{align}
Therefore, using \cref{lem:logHLS},
\begin{align}
\int_{\T^\ds}\log(f)df &= \paren*{1-\frac{\beta}{\bec}}\int_{\T^\ds}\log(f)df + \frac{\beta}{\bec}\int_{\T^\ds}\log(f)df \nn\\
&\geq \paren*{1-\frac{\be}{\bec}}\int_{\T^\ds}\log(f)df -\frac{\be}{2}\int_{(\R^\ds)^2}\log|x-y|df^{\otimes 2}(x,y) - \frac{C_0\beta}{2}.
\end{align}
Combining estimates, we obtain
\begin{align}
\Ec_\beta(f) \geq \paren*{\frac1\beta-\frac{1}{\bec}}\int_{\T^\ds}\log(f)df - \frac{C_0}{2} - \frac{C_\ds}{2},
\end{align}
which, upon rearrangement and relabeling of constants, yields \eqref{eq:FEcoer}. %If $2d\theta>1$, then we may divide both sides by $(1-\frac{1}{2d\theta})$ to obtain an upper for the entropy in terms of the free energy and a dimension- and temperature-dependent constant.

Since $|x\log|x|| \leq C|x|^{1/2}$ for $|x|\leq 1$, we have the inequality
\begin{align}
\int_{\T^\ds}\log(f)df &\geq \int_{\T^\ds}\log_+(f)\indic_{f\geq 1} df - \int_{\T^\ds}|\log(f)|\indic_{f\leq 1}df \nn\\
&\geq \int_{\T^\ds}\log_+(f)\indic_{f\geq 1} df - C\int_{\T^\ds}f^{1/2}\indic_{f \leq 1} dx \nn\\
&\geq \int_{\T^\ds}\log_+(f) df - C. \label{eq:posEnt}
\end{align}
Combining this estimate with \eqref{eq:FEcoer} yields \eqref{eq:posentFE}.
\end{proof}

\begin{remark}\label{rem:nonpd}
If $f\geq 0$ is such that $\int_{\T^\ds}\log(f)df<\infty$ but that $\int_{\T^{\ds}}f \neq 1$, then letting $\bar{f}=\int_{\T^\ds}df$, we may apply estimate \eqref{eq:FEcoer} with $f/\bar{f}$ and rearrange to obtain
\begin{equation}
\frac{\paren*{\frac1\beta-\frac{1}{\bec}}}{\bar f}\int_{\T^\ds}\log(f)df \leq \frac{\Ec_{\beta\bar f}(f)}{\bar f} - \frac{\log \bar f}{\bec} + C.
\end{equation}
\end{remark}

In the case $\be>\bec$, the free energy is unbounded from below. This may be seen through a rescaling argument.

\begin{lemma}\label{lem:FEsc}
Let $\ds\geq 1$ and $\be>\bec$. Then the minimal free energy $\Ec_{\be}^* \coloneqq \inf_{\mu\in\P_{ac}(\T^\ds)} \Ec_\beta(\mu) = -\infty$.
\end{lemma}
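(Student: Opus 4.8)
The plan is to run a concentration (rescaling) argument, adapting the classical one on $\R^\ds$ to the torus. On $\R^\ds$, if $\mu_\la(x) \coloneqq \la^\ds\mu(\la x)$ for $\la>0$, a change of variables gives $\int\log(\mu_\la)\,d\mu_\la = \ds\log\la + \int\log(\mu)\,d\mu$ and $-\iint\log|x-y|\,d\mu_\la^{\otimes 2} = \log\la - \iint\log|x-y|\,d\mu^{\otimes 2}$, so the Euclidean free energy satisfies $\Ec_\beta(\mu_\la) = \big(\tfrac{\ds}{\beta} - \tfrac12\big)\log\la + O_\mu(1)$, which tends to $-\infty$ as $\la\to\infty$ exactly when $\beta > 2\ds = \bec$. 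The obstruction to copying this verbatim on $\T^\ds$ is that $\g$ is not literally $-\log|x|$; the remedy is to use test measures concentrating near the origin, where, by \eqref{eq:ggE}, $\g(x)+\log|x|$ is smooth and hence bounded.

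Concretely, I would fix a nonzero smooth probability density $\mu$ on $\T^\ds$ supported in a small ball $B(0,r_0)$ with, say, $r_0\le\tfrac{1}{16}$, and for $\la\ge 1$ set $\mu_\la(x)\coloneqq \la^\ds\mu(\la x)\in\P_{ac}(\T^\ds)$, which is supported in $B(0,r_0/\la)\subseteq B(0,\tfrac{1}{16})$. Writing $\g(z) = -\log|z| + h(z)$ with $h\coloneqq \g+\log|\cdot|\in C^\infty(B(0,\tfrac14))$, and noting that $|x-y|\le\tfrac18$ whenever $x,y\in\supp\mu_\la$, the term $\iint h(x-y)\,d\mu_\la^{\otimes 2}$ is bounded by $\|h\|_{L^\infty(\ol{B(0,1/8)})}$ uniformly in $\la\ge 1$. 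The two scaling identities above then yield
\begin{equation}
\Ec_\beta(\mu_\la) = \Big(\tfrac{\ds}{\beta} - \tfrac12\Big)\log\la + \tfrac1\beta\!\int_{\T^\ds}\!\log(\mu)\,d\mu + \tfrac12\!\int_{(\T^\ds)^2}\!\log|x-y|\,d\mu^{\otimes 2}(x,y) + O(1),
\end{equation}
where the $O(1)$ is uniform in $\la\ge 1$ and all the $\mu$-dependent terms are finite since $\mu$ is a bounded probability density (in particular $\int\log\mu\,d\mu<\infty$ and $\int\int\log|x-y|\,d\mu^{\otimes 2}>-\infty$, and $\mu_\la$ has finite entropy and finite interaction energy for each $\la$). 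Since $\beta>\bec$ forces $\tfrac{\ds}{\beta}-\tfrac12<0$, letting $\la\to\infty$ gives $\Ec_\beta(\mu_\la)\to-\infty$, hence $\Ec_\beta^*=-\infty$.

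The only delicate point — and the one I would treat most carefully — is the uniform-in-$\la$ control of the non-logarithmic part of $\g$ appearing in the interaction term; this is precisely what forces the test measures to be supported in a fixed small neighborhood of the origin and merely dilated inward, rather than allowed to translate or spread out across the torus. Everything else is a routine change of variables together with the observation that the $\tfrac{\ds}{\beta}-\tfrac12$ coefficient of $\log\la$ changes sign at $\beta=\bec$.
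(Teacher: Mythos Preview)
Your proposal is correct and follows essentially the same approach as the paper: concentrate a smooth bump near the origin, use \eqref{eq:ggE} to replace $\g$ by $-\log|\cdot|$ up to a bounded remainder on the support, and then read off the coefficient $\tfrac{\ds}{\beta}-\tfrac12$ in front of $\log\la$. The only cosmetic difference is that the paper scales with $\phi_\la(x)=\la^{-\ds}\phi(x/\la)$ and sends $\la\to 0^+$, whereas you scale with $\mu_\la(x)=\la^\ds\mu(\la x)$ and send $\la\to\infty$; these are the same operation under $\la\mapsto 1/\la$.
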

\begin{proof}
Let $\varphi$ be a $C^\infty$ bump function on $\R^{\ds}$ supported on the ball $B(0,\frac18)$, such that $\int_{\R^\ds}\phi = 1$. Periodizing $\phi$, we may view it as an element of $C^\infty(\T^\ds)$. For $\la \in (0,1)$, set $\phi_\la \coloneqq \la^{-\ds}\phi(x/\la)$. Note that $\supp(\phi_\la)\subset B(0,\la/8)$ and $\int_{\T^\ds}\phi_\la = 1$. Now by making the changes of variable $x' = x/\la$ and $y'=y/\la$, we obtain
\begin{align}
\Ec_\beta(\phi_\la) &= \frac1\be\int_{\T^{\ds}}\log(\phi_\la)\phi_\la dx - \frac12\int_{(\T^\ds)^2}\g(x-y)\phi_\la(x)\phi_\la(y)dxdy \nn\\
&= \frac1\be\int_{\R^\ds}\log(\phi)\phi dx -\frac{\ds\log\la}{\be} - \frac12\int_{B(0,\frac18)^2}\g(\la(x-y))\phi(x)\phi(y)dxdy. \label{eq:FEunbdsc1}
\end{align}
Since for $x,y\in B(0,\frac18)$, $\la(x-y) \in B(0,\frac14)$, we may apply triangle inequality and \eqref{eq:ggE} to estimate
\begin{align}
 - \int_{B(0,\frac18)^2}\log|\la(x-y)|\phi(x)\phi(y)dxdy &\leq \int_{B(0,\frac18)^2} \left|\g(\la(x-y)) +\log|\la(x-y)|\right|\phi(x)\phi(y)dxdy\nn\\
&\ph + \int_{B(0,\frac18)^2}\g(\la(x-y))\phi(x)\phi(y)dxdy \nn\\
&\leq C_{\ds} + \int_{B(0,\frac18)^2}\g(\la(x-y))\phi(x)\phi(y)dxdy,
\end{align}
where $C_{\ds}>0$ is some constant depending only on $\ds$. Hence,
\begin{align}
\int_{B(0,\frac18)^2}\g(\la(x-y))\phi(x)\phi(y)dxdy \geq -\log\la - C_{\ds}- \int_{B(0,\frac18)^2}\log|x-y|\phi(x)\phi(y)dxdy.
\end{align}
Inserting this bound into \eqref{eq:FEunbdsc1}, we find
\begin{align}
\Ec_\beta(\phi_\la) \leq \frac1\be\int_{\R^\ds}\log(\phi)\phi dx -\frac{\ds\log\la}{\be} + \frac{\log \la}{2} + \frac{C_{\ds}}{2} + \frac12\int_{B(0,\frac18)^2}\log|x-y|\phi(x)\phi(y)dxdy.
\end{align}
Noting that $-\frac{\ds\log\la}{\be} + \frac{\log \la}{2} = \frac{\log\la}{2\be}(\be-\bec)$, we see that as $\la\rightarrow 0^+$, the preceding right-hand side tends to $-\infty$, which completes the proof.
\end{proof}

%\begin{remark}\label{rem:posEnt}
%\end{remark}

\medskip
It is known that the equation \eqref{eq:lima} is formally the gradient flow of the free energy \eqref{eq:FE} with respect to the 2-Wasserstein metric on the manifold of probability measures. And in fact, this gradient-flow structure has been used to produce solutions to the $\ds=2$ case of \eqref{eq:lima} on $\R^\ds$ (i.e., the PKS equation). We refer to \cite{Blanchet2013,Blanchet2013gf, BDP2006} for details. We do not need the full extent of this structure, but instead only that the free energy dissipates along the evolution of \eqref{eq:lima}. For this, we introduce the \emph{dissipation functional}
\begin{equation}\label{eq:FED}
\Dc_{\beta}(f) \coloneqq \int_{\T^\ds}\left|\frac1\beta\nabla\log f -\nabla\g\ast f \right|^2 df.
\end{equation}
Given any classical solution $\mu$ of \eqref{eq:lima}, a direct computation shows that
\begin{equation}\label{eq:FEid}
\forall t\geq 0, \qquad \Ec_\beta(\mu^t) + \beta\int_0^t \Dc_{\beta}(\mu^\tau)d\tau  = \Ec_\beta(\mu^0).
\end{equation}
Since $\Dc_{\beta}(\mu^\tau)\geq 0$, this relation implies that the free energy is nonincreasing. Combining the relation \eqref{eq:FEid} with \cref{lem:FEcoer}, we obtain the following useful \emph{a priori} estimate ({whose proof is immediate}).

\begin{lemma}\label{lem:FEests}
Let $\ds\geq 1$, $\beta<\bec$, and let $\mu$ be a probability density solution to equation \eqref{eq:lima} with maximal lifespan $[0,T_{\max})$. Then there exist constants $C_1>0$ independent of $\ds$ and $C_2>0$ dependent on $\ds$, such that
\begin{equation}
\forall t\in [0,T_{\max}), \qquad \int_{\T^\ds}\log_+(\mu^t)d\mu^t \leq C_1 + \paren*{\frac1\beta-\frac{1}{\bec}}^{-1}\paren*{\Ec_\beta(\mu^0) -\beta\int_0^t\Dc_{\beta}(\mu^\tau)d\tau + C_2}.
\end{equation}
\end{lemma}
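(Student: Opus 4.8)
The plan is to combine the free energy dissipation identity \eqref{eq:FEid} with the coercivity estimate \eqref{eq:posentFE} of \cref{lem:FEcoer}; this is precisely why the proof is ``immediate''. First I would fix $t\in [0,T_{\max})$ and apply \eqref{eq:posentFE} with $f=\mu^t$. This is legitimate because, by the instantaneous smoothing of \cref{lem:hypequiv}, $\mu^t$ is a bounded probability density for every $t>0$, so $\int_{\T^\ds}\log(\mu^t)d\mu^t<\infty$ and \cref{lem:FEcoer} applies; while for $t=0$ either $\int_{\T^\ds}\log(\mu^0)d\mu^0<\infty$, in which case \eqref{eq:posentFE} applies directly (and the dissipation integral is empty), or $\int_{\T^\ds}\log(\mu^0)d\mu^0=+\infty$, in which case $\Ec_\beta(\mu^0)=+\infty$ since the interaction term is bounded, and the asserted inequality is vacuous. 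For $t>0$, \eqref{eq:posentFE} gives
\begin{equation}
\paren*{\frac1\beta-\frac{1}{\bec}}\int_{\T^\ds}\log_+(\mu^t)d\mu^t \leq C_1\paren*{\frac1\beta-\frac{1}{\bec}} + \Ec_\beta(\mu^t) + C_2.
\end{equation}

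Next I would substitute the dissipation identity, $\Ec_\beta(\mu^t) = \Ec_\beta(\mu^0) - \beta\int_0^t\Dc_\beta(\mu^\tau)d\tau$, into the right-hand side, and then divide through by the strictly positive constant $\frac1\beta-\frac1{\bec}$, using the hypothesis $\beta<\bec$. Rearranging and relabeling constants then produces exactly the claimed bound. All of this is bookkeeping.

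The only point deserving care — and the one I would spell out — is the validity of \eqref{eq:FEid} for the solutions at hand, which \emph{a priori} are only mild solutions in the sense of \cref{prop:LWPa}, not classical at $t=0$. By \cref{rem:aclass} and \cref{lem:greg}, however, $\mu\in C^\infty((0,T_{\max})\times\T^\ds)$, so the direct differentiation underlying \eqref{eq:FEid} is valid on any subinterval $[s,t]\subset(0,T_{\max})$, giving $\Ec_\beta(\mu^t)+\beta\int_s^t\Dc_\beta(\mu^\tau)d\tau=\Ec_\beta(\mu^s)$. Letting $s\to 0^+$, the dissipation integral converges to $\int_0^t\Dc_\beta(\mu^\tau)d\tau$ by monotone convergence ($\Dc_\beta\geq 0$), and since $\tau\mapsto\Ec_\beta(\mu^\tau)$ is nonincreasing on $(0,T_{\max})$ one obtains $\Ec_\beta(\mu^t)+\beta\int_0^t\Dc_\beta(\mu^\tau)d\tau\leq\Ec_\beta(\mu^0)$, which is all that is needed in the inequality above (the full equality holding whenever $\Ec_\beta(\mu^0)<\infty$). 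With this the lemma follows at once.
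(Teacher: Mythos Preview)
Your proposal is correct and follows exactly the paper's approach: combine the coercivity estimate \eqref{eq:posentFE} of \cref{lem:FEcoer} with the dissipation identity \eqref{eq:FEid}, then divide by $\frac1\beta-\frac1{\bec}>0$. The paper itself declares the proof ``immediate'' and gives no further detail; your additional care about justifying \eqref{eq:FEid} for mild solutions at $t=0$ is essentially what the paper defers to \cref{rem:posentFED} via time translation.
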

%\begin{proof}
%The desired estimate is immediate from combining \eqref{eq:FEid} and \eqref{eq:posentFE}.
%\end{proof}

\begin{remark}\label{rem:posentFED}
Since the solutions given by \cref{prop:LWPa} are automatically classical for positive times, by time translation, we see that if $\mu \in C_w([0,T_{\max}), L^1(\T^\ds))$ is a maximal lifespan solution, then for any $0<t_0<T_{\max}$, \cref{lem:FEests} implies that
\begin{equation}\label{eq:posentFED}
\forall t\in [t_0,T_{\max}), \qquad \int_{\T^\ds}\log_+(\mu^t)d\mu^t \leq C_1 + \paren*{\frac1\beta-\frac{1}{\bec}}^{-1}\paren*{\Ec_\beta(\mu^{t_0}) -\beta\int_{t_0}^t\Dc_{\beta}(\mu^\tau)d\tau + C_2}.
\end{equation}
\end{remark}

For $f\in\P_{ac}(\T^\ds)$ with $\nabla\log f\in L^2(df)$, let us introduce the \emph{Fisher information}
\begin{equation}\label{eq:Finfdef}
I(f) \coloneqq \int_{\T^\ds}\frac{|\nabla f|^2}{f}dx = \int_{\T^\ds}|\nabla\log f|^2 df.
\end{equation}
At least when $\ds=2$, it is known that the entropy, mass, and dissipation functional control the Fisher information. This is still true for any $\ds\geq 1$. Before we prove this, we record some useful estimates involving the Fisher information which are consequences of Sobolev embedding and interpolation (cf. \cite[Lemma 2.1]{FM2016}).

\begin{lemma}\label{lem:FIests}
For $\ds\geq 1$ and $1\leq q<2$, with $q\leq \frac{\ds}{\ds-1}$ if $\ds\geq 3$, it holds for $f\in \P_{ac}(\T^\ds)$ that
\begin{equation}\label{eq:FIgrad}
 \|\nabla f\|_{L^q} \lesssim I(f)^{\frac{1}{2-\theta_q}} \|f-1\|_{L^1}^{\frac{1-\theta_q}{2-\theta_q}} + I(f)^{\frac{1}{2}}, \qquad \text{where} \ \theta_q \coloneqq \frac{2(q-1)}{q(1-(\frac{1}{q}-\frac1\ds))}.
\end{equation}
%Consequently, if
%\begin{equation}
%\begin{cases} {1\leq p\leq \infty}, & {d=1} \\ {1\leq p<\infty}, & {d=2} \\ 1\leq p \leq \frac{d}{d-2}, & {d\geq 3}  \end{cases}
%\end{equation}
%then
%\begin{equation}
%\|f-\bar f\|_{L^p} \lesssim 
%\end{equation}
We also have for any $\frac{\ds-2}{2}\leq r<\infty$,
\begin{equation}\label{eq:fLr+1}
\|f-1\|_{L^{r+1}}^{r+1} \lesssim \|f-1\|_{L^1}^{\frac{2(r+1)-\ds}{2+\ds(r-1)}} \| \nabla |f-1|^{\frac{r}{2}} \|_{L^{2}}^{\frac{2\ds r}{2+\ds(r-1)}},
\end{equation}
and if $\ds\geq 3$ and 
\begin{equation}\label{eq:fLrHLS}
\int_{\T^\ds} |f-1|^{2} |\Dm^{2-\ds}(f-1)| dx  \lesssim \|f-1\|_{L^1} \| \nabla f\|_{L^{2}}^2.
\end{equation}
\end{lemma}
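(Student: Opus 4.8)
The plan is to obtain all three bounds from Sobolev embedding, the Hardy--Littlewood--Sobolev (HLS) inequality, and $L^p$-interpolation, after rewriting each left-hand side in terms of a single function to which these tools apply; in every case the exponents are pinned down by dimensional analysis, so the real content is just bookkeeping. For \eqref{eq:FIgrad} I would first apply Cauchy--Schwarz to $|\nabla f| = (|\nabla f|/\sqrt f)\cdot\sqrt f$, which gives $\|\nabla f\|_{L^q}\le I(f)^{1/2}\|f\|_{L^{q/(2-q)}}^{1/2}$ (the exponent $q/(2-q)$ being finite since $q<2$). Then I would control $\|f\|_{L^p}$, $p\coloneqq q/(2-q)$, by writing $f=1+(f-1)$ with $f-1$ of mean zero on $\T^\ds$ and using the Sobolev embedding $W^{1,q}\hookrightarrow L^{q^*}$, Poincaré, and $L^p$-interpolation between $L^1$ and $L^{q^*}$, which yields $\|f\|_{L^p}\lesssim 1+\|\nabla f\|_{L^q}^{\theta_q}\|f-1\|_{L^1}^{1-\theta_q}$, the interpolation exponent being exactly $\theta_q$ after substituting $1/p=2/q-1$; the hypotheses on $q$ (notably $q\le\ds/(\ds-1)$ when $\ds\ge 3$, so $p\le q^*$) are precisely what make this legitimate. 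Substituting back gives the scalar inequality $\|\nabla f\|_{L^q}^2\lesssim I(f)+I(f)\|f-1\|_{L^1}^{1-\theta_q}\|\nabla f\|_{L^q}^{\theta_q}$, and solving it (splitting into the cases where the first or the second right-hand term dominates, the latter via Young) produces the two summands in \eqref{eq:FIgrad}.

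For \eqref{eq:fLr+1} I would set $u\coloneqq|f-1|^{r/2}$, so that $\|\nabla u\|_{L^2}=\|\nabla|f-1|^{r/2}\|_{L^2}$, $\|u\|_{L^{2/r}}^{2/r}=\|f-1\|_{L^1}$, and $\|u\|_{L^{2(r+1)/r}}^{2(r+1)/r}=\|f-1\|_{L^{r+1}}^{r+1}$. The claim is then the Gagliardo--Nirenberg inequality $\|u\|_{L^{2(r+1)/r}}\lesssim\|\nabla u\|_{L^2}^{\alpha}\|u\|_{L^{2/r}}^{1-\alpha}$ raised to the power $2(r+1)/r$: solving the scaling relation $\tfrac{r}{2(r+1)}=\alpha(\tfrac12-\tfrac1\ds)+(1-\alpha)\tfrac r2$ gives $\alpha=\tfrac{\ds r^2}{(r+1)(2+\ds(r-1))}$, and one checks that $\alpha\cdot\tfrac{2(r+1)}{r}=\tfrac{2\ds r}{2+\ds(r-1)}$ and $(1-\alpha)(r+1)=\tfrac{2(r+1)-\ds}{2+\ds(r-1)}$, matching \eqref{eq:fLr+1}. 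The constraint $r\ge\tfrac{\ds-2}{2}$ is exactly what guarantees $\alpha\in[0,1]$ and, for $\ds\ge 3$, that $2(r+1)/r$ does not exceed the Sobolev exponent $2\ds/(\ds-2)$, so that the interpolation applies.

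For \eqref{eq:fLrHLS}, write $h\coloneqq f-1$ (mean zero, so $\Dm^{2-\ds}h$ is well defined); for $\ds\ge 3$ the operator $\Dm^{2-\ds}$ is convolution against (the periodization of) a kernel comparable to $|x|^{-2}$, which lies in the HLS range since $0<2<\ds$. I would estimate, by Hölder and then HLS,
\[
\int_{\T^\ds}|h|^2\,|\Dm^{2-\ds}h|\,dx\ \le\ \|h\|_{L^{2\ds/(\ds-1)}}^2\,\|\Dm^{2-\ds}h\|_{L^{\ds}}\ \lesssim\ \|h\|_{L^{2\ds/(\ds-1)}}^2\,\|h\|_{L^{\ds/(\ds-1)}},
\]
then interpolate both norms on the right between $L^1$ and $L^{2\ds/(\ds-2)}$ and use the Sobolev--Poincaré bound $\|h\|_{L^{2\ds/(\ds-2)}}\lesssim\|\nabla h\|_{L^2}$; since the two sides of \eqref{eq:fLrHLS} have the same homogeneity, the total powers are forced to come out as $\|h\|_{L^1}^1$ and $\|\nabla h\|_{L^2}^2$, which one verifies directly from the interpolation exponents.

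The only genuinely delicate point — the thing I would be most careful about — is transferring the Gagliardo--Nirenberg inequalities from $\R^\ds$ to the torus without generating extraneous lower-order terms, especially in \eqref{eq:fLr+1} where the lower exponent $2/r$ drops below $1$ when $r>2$; here one exploits that $f$ is a probability density, which forbids $|f-1|^{r/2}$ from being close to a nonzero constant and thus removes the additive term that a naive inhomogeneous estimate would leave behind (for the sub-unit exponent one invokes the classical form of Gagliardo--Nirenberg, cf. \cite{FM2016}). Everything else is routine manipulation of Hölder, HLS, Sobolev, and interpolation exponents.
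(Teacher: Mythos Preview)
Your proposal is correct and follows essentially the same strategy as the paper: H\"older/Cauchy--Schwarz to pull out $I(f)$, then Gagliardo--Nirenberg/Sobolev/HLS plus interpolation to close the scaling. The only notable divergence is in \eqref{eq:fLr+1}: the paper avoids the sub-unit exponent $2/r$ that you flag by first applying GN to $|f-1|^{r/2}$ between $L^2$ and $\dot H^1$ (so both endpoints are $\geq 1$) and then using a separate H\"older interpolation $\|f-1\|_{L^r}\le \|f-1\|_{L^1}^{1/r^2}\|f-1\|_{L^{r+1}}^{1-1/r^2}$ to bring in the $L^1$ norm, solving the resulting inequality for $\|f-1\|_{L^{r+1}}$; this sidesteps any discussion of GN below $L^1$. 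For \eqref{eq:fLrHLS} your choice of H\"older exponents ($2\ds/(\ds-1)$ and $\ds$) is exactly the specialization $p=\ds/(\ds-1)$ of the paper's one-parameter family, and your route through $L^1$--$L^{2\ds/(\ds-2)}$ interpolation plus Sobolev--Poincar\'e is a cleaner version of what the paper does with extra free parameters before setting $r=2$.
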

\begin{proof}
Suppose that $1\leq q<2$. Then using H\"older's inequality with conjugate exponents $\frac{2}{q}$ and $\frac{2}{2-q}$,
\begin{align}
\int_{\T^\ds}|\nabla f|^qdx = \int_{\T^\ds}\left|\frac{\nabla f}{\sqrt{|f|}}\right|^q |f|^{\frac{q}{2}} dx \leq \paren*{\int_{\T^\ds}\frac{|\nabla f|^2}{|f|}}^{\frac{q}{2}} \paren*{\int_{\T^\ds} |f|^{\frac{q}{2-q}}dx }^{\frac{2-q}{2}} = I(f)^{\frac{q}{2}} \|f\|_{L^{\frac{q}{2-q}}}^{\frac{q}{2}}. \label{eq:Ifpresub}
\end{align}
By Gagliardo-Nirenberg interpolation,
\begin{align}
\|f-1\|_{L^{\frac{q}{2-q}}} \lesssim \|f-1\|_{L^1}^{1-\theta} \|\nabla f\|_{L^q}^\theta,
\end{align}
where $\theta = \frac{2(q-1)}{q(1-(\frac{1}{q}-\frac1\ds))}$. If $\ds\geq 3$, then in this last step, we also need to assume $q\leq \frac{\ds}{\ds-1}$. Using the triangle inequality and inserting the preceding estimate into the right-hand side of \eqref{eq:Ifpresub}, we find
\begin{align}
\int_{\T^\ds}|\nabla f|^qdx &\lesssim I(f)^{\frac{q}{2}}\paren*{\|f-1\|_{L^1}^{1-\theta}\|\nabla f\|_{L^q}^{\theta} + 1 }^{\frac{q}{2}}\nn\\
& \leq I(f)^{\frac{q}{2}}\paren*{\paren*{\|f-1\|_{L^1}^{1-\theta}\|\nabla f\|_{L^q}^{\theta}}^{\frac{q}{2}} + 1}. \label{eq:IfpreYoung}
\end{align}
Using Young's product inequality,
\begin{align}
I(f)^{\frac{q}{2}}\paren*{\|f-1\|_{L^1}^{1-\theta}\|\nabla f\|_{L^q}^{\theta}}^{\frac{q}{2}} \leq \paren*{2I(f)^{\frac{q}{2}}\|f-1\|_{L^1}^{\frac{q(1-\theta)}{2}} }^{\frac{2}{2-\theta}} + 2^{-\frac{2}{\theta}} \|\nabla f\|_{L^q}^q .
\end{align}
Inserting this estimate into the right-hand side of \eqref{eq:IfpreYoung} and rearranging the resulting inequality, we find
\begin{align}
\|\nabla f\|_{L^q} \lesssim I(f)^{\frac{1}{2-\theta}} \|f-1\|_{L^1}^{\frac{1-\theta}{2-\theta}} + I(f)^{\frac{1}{2}}.
\end{align}
Substituting in the value of $\theta$ now yields the desired \eqref{eq:FIgrad}.

By Gagliardo-Nirenberg interpolation,
\begin{align}
\|f-1\|_{L^{r+1}} = \| |f-1|^{\frac{r}{2}}\|_{L^{\frac{2(r+1)}{r}}}^{\frac{2}{r}} \lesssim \paren*{\| |f-1|^{\frac{r}{2}}\|_{L^2}^{1-\theta} \|\nabla |f-1|^{\frac{r}{2}}\|_{L^2}^\theta }^{\frac{2}{r}}, \label{eq:GNLr1psub}
\end{align}
where
\begin{align}
\frac{r}{2(r+1)} = \theta\paren*{\frac12 - \frac1\ds} + \frac{1-\theta}{2} \Leftrightarrow \theta = \frac{\ds}{2(r+1)}.
\end{align}
Next, by H\"older's inequality,
\begin{align}
\| |f-1|^{\frac{r}{2}}\|_{L^2}^{\frac{2}{r}} = \| f-1\|_{L^r} \leq \|f-1\|_{L^1}^{\ga} \|f-1\|_{L^{r+1}}^{1-\ga}
\end{align}
where
\begin{align}
\frac{1}{r} = \ga + \frac{1-\ga}{r+1} \Leftrightarrow \ga = \frac{1}{r^2}.
\end{align}
Inserting the preceding inequality into the right-hand side \eqref{eq:GNLr1psub} and rearranging, we find
\begin{align}
\|f-1\|_{L^{r+1}} \lesssim \paren*{\|f-1\|_{L^1}^{(1-\theta)\ga} \|\nabla |f-1|^{\frac{r}{2}}\|_{L^{2}}^{\frac{2\theta}{r}}}^{\frac{1}{1-(1-\theta)(1-\ga)}}.
\end{align}
Now $1-\theta = \frac{2(r+1)-\ds}{2(r+1)}$ and $1-\ga = \frac{r^2-1}{r^2}$, therefore
\begin{align}
1-(1-\theta)(1-\ga) = 1-\frac{2(r+1)-\ds}{2}\cdot \frac{r-1}{r^2} = \frac{2r^2-2(r^2-1)+\ds(r-1)}{2r^2} = \frac{2+\ds(r-1)}{2r^2}.
\end{align}
Hence,
\begin{align}
\frac{2\theta}{r\paren*{1-(1-\theta)(1-\ga)}} = \frac{2\ds r}{(r+1)(2+\ds(r-1))} , \\
\frac{\ga(1-\theta)}{\paren*{1-(1-\theta)(1-\ga)}} = \frac{(2(r+1)-\ds)}{(r+1)(2+\ds(r-1))},
\end{align}
which completes the proof of the desired estimate.

Finally, if $\ds\geq 3$, then we may use H\"older's inequality to estimate
\begin{align}\label{eq:f1rpreHLS}
\int_{\T^\ds} |f-1|^{r} |\Dm^{2-\ds}(f-1)| dx \leq \|f-1\|_{L^{rp}}^r \|\Dm^{2-\ds}(f-1)\|_{L^{\frac{p}{p-1}}}.
\end{align}
Let $q$ be the HLS conjugate to $\frac{p}{p-1}$, that is
\begin{equation}
\frac{\ds-2}{\ds} = \frac{1}{q} - \frac{p-1}{p},
\end{equation}
and we require that $p \in (1,\frac{\ds}{\ds-2})$, equivalently $\frac{p}{p-1} \in (\frac{\ds}{2}, \infty)$, which allows for any $q\in (1,\frac{\ds}{\ds-2})$. Applying the HLS lemma,
\begin{align}
\|\Dm^{2-\ds}(f-1)\|_{L^{\frac{p}{p-1}}} \lesssim \|f-1\|_{L^q}.
\end{align}
Now by Gagliardo-Nirenberg interpolation,
\begin{align}\label{eq:f1Lrp}
\|f-1\|_{L^{rp}} = \| |f-1|^{\frac{r}{2}}\|_{L^{2p}}^{2/r} \lesssim \Big(\| |f-1|^{\frac{r}{2}}\|_{L^1}^{1-\theta_p} \|\nabla |f-1|^{\frac{r}{2}}\|_{L^2}^{\theta_p}\Big)^{2/r} 
\end{align}
and
\begin{align}\label{eq:f1Lq}
\|f-1\|_{L^q} = \| |f-1|^{\frac{r}{2}}\|_{L^{\frac{2q}{r}}}^{2/r} \lesssim \Big(\| |f-1|^{\frac{r}{2}}\|_{L^1}^{1-\theta_q}\|\nabla |f-1|^{\frac{r}{2}}\|_{L^2}^{\theta_q}  \Big)^{2/r},
\end{align}
where $\theta_q,\theta_p$ are defined by
\begin{align}
\frac{1}{2p} = \theta_p(\frac12-\frac1\ds) + (1-\theta_p),  \qquad \theta_p = \frac{2\ds}{\ds+2}(1-\frac{1}{2p})\\
\frac{r}{2q} = \theta_q(\frac12-\frac1\ds) + (1-\theta_q), \qquad \theta_q = \frac{2\ds}{\ds+2}(1-\frac{r}{2q}).
\end{align}
Note that since $r<\frac{2\ds}{\ds-2}$, we may choose $p$ sufficiently close to $1$, so that $q$ is sufficiently close to $\frac{\ds}{\ds-2}$ to imply $\frac{2q}{r}>1$, i.e. $\frac{r}{2}<q$. By H\"older's inequality,
\begin{align}
\| |f-1|^{\frac{r}{2}}\|_{L^1} = \|f-1\|_{L^{\frac{r}{2}}}^{r/2} \leq \Big(\|f-1\|_{L^1}^{\vartheta_p} \|f-1\|_{L^{rp}}^{1-\vartheta_p}\Big)^{r/2}, \label{eq:f1L1r21}\\
\| |f-1|^{\frac{r}{2}}\|_{L^1} = \|f-1\|_{L^{\frac{r}{2}}}^{r/2} \leq \Big(\|f-1\|_{L^1}^{\vartheta_q} \|f-1\|_{L^q}^{1-\vartheta_q}\Big)^{r/2}, \label{eq:f1L1r22}
\end{align}
where
\begin{align}
\vartheta_p + \frac{1-\vartheta_p}{rp} = \frac{2}{r}, \qquad \vartheta_p =\frac{2p-1}{rp-1} \\
\vartheta_q  + \frac{1-\vartheta_q}{q} = \frac{2}{r}, \qquad \vartheta_q = \frac{2q-r}{r(q-1)}
\end{align}
with the convention that $\vartheta_p,\vartheta_q=1$ if $r\leq 2$. Combining the estimates \eqref{eq:f1Lrp}, \eqref{eq:f1Lq}, \eqref{eq:f1L1r21}, \eqref{eq:f1L1r22}, we obtain
\begin{align}
\|f-1\|_{L^{rp}} \lesssim \|f-1\|_{L^1}^{\vartheta_p(1-\theta_p)} \|f-1\|_{L^{rp}}^{(1-\theta_p)(1-\vartheta_p)} \|\nabla|f-1|^{\frac{r}{2}}\|_{L^2}^{2\theta_p/r}, \\
\|f-1\|_{L^q} \lesssim \|f-1\|_{L^1}^{\vartheta_q(1-\theta_q)} \|f-1\|_{L^{rp}}^{(1-\theta_q)(1-\vartheta_q)} \|\nabla|f-1|^{\frac{r}{2}}\|_{L^2}^{2\theta_q/r}.
\end{align}
Rearranging, we get
\begin{align}
\|f-1\|_{L^{rp}} \lesssim  \|f-1\|_{L^1}^{\frac{\vartheta_p(1-\theta_p)}{1-(1-\theta_p)(1-\vartheta_p)}} \|\nabla|f-1|^{\frac{r}{2}}\|_{L^2}^{\frac{2\theta_p}{r(1-(1-\theta_p)(1-\vartheta_p))}}, \\
\|f-1\|_{L^{q}} \lesssim \|f-1\|_{L^1}^{\frac{\vartheta_q(1-\theta_q)}{1-(1-\theta_q)(1-\vartheta_q)}} \|\nabla|f-1|^{\frac{r}{2}}\|_{L^2}^{\frac{2\theta_q}{r(1-(1-\theta_q)(1-\vartheta_q))}}.
\end{align}
Recalling our starting point \eqref{eq:f1rpreHLS}, we have shown that
\begin{multline}
\int_{\T^\ds} |f-1|^{r} |\Dm^{2-\ds}(f-1)| dx  \leq  \|f-1\|_{L^1}^{\frac{r\vartheta_p(1-\theta_p)}{1-(1-\theta_p)(1-\vartheta_p)} + \frac{\vartheta_q(1-\theta_q)}{1-(1-\theta_q)(1-\vartheta_q)}} \\
\times \|\nabla|f-1|^{\frac{r}{2}}\|_{L^2}^{\frac{2\theta_p}{(1-(1-\theta_p)(1-\vartheta_p))} + \frac{2\theta_q}{r(1-(1-\theta_q)(1-\vartheta_q))}}.
\end{multline}

In particular, if $r=2$, then $\vartheta_p=\vartheta_q=1$, and therefore,
\begin{align}
\frac{r\vartheta_p(1-\theta_p)}{1-(1-\theta_p)(1-\vartheta_p)} + \frac{\vartheta_q(1-\theta_q)}{1-(1-\theta_q)(1-\vartheta_q)} &= 2(1-\theta_p) + (1-\theta_q) \nn\\
&= 3 - \frac{2\ds}{\ds+2}\Big[2-\frac1p + 1-\frac1q\Big] \nn\\
&= 3 - \frac{2\ds}{\ds+2}\Big[1 +\frac{2}{\ds}\Big] \nn\\
&=1,
\end{align}
where we have used that $1-\frac1q = \frac{2}{\ds}-1 + \frac1p$ by HLS conjugacy. Similarly,
\begin{align}
{\frac{2\theta_p}{(1-(1-\theta_p)(1-\vartheta_p))} + \frac{2\theta_q}{r(1-(1-\theta_q)(1-\vartheta_q))}} =2\theta_p + \theta_q = 2.
\end{align}
Since $|\nabla|f-1|| \leq |\nabla(f-1)| = |\nabla f|$ (e.g., see \cite[Theorem 6.17]{LL2001}), the desired conclusion follows.

\begin{comment}
We choose $p$ so that $rp=q$, which implies the relation
\begin{equation}
\frac{\ds-2}{\ds} = \frac{1}{rp} - \frac{p-1}{p} \Leftrightarrow \frac{2(\ds-1)}{\ds} = \frac{r+1}{rp} \Leftrightarrow p = \frac{\ds(r+1)}{2r(\ds-1)}.
\end{equation}
Therefore, by the HLS lemma,
\begin{align}
\|f-1\|_{L^{rp}}^r \|\Dm^{2-\ds}(f-1)\|_{L^{\frac{p}{p-1}}} \lesssim \|f-1\|_{L^{rp}}^{r+1} = \|f-1\|_{L^{\frac{\ds(r+1)}{2(\ds-1)}}}^{r+1}.
\end{align}
Now by H\"older's inequality,
\begin{align}
\|f-1\|_{L^{\frac{\ds(r+1)}{2(\ds-1)}}} \leq \|f-1\|_{L^1}^{\vartheta} \|f-1\|_{L^{r+1}}^{1-\vartheta},
\end{align}
where
\begin{equation}
\frac{2(\ds+1)}{\ds(r+1)} = \vartheta + \frac{(1-\vartheta)}{r+1} \Leftrightarrow \frac{\ds-2}{\ds r} = \vartheta.
\end{equation}
Since $1-\vartheta = \frac{2+\ds(r-1)}{\ds r}$, estimate \eqref{eq:fLr+1} implies
\begin{align}
\|f-1\|_{L^{r+1}}^{(r+1)(1-\vartheta)} \lesssim \|f-1\|_{L^1}^{\frac{2(r+1)}{2+\ds(r-1)}} \| \nabla |f-1|^{\frac{r}{2}} \|_{L^{2}}^{\frac{2\ds r}{2+\ds(r-1)}} \lesssim \|f-1\|_{L^1}^{\frac{2(r+1)}{\ds r}} \| \nabla |f-1|^{\frac{r}{2}} \|_{L^{2}}^2.
\end{align}
After a little bookkeeping, we arrive at the desired estimate \eqref{eq:fLrHLS}.
\end{comment}
\end{proof}

We now use \cref{lem:FIests} to show that the Fisher information is controlled by the entropy and dissipation functional.

\begin{lemma}\label{lem:DFI}
There exists a constant $C>0$ depending only on $\ds$, such that for $f\in\P_{ac}(\T^\ds)$, it holds that if $\ds=1$, then
\begin{equation}\label{eq:DFI1d}
\frac{1}{\be^2} I(f) \leq C\Big[\Dc_{\beta}(f) + {\paren*{1+ \be^6}}\exp\Big({C}\Big(1+{\beta^4}\int_{\T^\ds}\log_+(f)df\Big)\Big)\Big],
\end{equation}
and if $\ds\geq 2$, then
\begin{equation}\label{eq:DFI2d+}
\frac{1}{\be^2} I(f) \leq C\Big[\Dc_{\beta}(f) + {\frac1\beta}\exp\Big({C}\Big(1+{\beta^2}\int_{\T^\ds}\log_+(f)df\Big)\Big)\Big].
\end{equation}
\end{lemma}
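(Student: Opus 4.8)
The starting point is to expand the square defining $\Dc_{\beta}(f)$. Using $f\nabla\log f=\nabla f$,
\begin{equation}
\Dc_{\beta}(f)=\frac1{\be^2}I(f)-\frac2\be\int_{\T^\ds}\nabla f\cdot\nabla\g\ast f\,dx+\int_{\T^\ds}f|\nabla\g\ast f|^2dx,
\end{equation}
so, discarding the last nonnegative term,
\begin{equation}\label{plan:split}
\frac1{\be^2}I(f)\le\Dc_{\beta}(f)+\frac2\be\int_{\T^\ds}\nabla f\cdot\nabla\g\ast f\,dx .
\end{equation}
Integrating by parts and using $\widehat{\Delta\g}(k)=-\cdd(2\pi|k|)^{2-\ds}\indic_{k\ne0}$, one computes
\[
\int_{\T^\ds}\nabla f\cdot\nabla\g\ast f\,dx=-\int_{\T^\ds}f\,\Delta\g\ast f\,dx=\cdd\int_{\T^\ds}(f-1)\,\Dm^{2-\ds}(f-1)\,dx=\cdd\big\|\Dm^{(2-\ds)/2}(f-1)\big\|_{L^2}^2 .
\]
Thus for $\ds=2$ the cross term is $\cdd\|f-1\|_{L^2}^2$; for $\ds\ge3$ it is $\lesssim_\ds\|f-1\|_{L^{\ds/(\ds-1)}}^2$ by Hardy--Littlewood--Sobolev; and for $\ds=1$ it is $\cdd\|f-1\|_{\dot H^{1/2}}^2$. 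In view of \eqref{plan:split}, it suffices in every case to bound the cross term by $\tfrac1{2\be^2}I(f)$ plus the claimed remainder; I will always assume (by approximation) that $f$ is smooth and bounded above and below so these manipulations are legitimate, and I set $E_+\coloneqq\int_{\T^\ds}\log_+(f)df$, the case $E_+=0$ being trivial.

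\textbf{Case $\ds\ge2$.} Set $q_*\coloneqq\ds/(\ds-1)\in(1,2]$. The heart of the matter is the \emph{small-parameter} interpolation inequality: for every $\epsilon\in(0,1)$,
\begin{equation}\label{plan:eps}
\|f-1\|_{L^{q_*}}^2\le\epsilon\,I(f)+C(\ds)\exp\!\big(C(\ds)\,\epsilon^{-2}E_+\big).
\end{equation}
To prove \eqref{plan:eps} I fix a threshold $K>1$ and split $f=f\indic_{f\le K}+f\indic_{f>K}$. On $\{f\le K\}$ one has $|f-1|\le K$, hence $\|(f-1)\indic_{f\le K}\|_{L^{q_*}}^{q_*}\le K^{q_*-1}\|f-1\|_{L^1}\lesssim K^{q_*-1}$, contributing $\lesssim K^{2/\ds}$ after squaring. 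For the tail I interpolate $L^{q_*}$ between $L^1$ and $L^r$ with $r\coloneqq 3\ds/(3\ds-4)$ (chosen so $q_*<r$ and, when $\ds\ge3$, $r<\ds/(\ds-2)$), which gives the interpolation weight $\lambda=\tfrac34$ and
\[
\|f\indic_{f>K}\|_{L^{q_*}}^2\le\Big(\int_{\{f>K\}}f\Big)^{1/2}\|f\|_{L^r}^{3/2}.
\]
For the first factor I use the entropy tail bound $\int_{\{f>K\}}f\,dx\le(\log K)^{-1}E_+$, valid since $\log f>\log K>0$ there. For the second, Gagliardo--Nirenberg applied to $\sqrt f$ (recalling $\|\sqrt f\|_{L^2}^2=1$ and $\|\nabla\sqrt f\|_{L^2}^2=\tfrac14 I(f)$; cf.\ \cref{lem:FIests}) gives $\|f\|_{L^r}\lesssim I(f)^{2/3}+1$, and the exponents are arranged precisely so that $\|f\|_{L^r}^{3/2}\lesssim I(f)+1$ --- i.e.\ the \emph{net} power of $I(f)$ is $1$. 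Collecting terms and choosing $\log K=C\epsilon^{-2}E_+$ yields \eqref{plan:eps}. Finally, inserting \eqref{plan:eps} (with the HLS bound) into \eqref{plan:split} and choosing $\epsilon\sim 1/\be$ to absorb half of $\tfrac1{\be^2}I(f)$ produces $\tfrac1{\be^2}I(f)\le C\Dc_{\beta}(f)+\tfrac C\be\exp(C\be^2E_+)$, which is \eqref{eq:DFI2d+}.

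\textbf{Case $\ds=1$.} Now the cross term $\cdd\|f-1\|_{\dot H^{1/2}}^2$ is of \emph{positive} order, so an extra step is needed. First, $\|f-1\|_{\dot H^{1/2}}^2\le\|f-1\|_{L^2}\|\nabla f\|_{L^2}$ by interpolation. Next I convert $\|\nabla f\|_{L^2}$ into Fisher information via $\|\nabla f\|_{L^2}^2\le\|f\|_{L^\infty}I(f)\le(1+\|f-1\|_{L^\infty})I(f)$ together with the one-dimensional Agmon inequality $\|f-1\|_{L^\infty}^2\lesssim\|f-1\|_{L^2}\|\nabla f\|_{L^2}$ (using $\int(f-1)=0$); absorbing yields $\|\nabla f\|_{L^2}^2\lesssim I(f)+\|f-1\|_{L^2}^{2/3}I(f)^{4/3}$. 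Separately, the same level-set decomposition as above, now with one-dimensional Gagliardo--Nirenberg on $\sqrt f$ (which gives $\|f\|_{L^3}^3\lesssim I(f)+1$), furnishes $\|f-1\|_{L^2}^2\le\delta\,I(f)^{1/2}+C\exp(C\delta^{-2}E_+)$ for every $\delta>0$. Combining these three estimates and running a chain of Young's inequalities, then optimizing the free parameters (each taken to be a power of $1/\be$), gives a bound of the form $\|f-1\|_{\dot H^{1/2}}^2\le\epsilon I(f)+C(1+\be^{c})\exp(C\be^{c'}E_+)$; inserting it into \eqref{plan:split} gives \eqref{eq:DFI1d} after adjusting constants.

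\textbf{Main obstacle.} The crucial and nonroutine point is obtaining the \emph{small}-constant interpolation \eqref{plan:eps}: a direct Gagliardo--Nirenberg bound only yields $\|f-1\|_{L^{q_*}}^2\lesssim I(f)+1$ with an $O(1)$ constant, which is too weak to close \eqref{plan:split} once $\be$ is large. The level-set truncation at a large threshold, fed by the entropy tail estimate $\int_{\{f>K\}}f\le(\log K)^{-1}E_+$, is exactly what trades that $O(1)$ constant for an arbitrarily small one at the price of a factor exponential in $E_+$. A secondary difficulty --- responsible for the worse constants in dimension $\ds=1$ --- is converting $\|\nabla f\|_{L^2}$ into $I(f)$, which forces one to control $\|f\|_{L^\infty}$ and hence to iterate an $L^\infty$/Agmon bound against the $L^2$ estimate.
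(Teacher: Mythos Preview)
Your approach is correct and, for $\ds\ge2$, essentially coincides with the paper's: expand the square, integrate the cross term by parts to $\cdd\int(f-1)\Dm^{2-\ds}(f-1)$, and then use a level-set truncation combined with the entropy tail bound $\int_{\{f>K\}}f\le(\log K)^{-1}E_+$ and Gagliardo--Nirenberg to trade the $O(1)$ constant in front of $I(f)$ for an arbitrarily small one at the price of an exponential in $E_+$. The paper treats $\ds=2$ and $\ds\ge3$ separately (in the latter case via a Cauchy--Schwarz step with respect to the measure $f\,dx$ before invoking HLS), whereas you unify both cases through the single interpolation $L^{q_*}\subset[L^1,L^r]$ with $r=3\ds/(3\ds-4)$; this is a minor packaging difference.

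For $\ds=1$ the two arguments genuinely diverge. The paper \emph{does not} integrate the cross term by parts: it keeps $\int(\nabla\log f)(\nabla\g\ast f)\,df$ as is, splits it according to $\{f\ge A\}$ versus $\{f<A\}$, and estimates each piece by H\"older together with the $L^4$ boundedness of the periodic Hilbert transform (which is what $\nabla\g\ast$ is in one dimension). Your route---integrate by parts to $\cdd\|f-1\|_{\dot H^{1/2}}^2$, then interpolate $\dot H^{1/2}$ between $L^2$ and $\dot H^1$, then convert $\|\nabla f\|_{L^2}$ to $I(f)$ via $\|f\|_{L^\infty}$ and Agmon---is also valid, but it forces a longer cascade of Young's inequalities. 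As a consequence the explicit exponents of $\beta$ you obtain need not be exactly $\be^6$ and $\be^4$: a careful run of your chain with $\epsilon\sim1/\be$ and $\delta\sim\epsilon^{3/2}$ seems to give something like $(1+\be^2)\exp(C\be^3E_+)$ in the remainder, which for $\be\ge1$ is actually stronger than \eqref{eq:DFI1d}, and for small $\be$ the inequality is in any case nearly tautological since then $\Dc_\beta(f)=\be^{-2}I(f)+O(\be^{-1})$. So your argument proves a variant with possibly different polynomial powers of $\be$, which is harmless for the application (\cref{rem:FIlinbnd} only needs that the remainder is controlled by some $C_\be$ increasing in $\be$), but you should flag that the precise constants $\be^6,\be^4$ in the stated form come from the paper's specific $\ds=1$ route rather than from yours.
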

\begin{proof}
Expanding the square,
\begin{align}
\Dc_{\beta}(f) &= \int_{\T^\ds}\left|\frac1\beta\nabla\log f -\nabla\g\ast f\right|^2 df \nn\\
&= \frac1{\beta^2}\int_{\T^\ds}|\nabla \log f|^2 df - \frac2\beta\int_{\T^\ds}\paren*{\nabla\log f\cdot \nabla\g\ast f}df + \int_{\T^\ds}|\nabla\g\ast f|^2 df \nn\\
&\geq \frac1{\beta^2} I(f) - \frac{2\cdd}{\beta}\int_{\T^\ds}\Dm^{2-\ds}(f) df ,
\end{align}
where the final line follows from integrating by parts the second term and discarding the third term on the second line. We need to control the modulus of the second term in the final line, which we do by considering cases of $\ds$.

\medskip
Suppose $\ds=2$. Then for $A>1$, writing $f=f\indic_{f\leq A} + f\indic_{f>A}$ and using interpolation together with Sobolev embedding and the estimate \eqref{eq:FIgrad} from \cref{lem:FIests} with $q=\frac65$ and $\theta_q = \frac12$, we see that
\begin{align}
\int_{\T^\ds}\left|\Dm^{2-\ds}(f)\right| df \leq \|f\|_{L^2}^2 &\leq 2\left[\|f\indic_{f>A}\|_{L^2}^2 + A^2\right] \nn\\
&\leq 2\left[\|f\indic_{f>A}\|_{L^1}^{\frac12} \|f\|_{L^3}^{\frac32} + A^2\right]\nn\\
&\leq C \paren*{\frac{1}{\log A}\int_{\T^2}\log_+(f)df}^{\frac12} \paren*{\|\nabla f\|_{L^{\frac{6}{5}}}^{\frac32} + 1} + A^2\nn\\
&\leq C\paren*{\frac{1}{\log A}\int_{\T^2}\log_+(f)df}^{\frac12} \paren*{I(f) + 1} + A^2. \label{eq:DFlb}
\end{align}
We choose $A$ according to
\begin{equation}
(\log A)^{\frac12} = { 32\cdd C}\paren*{1+{\be}\paren*{\int_{\T^2}\log_+(f)df}^{\frac12}} \Rightarrow A \leq  {\exp\paren*{{C'}\paren*{1+\beta^2\int_{\T^2}\log_+(f)df}}},
\end{equation}
where $C'>0$ depends on $\ds$. With this choice for $A$, we find
\begin{equation}
\frac{2\cdd}{\be}\int_{\T^2}\left|\Dm^{2-\ds}(f)\right|  df \leq \frac{1}{2\be^2}I(f) + \frac1\be\exp\paren*{{C''}{\paren*{1+\be^2\int_{\T^2}\log_+(f)df}}} ,
\end{equation}
where $C''\geq C'$. Applying this estimate to \eqref{eq:DFlb} and rearranging, we arrive at
\begin{equation}\label{eq:DFI2dfin}
\frac{1}{\be^2} I(f) \leq 2\Big[\Dc_{\beta}(f) + \frac1\beta\exp\paren*{{C''}{\paren*{1+{\be^2}\int_{\T^2}\log_+(f)df }} }\Big].
\end{equation}

\medskip
If $\ds\geq 3$, then by Cauchy-Schwarz,
\begin{align}
\int_{\T^\ds}|\Dm^{2-\ds}(f)| df  &\leq \paren*{\int_{\T^\ds} |\Dm^{2-\ds} f|^{2}df}^{1/2} \paren*{\int_{\T^\ds}\indic_{f>A} df}^{1/2} + A\int_{\T^\ds}|\Dm^{2-\ds}f|dx\nn\\
&\leq  \paren*{\frac{1}{\log A}\int_{\T^\ds}\log_+(f) df}^{\frac12}\paren*{\int_{\T^\ds} |\Dm^{2-\ds} f|^{2}df}^{1/2} + A\int_{\T^\ds}|\Dm^{2-\ds}f|dx. \label{eq:DFI3dpsub}
\end{align}
By H\"older's inequality,
\begin{align}
\int_{\T^\ds} |\Dm^{2-\ds} f|^{2}df \leq \|\Dm^{2-\ds}f\|_{L^{2p}}^2 \|f\|_{L^{\frac{p}{p-1}}}.
\end{align}
We choose $p$, so that the HLS conjugate $r$ of $2p$, defined by $\frac{\ds-2}{\ds} = \frac{1}{r} - \frac{1}{2p}$, equals $\frac{p}{p-1}$. This leads to the choice $p=\frac{3\ds}{4}$, implying $r=\frac{p}{p-1} = \frac{3\ds}{3\ds-4}$. Since
\begin{equation}
\ds\left(\frac1q - \frac{3\ds-4}{3\ds}\right) = 1 \Leftrightarrow q = \frac{3\ds}{3\ds-1},
\end{equation}
we may apply Sobolev embedding followed by the estimate \eqref{eq:FIgrad} from \cref{lem:FIests} with $\theta_q = \frac12$ in order to obtain
\begin{align}
\int_{\T^\ds} |\Dm^{2-\ds} f|^{2}df \lesssim \|f\|_{L^{\frac{3\ds}{3\ds-4}}}^3 &\lesssim \|f-1\|_{L^{\frac{3\ds}{3\ds-4}}}^3 + 1 \nn\\
&\lesssim \|\nabla f\|_{L^{\frac{3\ds}{3\ds-1}}}^3 + 1 \nn\\
%&\lesssim I(f)^{2}  + I(f)^{\frac{3}{2}} + 1 \nn\\
&\lesssim I(f)^{2} + 1.\label{eq:DFI3d1}
\end{align}
Since the kernel of $\Dm^{2-\ds}$ is in $L^1$, Young's inequality implies
\begin{align}
A\int_{\T^\ds}|\Dm^{2-\ds}f|dx \lesssim A\|f\|_{L^1}. \label{eq:DFI3d2}
\end{align}
Substituting the estimates \eqref{eq:DFI3d1}, \eqref{eq:DFI3d2} into the right-hand side of \eqref{eq:DFI3dpsub}, we find there is a constant $C>0$ depending only on $\ds$ such that
\begin{equation}
 \int_{\T^\ds}|\Dm^{2-\ds}(f)| df  \leq CA+ \paren*{\frac{C}{\log A}\int_{\T^\ds}\log_+(f) df}^{1/2} \paren*{I(f)  + 1} .\label{eq:DFI3dposub}
\end{equation}
Choosing $A$ according to
\begin{equation}
(\log A)^{\frac12} = {32\cdd}{C}\paren*{1+{\be^2}\int_{\T^\ds}\log_+(f)df}^{\frac12} \Rightarrow A \leq \exp\paren*{{C'}{\paren*{1+{\be^2}\int_{\T^\ds}\log_+(f)df}}},
\end{equation}
it follows that
\begin{align}
\frac{2\cdd}{\be}\int_{\T^\ds}|\Dm^{2-\ds}(f)| df \leq \frac{1}{2\be^2}I(f) + \frac{1}{\be}\exp\paren*{{C''}\paren*{1+{\be^2}\int_{\T^\ds}\log_+(f)df}},
\end{align}
from which we deduce an estimate analogous to \eqref{eq:DFI2dfin}.

\medskip
The $\ds=1$ case is a little trickier because $\Dm^{2-\ds}=\Dm$ is now the half-Laplacian. We instead undo the integration by parts and with $A$ as above, decompose
\begin{equation}
\int_{\T}\paren*{\nabla\log f\cdot \nabla\g\ast f}df  = \int_{\T}\paren*{\nabla\log f\cdot \nabla\g\ast f}\indic_{f\geq A}df + \int_{\T}\paren*{\nabla\log f\cdot \nabla\g\ast (f}\indic_{f\leq A}df.
\end{equation}
%+ \int_{\T^\ds}\paren*{\nabla\log f\cdot \nabla\g\ast (f\indic_{f>A})}\indic_{f\leq A} df.
%By Cauchy-Schwarz and Plancherel's theorem,
%\begin{align}
%\int_{\T^\ds}\left|\paren*{\nabla\log f\cdot \nabla\g\ast (f\indic_{f\leq A}}\right|\indic_{f\leq A}df \leq I(f)^{1/2} A^{1/2} \|\nabla\g%\ast(f\indic_{f\leq A})\|_{L^2} &\leq \frac{A}{\cdd} I(f)^{1/2}. \label{eq:1dDFI1}
%\end{align}
By H\"older's inequality,
\begin{align}
\int_{\T^\ds}\left|\nabla\log f\cdot \nabla\g\ast f\right|\indic_{f\geq A}df \leq I(f)^{1/2} \|\nabla\g\ast f\|_{L^4} \||f|^{1/2}\indic_{f\geq A}\|_{L^4}.
\end{align}
Since $\cdd^{-1}\nabla\g\ast$ is the Hilbert transform, which is bounded on $L^p$ for $p\in (1,\infty)$, we have
\begin{align}
\|\nabla\g\ast f\|_{L^4} \lesssim \|f\|_{L^4} \leq \|f\|_{L^1}^{\frac14}\|f\|_{L^\infty}^{\frac{3}{4}} &\lesssim { \|\nabla f\|_{L^1}^{\frac34} + 1 } \lesssim {I(f)^{\frac38}  + 1}, \label{eq:gradgL4}
\end{align}
where we have also used interpolation and Sobolev embedding (the $L^\infty$ embedding is fundamental theorem of calculus if $\ds=1$) together with estimate \eqref{eq:FIgrad} of \cref{lem:FIests} with $q=1$. Similarly,
\begin{align}
\||f|^{1/2}\indic_{f\geq A}\|_{L^4} = \|f\indic_{f\geq A}\|_{L^2}^{\frac{1}{2}} &\leq \|f\indic_{f\geq A}\|_{L^1}^{\frac{1}{4}}\paren*{\|f-1\|_{L^\infty}^{\frac14} + 1} \nn\\
&\lesssim \paren*{\frac{1}{\log A}\int_{\T^\ds}\log_+(f)df}^{\frac14}\paren*{I(f)^{\frac18} + 1}. \label{eq:fL4}
\end{align}
Estimates \eqref{eq:gradgL4},\eqref{eq:fL4} together imply that
\begin{align}
\int_{\T^\ds}\left|\nabla\log f\cdot \nabla\g\ast f\right|\indic_{f\geq A}df \lesssim  \paren*{\frac{1}{\log A}\int_{\T^\ds}\log_+(f)df}^{\frac14}(I(f) + I(f)^{\frac12}). \label{eq:1dDFI2}
\end{align}
By similar arguments as to the previous case,
\begin{align}
\int_{\T^\ds}\left|\nabla\log f\cdot \nabla\g\ast f\right|\indic_{f\leq A} df &\leq  I(f)^{\frac12} \|\nabla\g\ast f\|_{L^4} \||f|^{\frac12}\indic_{f\leq A}\|_{L^4} \nn\\
&\lesssim A^{\frac12} I(f)^{\frac12}\paren*{I(f)^{\frac38}  + 1} \label{eq:1dDFI3} .
\end{align}
Combining the estimates \eqref{eq:1dDFI2}, \eqref{eq:1dDFI3}, we find there is a constant $C>0$ depending only $\ds$ such that
\begin{align}
\left|\int_{\T^\ds}\paren*{\nabla\log f\cdot \nabla\g\ast f}df\right| \leq C \paren*{\frac{1}{\log A}\int_{\T^\ds}\log_+(f)df}^{\frac14}  (I(f) +I(f)^{\frac12}) + CA^{\frac12} I(f)^{\frac12}\paren*{I(f)^{\frac38}  + 1}.
\end{align}
We choose $A$ so that
\begin{equation}
{(\log A)^{\frac14}} = {32\cdd}{ C}\paren*{1+ { \be^4} \int_{\T^\ds}\log_+(f)df}^{\frac14} \Longrightarrow A \leq \exp\paren*{{C'}\paren*{1+ { \be^4}\int_{\T^\ds}\log_+(f)df}}.
\end{equation}
Now using Young's product inequality multiple times, we obtain
\begin{align}
\frac{2}{\be}\left|\int_{\T^\ds}\paren*{\nabla\log f\cdot \nabla\g\ast f}df\right| \leq \frac{1}{2\be^2}I(f) + { (1+\be^6)}\exp\paren*{C''\paren*{1+ { \be^4}\int_{\T^\ds}\log_+(f)df}},
\end{align}
from which \eqref{eq:DFI1d} follows.
\end{proof}

\begin{remark}\label{rem:FIlinbnd}
The control of the Fisher information provided by \cref{lem:DFI} together with the estimate \eqref{eq:posentFED} from \Cref{rem:posentFED} implies that if $\mu$ is a probability density solution to \eqref{eq:lima}, then for a constant $C_\beta$, increasing with respect to $\beta$,
\begin{align}
\frac{1}{\beta}\int_0^t I(\mu^\tau)d\tau &\leq C\int_0^t\Bigg(\be\Dc_{\beta}(\mu^\tau) + C_{\beta}\exp\paren*{C_{\beta}\paren*{1+\int_{\T^\ds}\log_+(\mu^\tau)d\mu^\tau}} \Bigg)d\tau \nn\\
&\leq C\Ec_\beta(\mu^0) + CC_{\beta}t\exp\paren*{ C_1 + \paren*{1-\frac{\beta}{\bec}}^{-1}\paren*{\Ec_\beta(\mu^{0}) + {C_2}{\beta}}},
\end{align}
where in the final line we have also used $\be\int_0^\infty\Dc_{\beta}(\mu^\tau)d\tau \leq \Ec_\beta(\mu^0)$. This gives a linear-in-time bound for the Fisher information, which will be important to the proof of \cref{lem:aLrbnd} in the next subsection.
\end{remark}

%By Plancherel, $\cdd\int_{\T^\ds}\Dm^{2-d}(f)df = \|f\|_{\dot{H}^{1/2}}^2$, and it is not clear how to control this seminorm as above for $d\geq 2$.

\begin{comment}
For the remaining assertion of the lemma, observe that if $d\leq 2$, then we can always find $q<2$ such that the Sobolev conjugate $q^* = \frac{dq}{d-q}>p$. So by Gagliardo-Nirenberg interpolation,
\begin{equation}
.
\end{equation}

Similarly, if $d\geq 3$, then by $p=?$ is the Sobolev conjugate of $\frac{d}{d-1}$; so we may again always find $q\leq \frac{d}{d-1}$ such that $p=q^*$. The proof is now complete.
\end{comment}

\begin{comment}
\begin{remark}
Since $\frac{2\ds}{2\ds-1} < \frac{d}{d-1}$, \cref{lem:FIests} implies that
\begin{equation}
\|f\|_{L^{\frac{2d}{d-1}}} \lesssim 
\end{equation}
We will use this observation together with the free energy dissipation identity in the next section to conclude that the solutions of \cref{prop:LWPa} are global.
\end{remark}
\end{comment}

\section{Global existence and relaxation}\label{sec:glob}
We now use the results of \Cref{sec:WP,sec:aLogFE} to show that the local solutions previously constructed are global (i.e., $T_{\max}=\infty$), if { $\beta <\bec$ or $\be<\bels$ and the initial data is sufficiently close to $\muu$ in $L^2$}. Moreover, if $\be<\beu$ or { $\be<\bels$ and initial data is sufficiently close to $\muu$ in $L^2$}, then $\mu^t$ converges at an exponential rate to the uniform distribution as $t\rightarrow\infty$ in Sobolev spaces of arbitrarily high regularity.

\subsection{Global existence}\label{ssec:globE}
Let $\mu \in C_w([0,T_{\max}), L^1(\T^\ds))$ be a maximal solution to \eqref{eq:milda} with initial datum $\mu^0 \in L^1(\T^\ds)$. Throughout this subsection, we assume that $\mu^0 \in \P_{ac}(\T^\ds)$. %By \ref{lem:?}, we know that for $0<T<T_{\max}$ and every $1\leq p\leq \infty$,
%\begin{equation}
%\sup_{0<t\leq T} t^{\frac{\ds}{2}\left(1-\frac1p\right)}\|\mu^t\|_{L^p} <\infty.
%\end{equation}
In order to show that $\mu$ is global, it suffices by the blow-up criterion of \cref{rem:ablowup} to show that if $T_{\max}<\infty$, then there there is a bound for
\begin{equation}
\sup_{0<t\leq T} t^{\frac{1}{4}}\|\mu^t\|_{L^{\frac{2\ds}{2\ds-1}}}, \qquad T<T_{\max},
\end{equation}
which is independent of $T$. This is a consequence of the Fisher information estimates from \cref{lem:FIests} and the control of Fisher information in terms of the free energy by \cref{rem:FIlinbnd}.

\begin{comment}
Suppose that $T_{\max}<\infty$. Then there exists an increasing sequence of times $t_n\rightarrow T_{\max}^-$ such that $\|\mu^{t_n}\|_{L^{\frac{2\ds}{2\ds-1}}} \rightarrow \infty$.

By H\"older's inequality, Sobolev embedding and \eqref{eq:FIgrad} with $q=1$,
\begin{align}
\|\mu^{t}\|_{L^{\frac{2\ds}{2\ds-1}}} \lesssim \|\mu^t\|_{L^{\frac{\ds}{\ds-1}}} \lesssim 1+ \|\nabla\mu^t\|_{L^1} \lesssim 1 + I(\mu^t)^{\frac12}.
\end{align}
Recalling the estimates \eqref{eq:muL2d2d-1}, \eqref{eq:Holmunabgmu}, we find that
\begin{align}
\|\mu^{t}\|_{L^{\frac{2\ds}{2\ds-1}}} &\lesssim (t/\beta)^{-\frac{1}{4}}\|\mu^0\|_{L^1} + \int_0^t \paren*{(t-\tau)/\beta}^{-\frac{3}{4}} \|\mu^\tau\|_{L^{\frac{2\ds}{2\ds-1}}}^2 d\tau \nn\\
&\lesssim (t/\beta)^{-\frac{1}{4}}\|\mu^0\|_{L^1} + \int_0^t \paren*{(t-\tau)/\beta}^{-\frac{3}{4}}\left(1+ I(\mu^\tau)\right)d\tau.
\end{align}

This estimate implies that $I(\mu^{t_n})\rightarrow \infty$. But by \cref{rem:FIlinbnd}, $\int_0^{T} I(\mu^t)dt$ is bounded for any $T>0$. Hence, $I(\mu^t)<\infty$ for a.e. $t\in [0,T_{\max})$ and since $\mu\in C^\infty((0,T_{\max})\times\T^\ds)$, we in fact have $I(\mu^t)<\infty$ for every $t\in [0,T_{\max})$. 
\end{comment}
%Next, we use the free energy dissipation identity \eqref{eq:FED} to obtain an exponential rate of decay of $\Ec_\beta(\mu^t)$ as $t\rightarrow\infty$, which by Pinsker's inequality implies an exponential rate of decay for $\|\mu^t-1\|_{L^1}$.
Since $\frac{2\ds}{2\ds-1}\leq 2$, it suffices to prove an $L^2$ estimate. In the statement of \cref{lem:aLrbnd} below, we assume that the initial datum $\mu^0\in L^2$, but this is not restrictive: the reader will recall from \cref{lem:greg} that solutions are instantaneously smooth.

\begin{lemma}\label{lem:aLrbnd}
{ Let $\beta<\bec$}. Let $\mu$ be a solution of equation \eqref{eq:lima}. Then there exist constants $C,C_1,C_2>0$, depending only on $\ds$, such that if $\ds=1$, then for $t\in [0,T_{\max})$,
\begin{align}\label{eq:aLrbnd1d}
\|\mu^t-1\|_{L^2}^2  { + 1}\leq \paren*{\|\mu^{0}-1\|_{L^2}^2 + { 1}}e^{C\beta\int_0^t I(\mu^\tau)d\tau},
\end{align}
and if $\ds\geq 2$, then for $t\in [0,T_{\max})$,
\begin{multline}\label{eq:aLrbnd2d+}
\|\mu^t-1\|_{L^2}^2 \leq e^{-\frac{4\pi^2}{\beta}t}\|\mu^0-1\|_{L^2}^2 + C\int_0^t  e^{-\frac{4\pi^2}{\beta}(t-\tau)} I(\mu^\tau)d\tau\\
+\frac{\beta}{4\pi^2}\exp\left(C\be\paren*{C_{1} + \paren*{1-\frac{\beta}{\bec}}^{-1}\paren*{\Ec_\beta(\mu^{0}) + {C_2}{\beta}}}\right),
\end{multline}
In particular, by \cref{rem:FIlinbnd}, the right-hand sides are locally bounded in time as a function of $\beta,\|\mu^0-1\|_{L^2}, \Ec_\beta(\mu^0)$.
\end{lemma}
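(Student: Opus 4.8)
The plan is to run an $L^{2}$ energy estimate on $g^{t}\coloneqq\mu^{t}-1$ and close it multiplicatively (Gr\"onwall) when $\ds=1$ and additively (Duhamel, using the spectral gap of $-\Delta$) when $\ds\geq 2$. Since solutions of \eqref{eq:lima} are instantaneously smooth by \cref{lem:greg} and depend continuously on the data by \cref{prop:LWPa}, it suffices to argue for a classical solution (with $\mu^{0}\in L^{2}$, after smoothing) and pass to the limit, so all integrations by parts below are legitimate. Testing \eqref{eq:lima} against $\mu^{t}$, using conservation of mass $\int_{\T^\ds}\partial_{t}\mu^{t}=0$, and integrating by parts in the interaction term via $\nabla\mu\cdot(\mu\nabla\g\ast\mu)=\tfrac12\nabla(\mu^{2})\cdot\nabla\g\ast\mu$, one gets
\[
\frac{d}{dt}\frac12\|\mu^{t}-1\|_{L^{2}}^{2}=-\frac1\beta\|\nabla\mu^{t}\|_{L^{2}}^{2}-\frac12\int_{\T^\ds}(\mu^{t})^{2}\,(\Delta\g)\ast\mu^{t}\,dx .
\]
Reading off Fourier coefficients from $\Dm^{\ds}\g=\cdd(\d_{0}-1)$ gives $(\Delta\g)\ast\mu=-\cdd\Dm^{2-\ds}(\mu-1)$, so the interaction term equals $\tfrac{\cdd}{2}\int_{\T^\ds}(\mu^{t})^{2}\,\Dm^{2-\ds}(\mu^{t}-1)\,dx$.

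Next I would expand $(\mu^{t})^{2}=(g^{t})^{2}+2g^{t}+1$. The constant pairs to zero against the mean-zero function $\Dm^{2-\ds}g^{t}$, leaving a quadratic piece $\cdd\int g^{t}\Dm^{2-\ds}g^{t}\,dx=\cdd\|\Dm^{1-\ds/2}g^{t}\|_{L^{2}}^{2}$ and a genuinely cubic piece $\tfrac{\cdd}{2}\int(g^{t})^{2}\Dm^{2-\ds}g^{t}\,dx$. For $\ds\geq 2$ the symbol $(2\pi|k|)^{2-\ds}$ is $\leq(2\pi)^{2-\ds}$ on $\Z^{\ds}\setminus\{0\}$, so the quadratic piece is $\leq\cdd(2\pi)^{2-\ds}\|g^{t}\|_{L^{2}}^{2}=\tfrac{4\pi^{2}}{\bels}\|g^{t}\|_{L^{2}}^{2}$ (note that this is exactly where the stability threshold enters, via $\cdd(2\pi)^{2-\ds}=4\pi^{2}/\bels$); combined with the Poincar\'e bound $\|\nabla\mu^{t}\|_{L^{2}}^{2}\geq 4\pi^{2}\|g^{t}\|_{L^{2}}^{2}$ (the lowest nonzero eigenvalue of $-\Delta$ on $\T^{\ds}$ being $4\pi^{2}$) extracted from the Laplacian dissipation, this yields the exponential decay factor in \eqref{eq:aLrbnd2d+}. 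When $\ds=1$, $\Dm^{1-\ds/2}=\Dm^{1/2}$ is a genuine half-derivative; I would interpolate $\|\Dm^{1/2}g^{t}\|_{L^{2}}^{2}\leq\|g^{t}\|_{L^{2}}\|\nabla\mu^{t}\|_{L^{2}}$ and apply Young's inequality, which produces a multiple of $\|\nabla\mu^{t}\|_{L^{2}}^{2}$ (absorbed into the Laplacian term) and a multiple of $\|g^{t}\|_{L^{2}}^{2}$ with no favorable sign, which is why \eqref{eq:aLrbnd1d} is stated in the weaker multiplicative form.

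The cubic piece is where the a priori bounds of \cref{sec:aLogFE} enter. I would bound its absolute value using \cref{lem:FIests}: estimate \eqref{eq:fLrHLS} together with $|\nabla|f-1||\leq|\nabla f|$ when $\ds\geq 3$; estimate \eqref{eq:fLr+1} with $r=2$ and H\"older's inequality when $\ds=2$ (where $\Dm^{0}$ is the identity); and H\"older's inequality together with the Gagliardo--Nirenberg/Sobolev estimate \eqref{eq:FIgrad} when $\ds=1$. In each case, using $\|\mu^{t}-1\|_{L^{1}}\leq 2$ and Young's product inequality, this term is dominated by $\ep\|\nabla\mu^{t}\|_{L^{2}}^{2}+C\,I(\mu^{t})+C'_{\beta}$, with $\ep$ at our disposal and $C'_{\beta}$ an explicit constant depending on $\beta$ and on $\int_{\T^\ds}\log_{+}(\mu^{t})d\mu^{t}$, the latter controlled by \cref{lem:FEests} (and \cref{rem:posentFED}) through $C_{1}+(1-\beta/\bec)^{-1}(\Ec_{\beta}(\mu^{0})+C_{2}\beta)$ after discarding the favorable time-integrated dissipation; exponentiating gives the factor $\exp\!\paren*{C\beta\paren*{C_{1}+(1-\beta/\bec)^{-1}(\Ec_{\beta}(\mu^{0})+C_{2}\beta)}}$ in \eqref{eq:aLrbnd2d+}. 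Absorbing $\ep\|\nabla\mu^{t}\|_{L^{2}}^{2}$ into $-\tfrac1\beta\|\nabla\mu^{t}\|_{L^{2}}^{2}$, one reaches for $\ds\geq 2$ a differential inequality $\tfrac{d}{dt}\|\mu^{t}-1\|_{L^{2}}^{2}\leq-\tfrac{4\pi^{2}}{\beta}\|\mu^{t}-1\|_{L^{2}}^{2}+C\,I(\mu^{t})+c_{0}$, which integrates by Duhamel's formula (using $\int_{0}^{t}e^{-\frac{4\pi^{2}}{\beta}(t-\tau)}c_{0}\,d\tau\leq\tfrac{\beta}{4\pi^{2}}c_{0}$) to \eqref{eq:aLrbnd2d+}; for $\ds=1$ one instead retains a term $C\beta\,I(\mu^{t})(\|\mu^{t}-1\|_{L^{2}}^{2}+1)$ on the right-hand side and applies Gr\"onwall's lemma to obtain \eqref{eq:aLrbnd1d}. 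The final assertion is then immediate from \cref{rem:FIlinbnd}, which bounds $\int_{0}^{t}I(\mu^{\tau})d\tau$ linearly in $t$ by an explicit function of $\beta$ and $\Ec_{\beta}(\mu^{0})$.

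I expect the main obstacle to be the constant bookkeeping in the estimate of the cubic term: one must apportion the Laplacian dissipation so that the several absorptions (of $\ep\|\nabla\mu\|_{L^{2}}^{2}$, and for $\ds\geq 2$ the quadratic piece as well) together with the claimed decay rate are simultaneously accommodated for $\beta$ in the stated range—this is precisely the place where the criticality $\beta=\bec$ of the log gas, through the sharp Gagliardo--Nirenberg/log-HLS constants underlying \cref{lem:FIests}, together with the value of $\bels$, intervene. A secondary, genuinely structural difficulty is the dimension $\ds=1$: there the self-interaction contributes a half-derivative term with no favorable sign and no spectral gain, so only a multiplicative Gr\"onwall estimate (and hence an a priori exponential-in-time, rather than linear-in-time, $L^{2}$ bound) is available, and the interpolation there must be arranged carefully so that the loss is exactly of the form $C\beta\,I(\mu^{t})(\|\mu^{t}-1\|_{L^{2}}^{2}+1)$.
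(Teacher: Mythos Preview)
Your overall architecture---the $L^2$ energy identity, the decomposition of the interaction into quadratic and cubic pieces, Gr\"onwall for $\ds=1$ and Duhamel with the Poincar\'e gap for $\ds\geq 2$---matches the paper. But there is a genuine gap in your handling of the cubic term $\tfrac{\cdd}{2}\int (g^t)^2\Dm^{2-\ds}g^t\,dx$ when $\ds\geq 2$.

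Applying \eqref{eq:fLrHLS} (or \eqref{eq:fLr+1} with $r=2$ when $\ds=2$) gives the bound $\lesssim\|\mu^t-1\|_{L^1}\|\nabla\mu^t\|_{L^2}^2$. Your stated plan, ``using $\|\mu^t-1\|_{L^1}\leq 2$ and Young's product inequality,'' yields a \emph{fixed} dimensional constant in front of $\|\nabla\mu^t\|_{L^2}^2$, not an $\ep$ at your disposal; there is no second factor to Young against. This can only be absorbed into $-\tfrac{2}{\beta}\|\nabla\mu^t\|_{L^2}^2$ for $\beta$ small, not for the full range $\beta<\bec$. You correctly sense that the entropy $\int\log_+(\mu^t)\,d\mu^t$ must appear in $C_\beta'$, but the mechanism producing it is missing. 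The paper's device is a \emph{vertical displacement}: write $|\mu^t-1|=\min(|\mu^t-1|,A)+(|\mu^t-1|-A)_+$, apply \eqref{eq:fLr+1}/\eqref{eq:fLrHLS} only to the truncated part (whose $L^1$ norm is $\lesssim\tfrac{1}{\log(A-1)}\int\mu^t\log_+\mu^t$ by Chebyshev), and choose $A$ so that this coefficient equals $\tfrac{1}{\beta}$. The price is the $A^3$ term, which after inserting the entropy bound from \cref{lem:FEests} produces exactly the exponential constant in \eqref{eq:aLrbnd2d+}.

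A secondary issue: your treatment of the quadratic piece $\cdd\|\Dm^{1-\ds/2}g^t\|_{L^2}^2\leq\tfrac{4\pi^2}{\bels}\|g^t\|_{L^2}^2$ for $\ds\geq 2$ tacitly assumes $\beta<\bels$ to combine favorably with the Poincar\'e term. But the lemma assumes only $\beta<\bec$, and $\bels<\bec$ for $\ds\geq 11$. The paper instead bounds the quadratic piece directly by $CI(\mu^t)$ via H\"older, HLS, Sobolev embedding $L^{\ds/(\ds-1)}\hookleftarrow W^{1,1}$, and \eqref{eq:FIgrad} with $q=1$ (see \eqref{eq:LrT22d}); this routes the term into the $CI(\mu^\tau)$ integral rather than into the decay rate.
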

\begin{proof}
Using equation \eqref{eq:lima}, integration by parts, and that $\div(\g\ast) = \cdd\Dm^{2-\ds}$, we compute
\begin{multline}\label{eq:adtLr}
\frac{d}{dt}\|\mu^t-1\|_{L^2}^2 = \cdd\int_{\T^\ds}|\mu^t-1|^2 \Dm^{2-\ds}(\mu^t-1) dx \\
+ 2\cdd\int_{\T^\ds}(\mu^t-1)\Dm^{2-\ds}(\mu^t-1)dx  - \frac{2}{\beta}\int_{\T^\ds}\left|\nabla (\mu^t-1)\right|^2dx.
\end{multline}

If $\ds=1$, then we use H\"older's inequality and fractional Leibniz rule to obtain
\begin{align}
\left|\int_{\T}|\mu^t-1|^2 \Dm(\mu^t-1) dx\right| &= \left|\int_{\T}\Dm\left((\mu^t-1)^2\right)(\mu^t-1) dx\right| \nn\\
&\lesssim \|\nabla\mu^t\|_{L^2} \|\mu^t-1\|_{L^2} \|\mu^t-1\|_{L^\infty} \nn\\
&\lesssim \|\nabla\mu^t\|_{L^2} \|\mu^t-1\|_{L^2} \|\nabla\mu^t\|_{L^1} \nn\\
&\lesssim \|\nabla\mu^t\|_{L^2} \|\mu^t-1\|_{L^2} I(\mu^t)^{\frac{1}{2}} , \label{eq:LrHLST1d1}
\end{align}
where the penultimate line is by Sobolev embedding (remember the endpoint $L^\infty$ estimate if $\ds=1$ holds by {fundamental theorem of calculus}) and the ultimate line is by estimate \eqref{eq:FIgrad} of \cref{lem:FIests}.

If $\ds=2$, then we have
\begin{align}
\left|\int_{\T^\ds}|\mu^t-1|^2 \Dm^{2-\ds}(\mu^t-1) dx\right| = \left|\int_{\T^\ds}  (\mu^t-1)^3\right| &\leq \|\mu^t-1\|_{L^{3}}^{3}.
\end{align}
A direct application of estimate \eqref{eq:fLr+1} of \cref{lem:FIests} to the right-hand side yields an estimate that will not close unless $\|\mu^t-1\|_{L^1}$ is sufficiently small uniformly in $t$. To get around this, we use a vertical displacement trick that seems to be fairly well-known (e.g., see the proofs of \cite[Proposition 3.3]{BDP2006}, \cite[Lemma 2.2]{BCM2008}, \cite[Lemma 2.6]{FM2016}). Given $A>0$, write
\begin{equation}\label{eq:mubmuA}
|\mu^t-1| = \min(|\mu^t-1|,A) + (|\mu^t-1|-A)_+.
\end{equation}
Then by the triangle inequality and convexity of the function $z\mapsto z^{3}$, we have
\begin{align}\label{eq:2dLr1split}
\|\mu^t-1\|_{L^{3}}^{3} \lesssim \| (|\mu^t-1|-A)_{+} \|_{L^{3}}^{3} + A^{3}.
\end{align}
By application of estimate \eqref{eq:fLr+1} of \cref{lem:FIests}, { we get}
\begin{align}
\| (|\mu^t-1|-A)_{+} \|_{L^{3}}^{3} &\lesssim \|(|\mu^t-1|-A)_{+}\|_{L^1}\| \nabla (|\mu^t-1|-A)_{+} \|_{L^{2}}^{2} \nn\\
&\lesssim \|(|\mu^t-1|-A)_{+}\|_{L^1} \|\nabla \mu^t \|_{L^{2}}^{2}, \label{eq:2dLr1A}
\end{align}
where {the} final expression follows from a classical estimate for the gradient of the max of two functions (e.g., see \cite[Corollary 6.18]{LL2001}). Now suppose that $A>2$, so that $|\mu^t-1|\geq A$ implies $|\mu^t| \geq A-1 > 1$. Then,
\begin{align}
\|(|\mu^t-1|-A)_{+}\|_{L^1} &\leq \| \paren*{|\mu^t-1|-A}\indic_{|\mu^t|\geq A-1} \|_{L^1} \nn\\
&\leq \frac{1}{\log(A-1)} \int_{\T^\ds}|\mu^t-1| \log(\mu^t) \indic_{|\mu^t|\geq A-1} dx \nn\\
&\lesssim \frac{1}{\log(A-1)}\paren*{ \int_{\T^\ds}\mu^t\log_+(\mu^t)dx + 1}.
\end{align}
Substituting this estimate into the right-hand side of \eqref{eq:2dLr1A}, then applying the resulting estimate to the right-hand side of \eqref{eq:2dLr1split}, we obtain
\begin{equation}
\|\mu^t-1\|_{L^{3}}^{3}  \lesssim A^{3} +\paren*{ \frac{1}{\log(A-1)}\paren*{\int_{\T^\ds}\mu^t\log_+(\mu^t)dx +1}}\|\nabla \mu^t \|_{L^{2}}^{2}. \label{eq:LrHLSd2fin}
\end{equation}

If $\ds\geq 3$, then we use the decomposition \eqref{eq:mubmuA} and the estimate \eqref{eq:fLrHLS} from \cref{lem:FIests} to instead obtain,
\begin{align}
\left|\int_{\T^\ds}|\mu^t-1|^2 \Dm^{2-\ds}(\mu^t-1) dx\right| &\lesssim  {\left|\int_{\T^\ds}|(|\mu^t-1|-A)_+|^2 \Dm^{2-\ds}((|\mu^t-1|-A)_+) dx\right|+  A^{3} }\nn\\
&\lesssim \|(|\mu^t-1|-A)_+\|_{L^1} \| \nabla(|\mu^t-1|-A)_+ \|_{L^{2}}^2 +A^{3} \nn\\
&\lesssim A^{3} +\paren*{ \frac{1}{\log(A-1)}\paren*{\int_{\T^\ds}\mu^t\log_+(\mu^t)dx + 1}} \|\nabla\mu^t \|_{L^{2}}^{2}, \label{eq:LrHLSd>2fin}
\end{align}
which agrees with our $\ds=2$ estimate.

We consider the remaining term in \eqref{eq:adtLr}. 
If $\ds=1$ then by Cauchy-Schwarz,
\begin{align}
\int_{\T}\left|(\mu^t-1)\Dm(\mu^t-1)\right| dx &\leq \|\mu^t-1\|_{L^2} \|\nabla \mu^t\|_{L^2} \lesssim \|\mu^t-1\|_{L^1}^{\frac12} I(f)^{\frac14} \|\nabla\mu^t\|_{L^2} , \label{eq:LrT21dr=2}
\end{align}
where the final inequality is by Gagliardo-Nirenberg interpolation and estimate \eqref{eq:FIgrad} from \cref{lem:FIests} with $q=1$. If $\ds\geq 2$, then { by H\"older and HLS lemma,}
%\begin{align}
%\left|\int_{\T^\ds}(\mu^t-1)\Dm^{2-\ds}(\mu^t-1)dx\right| \leq \|\mu^t-1\|_{L^2}^2,
%\end{align}
%and if $\ds\geq 3$, then using H\"older's inequality together with the fact that the kernel of $\Dm^{2-\ds}$ is in $L^1$, 
%\begin{align}
%\left|\int_{\T^\ds}(\mu^t-1)\Dm^{2-\ds}(\mu^t-1)dx\right| \leq  \| (\mu^t-1)\|_{L^2} \|\Dm^{2-\ds}(\mu^t-1)\|_{L^2}&\lesssim \|\mu^t-1\|_{L^2}^2. \label{eq:LrT2d>3}
%\end{align}
%By H\"older's inequality, then Hardy-Littlewood-Sobolev, followed by Sobolev embedding, we find
\begin{align} \label{eq:LrT22d}
\left|\int_{\T^\ds}(\mu^t-1)\Dm^{2-\ds}(\mu^t-1)dx\right| \leq \|\mu^t-1\|_{L^{\frac{\ds}{\ds-1}}} \|\Dm^{2-\ds}(\mu^t-1)\|_{L^{\ds}} \lesssim \|\mu^t-1\|_{L^{\frac{\ds}{\ds-1}}}^2 \lesssim I(\mu^t),
\end{align}
where the final inequality is by {Sobolev embedding and} estimate \eqref{eq:FIgrad} from \cref{lem:FIests} with $q=1$.

\medskip
The conclusion of the proof is now a matter of bookkeeping and appropriately choosing $A$. We first dispense with the $\ds=1$ case. Combining the estimates \eqref{eq:adtLr}, \eqref{eq:LrHLST1d1}, \eqref{eq:LrT21dr=2}, we have shown that there is a constant $C>0$, independent of $\be$, such that
\begin{multline}
\frac{d}{dt}\|\mu^t-1\|_{L^2}^2  \leq - \frac{2}{\beta}\int_{\T^\ds}\left|\nabla\mu^t\right|^2dx + C\|\nabla\mu^t\|_{L^2} \|\mu^t-1\|_{L^{2}}I(\mu^t)^{\frac12}\\
+ C\|\mu^t-1\|_{L^1}^{\frac12} I({ \mu^t})^{\frac14} \|\nabla\mu^t\|_{L^2}.
\end{multline}
Since $\|\mu^t-1\|_{L^1} \leq \|\nabla\mu^t\|_{L^1} \lesssim I(\mu^t)^{\frac12}$ by Poincar\'{e} and estimate \eqref{eq:FIgrad} with $q=1$, we find from Young's product inequality that there is a $C'\geq C$ such that
\begin{align}
\frac{d}{dt}\|\mu^t-1\|_{L^2}^2  \leq - \frac{1}{\beta}\left\|\nabla \mu^t\right\|_{L^2}^2 + C'\beta I(\mu^t)\left(\|\mu^t-1\|_{L^2}^2 +1\right).
\end{align}
It follows from the Gr\"onwall-Bellman lemma that for $t\in [0, T_{\max})$,
\begin{align}\label{eq:1dL2gronfin}
 \|\mu^t-1\|_{L^2}^2+1  \leq  \left(\|\mu^{0}-1\|_{L^2}^2+1 \right) e^{C'\beta\int_0^t I(\mu^\tau)d\tau}.
\end{align}
%Since the exponents of the $I(\mu^\tau)$ factors are all $\leq 1$, by H\"older's inequality and \ref{rem:}, all their respective integrals grow at most linearly in time.

Now we consider the $\ds\geq 2$ case. Combining the estimates \eqref{eq:adtLr}, \eqref{eq:LrHLSd2fin}, \eqref{eq:LrHLSd>2fin}, \eqref{eq:LrT22d}, we see that
\begin{multline}\label{eq:2d+L2gronpreA}
\frac{d}{dt}\|\mu^t-1\|_{L^2}^2  \leq  C\paren*{ \frac{1}{\log(A-1)}\paren*{\int_{\T^\ds}\mu^t\log_+(\mu^t)dx +1}} \|\nabla \mu^t \|_{L^{2}}^{2}\\
+C A^{3} + CI(\mu^t) - \frac{2}{\beta}\left\|\nabla\mu^t\right\|_{L^2}^2.
\end{multline}
Using estimate \eqref{eq:posentFED} from \cref{rem:posentFED}, we see that there exist constants $C_{1},C_2>0$, depending on $\ds$, such that
\begin{multline}
C\paren*{ \frac{1}{\log(A-1)}\paren*{\int_{\T^\ds}\mu^t\log_+(\mu^t)dx +1}}  \\
\leq C\Bigg(\frac{1}{\log(A-1)}\paren*{C_{1} + \paren*{1-\frac{\beta}{\bec}}^{-1}\paren*{\Ec_\beta(\mu^{0}) + {C_2}{\beta}}}\Bigg).
\end{multline}
Choose $A>2$ sufficiently large so that the right-hand side equals $\frac{1}{\beta}$. Then inserting this choice for $A$ into \eqref{eq:2d+L2gronpreA}, we obtain
\begin{align}
\frac{d}{dt}\|\mu^t-1\|_{L^2}^2  \leq - \frac{1}{\beta}\left\|\nabla\mu^t\right\|_{L^2}^2 + C'I(\mu^t) +\exp\left(C'\be\paren*{C_{1}' + \paren*{1-\frac{\beta}{\bec}}^{-1}\paren*{\Ec_\beta(\mu^{0}) + {C_2'}{\beta}}}\right),
\end{align}
for $C_1'\geq C_1$, $C_2'\geq C_2$, and $C'\geq C$. Since by the Poincar{\'e} inequality,
\begin{equation}
\left\|\nabla\mu^t\right\|_{L^2}^2 \geq 4\pi^2\|\mu^t-1\|_{L^2}^2,
\end{equation}
{an} application of Gr\"{o}nwall's lemma then yields
\begin{multline}
\|\mu^t-1\|_{L^2}^2 \leq e^{-\frac{4\pi^2}{\beta}t}\|\mu^0-1\|_{L^2}^2 + C'\int_0^t  e^{-\frac{4\pi^2}{\beta}(t-\tau)} I(\mu^\tau)d\tau\\
+\exp\left(C'\be\paren*{C_{1}' + \paren*{1-\frac{\beta}{\bec}}^{-1}\paren*{\Ec_\beta(\mu^{0}) + {C_2'}{\beta}}}\right)\int_0^t e^{-\frac{4\pi^2}{\beta}(t-\tau)}d\tau,
\end{multline}
which, upon simplification of the right-hand side, completes the proof of the lemma.
\end{proof}

\begin{comment}
Fix some $t_0 \in (0,T)$. Then using equation \eqref{eq:milda} time translated by $t_0$, the triangle and Minkowski inequalities, and the smoothing of the heat kernel, we find
\begin{equation}
\forall t\in [t_0,T], \qquad \|\mu^t\|_{L^{\frac{2\ds}{2\ds-1}}} \lesssim (t-t_0)^{-\frac14}\|\mu^{t_0}\|_{L^1} + \int_{t_0}^t (t-\tau)^{-\frac{3}{4}} d\tau
\end{equation}
\end{comment}

\subsection{Relaxation to equilibrium}\label{ssec:globR}
\cref{lem:aLrbnd} tells us that if $\be<\bec$, then any probability density solution to equation \eqref{eq:lima} must exist globally and, moreover, by \cref{lem:greg}, is classical for positive times. But it does not say anything about the asymptotic behavior, {nor} rule out ``infinite-time blowup.''  We now aim to show that this cannot happen.%and, in fact, $\|\mu^t-1\|_{L^r}$ eventually becomes nonincreasing and, in fact, decreases exponentially fast to zero.

The first result is $L^1$ (qualitative) convergence to a steady state $\mu_\beta$ as $t\rightarrow\infty$ for $\be<\bec$. {If $\ds=2$, then $\mu_\be$ is uniform. While if $\ds\neq2$, then $\mu_\be$ is uniform provided $\be$ is below the uniqueness threshold $\beu$.}

\begin{lemma}\label{lem:mutasyss}
Let $\ds\geq 1$ and $\beta<\bec$. There exists a $C^\infty$ solution $\mu_{\beta}$  to the equation
\begin{equation}\label{eq:muinfeq}
\mu_{\beta} = \dfrac{e^{\beta\g\ast\mu_{\beta}}}{\int_{\T^\ds}e^{\beta\g\ast\mu_\beta}dx},
\end{equation}
such that $H(\mu^t\vert\mu_{\beta})\rightarrow 0$ as $t\rightarrow\infty$. In particular, $\mu_{\beta}$ is a stationary solution to \eqref{eq:lima}. {For $\be<\beu$, we have $\mu_\be=\muu$. In particular, if $\ds=2$, then for all $\be< \bec$, we have $\mu_\be=\muu$.}
\end{lemma}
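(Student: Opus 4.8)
The plan is to exploit the gradient-flow structure of \eqref{eq:lima}. Since $\be<\bec$, \cref{lem:aLrbnd} gives global existence and \cref{lem:greg} gives instantaneous smoothing, so $\mu\in C^\infty((0,\infty)\times\T^\ds)$. By \cref{lem:FEcoer} the free energy $\Ec_\be$ is bounded from below, and by the dissipation identity \eqref{eq:FEid} it is nonincreasing along the flow; hence $\Ec_\be(\mu^t)$ decreases to a finite limit $\Ec_\infty$ and $\int_0^\infty\Dc_\be(\mu^\tau)\,d\tau<\infty$. In particular there is $t_n\to\infty$ with $\Dc_\be(\mu^{t_n})\to0$.

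First I would extract the limiting state along $\{t_n\}$. By \cref{lem:FEests} the quantity $\int_{\T^\ds}\log_+(\mu^{t_n})\,d\mu^{t_n}$ is bounded in $n$, so \cref{lem:DFI} bounds the Fisher information $I(\mu^{t_n})$; combined with $\|\mu^{t_n}\|_{L^1}=1$ and \eqref{eq:FIgrad} (with $q=1$) this bounds $\{\mu^{t_n}\}$ in $W^{1,1}(\T^\ds)$, hence (Rellich--Kondrachov) a subsequence converges to some $\mu_\be\in\P_{ac}(\T^\ds)$ strongly in $L^q$, $q\in(1,\tfrac{\ds}{\ds-1})$, and a.e. Since $\nabla\g\ast\mu^{t_n}\to\nabla\g\ast\mu_\be$ in $L^q$ (using $\g\in L^{q'}$, resp.\ boundedness of the Hilbert transform for $\ds=1$) and the Benamou--Brenier functional $(\rho,w)\mapsto\int|w|^2/\rho$ is jointly convex and weakly lower semicontinuous, the vanishing of $\Dc_\be(\mu^{t_n})$ forces $\nabla\mu_\be=\be\mu_\be\nabla\g\ast\mu_\be$; thus $\mu_\be e^{-\be\g\ast\mu_\be}$ is constant, and as $\int\mu_\be=1$ this gives $\mu_\be=e^{\be\g\ast\mu_\be}/Z_\be>0$, i.e.\ \eqref{eq:introKM}. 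Bootstrapping with $\Dm^{\ds}(\g\ast\mu_\be)=\cdd(\mu_\be-1)$ (gaining $\ds$ derivatives per step) makes $\mu_\be\in C^\infty$, a stationary solution of \eqref{eq:lima}.

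Both terms of $\Ec_\be$ are continuous along $\{\mu^{t_n}\}$: the interaction term because $\g\ast\mu^{t_n}\to\g\ast\mu_\be$ in $L^\infty$, and the entropy because the uniform $L^p$ bound for some $p>1$ (from $W^{1,1}\hookrightarrow L^p$) makes $\mu^{t_n}\log_+\mu^{t_n}$ uniformly integrable (Vitali), its negative part being bounded. Hence $\Ec_\be(\mu^{t_n})\to\Ec_\be(\mu_\be)$, so $\Ec_\infty=\Ec_\be(\mu_\be)$. Using $\log\mu_\be=\be\g\ast\mu_\be-\log Z_\be$ one then checks the identity
\[
H(\mu^t\vert\mu_\be)=\be\bigl(\Ec_\be(\mu^t)-\Ec_\be(\mu_\be)\bigr)+\tfrac{\be}{2}\int_{(\T^\ds)^{2}}\g(x-y)\,d(\mu^t-\mu_\be)^{\otimes 2}(x,y),
\]
in which both summands are nonnegative, the second being $\tfrac{\be}{2}\cdd\|\mu^t-\mu_\be\|_{\dot H^{-\ds/2}}^2$ (as $\hat\g(k)\ge 0$ and $\mu^t-\mu_\be$ has zero mean). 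Since $\Ec_\be(\mu^t)\to\Ec_\be(\mu_\be)$, the statement reduces to proving $\|\mu^t-\mu_\be\|_{\dot H^{-\ds/2}}\to0$ for the \emph{whole} family.

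This last step is the main obstacle, and is genuinely subtle exactly when \eqref{eq:introKM} admits non-uniform solutions, i.e.\ possibly when $\be\in[\beu,\bec)$. I would study the $\omega$-limit set $\Omega$ of the trajectory in $\dot H^{-\ds/2}$: it is nonempty, compact, and connected (compactness from a uniform-in-time bound in $L^q$, obtained from \cref{lem:aLrbnd} together with the integrability of $\Dc_\be$, at least for $\ds\ge2$), and I claim every point of $\Omega$ is again a solution of \eqref{eq:introKM} at free-energy level $\Ec_\infty$. Indeed, given $s_n\to\infty$ with $\mu^{s_n}\to\tilde\mu$, choose $s_n'\in[s_n,s_n+1]$ with $\Dc_\be(\mu^{s_n'})\to0$; since $\|\p_t\mu\|$ in a sufficiently negative Sobolev norm is $\lesssim\Dc_\be(\mu)^{1/2}$, interpolation against the $L^q$ bound gives $\mu^{s_n'}\to\tilde\mu$ in $\dot H^{-\ds/2}$ as well, and rerunning the compactness argument of the second paragraph on $\{\mu^{s_n'}\}$ exhibits $\tilde\mu$ as a $C^\infty$ solution of \eqref{eq:introKM} with $\Ec_\be(\tilde\mu)=\Ec_\infty$. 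Connectedness of $\Omega$ together with the fact that solutions of \eqref{eq:introKM} are isolated modulo the translation group of $\T^\ds$ then confines $\Omega$ to a single translate-orbit, and convergence of $\mu^t$ to one point of that orbit follows either directly from uniqueness of \eqref{eq:introKM} when it holds or, in general, from a convergence-to-equilibrium argument for the $W_2$-gradient flow of Lojasiewicz--Simon type, the nonlinearity $\mu\mapsto e^{\be\g\ast\mu}$ being real-analytic. Finally, when $\be<\beu$ this difficulty is vacuous --- $\muu$ is the unique solution of \eqref{eq:introKM}, so $\mu_\be=\muu$ --- and for $\ds=2$ this covers all $\be<\bec$ since $\beu=\bec=4$ by \cite{LL2006,GM2019,GGHL2021}.
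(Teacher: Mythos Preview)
Your overall route matches the paper's sketch: free-energy dissipation plus the entropy/Fisher-information bounds of \cref{lem:FEests} and \cref{lem:DFI} give subsequential compactness in $L^q$; vanishing of $\Dc_\be$ along the subsequence identifies the limit $\mu_\be$ as a solution of \eqref{eq:muinfeq}; bootstrapping yields smoothness; and separate convergence of the entropy and interaction terms gives $\Ec_\be(\mu^{t_n})\to\Ec_\be(\mu_\be)$. Your identity $H(\mu^t\vert\mu_\be)=\be\bigl(\Ec_\be(\mu^t)-\Ec_\be(\mu_\be)\bigr)+\tfrac{\be\cdd}{2}\|\mu^t-\mu_\be\|_{\dot H^{-\ds/2}}^2$ makes the reduction to full-time $\dot H^{-\ds/2}$ convergence particularly transparent; the paper simply defers the analogous step to \cite[Section~4]{BDP2006}.

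The divergence is in how full-time convergence (independence of the subsequence) is argued. The paper only asserts that ``the limit $\mu_\be$ is independent of the sequence $t_k$'' and omits the details. You correctly flag this as the nontrivial part when $\be\ge\beu$ and translate-orbits of nonuniform stationary states may appear, and you propose an $\omega$-limit-set argument resting on two claims---that solutions of \eqref{eq:muinfeq} are isolated modulo translation, and that a Lojasiewicz--Simon principle applies to this $W_2$-gradient flow---neither of which is established in the paper or in your proposal. So in that regime both arguments leave the same lacuna, the paper by omission, yours by appeal to unverified machinery. For $\be<\beu$ (the only case actually invoked downstream, e.g.\ in \cref{lem:aL2exp}), uniqueness of \eqref{eq:muinfeq} forces the $\omega$-limit set to be a singleton and everything you wrote is complete; the $\ds=2$ case then follows from $\beu=\bec=4$ exactly as you indicate via \cite{LL2006,GM2019,GGHL2021}.
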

\begin{proof}
The convergence of $\mu^t$ to a solution $\mu_{\beta}$ of \eqref{eq:muinfeq} as $t\rightarrow\infty$ follows from a slight variation on \cite[Section 4]{BDP2006}. Essentially, one uses the decreasing property of the free energy and compactness coming from the entropy bound to prove the existence of a limit $\mu_{\beta}$ along a subsequence of times $t_k\rightarrow\infty$. One can then show that {the}  dissipation functional must vanish at $\mu_{\beta}$, which allows one to show that it solves \eqref{eq:muinfeq}. One can then show that both the energy and entropy in $\Ec_\beta(\mu^{t_k})$ converge to their corresponding values for $\mu_{\beta}$, as $k\rightarrow\infty$, and that the limit $\mu_{\beta}$ is independent of the sequence $t_{k}$.  We omit the details.

The regularity of $\mu_{\beta}$ is not explicitly considered in \cite[Section 4]{BDP2006}, but this follows from standard bootstrap arguments, using equation \eqref{eq:muinfeq} and the $L\log L$ bound coming from the free energy. Strict positivity is then immediate since $\g\ast\mu_{\beta}$ is bounded.

That $\mu_{\beta} =\muu$ if $\ds=2$ follows from the works \cite{LL2006, GM2019, GGHL2021} showing uniqueness of solutions to \eqref{eq:muinfeq} for inverse temperatures $\beta\leq \bec$, which is sharp. For sufficiently small $\be$ in any dimension $\ds\geq 1$, uniqueness of solutions to \eqref{eq:muinfeq} is proven in \cref{prop:ssunq} below.
\end{proof}

\begin{remark}
The uniqueness of solutions to equation \eqref{eq:muinfeq} is equivalent to the assertion that every solution $\mu^t$ to \eqref{eq:lima} converges as $t\rightarrow\infty$ to the uniform distribution. Indeed, the forward implication is clear. For the reverse implication, we can take a solution $\mu_{\beta}$ to \eqref{eq:muinfeq} as our initial datum $\mu^0$ for \eqref{eq:lima} and evolve it forward. Since $\mu_{\beta}$ is a stationary solution to \eqref{eq:lima}, this then yields that $\mu_{\beta}\equiv 1$. 

Knowing uniqueness for equation \eqref{eq:muinfeq} then implies that there is a unique minimizer of $\Ec_\beta$ given by the uniform distribution, again assuming that $\beta<\bec$.\footnote{If $\ds=2$, then the existence of a minimizer is also known in the critical case $\beta=\bec$ \cite{DJL1997, NT1998}.} Indeed, this can either be seen by taking the Euler-Lagrange equation for $\Ec_\beta$ or by using the monotonicity of the free energy together with the fact that all $t\rightarrow\infty$ limits are the uniform distribution. In particular, since $\Ec_\beta(\muu)=0$, uniqueness implies that the minimum value of $\Ec_\beta$ is zero. The minimal value of $\Ec_\beta$ is discussed in more detail in \cref{ssec:Instabssunq}.
\end{remark}

With \cref{lem:mutasyss} in hand, we may revisit the proof of \cref{lem:aLrbnd} in to obtain an exponential-in-time $L^2$ decay estimate. The reader will recall that $\bels = \frac{(2\pi)^\ds}{\cdd}$, where $\cdd$ is defined in \eqref{eq:gdefa}.

\begin{lemma}\label{lem:aL2exp}
Let $\mu$ be a solution to \eqref{eq:lima}. Suppose that $\beta<\bels$. Then there exists constants $\d_\be,{c}_{\be}>0$ depending on $\ds,\beta$ such that if for some $T_0\geq 0$,
\begin{equation}\label{eq:aL2expcond}
 \|\mu^{t}-1\|_{L^1} \leq \d_{\beta}, \qquad \forall t\geq T_0,
\end{equation}
then
\begin{equation}\label{eq:aL2expbnd}
\forall t\geq T_0, \qquad \|\mu^t -1\|_{L^2} \leq \|\mu^{T_0}-1\|_{L^2}e^{-{c}_{\be}(t-T_0)}.
\end{equation}
Additionally, if there exists $T_0\geq 0$, such that
\begin{align}\label{eq:aL2expcond'}
 \|\mu^{T_0}-1\|_{L^2} \leq \d_{\beta},
\end{align}
then \eqref{eq:aL2expbnd} holds for all $t\geq T_0$.
\end{lemma}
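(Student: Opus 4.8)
The plan is to run a direct $L^2$ energy estimate on the fluctuation $h^t \coloneqq \mu^t - \muu = \mu^t - 1$, which has zero mean. The guiding principle is that the linearization of \eqref{eq:lima} at $\muu$ is strictly dissipative in $L^2$ precisely when $\beta < \bels$, with dissipation rate proportional to $\frac1\beta - \frac1\bels$, while the remaining genuinely nonlinear term is cubic in $h^t$ and carries exactly one spare factor of $\|h^t\|_{L^1}$ that the smallness hypothesis renders harmless. Starting from the differential identity \eqref{eq:adtLr} established in the proof of \cref{lem:aLrbnd} --- legitimate for $t>0$ since solutions are then classical by \cref{lem:greg}, and at $t=T_0=0$ after approximation by the classical solutions of \cref{rem:aclass} --- we have
\begin{equation*}
\frac{d}{dt}\|h^t\|_{L^2}^2 = \left(2\cdd\int_{\T^\ds} h^t\,\Dm^{2-\ds}h^t\,dx - \frac{2}{\beta}\|\nabla h^t\|_{L^2}^2\right) + \cdd\int_{\T^\ds}(h^t)^2\,\Dm^{2-\ds}h^t\,dx,
\end{equation*}
the bracket being the quadratic (linearized) contribution and the last integral the cubic one.

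For the bracket I would pass to Fourier series: since $(2\pi|k|)^{2-\ds} \le (2\pi)^{-\ds}(2\pi|k|)^2$ for every $k \in \Z^\ds \setminus \{0\}$ and every $\ds \ge 1$, Plancherel gives $\int_{\T^\ds} h^t\,\Dm^{2-\ds}h^t\,dx = \sum_{k \ne 0}(2\pi|k|)^{2-\ds}|\widehat{h^t}(k)|^2 \le (2\pi)^{-\ds}\|\nabla h^t\|_{L^2}^2$, so, recalling $\bels = (2\pi)^\ds/\cdd$,
\begin{equation*}
2\cdd\int_{\T^\ds} h^t\,\Dm^{2-\ds}h^t\,dx - \frac{2}{\beta}\|\nabla h^t\|_{L^2}^2 \;\le\; -2\left(\frac{1}{\beta} - \frac{1}{\bels}\right)\|\nabla h^t\|_{L^2}^2,
\end{equation*}
with strictly positive coefficient exactly because $\beta < \bels$. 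For the cubic term I would invoke the estimates already proved in \cref{lem:FIests}: if $\ds \ge 3$, estimate \eqref{eq:fLrHLS} gives $\int_{\T^\ds}|h^t|^2|\Dm^{2-\ds}h^t|\,dx \lesssim \|h^t\|_{L^1}\|\nabla h^t\|_{L^2}^2$; if $\ds = 2$, then $\Dm^{2-\ds}h^t = h^t$ and \eqref{eq:fLr+1} with $r = 2$ gives $\|h^t\|_{L^3}^3 \lesssim \|h^t\|_{L^1}\|\nabla h^t\|_{L^2}^2$; if $\ds = 1$, then $\Dm^{2-\ds} = \Dm$, and Cauchy--Schwarz together with $\|\Dm h^t\|_{L^2} = \|\nabla h^t\|_{L^2}$ and the one-dimensional Gagliardo--Nirenberg bound $\|h^t\|_{L^4}^2 \lesssim \|h^t\|_{L^1}\|\nabla h^t\|_{L^2}$ yield the same. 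In all cases the cubic term is at most $C_\ds\|h^t\|_{L^1}\|\nabla h^t\|_{L^2}^2$, with $C_\ds$ depending only on $\ds$. Choosing $\delta_\beta$ so that $C_\ds\delta_\beta \le \tfrac12\left(\frac1\beta - \frac1\bels\right)$ (hence depending only on $\ds,\beta$), the hypothesis \eqref{eq:aL2expcond} forces, for all $t \ge T_0$,
\begin{equation*}
\frac{d}{dt}\|h^t\|_{L^2}^2 \;\le\; -\left(\frac{1}{\beta} - \frac{1}{\bels}\right)\|\nabla h^t\|_{L^2}^2 \;\le\; -4\pi^2\left(\frac{1}{\beta} - \frac{1}{\bels}\right)\|h^t\|_{L^2}^2
\end{equation*}
by the Poincar\'e inequality on $\T^\ds$, and Gr\"onwall yields \eqref{eq:aL2expbnd} with $c_\beta = 2\pi^2\left(\frac1\beta - \frac1\bels\right)$.

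For the second assertion I would bootstrap off the same inequality. Since $\|\cdot\|_{L^1} \le \|\cdot\|_{L^2}$ on $\T^\ds$, shrinking $\delta_\beta$ in \eqref{eq:aL2expcond'} slightly if necessary we have $\|h^{T_0}\|_{L^1} \le \delta_\beta$, so the set of $T \ge T_0$ with $\|h^t\|_{L^1} \le \delta_\beta$ throughout $[T_0, T]$ is nonempty and relatively closed; on any such interval the displayed differential inequality applies, whence $t \mapsto \|h^t\|_{L^2}$ is nonincreasing and $\|h^t\|_{L^1} \le \|h^t\|_{L^2} \le \|h^{T_0}\|_{L^2} \le \delta_\beta$, with room to spare since $\delta_\beta$ was taken a touch below the threshold of the first part. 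Continuity of $t \mapsto \|h^t\|_{L^2}$ (from the smoothing of \cref{lem:greg}, the case $T_0=0$ being reduced to $T_0>0$ by propagating the $L^2$ bound over a short initial interval via the local theory) then makes this set open, hence equal to $[T_0, \infty)$, and the first assertion applies; the uniform $L^2$ bound obtained along the way also bounds $\|\mu^t\|_{L^{2\ds/(2\ds-1)}}$ and thus rules out finite-time blow-up via \cref{rem:ablowup}, so the solution indeed extends to all $t \ge T_0$. I do not expect a single hard step here: the substance is the bookkeeping that the linearized dissipation carries exactly the coefficient $\frac1\beta - \frac1\bels$ (so $\bels$ appears sharply) and that the nonlinearity, already tamed dimension-by-dimension in \cref{lem:FIests} and in the proof of \cref{lem:aLrbnd}, has precisely one factor of $\|h^t\|_{L^1}$ to trade against it; the only mildly delicate points are the justification of the energy identity at $T_0=0$ and the open/closed dichotomy in the bootstrap, both routine.
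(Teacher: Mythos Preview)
Your proposal is correct and follows essentially the same route as the paper: start from the $L^2$ energy identity \eqref{eq:adtLr}, use Plancherel to extract the sharp coefficient $\frac{1}{\beta}-\frac{1}{\bels}$ from the linearized terms, bound the cubic term dimension-by-dimension by $C_\ds\|h^t\|_{L^1}\|\nabla h^t\|_{L^2}^2$ via \cref{lem:FIests}, then choose $\delta_\beta$ to absorb the cubic contribution and close by Poincar\'e and Gr\"onwall; the second assertion is handled by the same continuity/bootstrap argument. Your treatment is slightly more explicit about the justification at $T_0=0$ and about ruling out finite-time blow-up, but substantively the arguments coincide.
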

\begin{proof}
By time translation, we may assume without loss of generality that $T_0 = 0$. Recalling \eqref{eq:adtLr},
\begin{multline}\label{eq:adtL2'}
\frac{d}{dt}\|\mu^t-1\|_{L^2}^2 = \cdd\int_{\T^\ds}|\mu^t-1|^2\Dm^{2-\ds}(\mu^t-1)dx + 2\cdd\int_{\T^\ds}(\mu^t-1)\Dm^{2-\ds}(\mu^t-1)dx \\
-\frac{2}{\beta}\int_{\T^\ds}|\nabla\mu^t|^2dx.
\end{multline}

Consider the first term on the right-hand side of \eqref{eq:adtL2'}. Suppose $\ds=1$. Similar to the reasoning leading to \eqref{eq:LrHLST1d1}, we see that
\begin{align}
\left|\int_{\T}|\mu^t-1|^2\Dm(\mu^t-1)dx\right| &\leq \|\Dm (\mu^t-1)\|_{L^2} \|\mu^t-1\|_{L^4}^2 \leq {C}\|\nabla \mu^t\|_{L^2}^2 \|\mu^t-1\|_{L^1}.
\end{align}
If $\ds\geq 2$, then using the estimate \eqref{eq:fLr+1} or \eqref{eq:fLrHLS} from \cref{lem:FIests},
\begin{align}
\left|\int_{\T^\ds}|\mu^t-1|^2 \Dm^{2-\ds}(\mu^t-1) dx\right| &\leq {C} \|\mu^t -1\|_{L^1} \|\nabla\mu^t\|_{L^2}^2.
\end{align}

Consider the second and third terms on the right-hand side of \eqref{eq:adtL2'}. By Plancherel's theorem,
\begin{align}
2\cdd\int_{\T^\ds}(\mu^t-1)\Dm^{2-\ds}(\mu^t-1)dx = 2\cdd \|\mu^t\|_{\dot{H}^{\frac{2-\ds}{2}}}^2 {\leq \frac{2\cdd}{(2\pi)^{\ds}} \|\nabla\mu^t\|_{L^2}^2}.
\end{align}
So if $\frac{1}{\be} > \frac{(2\pi)^\ds}{\cdd}$ (i.e., $\be<\bels$), then
\begin{equation}\label{eq:diss}
2\cdd\int_{\T^\ds}(\mu^t-1)\Dm^{2-\ds}(\mu^t-1)dx -\frac{2}{\be}\int_{\T^\ds}|\nabla\mu^t|^2dx \leq {-2\left(\frac1\be-\frac{1}{\bels}\right) \|\nabla\mu^t\|_{L^2}^2} < 0.
\end{equation}
{After a little bookkeeping, we have shown that
\begin{align}\label{eq:dtL2preL1small}
\frac{d}{dt}\|\mu^t-1\|_{L^2}^2 \leq \Big(C\cdd\|\mu^t -1\|_{L^1} - 2\Big(\frac1\be-\frac{1}{\bels}\Big)\Big) \|\nabla\mu^t\|_{L^2}^2.
\end{align}
}

{Now suppose that $\|\mu^t-1\|_{L^1}$ is sufficiently small so as to satisfy
\begin{equation}
\frac2\be - C\cdd\|\mu^t-1\|_{L^1}  > \frac{2}{\bels}
\end{equation}
or equivalently,
\begin{equation}\label{eq:mutL1smallcon}
\|\mu^t-1\|_{L^1} <  \d_{\be} \coloneqq \frac{2}{\cdd C}\paren*{\frac1\be - \frac1\bels}.
\end{equation}
Then
\begin{align}\label{eq:aL2rhsbnd}
\Big(C\cdd\|\mu^t -1\|_{L^1} - 2\Big(\frac1\be-\frac{1}{\bels}\Big)\Big) \|\nabla\mu^t\|_{L^2}^2 &\leq-\underbrace{\paren*{2\paren*{\frac1\be - \frac1\bels}  - \delta_\be C\cdd }}_{>0}\|\nabla\mu^t\|_{L^2}^2  \nn\\
&\leq -4\pi^2\paren*{2\paren*{\frac1\be - \frac1\bels}  - \delta_\be C\cdd }\|\mu^t-1\|_{L^2}^2.
\end{align}
}
Applying Gr\"onwall's lemma, we obtain the desired conclusion \eqref{eq:aL2expbnd}.

We now use a continuity argument to show that if $\|\mu^0-1\|_{L^2}$ satisfies the smallness condition \eqref{eq:mutL1smallcon}, then this condition holds for all $t\geq 0$. Continuity in time of the solution implies that either $\|\mu^t-1\|_{L^2} < \d_{\be}$ for every $t\geq 0$, or there exists a minimal time $T_*>0$ such that
\begin{align}
\forall t\in [0,T_*), \quad \|\mu^t-1\|_{L^2} < \d_\be \qquad \text{and} \qquad  \|\mu^{T_*}-1\|_{L^2} = \d_{\be}.
\end{align}
Since $\|\mu^t-1\|_{L^1}\leq \|\mu^t-1\|_{L^2}$, we see from that \eqref{eq:aL2rhsbnd} that $\|\mu^t-1\|_{L^2}$ is decreasing on $[0,T_*)$, which implies that $\|\mu^{T_*}-1\|_{L^2} \leq \|\mu^0-1\|_{L^2} < \d_\be$, a contradiction. Hence, no such $T_*$ can exist, and since $\|\mu^t-1\|_{L^2}\geq \|\mu^t-1\|_{L^1}$, we obtain the desired conclusion.
\end{proof}

We may combine \Cref{lem:aLrbnd,lem:aL2exp} to obtain the following global $L^2$ bound.

\begin{lemma}\label{lem:L2glob}
Let $\mu$ be a solution to \eqref{eq:lima}, and suppose that $\be<\min(\bec,\bels)$. Let $T_0\geq 0$ be the minimal time such that $\|\mu^{t}-1\|_{L^1} \leq \d_{\be}$ for all $t\geq T_0$, where $\d_\be$ is as in \cref{lem:aL2exp}. Then
\begin{align}\label{eq:L2globRHS}
\forall t\geq 0, \qquad \|\mu^t-1\|_{L^2}^2 \leq \W(\be,\|\mu^0-1\|_{L^2}, \Ec_\beta(\mu^0), e^{T_0})e^{-2{c}_{\be}(t-T_0)_+},
\end{align}
where ${c}_{\be}>0$ is an in \cref{lem:aL2exp} and $\W: [0,\infty)^4 \rightarrow [0,\infty)$ is a continuous, increasing function, which vanishes if any of its arguments are zero.
\end{lemma}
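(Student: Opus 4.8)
The plan is to split the time axis at $T_0$: for $t\le T_0$ I would invoke the local-in-time control provided by \cref{lem:aLrbnd}, and for $t\ge T_0$ the exponential decay of \cref{lem:aL2exp}. Both tools apply in the present range of $\be$: the inequality $\be<\min(\bec,\bels)\le\bec$ is what \cref{lem:aLrbnd} and \cref{rem:FIlinbnd} require, while $\be<\min(\bec,\bels)\le\bels$ makes available the constants $\d_\be,c_\be>0$ of \cref{lem:aL2exp}. Moreover, by the very definition of $T_0$, the standing hypothesis \eqref{eq:aL2expcond} of \cref{lem:aL2exp} holds with this value of $T_0$, so \eqref{eq:aL2expbnd} is available for all $t\ge T_0$.

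First I would record a short-time bound. By \cref{lem:aLrbnd} together with \cref{rem:FIlinbnd}, $\int_0^t I(\mu^\tau)\,d\tau$ grows at most linearly in $t$, with a rate that is a continuous, increasing function of $(\be,\Ec_\beta(\mu^0))$ (up to $\ds$-dependent constants). Inserting this into \eqref{eq:aLrbnd1d} (when $\ds=1$) or \eqref{eq:aLrbnd2d+} (when $\ds\ge2$), and bounding the decaying exponential factors by $1$ in the $\ds\ge2$ bound, gives for all $t\in[0,T_0]$ an estimate $\|\mu^t-1\|_{L^2}^2\le\W(\be,\|\mu^0-1\|_{L^2},\Ec_\beta(\mu^0),e^{T_0})$, for a suitable $\W:[0,\infty)^4\to[0,\infty)$ that is continuous, increasing, and vanishes when any argument is zero. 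The passage from a dependence on $T_0$ to one on $e^{T_0}$ is made through $e^{ct}\le(e^{T_0})^{c}$ for $t\in[0,T_0]$, which is exactly what is needed to digest the $\exp\!\big(c\be\int_0^tI\big)$ growth of the one-dimensional bound \eqref{eq:aLrbnd1d}. In particular, this controls $\|\mu^{T_0}-1\|_{L^2}^2$ by $\W(\be,\|\mu^0-1\|_{L^2},\Ec_\beta(\mu^0),e^{T_0})$.

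Finally, for $t\ge T_0$, \eqref{eq:aL2expbnd} gives $\|\mu^t-1\|_{L^2}^2\le\|\mu^{T_0}-1\|_{L^2}^2\,e^{-2c_\be(t-T_0)}$; combining this with the short-time bound evaluated at $t=T_0$ and observing $t-T_0=(t-T_0)_+$ on this range yields \eqref{eq:L2globRHS} for $t\ge T_0$. For $t\in[0,T_0]$ one has $(t-T_0)_+=0$, so $e^{-2c_\be(t-T_0)_+}=1$ and \eqref{eq:L2globRHS} is just the short-time bound already established; taking the $\W$ above then finishes the proof. The hardest part, such as it is, will be the bookkeeping: collecting the $\be$- and $\Ec_\beta(\mu^0)$-dependent constants coming out of \cref{lem:aLrbnd}, \cref{lem:aL2exp}, and \cref{rem:FIlinbnd} into a single function $\W$ with the advertised monotonicity, continuity, and vanishing properties, the one mildly delicate point being that in dimension $\ds=1$ the growth in \eqref{eq:aLrbnd1d} is exponential in $t$, so one must use the at-most-linear growth of $\int_0^tI(\mu^\tau)\,d\tau$ from \cref{rem:FIlinbnd} to keep it absorbed into the $e^{T_0}$ slot of $\W$.
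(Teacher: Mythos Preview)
Your proposal is correct and takes essentially the same approach as the paper, which in fact gives no explicit proof but simply states that the lemma follows by combining \cref{lem:aLrbnd} and \cref{lem:aL2exp}. Your write-up spells out precisely this combination: use \cref{lem:aLrbnd} together with \cref{rem:FIlinbnd} on $[0,T_0]$, then feed the resulting bound on $\|\mu^{T_0}-1\|_{L^2}$ into the exponential decay \eqref{eq:aL2expbnd} for $t\ge T_0$.
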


We can now combine the global $L^2$ bound \cref{lem:L2glob} with the local smoothing of \Cref{lem:hypequiv,lem:greg} to obtain exponential-in-time decay of arbitrarily large Sobolev norms. This then completes the proof of \cref{thm:mainlim}.

\begin{lemma}\label{lem:nabLinfexp}
Let $\ds\geq 1$. If $\be<\min(\bec,\bels)$, let $T_0$ be the minimal time such that $\|\mu^{t}-1\|_{L^1} \leq \d_{\be}$ for all $t\geq T_0$. Then for any integer $n\geq 1$, there exists a $c>0$, depending on $\ds,\be$ and a function ${\W}_n: [0,\infty)^5\rightarrow [0,\infty)$, which is continuous, increasing, and vanishes if and of its arguments are zero, such that for any $0<t_0<t_1< t$,
\begin{multline}\label{eq:nabLinfexp}
\|\nabla^{\otimes n}\mu^t\|_{L^\infty} \leq \W_n\Big(\beta, \|\mu^{t_0}-1\|_{L^2}, \Ec_\beta(\mu^0), e^{T_0},  \min((t_1-t_0)/\beta,1)^{-1}\Big)\\
\times \min((t-t_1)/\beta,1)^{-\frac{2n+\ds}{2}}e^{-c(t-T_0)_+}.
\end{multline}

{Now suppose $\be<\bels$. If $\|\mu^0-1\|_{L^2}\leq \d_\be$, then for any $0<t_1< t$, 
\begin{align}
\|\nabla^{\otimes n}\mu^t\|_{L^\infty} \leq \tl{\W}_n\Big(\be, \|\mu^0-1\|_{L^2}, \min(t_1/\beta,1)^{-1}\Big)\min((t-t_1)/\beta,1)^{-\frac{2n+\ds}{2}} e^{-ct},
\end{align}\label{eq:nabLinfexp'}
where $\tl{\W}_n: [0,\infty)^3\rightarrow [0,\infty)$ is a continuous, increasing function, vanishing if any of its arguments are zero.}
\end{lemma}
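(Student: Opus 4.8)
The idea is to upgrade the exponential $L^2$ relaxation already in hand (\cref{lem:L2glob} for \eqref{eq:nabLinfexp}, \cref{lem:aL2exp} with $T_0=0$ for the final statement \eqref{eq:nabLinfexp'}) to decay of arbitrarily high Sobolev norms, by interpolating it against the instantaneous-smoothing estimates \cref{lem:hypequiv} and \cref{lem:greg}. The only genuinely new step is a Gagliardo--Nirenberg inequality on $\T^\ds$ converting ``high derivatives bounded $+$ $L^2$ distance decaying exponentially'' into ``each fixed-order derivative decaying exponentially.''

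First I would bound the high derivatives. Fix an integer $N>n$, say $N=n+1$. Since $\mu$ is a probability density, $\|\mu^{t_0}\|_{L^1}=1$, and \cref{lem:hypequiv} (applied on windows based at $t_0$ for times close to $t_1$, and on windows of length $\beta$ thereafter, using autonomy of the equation) gives $\sup_{\tau\ge t_1}\|\mu^\tau\|_{L^\infty}\lesssim_{\ds,\be}\min((t_1-t_0)/\beta,1)^{-\ds/2}$, a bound uniform in $t$. Feeding this into \cref{lem:greg}, time-translated to the window $[t_1,t]$ (and applied on $[t-\beta,t]$ when $t-t_1>\beta$, so that no spurious large-time factor appears), yields
\[
\|\nabla^{\otimes N}\mu^t\|_{L^\infty}\le \min((t-t_1)/\beta,1)^{-N/2}\,\W_N\!\Big(\be,\ C_{\ds,\be}\min((t_1-t_0)/\beta,1)^{-\ds/2}\Big)=:M(t),
\]
with $M(t)$ uniformly bounded for $t-t_1$ bounded away from $0$. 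For \eqref{eq:nabLinfexp'}, where in addition $\|\mu^0-1\|_{L^2}\le\delta_\be$, the same reasoning with $t_0\downarrow0$ replaces $\min((t_1-t_0)/\beta,1)$ by $\min(t_1/\beta,1)$.

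Next I would bring in the $L^2$ decay. Since $\mu^t-1$ has zero mean on $\T^\ds$, Gagliardo--Nirenberg interpolation gives, with $\theta=\frac{2n+\ds}{2N+\ds}\in(0,1)$,
\[
\|\nabla^{\otimes n}(\mu^t-1)\|_{L^\infty}\ \lesssim_{\ds,n,N}\ \|\mu^t-1\|_{L^2}^{1-\theta}\,\|\nabla^{\otimes N}(\mu^t-1)\|_{L^\infty}^{\theta}\ =\ \|\mu^t-1\|_{L^2}^{1-\theta}\,M(t)^{\theta}.
\]
Inserting the $L^2$ decay $\|\mu^t-1\|_{L^2}\le g\,e^{-c_\be(t-T_0)_+}$ --- where for \eqref{eq:nabLinfexp'} one has $T_0=0$ and $g=\|\mu^0-1\|_{L^2}$ directly from \cref{lem:aL2exp}, while for \eqref{eq:nabLinfexp} the factor $g$ is the growing-then-decaying bound of \cref{lem:aLrbnd} composed with \cref{lem:aL2exp}, depending on $\be,\|\mu^{t_0}-1\|_{L^2},\Ec_\beta(\mu^0),e^{T_0}$ --- and noting $N\theta/2\le\tfrac{2n+\ds}{2}$ so that (as $\min((t-t_1)/\beta,1)\le1$) the factor $M(t)^\theta$ is dominated by $\min((t-t_1)/\beta,1)^{-\frac{2n+\ds}{2}}$ times a function of $\W_N$, one obtains exactly \eqref{eq:nabLinfexp} (resp.\ \eqref{eq:nabLinfexp'}) with $c=(1-\theta)c_\be>0$. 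Collecting constants gives the functions $\W_n$ (resp.\ $\tl{\W}_n$) of the stated arguments; continuity and monotonicity are manifest from this formula, and the vanishing-when-an-argument-is-zero property holds because $g^{1-\theta}$ vanishes with the $L^2$ argument while $\W_N$ vanishes with $\be$ and with the smoothing argument $\min((t_1-t_0)/\beta,1)^{-1}$ (resp.\ $\min(t_1/\beta,1)^{-1}$).

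I expect the main (minor) obstacle to be bookkeeping: matching time windows against the hypothesis $t\le\min(\beta,T)$ of \cref{lem:greg}; extracting the $L^\infty$ bound on $[t_1,\infty)$ \emph{uniformly} in $t$ (rather than on a compact interval) using autonomy of the flow; and verifying that the repackaged constants have the advertised monotonicity and vanishing. There is no essential analytic difficulty, since the work is all contained in the lemmas already established; the displayed exponent $\tfrac{2n+\ds}{2}$ is in fact not sharp --- any admissible $N$ produces a strictly smaller power of $\min((t-t_1)/\beta,1)^{-1}$, which a fortiori suffices.
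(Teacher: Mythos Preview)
Your proposal is correct and follows essentially the same strategy as the paper: obtain a uniform-in-time $L^\infty$ bound by time-translated application of \cref{lem:hypequiv}, upgrade to uniform high-derivative bounds via \cref{lem:greg}, then interpolate against the exponential $L^2$ decay. The paper's interpolation is the specific Gagliardo--Nirenberg instance $\|\nabla^{\otimes n}(\mu^t-1)\|_{L^\infty}\le \|\mu^t-1\|_{L^2}^{1/2}\|\nabla^{\otimes(2n+\ds)}\mu^t\|_{L^2}^{1/2}$ (high endpoint in $L^2$, then bounded by $L^\infty$), which produces exactly the exponent $\tfrac{2n+\ds}{2}$; your choice $N=n+1$ with the high endpoint directly in $L^\infty$ gives a smaller exponent and suffices a fortiori, as you note.

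One correction to your bookkeeping: the uniform bound $\sup_{\tau\ge t_1}\|\mu^\tau\|_{L^\infty}\lesssim_{\ds,\be}\min((t_1-t_0)/\beta,1)^{-\ds/2}$ does \emph{not} follow from $\|\mu^{t_0}\|_{L^1}=1$ alone via sliding $\beta$-windows, because the local-existence time from $L^1$ data has no uniform lower bound and the weighted norms in the proof of \cref{lem:hypequiv} carry growing time factors. The paper instead feeds the uniform $L^2$ control from \cref{lem:L2glob} (resp.\ \cref{lem:aL2exp}) into \cref{lem:hypequiv} on each window, using $\|\cdot\|_{L^{2\ds/(2\ds-1)}}\le\|\cdot\|_{L^2}$; this is precisely why the resulting $\W_n$ depends on $\|\mu^{t_0}-1\|_{L^2},\Ec_\beta(\mu^0),e^{T_0}$. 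You already invoke these quantities in your final constants, so the fix is only to route them through the $L^\infty$ step as well.
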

\begin{proof}
Choose some $t_0>0$. By \cref{lem:hypequiv}, $\mu^{t_0} \in L^2$, and so by estimate \eqref{eq:L2globRHS} of \cref{lem:L2glob},
\begin{align}
\forall t\geq t_0, \qquad \|\mu^t-1\|_{L^2} \leq \W(\beta, \|\mu^{t_0}-1\|_{L^2}, \Ec_\beta(\mu^{t_0}), e^{T_0})e^{-c(t-T_0)_+},
\end{align}
where $c>0$ depends on $\ds,\be$; $T_0$ is the minimal time such that $\|\mu^{t}-1\|_{L^1} \leq \d_{\be}$ for all $t\geq T_0$; and $\W$ is a continuous, increasing function of its arguments, vanishes if any of its arguments are zero.

We now use a time translation trick (for instance, cf. \cite[Lemmas 2.8 and 2.11]{CdCRS2023}). Given $t\geq t_0$, let $\frac{t_*-t_0}{\be} \coloneqq \min(\frac12, \frac{t-t_0}{2\be})$. Then by translating the initial time $t_0$ to zero, \cref{lem:hypequiv} implies
\begin{align}
\|\mu^t\|_{L^\infty} &\leq ((t-t^*)/\be)^{-\frac{\ds}{2}}\W'(\beta,\sup_{t_0\leq \tau \leq t} (\tau/\be)^{\frac14}\|\mu^\tau\|_{L^{\frac{2\ds}{2\ds-1}}}) \nn\\
&\leq \min((t-t_0)/2\beta, 1/2)^{-\frac{\ds}{2}}\W'\Big(\beta, 1+\W(\beta, \|\mu^{t_0}-1\|_{L^2}, \Ec_\beta(\mu^{t_0}), e^{T_0})e^{-c(t-T_0)_+}\Big) \nn\\
&\leq  \min((t-t_0)/\beta, 1)^{-\frac{\ds}{2}}\W''(\beta, \|\mu^{t_0}\|_{L^2}, e^{T_0}), \label{eq:t0Linfglob}
\end{align}
where $\W', \W''$ is another function of the same type as $\W$. Now let $t_1>t_0$. Note that $\sup_{t\geq t_1}\|\mu^t\|_{L^\infty}$ is controlled by the right-hand side of \eqref{eq:t0Linfglob}. Then by translating $t_1$ to the origin, we may apply \cref{lem:greg} to find that for any $n\geq 1$, for all $t\geq t_1$,
\begin{align}
\|\nabla^{\otimes n}\mu^t\|_{L^\infty} &\leq \min((t-t_1)/\beta,1/2)^{-\frac{n}{2}} \W_n(\beta, \sup_{t_1\leq \tau\leq t} \|\mu^\tau\|_{L^\infty}) \nn\\
&\leq  \min((t-t_1)/\beta,1/2)^{-\frac{n}{2}} \W_n\Big(\beta, \Big( \min((t_1-t_0)/\beta, 1)^{-\frac{\ds}{2}}\W''(\beta, \|\mu^{t_0}\|_{L^2}, e^{T_0})\Big)\Big). \label{eq:nabnmuLinfpre}
\end{align}
We now conclude the proof by interpolation. Given $n\geq 1$, from  Plancherel's theorem and Cauchy-Schwarz, it follows that
\begin{align}
\|\nabla^{\otimes n}\mu^t\|_{L^\infty} \leq \|\mu^t-1\|_{L^2}^{\frac12}\|\nabla^{\otimes 2n+\ds}\mu^t\|_{L^2}^{\frac12}.
\end{align}
Using that $\|\nabla^{\otimes 2n+\ds}\mu^t\|_{L^2}\leq \|\nabla^{\otimes 2n+\ds}\mu^t\|_{L^\infty}$ and applying \eqref{eq:nabnmuLinfpre}, \eqref{eq:L2globRHS}, we find that for all $t\geq t_1$
\begin{multline}
\|\nabla^{\otimes n}\mu^t\|_{L^\infty} \leq  \Big[\W(\beta,\|\mu^{t_0}-1\|_{L^2},\Ec_\beta(\mu^0))e^{-c(t-T_0)+}\Big]^{\frac12}\Big[\min((t-t_1)/\beta,1/2)^{-\frac{2n+\ds}{2}}\\
\times \W_{2n+\ds}\Big(\beta, \Big( \min((t_1-t_0)/\beta, 1)^{-\frac{\ds}{2}}\W''(\beta, \|\mu^{t_0}\|_{L^2}, e^{T_0})\Big)\Big)\Big]\\
\leq \W'''\Big(\beta, \|\mu^{t_0}-1\|_{L^2}, \Ec_\beta(\mu^0), e^{T_0},  \min((t_1-t_0)/\beta,1)^{-1}\Big)\min((t-t_1)/\beta,1/2)^{-\frac{2n+\ds}{2}} e^{-c(t-T_0)_+/2}
\end{multline}
where $\W''':[0,\infty)^5\rightarrow [0,\infty)$ is a continuous, increasing function, vanishing if any of its arguments are zero. This completes the proof {of the first assertion.}

{For the second assertion, we just need to modify the preceding argument. By \cref{lem:aL2exp}, we have
\begin{align}\label{eq:aL2expit}
\forall t\ge 0,\qquad \|\mu^t -1\|_{L^2} \leq \|\mu^{0}-1\|_{L^2}e^{-{c}t}.
\end{align}
Given $t\geq 0$, let $\frac{t_*}{\be} \coloneqq \min(\frac12, \frac{t}{2\be})$. Then \cref{lem:hypequiv} and \eqref{eq:aL2expit} imply
\begin{align}
\|\mu^t\|_{L^\infty} &\leq ((t-t^*)/\be)^{-\frac{\ds}{2}}\tl{\W}(\beta,\sup_{t_0\leq \tau \leq t} (\tau/\be)^{\frac14}\|\mu^\tau\|_{L^{\frac{2\ds}{2\ds-1}}}) \nn\\
&\leq  \min(t/\beta, 1)^{-\frac{\ds}{2}}\tl{\W}(\beta,\|\mu^0\|_{L^2}).
\end{align}
For $t_1>0$, we use \cref{lem:greg} to find that for any $n\geq 1$, for all $t\geq t_1$,
\begin{align}
\|\nabla^{\otimes n}\mu^t\|_{L^\infty} &\leq \min((t-t_1)/\beta,1/2)^{-\frac{n}{2}} \W_n(\beta, \sup_{t_1\leq \tau\leq t} \|\mu^\tau\|_{L^\infty}) \nn\\
&\leq \min((t-t_1)/\beta,1/2)^{-\frac{n}{2}} \W_n\Big(\beta, \min(t_1/\beta, 1)^{-\frac{\ds}{2}}\tl{\W}(\beta,\|\mu^0\|_{L^2})\Big).
\end{align}
Interpolating as before yields the desired conclusion.
}
\end{proof}

For the purposes of the application in \cref{sec:MFE}, we record the following remark concerning the case when $\mu^0\in W^{2,\infty}$.

\begin{remark}\label{rem:W2infglob}
Since $\mu^0\in W^{2,\infty}$, an examination of the proof of \cref{prop:LWPa} shows that there exists a $T_{LWP} = T_{LWP}(\|\mu^0\|_{W^{2,\infty}}, \be)>0$, such that the solution of \eqref{eq:lima} with initial datum satisfies $\|\mu\|_{C([0,T_{LWP}], W^{2,\infty})} \leq 2\|\mu^0\|_{W^{2,\infty}}$.

{Suppose first that $\be<\min(\bels,\bec)$ and there exists a $T_0>0$ such that for all $t\geq T_0$, $\|\mu^t-1\|_{L^1}\leq\delta_\be$ (the smallness threshold from above).} Choose $t_0=T_{LWP}/4$ and $t_1=T_{LWP}/2$. Then for any $t\geq T_{LWP}$, it follows from \eqref{eq:nabLinfexp} that for $n\leq 2$,
\begin{align}
\|\nabla^{\otimes n}(\mu^t-1)\|_{L^\infty} &\leq  {\W}_n\Big(\beta, \|\mu^{T_{LWP}/4}-1\|_{L^2}, \Ec_\beta(\mu^0), e^{T_0},  \min(T_{LWP}/4\beta,1)^{-1}\Big) \nn\\
&\ph\times \min(T_{LWP}/2\beta,1)^{-\frac{2n+\ds}{2}}e^{-c(t-T_0)_+} \nn\\
&\leq \tl{\W}(\beta , \|\mu^0\|_{W^{2,\infty}}, \Ec_\beta(\mu^0), e^{T_0})e^{-c(t-T_0)_+},
\end{align}
where $\tl{\W}:[0,\infty)^4 \rightarrow [0,\infty)$ is a continuous, increasing function, vanishing if any of its arguments are zero. Therefore, for all $t\geq 0$,
\begin{align}
\|\mu^t-1\|_{W^{2,\infty}} \leq 2\|\mu^0\|_{W^{2,\infty}}\indic_{t< T_{LWP}} + \tl{\W}(\beta , \|\mu^0\|_{W^{2,\infty}}, \Ec_\beta(\mu^0), e^{T_0})e^{-c(t-T_0)_+}\indic_{t\geq T_{LWP}}.
\end{align}

{Now suppose that $\be<\bels$ and $\|\mu^0-1\|_{L^2}\leq \d_\be$. Then choosing $t_1=T_{LWP}/2$, it follows from \eqref{eq:nabLinfexp'} that for $n\leq 2$ and $t\geq T_{LWP}$,
\begin{align}
\|\nabla^{\otimes n}(\mu^t-1)\|_{L^\infty} &\leq \tl{\W}_n\Big(\beta, \|\mu^0-1\|_{L^2}, \min(T_{LWP}/2\beta,1)^{-1}\Big)\min(T_{LWP}/2\beta,1)^{-\frac{2n+\ds}{2}}e^{-ct}.
\end{align}
Therefore, for all $t\geq 0$,
\begin{align}
\|\mu^t-1\|_{W^{2,\infty}} \leq 2\|\mu^0\|_{W^{2,\infty}}\indic_{t< T_{LWP}} +\tl{\W}(\beta , \|\mu^0\|_{W^{2,\infty}})e^{-ct}\indic_{t\geq T_{LWP}},
\end{align}
where now $\tl{\W}: [0,\infty)^2 \rightarrow [0,\infty)$ is a continuous, increasing function of its arguments, vanishing if any of them are zero.
}
\end{remark}

\section{Stability}\label{sec:Instab}
It is clear that the uniform distribution $\muu \coloneqq 1$ is a stationary solution of \eqref{eq:lima}. We linearize the equation around $\muu$, obtaining
\begin{equation}\label{eq:lin}
\begin{cases}
\p_t\nu^t = \paren*{\frac1\beta\D+\cdd\Dm^{2-\ds}}\nu^t\\
\nu^t|_{t=0} = \nu^0.
\end{cases}
\end{equation}
Letting $L_{\beta} \coloneqq \paren*{\frac1\beta\D-\cdd\Dm^{2-\ds}}$, we find for $k\in \Z^\ds\setminus\{0\}$,
\begin{align}
L_{\beta}e^{2\pi i k\cdot x} = \paren*{\cdd (2\pi|k|)^{2-\ds} - \frac{4\pi^2}{\beta} |k|^2}e^{2\pi i k\cdot x}.
\end{align}
Solving
\begin{align}
\paren*{\cdd (2\pi |k|)^{2-\ds} - \frac{4\pi^2}{\beta} |k|^2} {\leq } 0 \Longleftrightarrow |k| \geq \paren*{\frac{\beta\cdd}{(2\pi)^\ds}}^{1/\ds} = \paren*{\frac{\be}{\bels}}^{1/\ds}.
\end{align}
Since $|k|\geq 1$, we see that $\beta \geq \bels$ is necessary to have any eigenvalues with positive real part (all eigenvalues of $L_\beta$ are real, so we may drop the real part). We see that if $\beta>\bels$, then there exists at least one eigenfunction $e^{2\pi i k\cdot x}$ of $L_{\beta}$ with eigenvalue $\la_{\beta,|k|} \coloneqq \cdd (2\pi |k|)^{2-\ds} - \frac{4\pi^2}{\beta} |k|^2 >0$. Obviously, the eigenvalue only depends on $k$ through $|k|$, so there is no ambiguity in our notation $\la_{\beta,|k|}$. {If $\ds\geq 2$, we note that the eigenvalues $\la_{\beta,|k|}$ are strictly decreasing in $|k|$. If $\ds=1$, then noting that the function $\varphi(r) = 2r - \frac{2\pi r^2}{\be}$, for $r\geq 0$, has a global maximum at $r= \frac{\be}{2\pi}$, we see that $\la_{\be,|k|}$ is strictly increasing for $0\leq |k|< \frac{\be}{2\pi}$ and strictly decreasing for $|k|>\frac{\be}{2\pi}$. For later reference, we note that for all $\ds\geq 1$, we have that $\la_{\be,n}<n\la_{\be,1}$ for any positive integer $n$.
}

\subsection{Nonlinear instability}
We now show that $\muu$ is nonlinearly unstable for equation \eqref{eq:lima} if $\beta>\bels$.

\begin{prop}\label{prop:NLinstab}
Let $\ds\geq 1$ and $\beta>\bels$. There exist constants $C,\ep_0>0$ depending on $\ds,\beta$, such that if $0< \ep\leq \ep_0$, then there is a solution $\mu$ to equation \eqref{eq:lima} with initial datum $1+2\ep \cos(2\pi k\cdot x)$, for wave vector $|k|=1$, and lifespan $[0,C^{-1}\log(1/\ep)]$, such that
\begin{equation}
\frac12 \leq \|\mu^{C^{-1}\log(1/\ep)} - 1\|_{L^1}.
\end{equation}
\end{prop}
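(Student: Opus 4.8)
\medskip

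The plan is to exploit the linear instability of $\muu$ together with a careful bookkeeping of the Fourier modes of the nonlinear solution. Since the initial datum $1+2\ep\cos(2\pi k\cdot x)$ with $|k|=1$ puts all its mass on the most unstable mode (recall from the eigenvalue analysis above that $\lambda_{\be,1}>0$ when $\be>\bels$, and that $\lambda_{\be,n}<n\lambda_{\be,1}$ for every positive integer $n$), one expects the $k$-mode to grow like $e^{\lambda_{\be,1}t}$ while the nonlinearity only feeds higher modes, which grow no faster. First I would set up the solution via its Fourier series: writing $\mu^t = 1+\sum_{m\neq 0}\hat\mu^t(m)e^{2\pi i m\cdot x}$, equation \eqref{eq:lima} becomes the infinite system
\begin{align}
\frac{d}{dt}\hat\mu^t(m) = \lambda_{\be,|m|}\hat\mu^t(m) - 2\pi i\, m\cdot\!\!\sum_{\substack{p+q=m\\ p,q\neq 0}}\hat\mu^t(p)\,\widehat{\nabla\g}(q)\,\hat\mu^t(q),
\end{align}
where $\widehat{\nabla\g}(q) = \cdd(2\pi i q)(2\pi|q|)^{-\ds}$ is bounded by $\cdd(2\pi|q|)^{1-\ds}\lesssim 1$ uniformly in $q$. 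Local well-posedness and instantaneous smoothness come from \cref{prop:LWPa} and \cref{lem:greg}, so the series manipulations are justified; by symmetry of the initial data the solution stays real and even, so it suffices to track $a_n(t) \coloneqq \hat\mu^t(n e_1)$ (or more robustly the shell sums $b_n(t) \coloneqq \sum_{|m|=n}|\hat\mu^t(m)|$).

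\medskip

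The core of the argument is a bootstrap on an exponentially weighted norm. Fix a small $\rho>0$ and consider $\Phi(t) \coloneqq \sum_{n\geq 2}\rho^{-n} b_n(t)$, the ``non-principal'' part, while treating $b_1(t)$ separately. Initially $\Phi(0)=0$ and $b_1(0)=\ep$. Using $\lambda_{\be,n}\leq n\lambda_{\be,1}$ and the uniform bound on $\widehat{\nabla\g}$, Grönwall-type estimates on the system show that as long as $b_1(t)\lesssim 1$ and $\Phi(t)\lesssim 1$, the mode $b_1$ satisfies $b_1(t) \leq \ep\, e^{\lambda_{\be,1}t}(1+o(1))$ with a matching lower bound $b_1(t)\geq \ep\,e^{\lambda_{\be,1}t}(1-C\Phi(t)-C\ep e^{\lambda_{\be,1}t})$ coming from the fact that the nonlinearity cannot create more than $O(b_1^2+b_1\Phi)$ worth of cancellation in the $k$-mode equation, while $\Phi(t)\leq C(\ep e^{\lambda_{\be,1}t})^2$ for times up to the point where $\ep e^{\lambda_{\be,1}t}$ reaches a fixed small constant $\eta_0$. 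Concretely, one shows the differential inequality $\Phi'(t) \leq C\big(\lambda_{\be,1}+1\big)\big(b_1(t)+\Phi(t)\big)\Phi(t) + C\,b_1(t)^2$, whose solution stays $\lesssim b_1(t)^2$ until $b_1\sim\eta_0$; the point is that the quadratic nonlinearity only transfers energy \emph{upward}, and the weight $\rho^{-n}$ is summable against the convolution structure provided $\rho$ is chosen small relative to the decay of $\widehat{\nabla\g}$ and the smoothing gives enough decay in $n$ to make the tail sums converge.

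\medskip

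Once the bootstrap closes, let $t_\ep$ be the first time $b_1(t_\ep) = \eta_0$ for a suitable fixed $\eta_0 \in (0,1)$; then $t_\ep = \lambda_{\be,1}^{-1}\log(\eta_0/\ep)(1+o(1)) = C^{-1}\log(1/\ep)$ for $C \coloneqq \lambda_{\be,1}/(1+o(1))$, and at that time $\Phi(t_\ep)\lesssim \eta_0^2$, so the full solution satisfies $\|\mu^{t_\ep}-1\|_{L^\infty}\lesssim \eta_0$ but $|\hat\mu^{t_\ep}(k)|\gtrsim\eta_0$. From $|\hat\mu^{t_\ep}(k)| = |\int(\mu^{t_\ep}-1)e^{-2\pi i k\cdot x}dx|\leq \|\mu^{t_\ep}-1\|_{L^1}$ we get $\|\mu^{t_\ep}-1\|_{L^1}\gtrsim\eta_0$; choosing $\eta_0$ a fixed constant and, if necessary, running the flow a further $O(1)$ time to saturate the $L^1$ distance to $\tfrac12$ (using that $\muu$ is not a minimizer of $\Ec_\be$ by \cref{thm:mainstab} and that the free energy is strictly decreasing, so the solution cannot return to a neighborhood of $\muu$), yields the claimed lower bound $\tfrac12 \leq \|\mu^{t_\ep}-1\|_{L^1}$. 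The main obstacle is the bootstrap step: one must verify that the exponentially weighted sum of the higher Fourier shells does not blow up before time $t_\ep$, i.e. that the nonlinear ``leakage'' into modes $|m|\geq 2$ stays quadratically small; this requires choosing the weight $\rho$ and using the $O(|q|^{1-\ds})$ decay of $\widehat{\nabla\g}$ together with the parabolic smoothing of \cref{lem:greg} to control the convolution sums, and handling the $\ds=1$ case (where $\lambda_{\be,n}$ is not monotone and one must use $\lambda_{\be,n}<n\lambda_{\be,1}$ rather than $\lambda_{\be,n}\leq\lambda_{\be,1}$) with slightly more care.
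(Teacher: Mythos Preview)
Your approach differs from the paper's in a meaningful way. The paper follows Grenier's method: it builds an explicit \emph{approximate} solution $\mu_{\mathrm{app}}=\sum_{j=0}^{n}\ep^{j}\nu_{j}$ by formally expanding in powers of $\ep$, proves by induction that each $\nu_{j}^{t}$ is a finite trigonometric polynomial with coefficients bounded by $e^{j\lambda_{\be,1}t}$ (\cref{lem:nujbnd}), and then runs a single $L^{2}$ energy estimate on the difference $\mu-\mu_{\mathrm{app}}$ with zero initial data (\cref{lem:mumua}). The point of the expansion is that the forcing in the error equation is the remainder $R_{\mathrm{app}}=O(\ep^{n+1}e^{(n+1)\lambda_{\be,1}t})$, so by taking $n$ large enough relative to the Gr\"onwall constant, the error stays small all the way until the principal mode $2\ep e^{\lambda_{\be,1}t}\cos(2\pi k\cdot x)$ reaches order one. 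Your direct Fourier bootstrap on the true solution is aiming for the same conclusion, but Grenier's separation into ``approximate solution plus remainder'' is what makes the argument close cleanly: one never has to control an infinite coupled system with an exponentially weighted norm, only a standard nonlinear energy inequality for a single scalar quantity.

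There is a genuine gap in your final step. After the bootstrap you obtain $\|\mu^{t_{\ep}}-1\|_{L^{1}}\gtrsim \eta_{0}$ for some fixed small $\eta_{0}$ and propose to ``run the flow a further $O(1)$ time'' using free-energy monotonicity and \cref{thm:mainstab} to reach $\tfrac12$. This does not work: for $\be>\bec$ (which is included in the range $\be>\bels$ in general, and always when $\ds\ge 11$) the free energy is unbounded below by \cref{lem:FEsc}, and nothing prevents the solution from blowing up immediately after $t_{\ep}$; moreover \cref{thm:mainstab} only says $\muu$ is not a minimizer, not that trajectories cannot return near $\muu$. The paper avoids this entirely: since $\|2\ep e^{\lambda_{\be,1}t}\cos(2\pi k\cdot x)\|_{L^{1}}=\tfrac{4}{\pi}\ep e^{\lambda_{\be,1}t}$ and $\tfrac{4}{\pi}>1$, pushing $\ep e^{\lambda_{\be,1}t}$ up to a value just below $\tfrac12$ already gives an $L^{1}$ distance exceeding $\tfrac12$, with the higher-order terms and the $L^{2}$ error absorbed by the explicit constants. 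Your estimate $\|\mu-1\|_{L^{1}}\ge|\hat\mu(k)|$ loses exactly this factor and forces you into the flawed extension argument.

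A secondary concern: the differential inequality you write for $\Phi$ has linear part governed by $\lambda_{\be,n}$, not $\lambda_{\be,1}$; for $\ds\ge 2$ this is fine since the eigenvalues are decreasing in $|k|$, but in $\ds=1$ several modes can be unstable and $\widehat{\nabla\g}$ does not decay, so the claimed summability of the convolution against the weight $\rho^{-n}$ is not clear. Grenier's method sidesteps all of this because each $\nu_{j}$ lives on finitely many modes and the remainder is handled in physical space.
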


The proof of \cref{prop:NLinstab} relies on an iterative argument originally due to Grenier \cite{Grenier2000nlinstab}, which we learned of from \cite{HanKwan2016b} (see also \cite{HkN2016} for other applications in kinetic theory). We divide the proof over several lemmas.

\medskip
Let $\nu_1^0 = 2\cos(2\pi k\cdot x)$, for any $|k|=1$, and set $\nu_1^t \coloneqq e^{\la_{\be,1}t}\nu_1^0$. For $\ep>0$, considering a formal solution to equation \eqref{eq:lima} given by $\mu^t = \sum_{j=0}^\infty \ep^j \nu_j^t$, where $\nu_0 \coloneqq 1$, we see from substitution into the equation and matching powers of $\ep$ on both sides that the $\nu_j$, for $j\geq 2$, solve
\begin{equation}\label{eq:nuj}
\begin{cases}
\displaystyle\paren*{\p_t - L_{\be}}\nu_j^t = -\sum_{m=1}^{j-1} \div\paren*{\nu_{j-m}^t\nabla\g\ast\nu_{j}^t } \\
\nu_{j}^t|_{t=0} = 0.
\end{cases}
\end{equation}
In other words, $\nu_j$ solves a linear inhomogeneous PDE with a source term determined by lower-order functions and with zero initial datum. Of course, there is no reason for the series $\sum_{j=0}^\infty \ep^j \nu_j$ to converge, so we must truncate it to some order $n$, that is only consider an approximate solution
\begin{align}\label{eq:muadef}
\mu_{\text{app}} \coloneqq \sum_{j=0}^n \ep^j \nu_j.
\end{align}
{Later in proving \cref{prop:NLinstab}, we will take $n=2$.} One checks that
\begin{equation}\label{eq:muapp}
\p_t\mua^t + \div(\mua^t\nabla\g\ast\mua^t) - \frac1\be \D\mua^t = \sum_{1 \leq j,\ell \leq n : j+\ell\geq n+1} \ep^{j+\ell}\div(\nu_j^t\nabla\g\ast\nu_\ell^t) \eqqcolon  \Ra^t. 
\end{equation}

Using Duhamel's formula on \eqref{eq:nuj}, we see that $\nu_j^t$ is explicitly given by
\begin{equation}\label{eq:nujDuh}
\nu_j^t = \int_0^t e^{(t-\tau)L_{\be}}\paren*{-\sum_{m=1}^{j-1} \div\paren*{\nu_{j-m}^\tau\nabla\g\ast\nu_{m}^\tau } } d\tau.
\end{equation}
Let us start with $j=2$. Then using the explicit form of $\nu_1^t$, we compute
\begin{align}
\div\left(\nu_1^\tau \nabla\g\ast \nu_1^\tau\right) &= e^{2\la_{\be,1}\tau}\Big(\div(e^{2\pi i k\cdot x}\nabla\g\ast e^{2\pi i k\cdot x}) + \div(e^{-2\pi i k\cdot x}\nabla\g\ast e^{-2\pi i k\cdot x})\Big) \nn\\
&=-4\cdd e^{2\la_{\be,1}\tau}(2\pi|k|)^{2-\ds}\Big(e^{4\pi i k\cdot x}+e^{-4\pi i k\cdot x}\Big).
\end{align}
And therefore
\begin{align}
\nu_2^t &= 4\cdd|2\pi k|^{2-\ds}\Big(e^{4\pi i k\cdot x} +e^{-4\pi i k\cdot x} \Big)\int_0^t e^{(t-\tau)\la_{\be,2}}e^{2\la_{\be,1}\tau}d\tau \nn\\
&= 4\cdd|2\pi k|^{2-\ds}\Big(e^{4\pi i k\cdot x} +e^{-4\pi i k\cdot x} \Big)e^{t\la_{\be,2}}\frac{e^{(2\la_{\be,1}-\la_{\be,2})t}-1}{2\la_{\be,1}-\la_{\be,2}}.
%\begin{cases} t,  & {2\la_{\be,1} = \la_{\be,2}}  \\ \frac{e^{(2\la_{\be,1}-\la_{\be,2})t}-1}{2\la_{\be,1}-\la_{\be,2}} , & {2\la_{\be,1} \neq \la_{\be,2}}. \end{cases}
\end{align}
Recall that $n\la_{\be,1}>\la_{\be,2}$ for all $n\in\N$. We note that $\nu_2^t$ has the (less explicit) form
\begin{equation}\label{eq:nu2form}
\nu_2^t = \sum_{\ell=-2}^{2} C_{2,\ell}^t e^{\ell(2\pi i)k\cdot x}, \quad \text{where} \quad \forall t\geq 0, \ \ 0\leq C_{2,\ell}^t \leq \mathsf{C}_{2,\ell} e^{\ell\la_{\be,2}t},
\end{equation}
$C_{2,\ell}^t= C_{2,-\ell}^t$, $C_{2,0}^t=0$, and the constant $\mathsf{C}_2>0$ depends only on $\ds,\la_{\be,1},\la_{\be,2}$. As we show in the next lemma, all the higher-order functions $\nu_j$, for $j\geq 2$, satisfy a form analogous to \eqref{eq:nu2form}.

\begin{lemma}\label{lem:nujbnd}
For every $j\geq 2$, there exist smooth functions $C_{j,\ell}^t: [0,\infty) \rightarrow [0,\infty)$, for $-j\leq \ell\leq j$, such that
\begin{align}
\nu_j^t = \sum_{\ell=-j}^j C_{j,\ell}^t e^{\ell(2\pi i)k\cdot x},
\end{align}
where $C_{j,\ell}^t = C_{j,-\ell}^t$, $C_{j,0}^t=0$, and
\begin{equation}
\forall t\geq 0, \qquad C_{j,\ell}^t \leq \mathsf{C}_{j,\ell} e^{\ell\la_{\be,1}t},
\end{equation}
for a constant $\mathsf{C}_{j,\ell}>0$ depending only on $\ds,\la_{\be,1},\ldots,\la_{\be,j}$. 
\end{lemma}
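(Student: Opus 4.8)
\textbf{Proof proposal for \cref{lem:nujbnd}.}

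The proof proceeds by strong induction on $j\geq 2$, with the base case $j=2$ already verified above by explicit computation (note that \eqref{eq:nu2form} gives the claimed bound since $\la_{\be,2}<2\la_{\be,1}$, so $C_{2,\ell}^t\leq \mathsf{C}_{2,\ell}e^{\ell\la_{\be,2}t}\leq \mathsf{C}_{2,\ell}e^{\ell\la_{\be,1}t}$ for $\ell\geq 1$, and for $\ell\leq 0$ the coefficient vanishes or is controlled trivially using $C_{j,\ell}^t=C_{j,-\ell}^t$). Assume the stated representation and bound hold for all indices $2\leq m\leq j-1$; we also record that $\nu_1^t$ has the same structure with $C_{1,\pm 1}^t=e^{\la_{\be,1}t}$ and all other coefficients zero, so the bound $C_{1,\ell}^t\leq \mathsf{C}_{1,\ell}e^{\ell\la_{\be,1}t}$ holds there too (with the convention that for $\ell\leq 0$ the coefficient is zero). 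For the inductive step, I substitute the representations of $\nu_{j-m}^\tau$ and $\nu_m^\tau$ into the Duhamel formula \eqref{eq:nujDuh}. The source term is
\begin{align}
-\sum_{m=1}^{j-1}\div(\nu_{j-m}^\tau\nabla\g\ast\nu_m^\tau) = -\sum_{m=1}^{j-1}\sum_{\ell_1=-(j-m)}^{j-m}\sum_{\ell_2=-m}^{m} C_{j-m,\ell_1}^\tau C_{m,\ell_2}^\tau \div\paren*{e^{\ell_1(2\pi i)k\cdot x}\nabla\g\ast e^{\ell_2(2\pi i)k\cdot x}}.
\end{align}
Since $\nabla\g\ast e^{\ell_2(2\pi i)k\cdot x} = -\cdd(2\pi|\ell_2||k|)^{-\ds}(2\pi i\ell_2 k)e^{\ell_2(2\pi i)k\cdot x}$ for $\ell_2\neq 0$ (and vanishes for $\ell_2=0$, consistent with $C_{m,0}^\tau=0$), each term is a constant multiple of $\div(e^{(\ell_1+\ell_2)(2\pi i)k\cdot x})$, hence proportional to $e^{\ell(2\pi i)k\cdot x}$ with $\ell=\ell_1+\ell_2$. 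This immediately yields the claimed trigonometric representation with $|\ell|\leq j$, and the symmetry $C_{j,\ell}^t=C_{j,-\ell}^t$ follows from pairing the $(\ell_1,\ell_2)$ term with $(-\ell_1,-\ell_2)$ and using the inductive symmetry; also the $\ell=0$ contribution has coefficient proportional to $\ell=0$, forcing $C_{j,0}^t=0$.

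The crux is the frequency-dependent exponential bound. Collecting the coefficient of $e^{\ell(2\pi i)k\cdot x}$ in the source and applying the heat-type propagator $e^{(t-\tau)L_\be}$, which acts on this mode by multiplication by $e^{(t-\tau)\la_{\be,\ell}}$, we obtain for $\ell\geq 1$
\begin{align}
C_{j,\ell}^t \leq \sum_{\substack{m,\ell_1,\ell_2:\ \ell_1+\ell_2=\ell\\ 1\leq m\leq j-1}} D_{m,\ell_1,\ell_2}\int_0^t e^{(t-\tau)\la_{\be,\ell}} C_{j-m,\ell_1}^\tau C_{m,\ell_2}^\tau\, d\tau
\end{align}
for appropriate nonnegative constants $D_{m,\ell_1,\ell_2}$ depending on $\ds$ and the $\la$'s (here I use $C^t_{j,\ell}\ge 0$, which propagates through the induction since each ingredient is nonnegative). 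By the inductive hypothesis $C_{j-m,\ell_1}^\tau\leq \mathsf{C}_{j-m,\ell_1}e^{\ell_1\la_{\be,1}\tau}$ and $C_{m,\ell_2}^\tau\leq \mathsf{C}_{m,\ell_2}e^{\ell_2\la_{\be,1}\tau}$ (with the left side zero when $\ell_i\leq 0$), so the integrand is bounded by a constant times $e^{(t-\tau)\la_{\be,\ell}}e^{(\ell_1+\ell_2)\la_{\be,1}\tau} = e^{t\la_{\be,\ell}}e^{(\ell\la_{\be,1}-\la_{\be,\ell})\tau}$. The key quantitative input, already noted in the paragraph preceding \cref{prop:NLinstab}, is the \emph{superadditivity-type} inequality $\la_{\be,\ell}<\ell\la_{\be,1}$ for every positive integer $\ell$; hence $\ell\la_{\be,1}-\la_{\be,\ell}>0$ and
\begin{align}
\int_0^t e^{t\la_{\be,\ell}}e^{(\ell\la_{\be,1}-\la_{\be,\ell})\tau}\,d\tau = e^{t\la_{\be,\ell}}\cdot\frac{e^{(\ell\la_{\be,1}-\la_{\be,\ell})t}-1}{\ell\la_{\be,1}-\la_{\be,\ell}} \leq \frac{e^{\ell\la_{\be,1}t}}{\ell\la_{\be,1}-\la_{\be,\ell}}.
\end{align}
Summing over the finitely many decompositions $(m,\ell_1,\ell_2)$ with $\ell_1+\ell_2=\ell$, $1\le m\le j-1$, $\ell_i\ge 1$, and absorbing all constants into a single $\mathsf{C}_{j,\ell}$ (which depends only on $\ds,\la_{\be,1},\ldots,\la_{\be,j}$, since the denominators $\ell\la_{\be,1}-\la_{\be,\ell}$ and the $D$'s involve only these) gives $C_{j,\ell}^t\leq \mathsf{C}_{j,\ell}e^{\ell\la_{\be,1}t}$ for $\ell\geq 1$. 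For $\ell=0$ the coefficient vanishes, and for $\ell<0$ one uses $C_{j,\ell}^t=C_{j,-\ell}^t$ together with $e^{-\ell\la_{\be,1}t}\geq 1$ so the bound is vacuous (or one simply asserts the bound only for $\ell\geq 0$ and extends by symmetry). This closes the induction.

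The main obstacle is purely bookkeeping: one must verify that in the Duhamel integral only decompositions with $\ell_1,\ell_2\geq 1$ actually contribute to $C_{j,\ell}^t$ for $\ell\geq 1$ (the terms with $\ell_2=0$ drop because $\nabla\g\ast e^0=0$, and terms with some $\ell_i<0$ carry coefficient zero by the inductive vanishing convention), so that the exponent appearing in the source is exactly $(\ell_1+\ell_2)\la_{\be,1}=\ell\la_{\be,1}$ and never larger; if one were careless and allowed the crude bound $e^{j\la_{\be,1}\tau}$ one would lose the sharp frequency-dependent rate needed downstream. The one genuinely nontrivial analytic ingredient is the strict inequality $\la_{\be,\ell}<\ell\la_{\be,1}$, which is exactly the fact flagged "for later reference" in the linearization discussion; it guarantees the integrals converge to the right exponential and prevents any resonant polynomial growth.
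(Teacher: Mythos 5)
Your overall strategy---induction on $j$ through the Duhamel formula \eqref{eq:nujDuh}, plus the spectral fact $\la_{\be,n}<n\la_{\be,1}$---is the same as the paper's, but your write-up rests on a genuinely false step: the ``inductive vanishing convention'' that coefficients with $\ell\le 0$ vanish. This contradicts the symmetry $C_{j,\ell}^t=C_{j,-\ell}^t$ that you (and the lemma) assert, and it contradicts the explicit base cases: $\nu_1^t=e^{\la_{\be,1}t}(e^{2\pi ik\cdot x}+e^{-2\pi ik\cdot x})$ has $C_{1,-1}^t=e^{\la_{\be,1}t}\neq 0$, and $\nu_2^t$ has a nonzero mode $-2$. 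You then use this convention in the load-bearing way: to claim that only decompositions with $\ell_1,\ell_2\ge 1$ contribute to mode $\ell\ge1$, so that the source exponent is exactly $\ell\la_{\be,1}$. Once the negative modes are restored, mixed-sign pairings do contribute: for instance mode $2$ of $\nu_2$ against mode $-1$ of $\nu_1$ feeds mode $1$ of $\nu_3$, and the corresponding source is of size $e^{3\la_{\be,1}\tau}$, not $e^{\la_{\be,1}\tau}$. After Duhamel this gives $C_{3,1}^t$ of order $e^{3\la_{\be,1}t}$ (for $\ds\ge2$ the net prefactor, proportional to $2\hat\g(2k)-\hat\g(k)$, does not cancel), so the per-mode rate $C_{j,\ell}^t\le \mathsf{C}_{j,\ell}e^{\ell\la_{\be,1}t}$ cannot be extracted by this computation, and for $0<\ell<j$ it is in general not even true. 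Likewise, your remark that nonnegativity ``propagates since each ingredient is nonnegative'' relies on the same restriction: the combinatorial constants $\cdd\big(\tfrac{\ell'|2\pi\ell k|^{2-\ds}}{\ell}+|2\pi\ell k|^{2-\ds}\big)$ change sign when $\ell$ and $\ell'$ have opposite signs.

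What your argument does prove, once the induction hypothesis is weakened, is the uniform-in-$\ell$ bound $|C_{j,\ell}^t|\le\mathsf{C}_{j,\ell}e^{j\la_{\be,1}t}$: every product of two coefficients is then $\lesssim e^{(j-m)\la_{\be,1}\tau}e^{m\la_{\be,1}\tau}=e^{j\la_{\be,1}\tau}$, and since $\la_{\be,|\ell|}<|\ell|\la_{\be,1}\le j\la_{\be,1}$ the Duhamel integral is bounded by $e^{j\la_{\be,1}t}/(j\la_{\be,1}-\la_{\be,|\ell|})$. This weaker statement is all that is used downstream (in \cref{rem:muaRaSob}, \eqref{eq:muaSob}, \eqref{eq:RaSob}, and hence in \cref{lem:mumua} and \cref{prop:NLinstab}). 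For what it is worth, the paper's own proof is loose at exactly the same spot: it applies the stated bound at negative indices, where by symmetry it cannot hold, and its per-mode claim should be read in the $e^{j\la_{\be,1}t}$ form. So the correct fix is to carry the $e^{j\la_{\be,1}t}$ bound (with the symmetry, and without insisting on nonnegativity) through the induction, not to force the negative modes to vanish.
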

\begin{proof}
The proof is by induction on $j$. We have shown above the base case $j=2$. We make the following induction hypothesis: there exists $n\geq 2$ such that for every $1\leq j\leq n$, it holds that
\begin{equation}\label{eq:nujrepIH}
\nu_j^t = \sum_{\ell=-j}^{j} C_{j,\ell}^t e^{\ell (2\pi i)k\cdot x}, \quad \text{where}  \quad  \forall t\geq 0, \ \ 0\leq C_{j,\ell}^t \leq \mathsf{C}_{j,\ell} e^{\ell\la_{\be,1} t},
\end{equation}
$C_{j,\ell}^t = C_{j,-\ell}^t$, $C_{j,0}^t=0$, and the constant $\mathsf{C}_{j,\ell}>0$ depends only on $\ds,\la_{\be,1},\ldots,\la_{\be,j}$. We will show that \eqref{eq:nujrepIH} also holds for $j=n+1$.

Applying the induction hypothesis \eqref{eq:nujrepIH} to the formula \eqref{eq:nujDuh}, we find
\begin{align}
\nu_{n+1}^t  &= - \sum_{m=1}^{n}\sum_{\ell=-m}^m \sum_{\ell'=-(n+1-m)}^{n+1-m}\int_0^t C_{n+1-m,\ell'}^\tau C_{m,\ell}^\tau e^{(t-\tau)L_{\be}}\div\paren*{ e^{\ell'(2\pi i)k\cdot()} \nabla\g\ast e^{\ell(2\pi i)k\cdot() } }d\tau \nn\\
&=\sum_{m=1}^{n}\sum_{\ell,\ell' : \ell+\ell'\neq 0}\int_0^t C_{n+1-m,\ell'}^\tau C_{m,\ell}^\tau e^{(t-\tau)\la_{\be,\ell+\ell'}}\cdd\Big(\frac{ \ell' |2\pi \ell k|^{2-\ds}}{\ell} \nn\\
&\ph\qquad + |2\pi\ell k|^{2-\ds} \Big) e^{2\pi i (\ell+\ell')k\cdot x} d\tau.
\end{align}
Observe that for $1\leq m\leq n$, $-m\leq \ell\leq m$, $-(n-m+1)\leq \ell'\leq (n-m+1)$,
\begin{align}
&\int_0^t C_{n+1-m,\ell'}^\tau C_{m,\ell}^\tau e^{(t-\tau)\la_{\be,\ell+\ell'}}d\tau \nn\\
&\leq {\mathsf{C}_{n+1-m,\ell'}\mathsf{C}_{m,\ell}}\int_0^t e^{(t-\tau)\la_{\be,\ell+\ell'}} e^{(\ell+\ell')\la_{\be,1}\tau} d\tau \nn\\
&= {\mathsf{C}_{n+1-m,\ell'}\mathsf{C}_{m,\ell}}e^{t\la_{\be,\ell+\ell'}} \frac{e^{\left((\ell+\ell')\la_{\be,1}  - \la_{\be,\ell+\ell'}\right)t}-1}{(\ell+\ell')\la_{\be,1}- \la_{\be,\ell+\ell'}}.
%\begin{cases}t, & {(n+1)\la_{\be,1} = \la_{\be,n+1}} \\
%\displaystyle \frac{e^{\left((n+1)\la_{\be,1}  - \la_{\be,n+1}\right)t}}{(n+1)\la_{\be,1}- \la_{\be,n+1}} , & {(n+1)%\la_{\be,1} \neq \la_{\theta,n+1}}.
%\end{cases}
\end{align}
For $-n-1 \leq j\leq n+1$, define the function
\begin{multline}
C_{n+1,j}^t \coloneqq \sum_{m=1}^n \sum_{\substack{-m\leq \ell \leq m, -(n-m+1)\leq \ell'\leq (n-m+1) \\ \ell+\ell' = j}}\\
\cdd\Big(\frac{ \ell' |2\pi \ell k|^{2-\ds}}{\ell} + |2\pi\ell k|^{2-\ds} \Big)\int_0^t C_{n+1-m,\ell'}^\tau C_{m,\ell}^\tau e^{(t-\tau)\la_{\be,\ell+\ell'}}d\tau,
\end{multline}
which is evidently smooth, it follows from above that $C_{n+1,j}^t = C_{n+1,-j}^t$ and $0\leq C_{n+1,j}^t \leq \mathsf{C}_{n+1,j} e^{j\la_{\be,1} t}$, for some constant $\mathsf{C}_{n+1,j}>0$ depending only on $\ds,\la_{\be,1},\ldots,\la_{\be,j}$. This completes the induction step and therefore the proof of the lemma.
\end{proof}

\begin{remark}\label{rem:muaRaSob}
\cref{lem:nujbnd} shows that for every integers $m\geq 0$ and $j\geq 1$, we have the Sobolev bound
\begin{align}
\|\nu_j^t\|_{\dot{W}^{m,\infty}} \lesssim_{\ds,m} \sum_{\ell=-j}^j (2\pi \ell |k|)^m \mathsf{C}_{j,\ell} e^{\ell\lambda_{\be,1} t} \lesssim (2\pi j|k|)^m \mathsf{C}_j e^{j\la_{\be,1} t}.
\end{align}
Recalling the definition \eqref{eq:muapp} of $\mua$ and the triangle inequality, we then have
\begin{align}\label{eq:muaSob}
\|\mua-1\|_{W^{m,\infty}} &\leq \sum_{j=1}^n \ep^{j} \|\nu_j\|_{W^{m,\infty}} \leq 1+C\sum_{j=1}^n \ep^j\mathsf{C}_j e^{j\la_{\be,1}t} \leq  8C\paren*{\max_{1\leq j\leq n}\mathsf{C}_j}\ep e^{\la_{\be,1}t},
\end{align}
where the final inequality holds provided $\ep e^{\la_{\be,1}t} \leq \frac12$. Here, $C$ depends only on $\ds,m$. Similarly, if $\ep e^{\la_{\be,1}t} \leq \frac12$, then
\begin{align}
\|\Ra^t\|_{W^{m,\infty}} &\leq \sum_{1\leq j,j' \leq n: j+j' \geq n+1} \ep^{j+\ell}\|\div(\nu_j^t\nabla\g\ast\nu_{j'}^t)\|_{W^{m,\infty}} \nn\\
&\leq C\sum_{1\leq j,j' \leq n: j+j' \geq n+1} \ep^{j+j'} \mathsf{C}_{j}\mathsf{C}_{j'} e^{(j+j')\la_{\be,1}t} \nn\\
&\leq 8\paren*{\max_{1\leq j\leq n}\mathsf{C}_j}^2 \ep^{n+1} e^{(n+1)\la_{\be,1}t}. \label{eq:RaSob}
\end{align}
\end{remark}

\medskip
Next, we show that there is a genuine solution $\mu$ to equation \eqref{eq:lima} with initial datum $1+\ep\nu_1^0$, such that $\|\mu^t-\mu_{\text{app}}^t\|_{L^p} = O(\ep^{n+1})$ on time scales of order $\log(1/\ep)$. It is convenient to choose the exponent $p=2$, though any choice $p\in (1,\infty)$ would work with more effort. For $\ds\geq 2$, we could even choose $p=1$; but for $\ds=1$, this does not quite work, since $\nabla\g\ast$, which is the Hilbert transform, is unbounded on $L^1$. With more effort, it would be possible to replace $L^p$ with a Sobolev space $W^{k,p}$, but we have no need for such generality.   %  For $\ds\geq 2$, it is convenient to choose the exponent $p=1$. For $\ds=1$, this does not work for technical reasons (if $\ds=1$, $\nabla\g\ast$ is unbounded as a convolution operator on $L^1$, since it is the Hilbert transform); so it is instead convenient to choose $p=2$, though arbitrary $1<p<\infty$ would work.  %

\begin{lemma}\label{lem:mumua}
Suppose $\ds\geq 1$ and $\be>\bels$. Let $\mu$ be the unique solution to equation \eqref{eq:lima} with initial datum $\mu^0 = 1+\ep\nu_1^0$. Then the maximal time of existence for $\mu$ satisfies
\begin{equation}\label{eq:TmaxT*}
T_{\max} \geq T_* \coloneqq \min\paren*{\frac{\log(1/2\ep)}{\la_{\be,1}}, \frac{1}{2(C+(n+1)\la_{\be,1})}\log(\frac{(n+1)\la_{\be,1}}{C\ep^{2(n+1)}})}
\end{equation}
for a constant $C>0$ depending only on $\ds,n,\be$. Moreover,
\begin{align}
\forall t\in [0,T_*], \qquad \|\mu^t-\mua^t\|_{L^2}^2  \leq C\frac{\ep^{2(n+1)}e^{2((n+1)\la_{\be,1} + C)t}}{2(n+1)\la_{\be,1}}.
\end{align}
\end{lemma}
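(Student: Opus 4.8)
The plan is to run an $L^2$ energy estimate on the difference $w \coloneqq \mu - \mua$, closed by a continuity (bootstrap) argument. Since $\mu$ solves \eqref{eq:lima} and $\mua$ solves \eqref{eq:muapp}, subtraction gives
\[
\p_t w = \tfrac1\be\D w - \div\big(w\,\nabla\g\ast\mua\big) - \div\big(\mua\,\nabla\g\ast w\big) - \div\big(w\,\nabla\g\ast w\big) - \Ra,
\]
with $w^0 = 0$ (recall $\nu_j^0 = 0$ for $j\ge 2$ by \eqref{eq:nuj}, so $\mua^0 = 1+\ep\nu_1^0 = \mu^0$), and $\int_{\T^\ds}w^t = 0$ for all $t$ since $\mu$ and $\mua$ both have total mass $1$. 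Because $\mu$ is smooth on positive times by \cref{lem:greg} and $\mua$ is a smooth trigonometric polynomial of degree $\le n$ by \cref{lem:nujbnd}, $w$ is smooth and the identity $\tfrac12\tfrac{d}{dt}\|w^t\|_{L^2}^2 = \int_{\T^\ds}w^t\,\p_t w^t$ is legitimate.

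First I would fix a constant $C_0=C_0(\ds,n,\be)$ (to be chosen at the end), take $T_*$ as in \eqref{eq:TmaxT*} with $C=C_0$, and let $T^{**}$ be the supremum of those $T\le\min(T_{\max},T_*)$ with $\|w^t\|_{L^2}\le1$ on $[0,T]$. On $[0,T^{**})$ I estimate the four terms after integrating by parts. The first term produces a term bounded in absolute value by $C\,\|\Dm^{2-\ds}(\mua^t-1)\|_{L^\infty}\|w^t\|_{L^2}^2 \lesssim_{\ds,n}\|\mua^t-1\|_{L^\infty}\|w^t\|_{L^2}^2$, where I used that $\mua^t-1$ is a trigonometric polynomial of degree $\le n$ (so $\Dm^{2-\ds}$ acts boundedly on it in all dimensions), together with the fact that $\|\mua^t-1\|_{L^\infty}$ is bounded by a constant depending only on $\ds,n,\be$ for $t\le T_*$ — this is \eqref{eq:muaSob}, valid because the first entry of the minimum in \eqref{eq:TmaxT*} forces $\ep e^{\la_{\be,1}t}\le\tfrac12$. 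The second term, integrated by parts, equals $\int_{\T^\ds}\nabla w^t\cdot\mua^t\,\nabla\g\ast w^t$, bounded by $\|\mua^t\|_{L^\infty}\|\nabla w^t\|_{L^2}\|\nabla\g\ast w^t\|_{L^2}$ with $\|\nabla\g\ast w^t\|_{L^2}\lesssim\|w^t\|_{L^2}$ ($\nabla\g\in L^1$ and Young if $\ds\ge2$, the Hilbert transform if $\ds=1$), hence by Young's product inequality by $\tfrac1{2\be}\|\nabla w^t\|_{L^2}^2 + C\|w^t\|_{L^2}^2$. The cubic term, integrated by parts, is a constant times $\int_{\T^\ds}(w^t)^2\,\Dm^{2-\ds}w^t$; combining Hölder's inequality, Gagliardo–Nirenberg interpolation, and (for $\ds\ge3$) the HLS lemma exactly as in \cref{lem:FIests} and the proof of \cref{lem:aLrbnd}, but interpolating against $\|w^t\|_{L^2}$ (together with $\|w^t\|_{L^1}\le\|w^t\|_{L^2}$ on the torus, so no vertical-displacement trick is needed), one obtains
\[
\Big|\int_{\T^\ds}(w^t)^2\,\Dm^{2-\ds}w^t\Big| \le \tfrac1{2\be}\|\nabla w^t\|_{L^2}^2 + C\|w^t\|_{L^2}^{p_\ds}
\]
for some $p_\ds>2$ depending only on $\ds$, and on the bootstrap interval $\|w^t\|_{L^2}^{p_\ds}\le\|w^t\|_{L^2}^2$. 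Finally $-\int_{\T^\ds}w^t\Ra^t \le \|w^t\|_{L^2}\|\Ra^t\|_{L^2}\le\tfrac12\|w^t\|_{L^2}^2 + C\ep^{2(n+1)}e^{2(n+1)\la_{\be,1}t}$ using the bound \eqref{eq:RaSob}. Collecting and absorbing the $\|\nabla w^t\|_{L^2}^2$ contributions into the dissipation $-\tfrac2\be\|\nabla w^t\|_{L^2}^2$ yields
\[
\tfrac{d}{dt}\|w^t\|_{L^2}^2 \le -\tfrac1\be\|\nabla w^t\|_{L^2}^2 + C_1\|w^t\|_{L^2}^2 + C_2\,\ep^{2(n+1)}e^{2(n+1)\la_{\be,1}t}, \qquad t\in[0,T^{**}),
\]
with $C_1,C_2$ depending only on $\ds,n,\be$.

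Then I would drop the dissipation and apply Grönwall's inequality with $w^0=0$:
\[
\|w^t\|_{L^2}^2 \le C_2\,\ep^{2(n+1)}\!\!\int_0^t\! e^{C_1(t-\tau)}e^{2(n+1)\la_{\be,1}\tau}\,d\tau \le C_2\,t\,\ep^{2(n+1)}e^{(C_1+2(n+1)\la_{\be,1})t}.
\]
Choosing $C_0$ large enough (depending on $\ds,n,\be$) so that $2C_0>C_1$ and $2(n+1)\la_{\be,1}\,C_2\,t\le C_0\,e^{(2C_0-C_1)t}$ for all $t\ge0$, the right-hand side is at most $C_0\,\ep^{2(n+1)}e^{2((n+1)\la_{\be,1}+C_0)t}/(2(n+1)\la_{\be,1})$, the claimed bound with $C=C_0$. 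To close the bootstrap, evaluate this bound at $t$ equal to the second entry of the minimum in \eqref{eq:TmaxT*}: it equals exactly $\tfrac12$, so $\|w^t\|_{L^2}\le\tfrac1{\sqrt2}<1$ throughout $[0,T_*]$; hence $\|w^t\|_{L^2}$ cannot reach $1$ before $\min(T_{\max},T_*)$, so $T^{**}=\min(T_{\max},T_*)$ and the displayed estimate holds there. Finally, to rule out $T_{\max}<T_*$: if that held, $\|w^t\|_{L^2}\le1$ on $[0,T_{\max})$ together with $\tfrac{2\ds}{2\ds-1}\le2$ and finiteness of the torus would make $\|\mu^t\|_{L^{\frac{2\ds}{2\ds-1}}}\le\|\mua^t\|_{L^{\frac{2\ds}{2\ds-1}}}+\|w^t\|_{L^2}$ bounded on $[0,T_{\max})$, contradicting the blow-up criterion of \cref{rem:ablowup}. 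Hence $T_{\max}\ge T_*$ and the estimate holds on $[0,T_*]$.

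The main obstacle is the self-consistency of the bootstrap: the a priori bound $\|w\|_{L^2}\le1$ is what linearizes the cubic term, yet the output is an exponentially growing bound, so one must check the exponential has not yet exceeded the threshold on $[0,T_*]$ — precisely the role of the second logarithmic term in $T_*$ — and that the energy-estimate constants $C_1,C_2$ can be absorbed into the single constant $C_0$ that also defines $T_*$. The only other point needing care is the dimension-uniform estimate of $\int(w^t)^2\Dm^{2-\ds}w^t$, which reduces to the critical-HLS/interpolation inequalities already packaged in \cref{lem:FIests}; everything else (local well-posedness and smoothing of $\mu$, the Sobolev bounds \eqref{eq:muaSob}, \eqref{eq:RaSob} on $\mua$ and $\Ra$) is already in hand.
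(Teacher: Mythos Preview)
Your proof is correct and follows essentially the same scheme as the paper's: an $L^2$ energy estimate on $w=\mu-\mua$ closed by a bootstrap and Gr\"onwall. The only notable difference is in the handling of the cubic term $\int w^2\,\Dm^{2-\ds}w$ for $\ds\ge 2$: the paper bounds it by $C\|w\|_{L^1}\|\nabla w\|_{L^2}^2$ (via \eqref{eq:fLrHLS}) and then runs the bootstrap with the \emph{smaller} threshold $\|w\|_{L^2}\le \tfrac{2}{\beta C}$ so that this term is absorbed directly into the dissipation, whereas you pick HLS/GN exponents so that Young's inequality yields a remainder $C\|w\|_{L^2}^{p_\ds}$ with $p_\ds>2$ and then use the looser bootstrap $\|w\|_{L^2}\le 1$; both are valid, and your version has the minor advantage of treating all $\ds\ge 1$ uniformly (the paper separates $\ds=1$, where it does exactly what you do with $p_1=6$, from $\ds\ge 2$). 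Your explicit appeal to the blow-up criterion of \cref{rem:ablowup} to deduce $T_{\max}\ge T_*$ is also a point the paper leaves implicit.
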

\begin{proof}
Using equations \eqref{eq:lima} and \eqref{eq:muapp}, we compute
\begin{multline}
\p_t(\mu-\mua) = -\div\paren*{\mua\nabla\g\ast(\mu-\mua)} - \div\paren*{(\mu-\mua)\nabla\g\ast\mu }\\
 + \frac1\be\D(\mu^t-\mua^t) {-} \Ra.
\end{multline}
Observe from integration by parts that
\begin{align}
-\int_{\T^\ds}|\mu^t-\mua^t|^{p-2}(\mu^t-\mua^t)\div\paren*{({\mu^t-\mua^t})\nabla\g\ast{\mu^t} } dx &= \cdd\int_{\T^\ds}|\mu^t-\mua^t|^p\Dm^{2-\ds}(\mu^t)dx \nn\\
&\ph -\frac{1}{p}\int_{\T^\ds}\nabla|\mu^t-\mua^t|^p \cdot\nabla\g\ast(\mu^t) dx \nn\\
&= {\cdd}\left(1-\frac{1}{p}\right)\int_{\T^\ds}|\mu^t-\mua^t|^{p}\Dm^{2-\ds}(\mu^t)dx.
\end{align}
Similarly,
\begin{align}
\frac1\be \int_{\T^\ds}|\mu^t-\mua^t|^{p-2}(\mu^t-\mua^t)\D(\mu^t-\mua^t)dx = -\frac{4(p-1)}{p^2\be}\int_{\T^\ds} |\nabla |\mu^t-\mua^t|^{\frac{p}{2}} |^2 dx.
\end{align}
Hence,
\begin{multline}
\frac{d}{dt} \|\mu^t-\mua\|_{L^p}^p = {\cdd}(p-1)\int_{\T^\ds}|\mu^t-\mua^t|^{p}\Dm^{2-\ds}(\mu^t)dx\\
-\frac{4(p-1)}{p\be}\int_{\T^\ds} |\nabla |\mu^t-\mua^t|^{\frac{p}{2}} |^2 dx -p\int_{\T^\ds}|\mu^t-\mua^t|^{p-2}(\mu^t-\mua^t)\nabla\mua^t\cdot\nabla\g\ast(\mu^t-\mua^t)dx \\
+ \cdd p\int_{\T^\ds}|\mu^t-\mua^t|^{p-2}(\mu^t-\mua^t)\mua^t\Dm^{2-\ds}(\mu^t-\mua^t)dx\\
  {-} p\int_{\T^\ds}|\mu^t-\mua^t|^{p-2}(\mu^t-\mua^t)\Ra^t dx.
\end{multline}

We choose $p=2$ and use H\"older's inequality together with $\nabla\g\in L^1$ ($\ds \geq 2$) or the $L^2$ boundedness of the Hilbert transform ($\ds=1$) to obtain
\begin{align}
\int_{\T^\ds}\left|(\mu^t-\mua^t)\nabla\mua^t\cdot\nabla\g\ast(\mu^t-\mua^t)\right| dx &\leq \|\nabla\mua^t\|_{L^\infty} \|\mu^t-\mua^t\|_{L^2}\|\nabla\g\ast(\mu^t-\mua^t)\|_{L^2} \nn\\
&\lesssim \ep e^{\la_{\be,1}t}\|\mu^t-\mua^t\|_{L^2}^2,
\end{align}
where the final line follows from estimate \eqref{eq:muaSob} from \cref{rem:muaRaSob}, assuming $\ep e^{\la_{\be,1}t}\leq\frac12$. Similarly, if $\ds\geq 2$, then we may use that the operator $\Dm^{2-\ds}$ is bounded on $L^2$ to estimate.
\begin{align}
\int_{\T^\ds}\left|(\mu^t-\mua^t)\mua^t\Dm^{2-\ds}(\mu^t-\mua^t)\right|dx &\leq \|\mua^t\|_{L^\infty}\|\Dm^{2-\ds}(\mua^t-\mu^t)\|_{L^2} \|\mua^t-\mu^t\|_{L^2} \nn\\
&\lesssim \|\mu^t-\mua^t\|_{L^2}^2. 
\end{align}
If $\ds=1$, then we instead argue
\begin{align}
\int_{\T^\ds}\left|(\mu^t-\mua^t)\mua^t\Dm(\mu^t-\mua^t)\right|dx &\leq \|\mua^t\|_{L^\infty}  \|\mu^t-\mua^t\|_{L^2} \|\Dm(\mu^t-\mua^t)\|_{L^2} \nn\\
&\lesssim \|\mu^t-\mua^t\|_{L^2}\|\mu^t-\mua^t\|_{\dot{H}^1}
\end{align}
Using estimate \eqref{eq:RaSob} from \cref{rem:muaRaSob},
\begin{align}
\int_{\T^\ds}\left|(\mu^t-\mua^t)\Ra^t\right| dx \leq \|\mu^t-\mua^t\|_{L^2}\|\Ra^t\|_{L^2} &\leq \|\mu^t-\mua^t\|_{L^2}\|\Ra^t\|_{L^\infty} \nn\\
&\lesssim \ep^{n+1} e^{(n+1)\la_{\be,1}t}\|\mu^t-\mua^t\|_{L^2}. \label{eq:mumuaRapp}
\end{align}
Lastly, if $\ds\geq 2$, then using H\"older's and triangle inequalities,
\begin{align}
\int_{\T^\ds}\left||\mu^t-\mua^t|^{2}\Dm^{2-\ds}(\mu^t)\right|dx &\leq \|\mu^t-\mua^t\|_{L^2}^2 \|\Dm^{2-\ds}\mua^t\|_{L^\infty} \nn\\
&\ph+ \int_{\T^\ds}|\mu^t-\mua^t|^{2} |\Dm^{2-\ds}(\mu^t-\mua^t)|dx \nn\\
&\lesssim \ep e^{\la_{\be,1}t}\|\mu^t-\mua^t\|_{L^2}^2 + \|\mu^t-\mua^t\|_{L^1} \|\nabla(\mu^t-\mua^t)\|_{L^2}^2,
\end{align}
where the final line follows from {$\Dm^{2-\ds}(\mua^t) = \Dm^{2-\ds}(\mua^t-1)$ and} estimate \eqref{eq:fLrHLS} from \cref{lem:FIests} and \cref{rem:muaRaSob}. %For the first term on the right-hand side,
%\begin{align}
%\|\mu^t-\mua^t\|_{L^2}^2 \|\Dm^{2-\ds}\mua^t\|_{L^\infty} \lesssim \ep e^{\la_{\be,1}t} \|\mu^t-\mua^t\|_{L^2}^2.
%\end{align}
%For the second term, we modify the proof of the estimate . By H\"older's inequality and Hardy-Littlewood-Sobolev, for $p\in (1,\frac{\ds}{\ds-2})$
%\begin{align}
%\int_{\T^\ds}|\mu^t-\mua^t|^{2} |\Dm^{2-\ds}(\mu^t-\mua^t)|dx \leq \|\mu^t-\mua^t\|_{L^{2p}}^2 \|\mu^t-\mua^t\|_{L^q},
%\end{align}
%where $\frac{\ds-2}{\ds} = \frac{1}{q} - \frac{p-1}{p}$. Using Gagliardo-Nirenberg interpolation,
%\begin{align}
%\|f-1\|_{L^{2p}} \leq \|f-1\|_{L^2}^{1-\theta_p} \|\nabla f\|_{L^2}^{\theta_p},\\
%\|f-1\|_{L^q} \leq \|f-1\|_{L^1}^{1-\theta_q} \|\nabla f\|_{L^2}^{\theta_q},
%\end{align}
%where
%\begin{align}
%\frac{1}{2p} = \theta_p(\frac12-\frac1\ds) + \frac{(1-\theta_p)}{2},  \qquad \theta_p = \frac{\ds(p-1)}{2p}\\
%\frac{1}{q} = \theta_q(\frac12-\frac1\ds) + (1-\theta_q), \qquad \theta_q = \frac{2\ds}{\ds+2}(1-\frac{1}{q}).
%\end{align}
If $\ds=1$, then instead,
\begin{align}
\int_{\T^\ds}\left||\mu^t-\mua^t|^{2}\Dm(\mu^t)\right|dx &\leq \|\mu^t-\mua^t\|_{L^4}^2 \|\Dm(\mu^t-\mua^t)\|_{L^2} + \|\Dm(\mua^t)\|_{L^\infty} \|\mu^t-\mua^t\|_{L^2}^2 \nn\\
&\lesssim \|\mua^t-\mu^t\|_{L^2}^{3/2} \|\mu^t-\mua^t\|_{\dot{H}^1}^{3/2} + \ep e^{\la_{\be,1}t} \|\mu^t-\mua^t\|_{L^2}^2.
\end{align}

Putting together our estimates and using $ab\leq \frac{a^2+b^2}{2}$ on \eqref{eq:mumuaRapp}, for $\ds\geq 2$, we then arrive at
\begin{multline}\label{eq:d2preGron}
\frac{d}{dt} \|\mu^t-\mua^t\|_{L^2}^2 \leq C\paren*{\|\mu^t-\mua^t\|_{L^2}^2 + \ep^{2(n+1)}e^{2(n+1)\la_{\be,1}t}} \\
+ \Big(C\|\mu^t-\mua^t\|_{L^1}- \frac2\be\Big)\|\nabla(\mu^t-\mua^t)\|_{L^2}^2,
\end{multline}
for some constant $C>0$ depending on $\ds,n$ and the eigenvalues $\la_{\be,1},\ldots,\la_{\be,n}$, provided $t$ is sufficiently small so that $\ep e^{\la_{\be,1}t} \leq \frac{1}{2}$. To obtain a closed estimate for $\|\mu^t-\mua^t\|_{L^2}^2$, we use a continuity argument.  Let $T_*$ be the maximal time in $(0,\frac{\log(1/2\ep)}{\la_{\be,1}}]$ such that
\begin{align}\label{eq:muT*}
\forall t\leq T_*, \qquad \|\mu^{t}-\mua^{t}\|_{L^2} \leq \frac{2}{\be C}.
\end{align}
We adopt the convention that $T_*\coloneqq \frac{\log(1/2\ep)}{\la_{\be,1}}$ if no such time exists. Assuming such a $T_*$ exists,  continuity in time implies $\|\mu^{T_*}-\mua^{T_*}\|_{L^2}= \frac{2}{\be C}$. Then since $\|\mu^t-\mua^t\|_{L^1}\leq \|\mu^t-\mua^t\|_{L^2}$, applying the Gr\"{o}nwall-Bellman lemma to \eqref{eq:d2preGron} (remembering that $\|\mu^0-\mua^0\|_{L^2}=0$) we obtain
\begin{align}
\forall t\in [0,T_*], \qquad \|\mu^t-\mua^t\|_{L^2}^2 \leq C\frac{\ep^{2(n+1)}}{2(n+1)\la_{\be,1}}e^{2(n+1)\la_{\be,1}t} e^{Ct}.
\end{align}
We claim that 
\begin{align}
T_*\geq \min\paren*{\frac{\log(1/2\ep)}{\la_{\be,1}}, \frac{1}{2(C+(n+1)\la_{\be,1})}\log(\frac{4\be(n+1)\la_{\be,1}}{C^2\ep^{2(n+1)}})}.
\end{align}
Otherwise, one checks that
\begin{align}
\frac{2}{\be C}=\|\mu^{T_*}-\mua^{T_*}\|_{L^2}^2 \leq C\frac{\ep^{2(n+1)}e^{2(n+1)\la_{\be,1}T_*}}{2(n+1)\la_{\be,1}} e^{2CT_*} < \frac{2}{\be C},
\end{align}
which is a contradiction.

%Applying the Gr\"onwall-Bellman lemma to \eqref{eq:d2preGron}, we obtain
%\begin{equation}
%\forall t \in [0,\frac{\log(1/2\ep)}{\la_{\be,1}}], \qquad \|\mu^t-\mua^t\|_{L^2}^2 \leq C\frac{\ep^{n+1}}{(n+1)\la_{\be,1}}e^{(n+1)\la_{\be,1}t} e^{Ct},
%\end{equation}
%as desired.
%If $d=1$, then we choose $p=2$ and using H\"older's inequality
%\begin{align}
%\int_{\T^\ds}\left| (\mu^t-\mua^t)\nabla\mua^t\cdot\nabla\g\ast(\mu^t-\mua^t)\right| dx &\leq \|\nabla\mua^t\|_{L^\infty} \|\mu^t-\mua^t%\|_{L^2} \|\nabla\g\ast(\mu^t-\mua^t)\|_{L^2} \nn\\
%&\lesssim \ep e^{\la_{\be,1}t}\|\mu^t-\mua^t\|_{L^2}^2 ,
%\end{align}
%where we also use Plancherel's theorem and the estimate \eqref{eq:muaSob}.
If $\ds=1$, then putting our estimates together and using Young's product inequality to absorb the $\dot{H}^1$ factors into the diffusion term, we arrive at
\begin{equation}\label{eq:d1preGron}
\frac{d}{dt} \|\mu^t-\mua^t\|_{L^2}^2 \leq C\Big( \|\mu^t-\mua^t\|_{L^2}^6 + \|\mu^t-\mua^t\|_{L^2}^2 + \ep^{2(n+1)}e^{2(n+1)\la_{\be,1}t}\Big),
\end{equation}
for some constant $C>0$ depending on $\ds,\be,\la_{\be,1},\ldots,\la_{\be,n}$, provided $\ep e^{\la_{\be,1}t} \leq \frac12$. To obtain a closed estimate for $\|\mu^t-\mua^t\|_{L^2}^2$, we again use a continuity argument. Let $T_*$ be the maximal time in $(0,\frac{\log(1/2\ep)}{\la_{\be,1}}]$ such that
\begin{align}\label{eq:muT*'}
\forall t\leq T_*, \qquad \|\mu^{t}-\mua^{t}\|_{L^2} \leq 1.
\end{align}
We adopt the convention that $T_*\coloneqq \frac{\log(1/2\ep)}{\la_{\be,1}}$ if no such time exists. Assuming such a $T_*$ exists,  continuity in time implies $\|\mu^{T_*}-\mua^{T_*}\|_{L^2}= 1$. Applying Gr\"onwall-Bellman to \eqref{eq:d1preGron}, it follows that
\begin{align}
\forall t\in [0,T_*], \qquad \|\mu^t-\mua^t\|_{L^2}^2  \leq C\frac{\ep^{2(n+1)}e^{2(n+1)\la_{\be,1}t}}{2(n+1)\la_{\be,1}} e^{2Ct}.
\end{align}
We claim that
\begin{align}
T_*\geq \min\paren*{\frac{\log(1/2\ep)}{\la_{\be,1}}, \frac{1}{2(C+(n+1)\la_{\be,1})}\log(\frac{2(n+1)\la_{\be,1}}{C\ep^{2(n+1)}})}.
\end{align}
Otherwise, one checks that
\begin{align}
1=\|\mu^{T_*}-\mua^{T_*}\|_{L^2}^2 \leq C\frac{\ep^{2(n+1)}e^{2(n+1)\la_{\be,1}T_*}}{2(n+1)\la_{\be,1}} e^{2CT_*} < 1,
\end{align}
which is a contradiction. This then completes completes the proof of the lemma.

%For the maximal lifespan lower bound \eqref{eq:TmaxT*}, we combine the estimates established above with the local well-posedness theory of \cref{sec:WP}. 
\end{proof}

We now have all the ingredients to conclude the proof of \cref{prop:NLinstab}.

\begin{proof}[Conclusion of proof of \cref{prop:NLinstab}]
From \cref{lem:mumua}, we know that there is a constant $C_0>0$ depending on $\ds,\be$, such that there is a solution $\mu$ to equation \eqref{eq:lima} with initial datum $1+\ep\nu_1^0$ on the interval $[0,C_0^{-1}\log(1/\ep)]$ and
\begin{align}\label{eq:mumuadiffL1}
\forall t\in [0,C_0^{-1}\log(1/\ep)], \qquad \|\mu^t-\mua^t\|_{L^2} \leq C_0\ep^2 e^{C_0 t},
\end{align}
where we have constructed $\mua$ for $n=2$ (recall the definition \eqref{eq:muadef}). Similarly, using \cref{lem:nujbnd}, we have the bound
\begin{align}\label{eq:mualb}
1+2\ep e^{\la_{\be,1}t}\cos(2\pi k\cdot x) + \mathsf{C}_2 \ep^2 e^{2\la_{\be,1}t} \geq \mua^t \geq 1+2\ep e^{\la_{\be,1}t}\cos(2\pi k\cdot x) - \mathsf{C}_2 \ep^2 e^{2\la_{\be,1}t}.
\end{align}
The inequalities \eqref{eq:mumuadiffL1}, \eqref{eq:mualb} together with the triangle inequality now imply
\begin{align}\label{eq:mu1L1lb}
\|\mu^t-1\|_{L^1} &\geq  \|\mua^t-1\|_{L^1} - \|\mua^t-\mu^t\|_{L^2}\nn\\
&\geq \frac{4\ep}{\pi} e^{\la_{\be,1}t} - \ep^2 \paren*{C_0e^{C_0 t} + \mathsf{C}_2 e^{2\la_{\be,1}t}}.
\end{align}
Taking $C_0$ larger if necessary, we may assume without loss of generality that $C_0\geq \max(\mathsf{C}_2, 2\la_{\be,1})$. Solving
\begin{align}
\frac{e^{\la_{\be,1}t}}{2} = 2\ep C_0e^{C_0 t} \Longrightarrow t = \frac{1}{C_0-\la_{\be,1}}\log\paren*{\frac{1}{4C_0\ep}}.
\end{align}
Substituting this choice for $t$ into the right-hand side of \eqref{eq:mu1L1lb}, we see the right-hand side $=\frac12$, completing the proof. 
\end{proof}
%Now suppose $\ds=1$. Provided $\ep>0$ is sufficiently small depending on $\ds,\be$, we have instead of \eqref{eq:mumuadiffL1},
%\begin{align}\label{eq:mumuadiffL2}
%\forall t\in [0,C_0^{-1}\log(1/\ep)], \qquad \|\mu^t-\mua^t\|_{L^2} \leq C_0\ep^2 e^{C_0 t}
%\end{align}
%and instead of \eqref{eq:mu1L1lb},
%\begin{align}\label{eq:mu1L2lb}
%\|\mu^t-1\|_{L^1} &\geq \|\mua^t-1\|_{L^1} - \|\mua^t-\mu^t\|_{L^2} \nn\\
%&\geq \frac{4\ep}{\pi} e^{\la_{\be,1}t} - \ep^2 \paren*{C_0e^{C_0 t} + \mathsf{C}_2 e^{2\la_{\be,1}t}}.
%\end{align}
%The remainder of the proof follows just as before.

\subsection{Nonlinear stability}
\cref{lem:aL2exp} shows that $\muu$ is \emph{nonlinearly stable} if $\be<\bels$, implying, by the result of \cref{prop:NLinstab}, that the threshold $\bels$ is sharp for nonlinear stability. Moreover, this shows that it is false that all nonstationary classical solutions blow up in finite time if $\be>\bec$. Indeed, for lower dimensions $\ds$ (including $\ds=2$) such that $\bec<\bels$, we can choose $\be \in (\bec, \bels)$ and take an $O(\ep)$ perturbation of $\muu$ as initial datum to obtain from \cref{lem:aL2exp} a global nonstationary solution $\mu$ which relaxes exponentially fast to $\muu$ as $t\rightarrow\infty$.

\begin{remark}
Since the linearized operator $L_{\be}$ has decaying modes (i.e., modes with negative eigenvalues) if $\be<\infty$, this might suggest that it is possible to construct global solutions near $\muu$ for any $\be<\infty$. However, it is not clear to us how to modify the proof of \cref{lem:aL2exp} to show such a result: there could be a migration in the spectrum of the solution from high to low frequencies. {More precisely, if the Fourier transform $\hat{\mu}^t(k)$ vanishes for $|k|\leq r$ (i.e., $\hat{\mu}^t$ is supported on high frequencies), then instead of \eqref{eq:diss}, we have the sharper estimate
\begin{align}
2\cdd\int_{\T^\ds}(\mu^t-1)\Dm^{2-\ds}(\mu^t-1)dx -\frac{2}{\be}\int_{\T^\ds}|\nabla\mu^t|^2dx \leq 2\Big(\frac{\cdd}{r^{\ds}} - \frac{1}{\be}\Big)\|\nabla\mu^t\|_{L^2}^2.
\end{align}
For any $\be$, the prefactor made be negative by taking $r$ sufficiently large. However, even if $\hat{\mu}^0$ is supported on high frequencies to ensure negativity of the prefactor, we do not have a mechanism to propagate this condition.}
\end{remark}

\section{Uniqueness}\label{sec:Unq}
We now turn to the proof of \cref{thm:mainstab} concerning uniqueness of solutions to equation \eqref{eq:introKM}.

\subsection{Nonuniqueness of stationary solutions}\label{ssec:Instabssunq}
We first show that if $\be\in (\bels,\bec)$, then the uniform distribution is not the unique solution to \eqref{eq:introKM}, a fact which does not seem to have been previously observed for the general $\ds$-dimensional case. For $\ds=2$, nonuniqueness when $\be>\bec$ is classical \cite{ST1998, RT1998}. In particular, this shows that $\bec$ is perhaps not the right temperature to speak of ``criticality'' of the system. The idea of the proof is simple. As $\Ec_\beta(\muu) = 0$, so we show that there is a probability density $\mu_0$ with $-\infty<\Ec_\beta(\mu_0)<0$. As minimizers of the free energy are stationary solutions, this then implies nonuniqueness.  After the completion of our work, we learned that a similar argument has been previously used for general McKean-Vlasov equations on the torus in \cite{CP2010mv, CGPS2020}.

\begin{prop}\label{prop:ssnounq}
If $\beta>\bels$, then
\begin{align}\label{eq:infFEub}
\Ec_\beta^*\coloneqq \inf_{\mu \in \mathcal{P}_{ac}(\T^\ds)} \Ec_\beta(\mu)\leq -\frac{\be}{432}\left|\frac1\bels-\frac1\be\right|^3 < 0 = \Ec_\beta(\muu).
\end{align}
Moreover, if $\Ec_\beta$ has a minimizer, then there exist infinitely many solutions to the equation
\begin{align}\label{eq:TEeqn}
\mu = \dfrac{e^{\beta\g\ast\mu}}{Z_\beta}, \qquad \text{where} \ Z_\be = \int_{\T^{\ds}}e^{\beta\g\ast\mu}.
\end{align}
\end{prop}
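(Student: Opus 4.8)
The plan is to establish the upper bound \eqref{eq:infFEub} by evaluating $\Ec_\beta$ on a carefully chosen one-parameter family of probability densities that are small perturbations of the uniform distribution $\muu$, and then to optimize in the perturbation parameter. The natural ansatz, given that $\bels$ is precisely the linear stability threshold coming from the lowest Fourier mode, is to take $\mu_t = 1 + t\cos(2\pi k\cdot x)$ for a unit wave vector $|k|=1$ and $t\in(0,1)$, so that $\mu_t\in\P_{ac}(\T^\ds)$. Expanding $\Ec_\beta(\mu_t)$: the interaction term is exactly computable via Plancherel since $\widehat{\g}(\pm k) = \cdd(2\pi)^{-\ds}$, giving $-\frac12\int \g\,d\mu_t^{\otimes 2} = -\frac{t^2}{4}\cdot\frac{\cdd}{(2\pi)^\ds} = -\frac{t^2}{4\bels}$. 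For the entropy term $\frac1\beta\int(1+t\cos)\log(1+t\cos)$, I would Taylor expand $\log(1+s) = s - \tfrac{s^2}{2} + O(s^3)$: the linear term integrates to zero, the quadratic term gives $\frac1\beta\cdot\frac{t^2}{2}\int\cos^2 = \frac{t^2}{4\beta}$, and the cubic and higher terms are $O(t^3/\beta)$ with an explicit constant (one can use $|s\log(1+s) - s^2 + \tfrac{s^3}{2}|\lesssim s^4$ for $|s|\le \tfrac12$, or just crudely bound the remainder). Thus $\Ec_\beta(\mu_t) \le \frac{t^2}{4}\left(\frac1\beta - \frac1\bels\right) + C\frac{t^3}{\beta}$ for $t$ small. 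Since $\beta>\bels$, the quadratic coefficient is negative; choosing $t$ proportional to $\left(\frac1\bels-\frac1\beta\right)$ and tracking constants carefully should yield exactly the stated bound $-\frac{\beta}{432}\left|\frac1\bels-\frac1\beta\right|^3$ (the constant $432$ presumably emerges from optimizing $\frac{a}{4}t^2 - \frac{b}{3}t^3$-type expressions with the specific remainder constant). The strict inequality $\Ec_\beta^* < 0 = \Ec_\beta(\muu)$ is then immediate, and $\Ec_\beta^* > -\infty$ when $\beta\le\bec$ follows from \cref{lem:FEcoer}; but since here we only assume $\beta>\bels$ (not $\beta<\bec$), the finiteness is not claimed in general — the bound \eqref{eq:infFEub} is an upper bound, consistent with $\Ec_\beta^* = -\infty$ being allowed.

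For the second assertion, suppose $\Ec_\beta$ attains its minimum at some $\mu_0\in\P_{ac}(\T^\ds)$. Then $\Ec_\beta(\mu_0) = \Ec_\beta^* < 0 = \Ec_\beta(\muu)$, so in particular $\mu_0\ne\muu$. A minimizer must satisfy the Euler-Lagrange equation for $\Ec_\beta$ on the constraint set $\{\int\mu = 1, \mu\ge 0\}$: computing the first variation of $\frac1\beta\int\mu\log\mu - \frac12\int\g\,d\mu^{\otimes 2}$ gives $\frac1\beta(\log\mu_0 + 1) - \g\ast\mu_0 = \text{const}$, which upon exponentiating and normalizing is precisely equation \eqref{eq:TEeqn}. (One should check $\mu_0 > 0$ a.e. so the logarithm makes sense — this follows because if $\mu_0$ vanished on a positive measure set, one could decrease the entropy while keeping $\g\ast\mu_0$ bounded, contradicting minimality; alternatively invoke the bootstrap regularity as in \cref{lem:mutasyss}.) Finally, translation invariance of $\Ec_\beta$ on the torus: for any $a\in\T^\ds$, the translate $\mu_0(\cdot - a)$ is also a minimizer and also solves \eqref{eq:TEeqn}. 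Since $\mu_0$ is nonconstant, the map $a\mapsto \mu_0(\cdot-a)$ is nonconstant, so this produces infinitely many (indeed a continuum of) distinct solutions to \eqref{eq:TEeqn}.

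The main obstacle I anticipate is purely bookkeeping: pinning down the explicit constant $432$ requires being careful with the cubic remainder in the entropy expansion and then solving the resulting optimization in $t$ exactly rather than just up to constants. The remainder estimate should be done cleanly — e.g. by writing $\int \mu_t\log\mu_t\,dx = \int\left[(1+t\cos)\log(1+t\cos) - t\cos - \tfrac{t^2}{2}\cos^2\right]dx + \tfrac{t^2}{2}\int\cos^2\,dx$ and bounding the bracketed integrand pointwise on $|t\cos|\le t < 1$ by a multiple of $t^3|\cos|^3$ (valid since $\phi(s) := (1+s)\log(1+s) - s - s^2/2$ satisfies $\phi(s)\le C|s|^3$ near $0$, with $C$ computable, and one may need to restrict to $t$ below a threshold so higher-order terms don't spoil the sign). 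Everything else — the exact Plancherel computation of the interaction term, the derivation of the Euler-Lagrange equation, and the translation-invariance argument for infinitely many solutions — is standard and should go through without difficulty.
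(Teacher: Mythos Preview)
Your proposal is correct and follows essentially the same route as the paper. The paper uses the identical ansatz $\mu_\ep = 1 + 2\ep\cos(2\pi k\cdot x)$ with $|k|=1$, computes the interaction term exactly via Fourier (obtaining $-\ep^2/\bels$), expands the entropy using the power series for $\log(1+z)$ to get leading term $\ep^2/\beta$ with a crudely bounded remainder $8\ep^3/\beta$ (valid for $\ep\le 1/4$ by summing the geometric tail $\sum_{n\ge 3}(2\ep)^n/n$), then optimizes $\varphi(\ep)=\ep^2\bigl(\tfrac{1+8\ep}{\beta}-\tfrac{1}{\bels}\bigr)$ at $\ep=\tfrac{\beta}{12}\bigl(\tfrac{1}{\bels}-\tfrac{1}{\beta}\bigr)$ to produce the constant $432$; the infinitely-many-solutions claim is handled, as you anticipated, by translation invariance together with the fact that any minimizer is nonuniform.
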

\begin{proof}
For $k\in\Z^\ds\setminus\{0\}$ to be chosen and $\ep\leq\frac14$, set
\begin{equation}
\mu_\ep \coloneqq 1 + \ep\paren*{e^{2\pi ik\cdot x} + e^{-2\pi i k\cdot x} } = 1+2\ep\cos(2\pi k\cdot x).
\end{equation} 
We compute
\begin{multline}
\Ec_\beta(\mu_\ep) = \frac1\be\int_{\T^\ds}\log\paren*{1+2\ep\cos(2\pi k\cdot x)}\paren*{1+2\ep\cos(2\pi k\cdot x)} dx \\
- \frac12\int_{(\T^\ds)^2} \g(x-y)\paren*{1+2\ep\cos(2\pi k\cdot x)}\paren*{1+2\ep\cos(2\pi k\cdot y)}dxdy.
\end{multline}

Consider first the interaction term. Using that $\g$ is even with $\int_{\T^\ds}\g=0$, we find it equals
\begin{equation}
-\ep^2\int_{(\T^\ds)^2}\g(x-y)e^{2\pi ik\cdot(x-y)}dxdy - \ep^2\int_{(\T^\ds)^2}\g(x-y)e^{2\pi ik\cdot(x+y)}dxdy \eqqcolon \Te_1+\Te_2.
\end{equation}
Making the change of variable $z\coloneqq x-y$ and using Fubini-Tonelli,
\begin{align}\label{eq:FEminT1}
\Te_1 = -\ep^2\int_{\T^\ds} \int_{\T^\ds}\g(z)e^{2\pi i k\cdot z}dz dy = -\ep^2 \hat{\g}(-k) = -\frac{\ep^2\cdd}{(2\pi |k|)^\ds}.
\end{align}
Similarly,
\begin{align}\label{eq:FEminT2}
\Te_2 = -\ep^2\int_{\T^\ds}e^{4\pi ik\cdot y}\paren*{\int_{\T^\ds}\g(z) e^{2\pi i k\cdot z}dz}dy = -\ep^2\hat{\g}(-k)\int_{\T^\ds}e^{4\pi ik\cdot y}dy = 0.
\end{align}

Consider now the entropy term. Using the analyticity of $z\mapsto \log(1+z)$ for $|z|<1$, our assumption $\ep<\frac14$ allows us to write it as
\begin{multline}
\frac{2\ep}{\be}\int_{\T^\ds}\cos(2\pi k\cdot x)\paren*{1+2\ep\cos(2\pi k\cdot x)}dx - \frac{\ep^2}{\be}\int_{\T^\ds}\cos^2(2\pi k\cdot x)\paren*{1+2\ep\cos(2\pi k\cdot x)}dx \\
+\frac1\be\sum_{n=3}^\infty \frac{(-1)^{n+1}(2\ep)^{n}}{n} \int_{\T^\ds} \cos^n(2\pi k\cdot x)\paren*{1+2\ep\cos(2\pi k\cdot x)}dx.
\end{multline}
By the fundamental theorem of calculus and periodic boundary conditions, the first line above simplifies to
\begin{align}\label{eq:FEminT3}
\frac{2\ep^2}{\be} \int_{\T^\ds}\cos^2(2\pi k\cdot x)dx = \frac{\ep^2}{\be}.
\end{align}
This gives the leading contribution to the entropy term. For the subleading contribution, we crudely estimate
\begin{align}
\frac{1}{\be}\sum_{n=3}^\infty \left|\frac{(-1)^{n+1}(2\ep)^{n}}{n} \int_{\T^\ds} \cos^n(2\pi k\cdot x)\paren*{1+2\ep\cos(2\pi k\cdot x)}dx\right| \leq \frac{8\ep^3(1+2\ep)}{3(1-2\ep)\be} \leq \frac{8\ep^3}{\be}, \label{eq:FEminT4}
\end{align}
where we use our assumption that $\ep\leq\frac14$. Combining \eqref{eq:FEminT1}-\eqref{eq:FEminT4}, we obtain
\begin{equation}\label{eq:Fcmuepub}
\Ec_\beta(\mu_\ep) \leq \ep^2\paren*{\frac{(1+8\ep)}{\be} - \frac{\cdd}{(2\pi|k|)^\ds}}.
\end{equation}
We minimize the right-hand side by choosing $|k|=1$. Choosing $\ep>0$ sufficiently small so that $\ep < \frac{\be}{8}(\frac{\cdd}{(2\pi)^\ds} -\frac1\be)$, we see that the right-hand side of \eqref{eq:Fcmuepub} is $<0$, establishing assertion \eqref{eq:infFEub}. Viewing the right-hand side of \eqref{eq:Fcmuepub} as a function $\varphi(\ep)$, we see that
\begin{align}
\varphi'(\ep) = 2\ep\left(\frac{(1+8\ep)}{\be}-\frac{1}{\bels}\right)+\frac{8\ep^2}{\be}.
\end{align} 
Setting the left-hand side equal to zero, we find the unique critical point $\ep=\frac{\be}{12}\left(\frac1\bels-\frac1\be\right)$, which is a global minimum. We then compute
\begin{align}
\varphi\left(\frac{\be}{12}\left(\frac1\bels-\frac1\be\right)\right) = -\frac{\be}{432}\left|\frac1\bels-\frac1\be\right|^3.
\end{align}
It then follows $\inf_{\mu\in\P_{ac}}\Ec_\beta(\mu) \leq -\frac{\be}{432}\left|\frac1\bels-\frac1\be\right|^3$.

To show that there are infinitely many solutions to \eqref{eq:TEeqn}, we use the translation invariance of $\T^\ds$. This completes the proof. %Let $\mum$ be a minimizer of the free energy, which we know is a solution to \eqref{eq:TEeqn} and also must be smooth. Since $\mum\neq \muu$, there exist two distinct points $a,b\in\T^\ds$ such that $\mum(a) < \mum(b)$. since $\mum$ is continuous, there exists a sequence of (necessarily distinct) points $\{x_n\}_{n=1}^\infty \subset \T^\ds$ such that
%\begin{align}
%\mu(x_n) < \mu(x_{n+1}) \quad \text{and} \quad |x_n-x_{n+1}|> |x_{n+1}-x_{n+2}|.
%\end{align}
%Thus, if set $e_n \coloneqq x_{n+1}-x_n$, for $n\geq 1$, and $e_0 \coloneqq 0$, we see that the family of translates $\{\mum(\cdot+e_n)\}_{n=0}^\infty$ are pairwise distinct solutions.
\end{proof}

%\begin{remark}
%\cref{prop:ssnounq} shows that the unstable solutions $\mu_\ep$ given by \cref{prop:NLinstab} cannot converge weakly to $\muu$ as $t%\rightarrow\infty$. Indeed, $\Ec_\beta(\mu_\ep^0) < \Ec_\beta(\muu)$ and the free energy is nonincreasing along the flow.  {\cb [for small $\ep$?]}
%\end{remark}

{\cref{prop:ssnounq} tells us that if $\bels<\bec$, then for any $\be\in (\bels,\bec)$, the minimizer $\mu_\be$ of $\Ec_\be$ exists and is nonuniform. This gives us nonuniqueness of solutions to \eqref{eq:TEeqn} in this case, which corresponds to dimensions $\ds\ge 11$. If $\bec<\bels$, then for $\be>\bels$, there does not exist a minimizer for $\Ec_\be$, so we cannot show nonuniqueness through this route. However, using a mountain pass argument (cf. \cite[Section 3]{ST1998}), we can show that if $\bels>\bec$, then for $\be\in (\bec, \bels)$, there exists a nonuniform solution $\mu_\beta$ to equation \eqref{eq:TEeqn}.

\begin{prop}\label{prop:ssnounq2}
Let $\ds\geq 1$ and suppose that $\bels>\bec$. Then for $\be\in (\bec,\bels)$,  there exists a $\mu_\be\in \P_{ac}(\T^\ds)$ with $\Ec_\beta(\mu_\beta) \in (0,\infty)$ and $\mu_\beta$ is a nonuniform solution of equation \eqref{eq:TEeqn}. 
\end{prop}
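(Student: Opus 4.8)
The plan is to set up a mountain pass argument for the free energy $\Ec_\be$ on the space of probability densities, exploiting the fact that when $\bec < \be < \bels$ the energy is bounded from below (by \cref{lem:FEcoer}, since $\be < \bels$ is compatible with... wait, actually we need $\be < \bec$ for coercivity) --- more precisely, when $\bels > \bec$ and $\be \in (\bec,\bels)$ we are \emph{above} $\bec$, so $\Ec_\be$ is not bounded below (\cref{lem:FEsc}); however, by \cref{prop:ssnounq} applied with the roles reversed, the uniform state $\muu$ is a strict local minimum of $\Ec_\be$ when $\be < \bels$ (the second variation is positive: the computation \eqref{eq:Fcmuepub} shows $\Ec_\be(\mu_\ep) \geq \ep^2(\tfrac{1}{\be} - \tfrac{\cdd}{(2\pi)^\ds}) + O(\ep^3) > 0$ for $\be < \bels$ and $\ep$ small, along any trigonometric perturbation, and one should upgrade this to a genuine $H^{(2-\ds)/2}$-coercive lower bound near $\muu$ using the linearization $L_\be$ from \cref{sec:Instab}, all of whose eigenvalues are negative when $\be < \bels$). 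Meanwhile \cref{lem:FEsc}'s proof exhibits an explicit family $\phi_\la$ with $\Ec_\be(\phi_\la) \to -\infty$ as $\la \to 0^+$, hence in particular a probability density with $\Ec_\be < 0 = \Ec_\be(\muu)$. So $\muu$ is a strict local min but not a global min: this is exactly the mountain pass geometry.

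First I would record the local minimum property rigorously. Using the expansion of $\Ec_\be$ around $\muu = 1$: writing $\mu = 1 + h$ with $\int h = 0$, one has
\begin{equation}
\Ec_\be(\mu) = \frac{1}{\be}\int_{\T^\ds}(1+h)\log(1+h)\,dx - \frac{1}{2}\int_{(\T^\ds)^2}\g(x-y)h(x)h(y)\,dxdy,
\end{equation}
and since the quadratic part is $\frac{1}{2\be}\|h\|_{L^2}^2 - \frac{\cdd}{2}\|h\|_{\dot H^{(2-\ds)/2}}^2 + o(\|h\|^2)$, which by Plancherel equals $\frac12\sum_{k\neq 0}\big(\frac{1}{\be} - \frac{\cdd}{(2\pi|k|)^\ds}\big)|\hat h(k)|^2 \geq \frac12(\frac1\be - \frac{1}{\bels})\|h\|_{L^2}^2$ when $\be < \bels$ (using $|k|\geq 1$), there exist $\rho_0 > 0$ and $m_0 > 0$ such that $\Ec_\be(\mu) \geq m_0$ whenever $\|\mu - \muu\|$ (in a suitable norm, say $L^2$ combined with an $L\log L$ bound so the entropy term is controlled) equals $\rho_0$. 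Here care is needed with the higher-order entropy terms when $h$ is large in $L^\infty$; the cleanest fix is to work on the set $\{\mu : \|\mu - 1\|_{L^2} = \rho_0,\ \int \mu\log_+\mu \leq M\}$ and note that outside a bounded entropy ball \cref{lem:FEcoer}-type bounds... actually since $\be > \bec$ we don't have that, so instead I would restrict attention to a slice and use that $z\log z \geq z - 1$ pointwise to get a crude lower bound $\frac1\be\int(1+h)\log(1+h) \geq 0$ always, which already handles the region where the quadratic term is not dominant. This needs to be done carefully but is standard.

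Next I would invoke the mountain pass theorem (Ambrosetti--Rabinowitz). Fix the endpoint $\mu_1 := \phi_{\la}$ for $\la$ small enough that $\Ec_\be(\mu_1) < 0 < m_0$ and $\mu_1$ lies outside the ball of radius $\rho_0$ around $\muu$. Define $c := \inf_{\gamma \in \Gamma} \max_{s \in [0,1]} \Ec_\be(\gamma(s))$ where $\Gamma$ is the set of continuous paths in $\P_{ac}(\T^\ds)$ (with an appropriate topology, e.g. the $H^{(2-\ds)/2}$ or $L^2$ topology restricted to a bounded-entropy set) from $\muu$ to $\mu_1$. By the geometry, $c \geq m_0 > 0 > \max(\Ec_\be(\muu), \Ec_\be(\mu_1))$, so $c$ is a genuine mountain-pass level. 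One then needs a Palais--Smale condition: a PS sequence $\mu_n$ with $\Ec_\be(\mu_n) \to c$, $\Ec_\be'(\mu_n) \to 0$ admits a convergent subsequence. The bound $\Ec_\be(\mu_n) \to c$ combined with the log-HLS inequality \eqref{eq:logHLS} gives a uniform $L\log L$ bound (exactly as in \cref{lem:FEcoer}, but here $\be > \bec$ so one gets a bound on the negative part / needs the constraint $\int\mu_n = 1$ and that the interaction is controlled --- this is the delicate point, since $\be > \bec$ means the free energy is \emph{not} coercive; one must use that we are working at a \emph{finite} positive level $c$ and near a mountain pass, and extract compactness from the $H^1$-type bound coming from $\Ec_\be' \to 0$, i.e. the Euler--Lagrange equation $\frac1\be(1 + \log\mu_n) - \g\ast\mu_n \to \text{const}$ in a dual norm, which forces $\log\mu_n$ to be bounded in a Sobolev space since $\g\ast\mu_n$ is bounded in $W^{\ds,p}$ given the $L\log L$ control). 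With PS in hand, the critical point $\mu_\be$ satisfies $\Ec_\be'(\mu_\be) = 0$ with Lagrange multiplier, i.e. $\frac1\be(1+\log\mu_\be) - \g\ast\mu_\be = \text{const}$, which rearranges to $\mu_\be = e^{\be\g\ast\mu_\be}/Z_\be$, i.e. \eqref{eq:TEeqn}; and $\Ec_\be(\mu_\be) = c > 0 = \Ec_\be(\muu)$, so $\mu_\be \neq \muu$. Positivity and smoothness of $\mu_\be$ follow from the Kirkwood--Monroe equation and bootstrapping as in \cref{lem:mutasyss}.

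The main obstacle I expect is the Palais--Smale / compactness step, precisely because $\be > \bec$ makes $\Ec_\be$ unbounded below, so one cannot extract compactness globally and must argue that along a PS sequence at the fixed level $c$ no mass concentration (blow-up) can occur --- ruling out the PKS-type concentration at a point. The key leverage is that $\Ec_\be'(\mu_n) \to 0$ provides the Euler--Lagrange relation $\log\mu_n = \be\g\ast\mu_n + o(1) + \text{const}$, and since the $L\log L$ bound (from $\Ec_\be(\mu_n)$ bounded and log-HLS) makes $\g\ast\mu_n$ bounded in $C^{0,\alpha}$ (as $\g$ has an integrable, log-type singularity and $\mu_n\in L\log L \subset $ a space whose convolution with $\g$ is continuous), we get $\mu_n = e^{\be\g\ast\mu_n + o(1)}/Z_n$ uniformly bounded above and below, hence compact in every $L^p$ by Arzelà--Ascoli after another bootstrap --- so concentration is automatically excluded. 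One must check the $L\log L$ bound itself survives when $\be > \bec$: here one uses that $c$ is \emph{finite} and that $\Ec_\be(\mu_n) = \frac1\be\int\mu_n\log\mu_n - \frac12\langle \g\ast\mu_n,\mu_n\rangle$ with $\langle\g\ast\mu_n,\mu_n\rangle \leq \frac1\ds\int\mu_n\log\mu_n + C_0$ by log-HLS (\cref{lem:logHLS} plus \eqref{eq:ggE}), giving $c + o(1) = \Ec_\be(\mu_n) \geq (\frac1\be - \frac{1}{2\ds})\int\mu_n\log\mu_n - C$, and $\frac1\be - \frac1{2\ds}$... is negative when $\be > \bec = 2\ds$, so this direct route fails and one genuinely needs the Euler--Lagrange relation to close the loop: substitute $\log\mu_n \approx \be\g\ast\mu_n$ into the entropy to get $\int\mu_n\log\mu_n \approx \be\langle\g\ast\mu_n,\mu_n\rangle \approx 2\be(\frac1\be\int\mu_n\log\mu_n - \Ec_\be(\mu_n))$, i.e. $(1 - 2)\int\mu_n\log\mu_n \approx -2\be c$, yielding $\int\mu_n\log\mu_n \approx 2\be c$ bounded. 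This self-improving argument is the crux and the part requiring the most care.
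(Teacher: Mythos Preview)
Your overall strategy---mountain pass geometry with $\muu$ as a strict local minimum (since $\be<\bels$) and a far-away point with strictly lower energy (since $\be>\bec$ makes the functional unbounded below)---matches the paper's. However, the paper works not with $\Ec_\be$ on $\P_{ac}(\T^\ds)$ but with the \emph{dual functional}
\[
\Es_\be(\hs) \;=\; \frac{1}{2\cdd}\|\hs\|_{\dot H^{\ds/2}}^2 \;-\; \be\log\!\int_{\T^\ds}e^{\hs}\,dx
\]
on the Hilbert space $\dot H^{\ds/2}(\T^\ds)$, whose critical points correspond via $\mu = 1 + \frac{1}{\cdd\be}\Dm^{\ds}\hs$ to solutions of \eqref{eq:TEeqn}. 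This buys several things you are missing: (i) a clean Banach-manifold setting where the mountain pass theorem of Ambrosetti--Rabinowitz applies verbatim; (ii) the strict local minimum at $\hs=0$ follows from a short second-variation computation together with the Trudinger--Moser (for $\ds\ge2$) or Onofri (for $\ds=1$) inequality, which controls the exponential nonlinearity---a tool you do not invoke but which is essential; and (iii) Palais--Smale compactness, \emph{given boundedness} of the sequence in $\dot H^{\ds/2}$, follows from Rellich--Kondrachov and another Trudinger--Moser application.

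Your direct approach on $\P_{ac}$ runs into a genuine obstacle at exactly the spot you flag. In the ``self-improving'' computation, the Euler--Lagrange relation reads $\log\mu_n = \be\g\ast\mu_n + c_n + o(1)$ with $c_n = \int_{\T^\ds}\log\mu_n$ (integrate the relation against Lebesgue measure, using $\int\g\ast\mu_n=0$). Your substitution yields $\int\mu_n\log\mu_n = 2\be c - c_n + o(1)$; but $c_n = \int\log\mu_n \le 0$ by Jensen, so this gives only a \emph{lower} bound on the entropy, not the upper bound you need---and $c_n$ can drift to $-\infty$ under concentration. Moreover, the $o(1)$ from $\Ec_\be'(\mu_n)\to 0$ is an $o(1)$ times the norm of the test function, which is $\mu_n-1$, so the argument is circular without an a priori bound. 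In the dual formulation these difficulties dissolve: boundedness of $\|\hs_n\|_{\dot H^{\ds/2}}$ (which the paper takes as a hypothesis in the Palais--Smale lemma) directly controls both the entropy via Trudinger--Moser and the interaction, with no Lagrange multiplier to track. Passing to the dual is the missing step that makes your outline rigorous.
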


To prove \cref{prop:ssnounq2}, rather than work with $\Ec_{\be}$, it is more convenient to work with its dual formulation
\begin{align}
\mathsf{E}_\be(\mathsf{h}) \coloneqq \frac{1}{2\cdd}\|\mathsf{h}\|_{\dot{H}^{\frac{\ds}{2}}}^2 - \be\log\Big[\int_{\T^\ds}e^{\hs}dx\Big]
\end{align}
defined on $\dot{H}^{\frac\ds2}$ identified as the subspace of $H^{\frac\ds2}$ with zero mean. By Jensen's inequality, $\int_{\T^\ds}e^{\hs}\geq e^{\int_{\T^\ds}\hs} = 1$, since $\hs$ has zero mean. Using the dual formulation of entropy and \cref{lem:logHLS}, we see that $\Es_\be$ is bounded from below if and only if $\be\leq \bec$. A critical point of $\Es_\be$ is a solution to the equation
\begin{align}\label{eq:hseqn1}
\frac{1}{\cdd}\Dm^{\ds}\hs =\be \left(\frac{e^{ \hs}}{\int_{\T^\ds}e^{\hs}} - 1\right),
\end{align}
which yields a solution $\mu = \frac{1}{\cdd\be}\Dm^{\ds}\hs$ to equation \eqref{eq:TEeqn}. Note that under this duality, $\hs=0$ if and only if $\mu=\muu$.  {An argument similar to the proof of \cref{lem:FEsc} shows that for $\be>\bec$, $\Es_\be$ is unbounded below.}
%, there exists a family $\{\hs_\lambda\}_{\la>0}$ in $\dot{H}^{\ds/2}$ such that
%\begin{align}
%\Es_\be(\hs_\la) = (\bec-\be)\log(1/\la) + O(1).
%\end{align}
%{\cb [Isn't it enough to say that $\Es_\be$ is unbounded below or is the point to prove it here?]}

We deduce \cref{prop:ssnounq2} from the following result for the functional $\Es_\be$.

\begin{prop}\label{lem:Esbe}
Let $\ds\geq 1$ and suppose that $\bels>\bec$. Then for $\be\in (\bec,\bels)$, there exists a critical point $\hs_\be \in \dot{H}^{\frac\ds2}(\T^d)$ of $\Es_\be$, such that $\Es_\be(\hs_\be)>0$.
\end{prop}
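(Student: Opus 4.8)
The plan is to obtain $\hs_\be$ as a mountain pass critical point of $\Es_\be$ on the Hilbert space $\dot H^{\frac\ds2}(\T^\ds)$ (the zero-mean subspace of $H^{\frac\ds2}$), exploiting that $\hs=0$ — which corresponds to $\muu$ under the duality $\mu=\frac{1}{\cdd\be}\Dm^{\ds}\hs+1$ — is a strict local minimum with $\Es_\be(0)=0$, while $\Es_\be$ is unbounded from below because $\be>\bec$.

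\emph{Mountain pass geometry.} First I would establish a \emph{quantitative} local minimum at the origin. Since $\hs$ has zero mean, Jensen's inequality gives $\log\int_{\T^\ds}e^{\hs}\,dx\ge 0$, and expanding the exponential,
\begin{equation}
\log\int_{\T^\ds}e^{\hs}\,dx \le \frac12\|\hs\|_{L^2}^2 + \int_{\T^\ds}\Big(e^{|\hs|}-1-|\hs|-\tfrac12\hs^2\Big)dx .
\end{equation}
The critical embeddings $\dot H^{\frac\ds2}(\T^\ds)\hookrightarrow L^p$ for all $p<\infty$, together with the Moser--Trudinger--Adams inequality on $\T^\ds$ (the same ingredient that makes $\Es_{\bec}$ bounded below), bound the last term by $C\|\hs\|_{\dot H^{\frac\ds2}}^3$ once $\|\hs\|_{\dot H^{\frac\ds2}}$ is small. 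Combining this with the spectral bound $\|\hs\|_{L^2}^2\le(2\pi)^{-\ds}\|\hs\|_{\dot H^{\frac\ds2}}^2=\frac{1}{\cdd\bels}\|\hs\|_{\dot H^{\frac\ds2}}^2$ (the lowest nonzero frequency on $\T^\ds$ is $|k|=1$) yields, for $\|\hs\|_{\dot H^{\frac\ds2}}=\rho$ small depending on $\ds,\be$,
\begin{equation}
\Es_\be(\hs)\ge \frac{1}{2\cdd}\Big(1-\frac{\be}{\bels}\Big)\rho^2-C\rho^3 \ge \alpha_\be\coloneqq\frac{\rho^2}{4\cdd}\Big(1-\frac{\be}{\bels}\Big)>0 ,
\end{equation}
using $\be<\bels$. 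For the far endpoint I would invoke that $\Es_\be$ is unbounded below when $\be>\bec$ (by a concentrating-bump computation in the dual variable, analogous to \cref{lem:FEsc}), fixing any $\hs_1$ with $\|\hs_1\|_{\dot H^{\frac\ds2}}>\rho$ and $\Es_\be(\hs_1)<0$. Then the min-max value $c_\be\coloneqq\inf_\gamma\max_{t\in[0,1]}\Es_\be(\gamma(t))$ over continuous paths $\gamma$ in $\dot H^{\frac\ds2}$ from $0$ to $\hs_1$ satisfies $c_\be\ge\alpha_\be>0$.

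\emph{Palais--Smale and conclusion.} The substantive part is the Palais--Smale condition at the level $c_\be$. Given $(\hs_n)$ with $\Es_\be(\hs_n)\to c_\be$ and $\Es_\be'(\hs_n)\to0$ in $(\dot H^{\frac\ds2})^{*}$, one must first show $(\hs_n)$ is bounded in $\dot H^{\frac\ds2}$ — either directly from the min-max structure and the Moser--Trudinger--Adams inequality, or via Struwe's monotonicity trick applied to the decreasing map $\be\mapsto c_\be$ (monotone because $\log\int e^{\hs}\ge0$) — and then upgrade weak convergence to strong. Boundedness makes $\Dm^{\ds}\hs_n=\cdd\be\big(e^{\hs_n}/\!\int_{\T^\ds}e^{\hs_n}-1\big)+o(1)$ with normalized densities $e^{\hs_n}/\!\int e^{\hs_n}$ of unit mass, so one can run the Brezis--Merle / Li--Shafrir concentration-compactness analysis in the form available for $\Dm^{\ds}$ on $\T^\ds$ (fractional for odd $\ds$, polyharmonic-type for even $\ds$): along a subsequence, either $\hs_n\to\hs_\be$ strongly in $\dot H^{\frac\ds2}$, or these densities concentrate at finitely many points with the quantization of bubbling mass forcing $\be\in\bec\,\N$. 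Since $\be\in(\bec,\bels)$, concentration is excluded whenever $\be\notin\bec\,\N$; the set $\bec\,\N\cap(\bec,\bels)$ is empty when $\bels\le2\bec$ (in particular for $\ds=2$) and finite otherwise, and those few remaining values can be handled by approximating $\be$ from below by parameters off $\bec\,\N$, producing critical points $\hs_{\be_j}$ with $\alpha_\be\le c_{\be_j}\le c_{\be_{j_0}}<\infty$ and passing to the limit. Either way $\Es_\be$ has a critical point $\hs_\be$ at level $c_\be\ge\alpha_\be>0$; in particular $\hs_\be\neq0$, so $\mu_\be\coloneqq e^{\hs_\be}/\!\int_{\T^\ds}e^{\hs_\be}$ is, by \eqref{eq:hseqn1}, a nonuniform solution of \eqref{eq:TEeqn}, smooth and strictly positive after bootstrapping — which is how \cref{prop:ssnounq2} follows.

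\emph{Main obstacle.} I expect the concentration-compactness and quantization step to be the crux: establishing the Brezis--Merle alternative, identifying the sharp mass quantum $\bec$ for $\Dm^{\ds}$ on the torus, and coping with the exceptional parameters $\be\in\bec\,\N\cap(\bec,\bels)$ constitute the delicate analytic input, whereas the mountain-pass geometry and the passage from $\hs_\be$ back to a stationary solution are essentially routine.
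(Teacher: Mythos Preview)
Your overall mountain-pass strategy coincides with the paper's: show $\hs=0$ is a strict local minimum using $\be<\bels$ together with Moser--Trudinger (this is \cref{lem:locmin}), pick $\hs_0$ with $\Es_\be(\hs_0)<0$ from unboundedness below when $\be>\bec$, and apply the mountain pass theorem.

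The divergence is in how you upgrade weak to strong convergence of a bounded Palais--Smale sequence. You propose Brezis--Merle/Li--Shafrir concentration-compactness for $\Dm^{\ds}$ with mass quantization in multiples of $\bec$, and then separately treat exceptional parameters $\be\in\bec\N\cap(\bec,\bels)$ by an approximation argument. This is considerable overkill. The paper's argument (\cref{lem:PS}) is elementary: once $\|\hs_n\|_{\dot H^{\frac\ds2}}\le C_1$, take a weak limit $\hs$ and pair the vanishing derivative against $\hs_n-\hs$,
\[
o(1)=\ipp*{D\Es_\be(\hs_n),\hs_n-\hs}=\frac{1}{\cdd}\|\hs_n-\hs\|_{\dot H^{\frac\ds2}}^2+\frac{1}{\cdd}\ipp*{\hs,\hs_n-\hs}_{\dot H^{\frac\ds2}}-\be\,\frac{\int_{\T^\ds}(\hs_n-\hs)e^{\hs_n}}{\int_{\T^\ds}e^{\hs_n}}.
\]
The middle term vanishes by weak convergence. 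For the last term, Onofri/Trudinger--Moser with the uniform $\dot H^{\frac\ds2}$ bound gives $e^{2\hs_n}$ bounded in $L^1$, hence $e^{\hs_n}$ bounded in $L^2$; since $\hs_n\to\hs$ strongly in $L^2$ by Rellich--Kondrachov, Cauchy--Schwarz kills it. Thus $\hs_n\to\hs$ strongly in $\dot H^{\frac\ds2}$ --- no concentration analysis, no quantization, no exceptional values of $\be$. What your machinery buys is robustness (it is the route one would take in settings where the derivative information is unavailable), but here it is unnecessary. Your identification of Struwe's monotonicity trick (via the nonincreasing map $\be\mapsto c_\be$, using $\log\int e^{\hs}\ge0$) as the tool for the boundedness step is on target; the paper's \cref{lem:PS} takes boundedness as a hypothesis.
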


To prove \cref{lem:Esbe}, we make use of the fact that if $\be<\bels$, then $\hs=0$ is a strict local minimizer of $\Es_\be$. This is the content of the next lemma.

\begin{lemma}\label{lem:locmin}
There exists a constant $C>0$, depending only on $\ds$, such that for any direction $\phi \in \dot{H}^{\frac{\ds}{2}}$, we have
\begin{align}\label{eq:d2dt2Eslb}
\frac{d^2}{dt^2}\Es_\be(t\phi) \geq (2\pi)^{-\ds}\Big(\bels-\be - tC\be \Big(e^{2t\|\phi\|_{\dot{H}^{\frac12}} + 2t^2\|\phi\|_{\dot{H}^{\frac12}}^2}\indic_{\ds=1} + e^{\frac{1}{C}\|\phi\|_{\dot{H}^{\frac\ds2}}^\ds}\indic_{\ds\geq 2}\Big)^{1/2}\Big)\|\phi\|_{\dot{H}^{\frac\ds2}}^2 .
\end{align}
In particular, if $\be<\bels$, then $\hs=0$ is a strict local minimizer of $\Es_\be$. 
\end{lemma}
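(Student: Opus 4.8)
The statement is a quantitative convexity estimate at the origin for the dual functional $\Es_\be$, plus the immediate corollary that $\hs=0$ is a strict local minimizer when $\be<\bels$. The natural approach is to compute the second variation directly. Writing $\mu_{t\phi} \coloneqq e^{t\phi}/\int_{\T^\ds}e^{t\phi}$, a short computation gives
\begin{align}
\frac{d^2}{dt^2}\Es_\be(t\phi) = \frac{1}{\cdd}\|\phi\|_{\dot H^{\frac\ds2}}^2 - \be\,\mathrm{Var}_{\mu_{t\phi}}(\phi),
\end{align}
where $\mathrm{Var}_{\mu_{t\phi}}(\phi) = \int_{\T^\ds}\phi^2 d\mu_{t\phi} - (\int_{\T^\ds}\phi\, d\mu_{t\phi})^2$ is the variance of $\phi$ under the tilted measure $\mu_{t\phi}$. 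So the whole game is to bound $\mathrm{Var}_{\mu_{t\phi}}(\phi)$ from above in terms of $\|\phi\|_{\dot H^{\frac\ds2}}^2$, with a constant that degenerates controllably as $t\to 0$.

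First I would split $\mathrm{Var}_{\mu_{t\phi}}(\phi) \le \int_{\T^\ds}\phi^2 d\mu_{t\phi}$. At $t=0$ this is $\int_{\T^\ds}\phi^2 dx = \|\phi\|_{L^2}^2 \le (2\pi)^{-\ds}\|\nabla\phi\|_{L^2}^2 \le (2\pi)^{-\ds}\cdd^{-1}\cdot\cdd\|\nabla\phi\|_{L^2}^2$; combined with $\|\phi\|_{\dot H^{\frac\ds2}}^2 = \cdd^{-1}\langle\Dm^\ds\phi,\phi\rangle$... actually the cleanest route is: by Poincaré and then the definition of $\cdd$ and $\bels$ — recall $\bels = (2\pi)^\ds/\cdd$ — one has $\int_{\T^\ds}\phi^2 dx \le (2\pi)^{-\ds}\|\phi\|_{\dot H^{\frac\ds2}}^2\cdot\cdd$... wait, more carefully: for zero-mean $\phi$, $\|\phi\|_{L^2}^2 = \sum_{k\ne0}|\hat\phi(k)|^2 \le \sum_{k\ne0}|2\pi k|^\ds|\hat\phi(k)|^2 = \|\phi\|_{\dot H^{\frac\ds2}}^2$ since $|2\pi k|\ge 2\pi > 1$... hmm, this uses $|2\pi k|^\ds\ge 1$. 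Actually the sharp inequality is $\|\phi\|_{L^2}^2 \le (2\pi)^{-\ds}\|\Dm^{\ds/2}\phi\|_{L^2}^2 = (2\pi)^{-\ds}\cdd\,\|\phi\|_{\dot H^{\frac\ds2}}^2 \cdot(\text{with whatever normalization }\|\phi\|_{\dot H^{\frac\ds2}}\text{ carries})$; matching the normalization so that the leading term in the bound reads $(2\pi)^{-\ds}(\bels-\be)\|\phi\|_{\dot H^{\frac\ds2}}^2$ pins down the constants. I would then write $\int_{\T^\ds}\phi^2 d\mu_{t\phi} = \int_{\T^\ds}\phi^2 dx + \int_{\T^\ds}\phi^2(\mu_{t\phi}-1)dx$ and estimate the error term: $|\int\phi^2(\mu_{t\phi}-1)| \le \|\phi^2\|_{L^{q'}}\|\mu_{t\phi}-1\|_{L^q}$ for a suitable Hölder pair, or more simply $\le \|\phi\|_{L^4}^2\|\mu_{t\phi}-1\|_{L^2}$, and control $\|\mu_{t\phi}-1\|_{L^2}$ linearly in $t$ times an exponential of $\|\phi\|$-norms (since $\mu_{t\phi}-1 = t\phi + O(t^2)$ with explicit Gaussian-type tails coming from $e^{t\phi}$). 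The factors $\|\phi\|_{L^4}^2$ get bounded by $\|\phi\|_{\dot H^{\frac12}}^2$ (Sobolev, $\ds=1$) or by $\|\phi\|_{\dot H^{\frac\ds2}}^2$ up to a dimensional Sobolev/Moser–Trudinger argument ($\ds\ge 2$), which is exactly where the $e^{\frac1C\|\phi\|_{\dot H^{\frac\ds2}}^\ds}$ factor enters for $\ds\ge 2$ and $e^{2t\|\phi\|_{\dot H^{\frac12}}+2t^2\|\phi\|_{\dot H^{\frac12}}^2}$ for $\ds=1$ (an $L^\infty$ bound on $e^{t\phi}$ via $\|\phi\|_{L^\infty}\lesssim\|\phi\|_{\dot H^{\frac12}}$... no, $\dot H^{1/2}$ does not embed in $L^\infty$ in $1$D; rather one uses $\|e^{t\phi}\|$ controlled through the exponential Sobolev inequality on $\T$, giving the stated exponent). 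Collecting terms yields precisely \eqref{eq:d2dt2Eslb}.

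For the final assertion: if $\be<\bels$, then for $\|\phi\|_{\dot H^{\frac\ds2}}$ bounded and $t$ small the bracket in \eqref{eq:d2dt2Eslb} is $\ge \frac12(2\pi)^{-\ds}(\bels-\be)>0$, so $t\mapsto\Es_\be(t\phi)$ is strictly convex near $t=0$ uniformly over $\phi$ in a bounded ball of $\dot H^{\frac\ds2}$; together with $\frac{d}{dt}\Es_\be(t\phi)|_{t=0}=0$ (since $\hs=0$ solves \eqref{eq:hseqn1}), Taylor's theorem gives $\Es_\be(\phi) - \Es_\be(0) \ge c\|\phi\|_{\dot H^{\frac\ds2}}^2$ for $\|\phi\|_{\dot H^{\frac\ds2}}$ small, i.e. $\hs=0$ is a strict local minimizer.

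\textbf{Main obstacle.} The analytically delicate point is uniform control of the tilted variance for $\ds\ge 2$: $\dot H^{\ds/2}$ is the critical Sobolev space, it does \emph{not} embed into $L^\infty$, so bounding $\int\phi^2 d\mu_{t\phi}$ (equivalently, controlling $\|e^{t\phi}\|$ in whatever norm pairs with $\|\phi^2\|$) requires the Moser–Trudinger / exponential-class inequality on $\T^\ds$ — this is the source of the somewhat awkward $e^{\frac1C\|\phi\|_{\dot H^{\frac\ds2}}^\ds}$ factor, and getting the exponent and the linear-in-$t$ prefactor simultaneously right is the part demanding care. The $\ds=1$ case is easier but still needs the exponential Sobolev inequality rather than a naive $L^\infty$ bound.
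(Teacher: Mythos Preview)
Your proposal is correct and follows essentially the same route as the paper: compute the second variation, drop the nonnegative $(\int\phi\,d\mu_{t\phi})^2$ term so that one only needs to bound $\int\phi^2\,d\mu_{t\phi}$, split this as $\|\phi\|_{L^2}^2$ plus a remainder of size $O(t)$, use the sharp Poincar\'e inequality $\|\phi\|_{L^2}^2\le(2\pi)^{-\ds}\|\phi\|_{\dot H^{\ds/2}}^2$ to produce the $(2\pi)^{-\ds}(\bels-\be)$ leading term, and control the remainder via Onofri's inequality ($\ds=1$) or Trudinger--Moser ($\ds\ge 2$). The only tactical difference is that the paper writes the remainder via the mean-value theorem and Jensen as $t\int|\phi|^3 e^{t|\phi|}\le t\|\phi\|_{L^6}^3(\int e^{2t|\phi|})^{1/2}$ rather than your H\"older pairing $\|\phi\|_{L^4}^2\|\mu_{t\phi}-1\|_{L^2}$, but both reduce to the same exponential-integrability input.
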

\begin{proof}
Given any direction $\phi \in \dot{H}^{\frac{\ds}{2}}$, we compute
\begin{align}
\frac{d^2}{dt^2}\Es_\be(t\phi) &= \frac{1}{\cdd}\|\phi\|_{\dot{H}^{\frac\ds2}}^2 - \be\frac{\int_{\T^\ds}(\phi)^2 e^{t\phi}dx}{\int_{\T^\ds}e^{t\phi}dx} + 2\be\frac{\left(\int_{\T^\ds}\phi e^{t\phi}dx\right)^2}{\left(\int_{\T^\ds} e^{t\phi}dx\right)^2} \nn\\
&\geq \frac{1}{\cdd}\|\phi\|_{\dot{H}^{\frac\ds2}}^2 - \be\frac{\int_{\T^\ds}(\phi)^2 e^{t\phi}dx}{\int_{\T^\ds}e^{t\phi}dx}. \label{eq:EsbeSVlb}
\end{align}
By mean value theorem for the numerator and Jensen's inequality for the denominator,
\begin{align}
\frac{\int_{\T^\ds}(\phi)^2 e^{t\phi}dx}{\int_{\T^\ds}e^{t\phi}dx} \leq \|\phi\|_{L^2}^2 + t\int_{\T^\ds}|\phi|^3 e^{t|\phi|}dx 
&\leq\|\phi\|_{L^2}^2 + t\|\phi\|_{L^6}^2\Big(\int_{\T^\ds}e^{2t|\phi|}dx\Big)^{1/2}, \label{eq:e2tphipre}
\end{align}
where the final inequality is by Cauchy-Schwarz. If $\ds=1$, then we use Onofri's inequality for $\mathbb{T}$ (e.g., see \cite[Theorem 5]{CL1992}) to bound
\begin{align}\label{eq:e2tphi1}
\int_{\T^\ds}e^{2t|\phi|}dx \leq e^{2t\int_{\T^\ds}|\phi|dx} e^{2t^2\|\phi\|_{\dot{H}^{\frac12}}^2} \leq e^{2t\|\phi\|_{\dot{H}^{\frac12}} + 2t^2\|\phi\|_{\dot{H}^{\frac12}}^2}.
\end{align}
If $\ds\geq 2$, then we use the Trudinger-Moser inequality \cite{Trudinger1967,Moser1970} and the embedding $\dot{H}^{\frac{\ds-2}{2}} \subset L^{\ds}$ to estimate
\begin{align}\label{eq:e2tphi2}
\int_{\T^\ds}e^{2t|\phi|}dx  \leq \int_{\T^\ds}e^{\frac{(2\|\nabla\phi\|_{L^{\ds}}/c)^\ds}{\ds} + \frac{\ds-1}{\ds}(c|\phi|/\|\nabla\phi\|_{L^{\ds}})^{\frac{\ds}{\ds-1}}}dx \lesssim e^{c'\|\phi\|_{\dot{H}^{\frac\ds2}}^\ds},
\end{align}
for $c>0$ chosen sufficiently small. Inserting the estimates \eqref{eq:e2tphi1}, \eqref{eq:e2tphi2} into the right-hand side of \eqref{eq:e2tphipre}, we find that
\begin{align}
\frac{\int_{\T^\ds}(\phi)^2 e^{t\phi}dx}{\int_{\T^\ds}e^{t\phi}dx} &\leq \|\phi\|_{L^2}^2  +  t\|\phi\|_{L^6}^2\Big(e^{2t\|\phi\|_{\dot{H}^{\frac12}} + 2t^2\|\phi\|_{\dot{H}^{\frac12}}^2}\indic_{\ds=1} + Ce^{c'\|\phi\|_{\dot{H}^{\frac\ds2}}^\ds}\indic_{\ds\geq 2}\Big)^{1/2} \nn\\
&\leq \|\phi\|_{L^2}^2  +  tC'\|\phi\|_{\dot{H}^{\frac\ds2}}^2\Big(e^{2t\|\phi\|_{\dot{H}^{\frac12}} + 2t^2\|\phi\|_{\dot{H}^{\frac12}}^2}\indic_{\ds=1} + e^{c'\|\phi\|_{\dot{H}^{\frac\ds2}}^\ds}\indic_{\ds\geq 2}\Big)^{1/2},\label{eq:e2phi3}
\end{align}
where the final line is by the Sobolev embedding, $\|\phi\|_{L^6} \lesssim \|\phi\|_{\dot{H}^{\frac\ds3}} \leq (2\pi)^{-\frac{\ds}{6}} \|\phi\|_{\dot{H}^{\frac\ds2}}$. Inserting the estimate \eqref{eq:e2phi3} into \eqref{eq:EsbeSVlb} and using $\|\phi\|_{L^2} \leq (2\pi)^{-\frac\ds2} \|\phi\|_{\dot{H}^{\frac\ds2}}$, we arrive at
\begin{align}
\frac{d^2}{dt^2}\Es_\be(t\phi) \geq \Big(\frac1\cdd - \frac{\be}{(2\pi)^{\ds}} - tC'\be \Big(e^{2t\|\phi\|_{\dot{H}^{\frac12}} + 2t^2\|\phi\|_{\dot{H}^{\frac12}}^2}\indic_{\ds=1} + e^{c'\|\phi\|_{\dot{H}^{\frac\ds2}}^\ds}\indic_{\ds\geq 2}\Big)^{1/2}\Big)\|\phi\|_{\dot{H}^{\frac\ds2}}^2 .
\end{align}
Since $\be<\bels$ and therefore $\frac{1}{\cdd} - \frac{\be}{(2\pi)^{\ds}} = \frac{\bels-\be}{(2\pi)^{\ds}} >0$, we may take $t$ sufficiently small depending only on $\|\phi\|_{\dot{H}^{\frac\ds2}}$ and $\be$, so that the expression inside the parentheses is $>0$. This completes the proof of the lemma.
\end{proof}

Fix $\be_0\in (\bec,\bels)$. {Since $\Es_\be$ is unbounded below, there exists $\hs_0 \in\dot{H}^{\frac\ds2}$} such that $\Es_{\be_0}(\hs_0) <0$ and $\|\hs_0\|_{\dot{H}^{\frac\ds2}}\geq 1$. Evidently, for any $\be\geq \be_0$, we have
\begin{align}
\Es_{\be}(\hs_0) \leq \Es_{\be_0}(\hs_0) < 0.
\end{align}
Let
\begin{align}
P \coloneqq \{\zeta \in C([0,1], \dot{H}^{\frac\ds2}) : \zeta(0) =0, \ \zeta(1) = \hs_0\},
\end{align}
and define the function
\begin{align}
\forall \be\geq\be_0, \qquad \Fs_{\be} \coloneqq \inf_{\zeta \in P} \max_{t\in [0,1]} \Es_{\be}(\zeta(t)).
\end{align}
%The function $\Fs_\be$ is monotone decreasing, hence differentiable for a.e. $\be\in (\be_0,\bels)$.
From our assumption that $\|\hs_0\|_{\dot{H}^{\frac\ds2}}\geq 1$ and the intermediate value theorem, it follows that for any $\delta\in (0,1)$ and path $\zeta \in P$, there exists $t_\delta \in (0,1)$ such that $\|\zeta(t_\delta)\|_{\dot{H}^{\frac\ds2}}=\delta$. Combining this observation with \eqref{eq:d2dt2Eslb}, we see that there exists $c_0>0$ such that
\begin{align}
\forall \be\in [\be_0,\bels), \qquad \Fs_\be \geq c_0\left(\be_s-\be\right).
\end{align}

To prove \cref{lem:Esbe}, we want to apply the mountain pass theorem \cite[Section 8.5, Theorem 2]{Evans2010book}, which would yield that $\Fs_\be$ is a critical value for $\Es_\be$. That is, there is a critical point $\hs \in \dot{H}^{\frac\ds2}$ of $\Es_\be$ such that $\Es_\be(\hs) = \Fs_\be>0$. To do so, we only need to check that $\Es_\be$ satisfies the Palais-Smale compactness condition, which is a consequence of the next lemma.

\begin{lemma}\label{lem:PS}
Let $\{\hs_n\}$ be a sequence in $\dot{H}^{\frac\ds2}$ such that $\|\hs_n\|_{\dot{H}^{\frac\ds2}}^2\leq C_1$ for all $n$, for some constant $C_1>0$ and $D\Es_\be(\hs_n)\rightarrow 0$ as $n\rightarrow\infty$. Then $\{h_n\}$ has a convergent subsequence in $\dot{H}^{\frac\ds2}$.
\end{lemma}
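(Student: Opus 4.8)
\textbf{Proof proposal for \cref{lem:PS} (Palais--Smale compactness).}

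The plan is to extract a weakly convergent subsequence and then upgrade weak convergence to strong convergence using the structure of $D\Es_\be$. First, since $\{\hs_n\}$ is bounded in the Hilbert space $\dot H^{\frac\ds2}$, by weak compactness we may pass to a subsequence (not relabeled) such that $\hs_n \rightharpoonup \hs_\infty$ weakly in $\dot H^{\frac\ds2}$ for some $\hs_\infty \in \dot H^{\frac\ds2}$. Because the embedding $\dot H^{\frac\ds2}(\T^\ds)\hookrightarrow \dot H^{\frac s2}(\T^\ds)$ is compact for any $s<\ds$, we also get $\hs_n\to\hs_\infty$ strongly in $\dot H^{\frac s2}$, hence strongly in $L^p$ for all finite $p$ (and, in the regime where the relevant exponential-integrability inequalities from \cref{lem:locmin} apply, convergence of $e^{\hs_n}$ in $L^1$, using Vitali/Trudinger--Moser to control uniform integrability since $\|\hs_n\|_{\dot H^{\frac\ds2}}$ is bounded). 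In particular $\int_{\T^\ds}e^{\hs_n}\to\int_{\T^\ds}e^{\hs_\infty} \ge 1 > 0$, so the denominators appearing in $D\Es_\be$ are bounded away from zero uniformly in $n$.

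Next I would compute the Gateaux derivative explicitly: for $\psi\in\dot H^{\frac\ds2}$,
\begin{align}
\langle D\Es_\be(\hs_n),\psi\rangle = \frac{1}{\cdd}\langle \hs_n,\psi\rangle_{\dot H^{\frac\ds2}} - \be\frac{\int_{\T^\ds}\psi\, e^{\hs_n}dx}{\int_{\T^\ds}e^{\hs_n}dx}.
\end{align}
The hypothesis $D\Es_\be(\hs_n)\to 0$ in $(\dot H^{\frac\ds2})^*$ means the right-hand side, viewed as a functional of $\psi$, tends to zero in operator norm. Testing against $\psi=\hs_n-\hs_\infty$, I would write
\begin{align}
\frac{1}{\cdd}\langle \hs_n,\hs_n-\hs_\infty\rangle_{\dot H^{\frac\ds2}} = \langle D\Es_\be(\hs_n),\hs_n-\hs_\infty\rangle + \be\frac{\int_{\T^\ds}(\hs_n-\hs_\infty)e^{\hs_n}dx}{\int_{\T^\ds}e^{\hs_n}dx}.
\end{align}
The first term on the right tends to zero since $D\Es_\be(\hs_n)\to0$ and $\|\hs_n-\hs_\infty\|_{\dot H^{\frac\ds2}}$ is bounded. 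For the second term, $e^{\hs_n}/\int e^{\hs_n}$ is bounded in $L^q$ for any fixed finite $q$ (again by Trudinger--Moser/Onofri-type integrability, uniformly in $n$), while $\hs_n-\hs_\infty\to 0$ strongly in $L^{q'}$; hence this term also tends to zero. Therefore $\langle\hs_n,\hs_n-\hs_\infty\rangle_{\dot H^{\frac\ds2}}\to 0$. Since also $\langle\hs_\infty,\hs_n-\hs_\infty\rangle_{\dot H^{\frac\ds2}}\to 0$ by weak convergence, subtracting gives $\|\hs_n-\hs_\infty\|_{\dot H^{\frac\ds2}}^2\to 0$, i.e.\ strong convergence.

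The main obstacle is the control of the nonlinear (exponential) term: one must ensure that $e^{\hs_n}$ is uniformly integrable and, more precisely, that $\{e^{\hs_n}/\int e^{\hs_n}\}$ is bounded in some $L^q$ with $q>1$ uniformly in $n$, so that the duality pairing against $\hs_n-\hs_\infty\to0$ in $L^{q'}$ closes. This is exactly where the critical Sobolev embedding $\dot H^{\frac\ds2}\subset$ exponential Orlicz class enters: the Trudinger--Moser inequality (for $\ds\ge 2$) and Onofri's inequality (for $\ds=1$), already invoked in \cref{lem:locmin}, give $\int_{\T^\ds}e^{q\hs_n}\,dx \le C(q,\|\hs_n\|_{\dot H^{\frac\ds2}})$, which is bounded along the sequence. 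Once this integrability is in hand, everything else is the soft Hilbert-space argument above. I would also remark that the same computation, applied with $\hs_\infty$ in place of the limit and passing to the limit, shows $D\Es_\be(\hs_\infty)=0$, so the limit is itself a critical point, though this is not needed for the statement of the lemma.
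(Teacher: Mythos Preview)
Your proof is correct and follows essentially the same route as the paper: extract a weak limit by boundedness and Rellich--Kondrachov, test $D\Es_\be(\hs_n)$ against $\hs_n-\hs_\infty$, control the exponential term via Onofri/Trudinger--Moser together with strong $L^p$ convergence of $\hs_n-\hs_\infty$, and deduce $\|\hs_n-\hs_\infty\|_{\dot H^{\frac\ds2}}^2\to 0$. The paper's version is slightly more economical (it uses Cauchy--Schwarz and Jensen in place of your general $L^q/L^{q'}$ duality and convergence-of-denominators discussion), but the argument is the same.
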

\begin{proof}
Let $\{\hs_n\}$ be a sequence as in the statement of the lemma. By weak compactness, passing to a subsequence if necessary, there exists an $\hs\in \dot{H}^{\frac\ds2}$ such that $\hs_n\xrightharpoonup[]{} \hs$ in $\dot{H}^{\frac\ds2}$. By Rellich-Kondrachov, $\hs_n\rightarrow \hs$ in $\dot{H}^{\frac{\ds}{2}-\epsilon}$ as $n\rightarrow\infty$, for any $\epsilon>0$. Then
\begin{align}
\ipp*{D\Es_\be(\hs_n), \hs_n-\hs}  &=  \frac{1}{\cdd}\int_{\T^\ds}\Dm^{\frac{\ds}{2}}(\hs_n)\Dm^{\frac{\ds}{2}}(\hs_n-\hs) - \be\frac{\int_{\T^\ds}(\hs_n-\hs)e^{\hs_n}dx}{\int_{\T^\ds}e^{\hs_n}dx} \nn\\
&=\frac{1}{\cdd}\|\hs_n-\hs\|_{\dot{H}^{\frac\ds2}}^2  + \frac1\cdd\int_{\T^\ds}\Dm^{\frac{\ds}{2}}\hs\Dm^{\frac{\ds}{2}}(\hs_n-\hs)dx  - \be\frac{\int_{\T^\ds}(\hs_n-\hs)e^{\hs_n}dx}{\int_{\T^\ds}e^{\hs_n}dx}.
\end{align}
Since $\hs_n\xrightharpoonup[]{} \hs$ in $\dot{H}^{\frac\ds2}$, we have
\begin{align}
\lim_{n\rightarrow\infty} \left|\int_{\T^\ds}\Dm^{\frac{\ds}{2}}\hs\Dm^{\frac{\ds}{2}}(\hs_n-\hs)dx\right| = 0.
\end{align}
By Cauchy-Schwarz and Jensen,
\begin{align}
\frac{\int_{\T^\ds}\left|\hs_n-\hs\right|e^{\hs_n}dx}{\int_{\T^\ds}e^{\hs_n}dx} \leq \|\hs_n-\hs\|_{L^2} \frac{\Big(\int_{\T^\ds}e^{2\hs_n}dx\Big)^{1/2}}{\int_{\T^\ds}e^{\hs_n}dx} \leq \|\hs_n-\hs\|_{L^2}\Big(\int_{\T^\ds}e^{2\hs_n}dx\Big)^{1/2}.
\end{align}
The second factor on the right-hand side is uniformly bounded in $n$ by Onofri/Trudinger-Moser, similarly to the proof of \cref{lem:locmin}, and so
\begin{align}
\lim_{n\rightarrow\infty} \be\frac{\int_{\T^\ds}(\hs_n-\hs)e^{\hs_n}dx}{\int_{\T^\ds}e^{\hs_n}dx} = 0.
\end{align}
Since
\begin{align}
\lim_{n\rightarrow\infty}\left|\ipp*{D\Es_\be(\hs_n), \hs_n-\hs}\right| \leq 2C_1\lim_{n\rightarrow\infty}\|D\Es_\be(\hs_n)\|_{\dot{H}^{\frac\ds2}} =0,
\end{align}
we conclude, after a little bookkeeping, that $\hs_n\rightarrow \hs$ in $\dot{H}^{\frac\ds2}$.
\end{proof}
}

\subsection{Uniqueness of stationary solutions}
We now show uniqueness of solutions to \eqref{eq:TEeqn} for high temperatures, i.e. $\muu$ is the unique solution. Equivalently, the uniqueness threshold $\beu>0$. The idea is essentially to show that the map $\mu \mapsto \frac{e^{\beta\g\ast\mu}}{Z_\beta}$ is a contraction (cf. \cite[Section 5]{ST1998}).

\begin{prop}\label{prop:ssunq}
Let $\ds\geq 1$. There exists {$\be_0>0$}, depending only on $\ds$, such that if $\be<\be_0$ and  $\mu$ is a solution to equation \eqref{eq:TEeqn}, then $\mu=\muu$. 
\end{prop}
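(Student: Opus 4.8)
The strategy is a Banach fixed-point / contraction argument, carried out on the equivalent potential formulation of \eqref{eq:TEeqn}. Given a solution $\mu$ to \eqref{eq:TEeqn}, write $\hs \coloneqq \be\g\ast\mu - \log Z_\be$, so that $\mu = e^{\hs}$ with $\int_{\T^\ds}e^{\hs} = 1$, and $\hs$ solves the Kazhdan--Warner-type equation $\frac{1}{\cdd}\Dm^{\ds}\hs = \be(e^{\hs}-1)$ (using $\Dm^{\ds}\g = \cdd(\d_0-1)$ and that $\int_{\T^\ds}(e^{\hs}-1)=0$). Equivalently $\hs = \be\,\cdd^{-1}\,\Dm^{-\ds}(e^{\hs}-1) = \be\,\g\ast(e^{\hs}-1)$, where $\Dm^{-\ds}$ is the zero-mean inverse, i.e. convolution against $\cdd^{-1}\g$. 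I would then set up the map $\Phi(\hs) \coloneqq \be\,\g\ast(e^{\hs}-1)$ on a small ball $B_r = \{\hs : \int_{\T^\ds}\hs = 0, \ \|\hs\|_{L^\infty}\leq r\}$ in the space of zero-mean $L^\infty$ (or $C$) functions on $\T^\ds$, and show it is a contraction there for $\be$ small.

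First I would check $\Phi$ maps $B_r$ into itself: since $\g\ast$ has kernel in $L^1$ (note $\cdd^{-1}\g = \Dm^{-\ds}$ acting on mean-zero functions, and $\g\in L^1(\T^\ds)$ because $\g + \log|x|$ is bounded near $0$ and $\log|x|$ is locally integrable in every dimension---wait, one must be careful, $\log|x|\in L^1_{loc}$ indeed for all $\ds\geq 1$), Young's inequality gives $\|\g\ast f\|_{L^\infty}\leq \|\g\|_{L^1}\|f\|_{L^\infty}$. So $\|\Phi(\hs)\|_{L^\infty}\leq \be\|\g\|_{L^1}\|e^{\hs}-1\|_{L^\infty}\leq \be\|\g\|_{L^1}(e^{r}-1)$, which is $\leq r$ provided $\be$ is small depending on $r,\ds$. (The zero-mean constraint is automatic since $\g\ast$ of any function has zero mean, by $\hat\g(0)=0$.) Second, for the contraction estimate: for $\hs_1,\hs_2\in B_r$, $\|\Phi(\hs_1)-\Phi(\hs_2)\|_{L^\infty} \leq \be\|\g\|_{L^1}\|e^{\hs_1}-e^{\hs_2}\|_{L^\infty}\leq \be\|\g\|_{L^1}e^{r}\|\hs_1-\hs_2\|_{L^\infty}$, using the mean value inequality $|e^{a}-e^{b}|\leq e^{\max(|a|,|b|)}|a-b|$. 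Choosing $\be_0$ so that $\be_0\|\g\|_{L^1}e^{r}\leq \tfrac12$ makes $\Phi$ a contraction, hence it has at most one fixed point in $B_r$; since $\hs\equiv 0$ (i.e. $\mu=\muu$) is a fixed point, it is the unique one in $B_r$.

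The remaining (and genuinely necessary) step is an \emph{a priori bound}: one must show that \emph{every} solution $\mu$ to \eqref{eq:TEeqn} automatically lies in $B_r$ once $\be$ is small, so that the local uniqueness above is in fact global. This is the main obstacle. The idea is a bootstrap: from $\mu = e^{\hs}/Z_\be$ with $\hs = \be\g\ast\mu$ (unnormalized), first get a crude bound. One route: $\|\hs - \bar\hs\|_{L^\infty} = \be\|\g\ast(\mu-1)\|_{L^\infty}$ is not immediately small since we have no control on $\|\mu\|_{L^\infty}$ yet, so instead bound in a weaker norm first---e.g., $\|\g\ast(\mu-1)\|_{L^p}\leq \|\g\|_{L^1}\|\mu-1\|_{L^p}$ won't help either without an $L^p$ bound. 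A cleaner approach: use $\|\g\ast\mu - \int\g\ast\mu\|_{L^\infty}$; by Young with $\mu$ a probability measure, $\g\ast\mu(x) - \g\ast\mu(y) = \int(\g(x-z)-\g(y-z))d\mu(z)$, and although $\g$ is unbounded, the oscillation $\|\g\ast\mu\|_{L^\infty} - \inf \g\ast\mu$ can diverge. The robust fix, as in \cite{ST1998}, is: (i) bound $\|\g\ast\mu\|_{L^q}\lesssim 1$ for some finite $q$ using $\|\g\|_{L^q}<\infty$ for all $q<\infty$ ($\log$ is in every $L^q_{loc}$) and $\|\mu\|_{L^1}=1$, hence $\hs$ has bounded $L^q$ norm uniformly; (ii) feed this into $\mu = e^{\hs}/Z_\be \leq e^{\|\hs\|_{L^\infty}}$—no, still need $L^\infty$. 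So one iterates: $\|\hs\|_{W^{\ds,q}} = \be\cdd\|\mu - 1\|_{L^q}\cdot(\text{const})$ from the equation $\Dm^\ds \hs = \cdd\be(\mu-1)$, then Sobolev embedding upgrades $\|\mu\|_{L^q} = \|e^{\hs}/Z_\be\|_{L^q}$ using that $e^{\hs}\in L^{q'}$ whenever $\hs\in W^{\ds,q}\hookrightarrow$ (exponential class via Moser--Trudinger), closing a bootstrap to get $\|\hs\|_{L^\infty}\leq C\be$ with $C = C(\ds)$. Then for $\be < \be_0$ small this forces $\hs\in B_r$, and the contraction argument applies. I expect this bootstrap—making quantitative the passage from the crude integrability of $\g$ to an $L^\infty$ bound on $\hs$ that is small with $\be$—to be where all the real work sits; everything else is soft.
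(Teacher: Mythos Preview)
Your overall architecture---an a priori bound on the potential forcing every solution into a small ball, followed by a contraction argument there---is exactly the paper's. There is a minor slip in your fixed-point map: with your normalization $\mu=e^{\hs}$, $\int e^{\hs}=1$, the function $\hs=\be\g\ast\mu-\log Z_\be$ does \emph{not} have mean zero in general, so $\hs$ is not a fixed point of $\Phi(\hs)=\be\g\ast(e^{\hs}-1)$ (which always outputs a mean-zero function). The clean choice is to work with $\psi\coloneqq\be\g\ast\mu$ (mean zero) and the map $\psi\mapsto\be\g\ast(e^{\psi}/\int e^{\psi})$; this is cosmetic.

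The substantive gap is in the a priori step. Your Moser--Trudinger bootstrap is circular as written: from $\mu\in L^1$ and Young you get $\hs\in L^q$ for all $q<\infty$, but Moser--Trudinger needs critical Sobolev control (e.g.\ $\hs\in W^{1,\ds}$ or $\dot H^{\ds/2}$), and $\nabla\g\sim|x|^{-1}\notin L^\ds$, so $\nabla\hs=\be\nabla\g\ast\mu$ is not placed in $L^\ds$ by Young alone. To get $\hs\in W^{\ds,q}$ you need $\mu\in L^q$ for some $q>1$, which is precisely what you are trying to prove. The paper closes this differently: it takes $\hs=\g\ast\mu$ (mean zero, no $\be$), writes $\hs(x)\le C_\ds'-\frac{1}{Z_\be}\int\log|x-y|\,e^{\be\hs(y)}\,dy$, and applies the Fenchel-type inequality $ab\le b(\log b-1)+e^{a}$ with $a=-\ep\log|x-y|$ and $b=e^{\be\hs}/\ep$. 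The point is that $\int e^{\be\hs}\be\hs=\frac{\be Z_\be}{\cdd}\|\hs\|_{\dot H^{\ds/2}}^2$ by pairing the equation with $\hs$, so one obtains $\|\hs\|_{L^\infty}\le C(\ds,\ep)+\frac{\be}{\cdd\ep}\|\hs\|_{\dot H^{\ds/2}}^2$; combined with the reverse bound $\frac1{\cdd}\|\hs\|_{\dot H^{\ds/2}}^2\le\|\hs\|_{L^\infty}$ (same pairing), this gives a $\be$-independent $L^\infty$ bound on $\hs$ for $\be<\ds$. The contraction is then run in $\dot H^{\ds/2}$ rather than $L^\infty$: writing $\|\hs\|_{\dot H^{\ds/2}}^2=\cdd\int\hs(e^{\be\hs}-1)/Z_\be$ and using $|e^{\be\hs}-1|\le\be|\hs|e^{\be\|\hs\|_{L^\infty}}$ gives $\|\hs\|_{\dot H^{\ds/2}}^2\le\frac{\cdd\be}{(2\pi)^{\ds/2}}e^{\be\|\hs\|_{L^\infty}}\|\hs\|_{\dot H^{\ds/2}}^2$, forcing $\hs=0$ for small $\be$. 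So the paper avoids Moser--Trudinger entirely; the Fenchel trick is what makes the a priori step close.
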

\begin{proof}
Let $\mu \in \P_{ac}(\T^\ds)$ be a solution to equation \eqref{eq:TEeqn}. Set $\mathsf{h}\coloneqq \g\ast\mu$, so that $\hs$ solves
\begin{align}\label{eq:hseqn}
\frac{1}{\cdd}\Dm^{\ds}\hs = \frac{e^{\be \hs}}{\int_{\T^\ds}e^{\be \hs}} - 1.
\end{align}
Also, note that by $\int_{\T^\ds}\hs=0$ and Jensen's inequality, $\int_{\T^\ds}e^{\be\hs} \geq 1$. Observe that 
\begin{align}
\mathsf{h}(x) =\frac{1}{\cdd}\left(\g\ast\Dm^{\ds}\hs\right)(x) &= \frac{1}{\cdd}\int_{\T^\ds}\g(x-y)\Dm^{\ds}\hs(y) dy \nn\\
&= \int_{\T^\ds}\g(x-y)\frac{e^{\be \hs(y)}}{\int_{\T^\ds}e^{\be \hs}} \nn\\
&\leq-\frac{1}{\int_{\T^\ds}e^{\be \hs}} \int_{[-\frac12,\frac12]^{\ds}}\log|x-y|e^{\be \hs(y)}dy + \underbrace{\sup_{z\in [-\frac12,\frac12]^{\ds}} \left|\g(z)+\log|z|\right|}_{C_{\ds}'}, \label{eq:hunq1}
\end{align}
where the final line follows from adding and subtracting $-\log x$ and using \eqref{eq:ggE}. Using the elementary inequality $ab \leq b(\log b-1) + e^{a}$, for $a>0$ and $b\in\R$, we see that for any $\ep \in (0,\ds)$ (note that $\ds=\frac{\bec}{2}$),
\begin{align}
-\int_{[-\frac12,\frac12]^{\ds}}\log|x-y|e^{\be \hs(y)}dy \leq \int_{[-\frac12,\frac12]^{\ds}} |x-y|^{-\ep}dy + \int_{[-\frac12,\frac12]^{\ds}}\frac{e^{\be \hs(y)}}{\ep}\left(\log\Big(\frac{e^{\be \hs(y)}}{\ep}\Big) - 1\right)dy. \label{eq:hunq2'}
\end{align}
Evidently,
\begin{align}
\int_{[-\frac12,\frac12]^{\ds}} |x-y|^{-\ep}dy \leq \frac{C_{\ds}}{\ds-\ep}, \label{eq:hunq2}
\end{align}
for some $C_{\ds}>0$. Next, observe that
\begin{align}
\int_{\T^{\ds}}\frac{e^{\be \hs}}{\ep}\left(\log\Big(\frac{e^{\be \hs}}{\ep}\Big) - 1\right)dy = \frac1\ep\int_{\T^\ds}e^{\be\hs}\be\hs dy - \frac{\log\ep}{\ep}\int_{\T^\ds}e^{\be\hs }dy  - \frac1\ep\int_{\T^\ds}e^{\be\hs}dy. \label{eq:hunq3}
\end{align}
Since, using \eqref{eq:hseqn}, we have
\begin{align}
\frac1\ep\int_{\T^\ds}e^{\be\hs}\be\hs dy = \frac{\be\int_{\T^\ds}e^{\be\hs}}{\cdd\ep} \int_{\T^\ds}\Dm^\ds\hs \hs = \frac{\be\int_{\T^\ds}e^{\be\hs}}{\cdd\ep}\|\hs\|_{\dot{H}^{\frac{\ds}{2}}}^2,
\end{align}
combining \eqref{eq:hunq1}, \eqref{eq:hunq2}, \eqref{eq:hunq3}, we obtain
\begin{align}\label{eq:hunqLinf}
\|\hs\|_{L^\infty} \leq \frac{C_{\ds}}{\ds-\ep} + \frac{\be}{\cdd\ep}\|\hs\|_{\dot{H}^{\frac{\ds}{2}}}^2  -\frac{(\log\ep + 1)}{\ep} + {\sup_{z\in [-\frac12,\frac12]^{\ds}} \left|\g(z)+\log|z|\right|}.
\end{align}
On the other hand,
\begin{align}
\int_{\T^\ds}e^{\be\hs}\be\hs dy \leq \be \|\hs\|_{L^\infty} \int_{\T^\ds}e^{\be\hs}dy,
\end{align}
which implies that
\begin{align}
\frac{1}{\cdd}\|\hs\|_{\dot{H}^{\frac{\ds}{2}}}^2 \leq  \|\hs\|_{L^\infty}.
\end{align}
Inserting this bound into \eqref{eq:hunqLinf} and rearranging yields
\begin{align}
\|\hs\|_{L^\infty} \leq \left(1-\frac{\be}{\ep}\right)^{-1}\Big( \frac{C_{\ds}}{\ds-\ep} -\frac{(\log\ep + 1)}{\ep}  + C_{\ds}'\Big),\\
\|\hs\|_{\dot{H}^{\frac{\ds}{2}}}^2 \leq \cdd\left(1-\frac{\be}{\ep}\right)^{-1}\Big( \frac{C_{\ds}}{\ds-\ep}-\frac{(\log\ep + 1)}{\ep} +C_{\ds}'\Big),
\end{align}
provided $\frac{\be}{\ep}<1$. If $\be<\ds$, we may satisfy both constraints on $\ep$ by choosing any $\ep \in (\be,\ds)$, such that $\ep\geq e^{-1}$. The choice $\ep=\frac{\be+\ds}{2}$ satisfies these conditions, leading to the estimate
\begin{align}
\|\hs\|_{L^\infty} \leq \frac{2C_{\ds}(\ds+\be)}{(\ds-\be)^2} + \frac{C_{\ds}'(\ds+\be)}{(\ds-\be)}, \label{eq:hLinffin}\\
\|\hs\|_{\dot{H}^{\frac{\ds}{2}}}^2 \leq   \frac{2\cdd C_{\ds}(\ds+\be)}{(\ds-\be)^2} +  \frac{\cdd C_{\ds}'(\ds+\be)}{(\ds-\be)}. \label{eq:hSobfin}
\end{align}

Since $\int_{\T^\ds}\hs=0$, we may write
\begin{align}
\|\hs\|_{\dot{H}^{\frac\ds2}}^2 =  \cdd\frac{\int_{\T^\ds}e^{\be\hs}\hs dy}{\int_{\T^\ds}e^{\be\hs}dy}=\cdd\frac{\int_{\T^\ds}[e^{\be\hs}-1]\hs dy}{\int_{\T^\ds}e^{\be\hs}dy}. \label{eq:hSobcon1}
\end{align}
By the mean value theorem, $|e^{\be\hs}-1| \leq \be|\hs|e^{\be\|\hs\|_{L^\infty}}$. Combining this bound with \eqref{eq:hLinffin} and inserting into the right-hand side of \eqref{eq:hSobcon1} yields
\begin{align}
\|\hs\|_{\dot{H}^{\frac\ds2}}^2  &\leq \frac{\cdd \be e^{\be\|\hs\|_{L^\infty}} \int_{\T^\ds}|\hs|^2 dy}{\int_{\T^\ds}e^{\be \hs}dy} \nn\\
&\leq {\frac{\cdd \be}{(2\pi)^{\ds/2}}\exp(\frac{2C_{\ds}(\ds+\be)}{(\ds-\be)^2} + \frac{C_{\ds}'(\ds+\be)}{(\ds-\be)})} \|\hs\|_{\dot{H}^{\frac\ds2}}^2,
\end{align}
where we have used that $\|\hs\|_{\dot{H}^{\frac\ds2}}\geq (2\pi)^{\frac{\ds}{2}}\|\hs\|_{L^2}$ by Plancherel and $\int_{\T^\ds}e^{\be\hs}\geq 1$, as commented above. Let {$\be_0$} be such that
\begin{align}\label{eq:uniqtcon}
 \frac{\cdd {\be_0} e^{\frac{2C_{\ds}(\ds+{\be_0})}{(\ds-{\be_0})^2} + \frac{C_{\ds}'(\ds+{\be_0})}{(\ds-{\be_0})}}}{(2\pi)^{\ds/2}} = 1.
\end{align}
Then for $\be<{\be_0}$, we conclude that $\|\hs\|_{\dot{H}^{\frac\ds2}}=0$. This implies that $\mu=\muu$, which completes the proof.
\end{proof}

%\begin{remark}
%Unfortunately, we are unable to give a more explicit condition for $\be$ than \eqref{eq:uniqtcon}. The reason is that an explicit formula for the quantity $C_{\ds}'$ defined in \eqref{eq:hunqLinf} does not seem to be known.
%\end{remark}

\section{A modulated log HLS inequality}\label{sec:MLHLS}
In this section, we prove \cref{thm:mainMLHLS}. We divide the section into two subsections: \cref{ssec:MLHLSdprf} shows a counterexample to the modulated log HLS inequality at low temperature, while \cref{ssec:MLHLSprf} proves modulated log HLS inequality at high temperature.

\subsection{Disproof at low temperature}\label{ssec:MLHLSdprf}
We start with the negative portion of \cref{thm:mainMLHLS}. 

%We know that any probability density minimizer $\mu_{\min}$ of the free energy $\Ec_\beta$ must satisfy $\Ec_\beta(\mu_{\min}) < 0$ and be a stationary solution. for $\theta>\frac{1}{2d}$, probability density $f_N$ on $(\T^\ds)^N$, and probability density $\mu$ on $(\T^\ds)$,
%\begin{equation}\label{eq:MlHLSdis}
%\frac{1}{2\theta}\E_{f_N}\paren*{\Fr_N(\ux_N,\mu)} \leq H_N(f_N\vert \mu^{\otimes N}) + CN^{-\al},
%\end{equation}
%where $\al>0$ depends on $d,\theta$ and $C>0$ depends on $d,\theta, \inf_{\T^\ds}\mu$, and $\|\mu\|_{W^{k,\infty}}$, for some $k$. 
\begin{proof}
{Suppose $\be>\min(\bels,\bec)$. Then by \cref{lem:FEsc} or the upper bound for the minimal free energy $\Ec_\beta^*$ in \eqref{eq:infFEub}, there exists an $\eta_\be>0$ and a smooth probability density $\mum$, such that 
\begin{align}\label{eq:mumeta}
\frac1\be\int_{\T^\ds}\log\paren*{\mum}d\mum -\frac12\int_{(\T^\ds)^2}\g(x-y)d\mum^{\otimes 2}(x,y) \leq -\eta_\be.
\end{align}}
Set $f_N\coloneqq \mum^{\otimes N}$. Since $\int_{\T^\ds}\g=0$, we have
\begin{align}\label{eq:MEunif}
\Fr_N(\ux_N,\muu) = \frac{1}{2N^2}\sum_{1\leq i\neq j\leq N}\g(x_i-x_j).
\end{align}
Now since $f_N$ is an exchangeable law, the linearity of expectation implies
\begin{align}\label{eq:MEunif'}
\E_{f_N}\paren*{\Fr_N(\ux_N,\muu)} = \frac{(N-1)}{2N}\int_{(\T^\ds)^2}\g(x-y)d\mum^{\otimes 2}(x,y).
\end{align}
Now observe
\begin{align}
H_N(f_N\vert \muu^{\otimes N}) = \frac1N\int_{(\T^\ds)^N}\log\paren*{\mum^{\otimes N}}d\mum^{\otimes N} = \int_{\T^\ds}\log\paren*{\mum}d\mum.
\end{align}
Hence, by \eqref{eq:mumeta}, 
\begin{align}
\frac1\be H_N(f_N\vert \muu^{\otimes N}) = \frac1\be\int_{\T^\ds}\log\paren*{\mum}d\mum &\leq \frac12\int_{(\T^\ds)^2}\g(x-y)d\mum^{\otimes 2}(x,y) - \eta_\be \nn\\
&=\frac{N}{(N-1)}\E_{f_N}\paren*{\Fr_N(\ux_N,\muu)} - \eta_\be \nn\\
&= \E_{f_N}\paren*{\Fr_N(\ux_N,\muu)} + \frac{\|\mum\|_{\dot{H}^{-\frac{\ds}{2}}}^2}{2N} - \eta_\be, \label{eq:MElHLSdisID}
\end{align}
where the last line follows from \eqref{eq:MEunif'}. Choosing $N=N(\be)$ sufficiently large so that $\frac{\|\mum\|_{\dot{H}^{-\frac{d}{2}}}^2}{2N}\leq \frac{\eta_\be}{2}$, \eqref{eq:MElHLSdisID} implies
\begin{align}
\frac1\beta H_N(f_N\vert \muu^{\otimes N})  \leq  \E_{f_N}\paren*{\Fr_N(\ux_N,\muu)}  - \frac{\eta_\be}{2}.
\end{align}
\end{proof}

\begin{comment}
If \eqref{eq:MlHLSdis} did hold, then multiplying both sides by $\frac{N}{N-1}$, letting $N\rightarrow\infty$, and using \eqref{eq:MElHLSdisID}, we would obtain
\begin{align}
\frac12\int_{(\T^\ds)^2}\g(x-y)d\mum^{\otimes 2}(x,y) &= \lim_{N\rightarrow\infty} \frac{N}{2\theta (N-1)}\E_{f_N}\paren*{\Fr_N(\ux_N,\muu)} \nn\\
&\leq \lim_{N\rightarrow\infty}\paren*{H_N(f_N \vert \mum^{\otimes N}) + \frac{CN^{1-\al}}{(N-1)}} \nn\\
&=\int_{\T^\ds}\log\paren*{\mum}d\mum,
\end{align}
which is a contradiction.
\end{comment}

\subsection{Proof at high temperature}\label{ssec:MLHLSprf}
We turn to the proof of the positive portion of \cref{thm:mainMLHLS}, which proceeds through several lemmas. {We outline the main proof first and then will give the proofs of the individual lemmas in the ensuing subsections.}

As our starting point, we apply the Donsker-Varadhan lemma, for $\be>0$, to obtain
\begin{equation}\label{eq:gr1F}
\E_{f_N}\left[\Fr_{N}(\ux_N,\mu)\right]  \leq \frac1\be\paren*{H_N(f_N\vert \mu^{\otimes N}) + \frac{\log K_{N,\be}(\mu)}{N}},
\end{equation}
where the reader will recall from \eqref{defKNbe} the modulated partition function notation $K_{N,\be}(\mu)$. We want to regularize the potential $\g$ by replacing it with a {truncated version}. Given $\eta>0$, define the truncated potential
\begin{align}\label{eq:getadef}
\g_{(\eta)} \coloneqq \g+ \log\frac{|x|}{\max(|x|,\eta)}.
\end{align}
The function $\g_{(\eta)}=\g$ for $|x|\geq \eta$ and is equal to a $C^\infty$ perturbation of $-\log\eta$ for $|x|<\eta$. In particular, $\g_{(0)}=\g$. Using that $\supp\left(\log\frac{|x|}{\max(|x|,\eta)}\right)\subset \ol{B}(0,\eta)$ with $\|\log\frac{|x|}{\max(|x|,\eta)}\|_{L^1} \lesssim \eta^\ds$, it follows that
\begin{equation}
\Fr_{N}({\ux_N},\mu) \leq \Fr_{N,\eta}({\ux_N},\mu) + \frac{1}{2N^2}\sum_{\substack{1\leq i\neq j\leq N \\ |x_i-x_j|\leq \eta }}\log\paren*{\frac{\eta}{|x_i-x_j|}} + C\eta^\ds\|\mu\|_{L^\infty},
\end{equation}
for some constant $C>0$ depending only on $\ds$, where $\Fr_{N,\eta}$ is the truncated modulated energy defined by
\begin{equation}
\Fr_{N,\eta}({\ux_N},\mu) \coloneqq \int_{(\T^\ds)^2\setminus\triangle} \g_{(\eta)}(x-y)d\paren*{\frac1N\sum_{i=1}^N\d_{x_i}-\mu}^{\otimes 2}(x,y).
\end{equation}
Thus, we have shown that
\begin{equation}\label{eq:ZNretabetapre}
K_{N,\be}(\mu) \leq e^{CN\be\|\mu\|_{L^\infty}\eta^\ds}\mathbb{E}_{\mu^{\otimes N}}\left[\exp\Bigg(N\be\Fr_{N,\eta}({\ux_N},\mu) + \frac{\be}{2N}\sum_{\substack{1\leq i\neq j\leq N \\ |x_i-x_j|\leq \eta }}\log\frac{\eta}{|x_i-x_j|}\Bigg)\right].
\end{equation}
It will be convenient to introduce the following partition function notation for the truncated modulated energy:
\begin{align}\label{eq:KNgatrun}
K_{N,\be,\eta}(\mu) \coloneqq \E_{\mu^{\otimes N}}\left[e^{N\be\Fr_{N,\eta}({\ux_N},\mu)}\right].
\end{align}
{Our first lemma shows that we can estimate the second factor in \eqref{eq:ZNretabetapre} in terms of $K_{N,\be',\eta}(\mu)$, for larger inverse temperature $\be'<\bec$, up to an error factor whose log will be negligible as $N\rightarrow\infty$.}

\begin{lemma}\label{lem:expfacbnd}
Let $0<4\eta\leq \ep\leq\frac{1}{4}$, $0<\be<\be'<\bec$. Suppose that $\mu\in \P(\T^\ds)$ and $\log\mu\in W^{1,\infty}(\T^\ds)$. Then there exist constants $C_0,C_1,C_2>0$ depending only on $\ds$, such that
\begin{multline}\label{eq:expfacbnd}
\mathbb{E}_{\mu^{\otimes N}}\left[\exp\Bigg(N\be\Fr_{N,\eta}({\ux_N},\mu) + \frac{\be}{2N}\sum_{\substack{1\leq i\neq j\leq N \\ |x_i-x_j|\leq \eta }}\log\frac{\eta}{|x_i-x_j|}\Bigg)\right]\\
\leq K_{N,\be',\eta}(\mu)^{\frac{\be}{\be'}} e^{C_0N\eta\be\paren*{\|\nabla\mu\|_{L^\infty} + \frac{1}{\ep} }} \\
\times \paren*{1+C_24^{\frac{\be\be'}{\be'-\be}} \|\mu\|_{L^\infty}\ep^\ds +  C_2\|\mu\|_{L^\infty}\eta^\ds\left(\frac{4^\be\|\mu\|_{L^\infty}}{(\inf\mu)C_1(\bec-\be)}\right)^{\frac{\be'}{\be'-\be}}}^{(N-1)\left(1-\frac{\be}{\be'}\right)}.
\end{multline}
\end{lemma}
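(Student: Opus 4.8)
The plan is to trade temperature by H\"older's inequality, obtaining the partition function at the higher (subcritical) inverse temperature $\be'$ to the power $\be/\be'$, and then to control the residual ``close-pair'' exponential moment by a telescoping argument in which the $N$ integration variables are eliminated one at a time. For Step~1, observe that the correction term in the exponent of the left-hand side of \eqref{eq:expfacbnd} is exactly $\frac{\be}{N}\sum_{i<j}(\g-\g_{(\eta)})(x_i-x_j)$, since $\g-\g_{(\eta)}$ is supported in $\overline{B}(0,\eta)$ and equals $\log\frac{\eta}{|x|}$ there; thus the integrand is $e^{N\be\Fr_{N,\eta}(\XN,\mu)}$ times a nonnegative pair factor, and applying H\"older with conjugate exponents $p=\be'/\be$ and $q=\frac{\be'}{\be'-\be}$ yields
\[
(\text{LHS of }\eqref{eq:expfacbnd})\ \le\ K_{N,\be',\eta}(\mu)^{\be/\be'}\,M_N^{1/q},\qquad M_N:=\E_{\mu^{\otimes N}}\!\Big[e^{\frac{\be q}{2N}\sum_{i\ne j,\,|x_i-x_j|\le\eta}\log\frac{\eta}{|x_i-x_j|}}\Big].
\]
Since $1/q=1-\be/\be'$ and $\frac{\be q}{q}=\be$, the lemma reduces to showing $M_N\le \exp\!\big(C_0N\eta\be q(\|\nabla\mu\|_{L^\infty}+\tfrac1\ep)\big)\,(1+\cdots)^{N-1}$ with the parenthetical as in \eqref{eq:expfacbnd}.

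For Step~2, write $a:=\be q$ and $\Phi(x,y):=\log\frac{\eta}{|x-y|}\indic_{|x-y|\le\eta}\ge 0$. Grouping the pair sum by its largest index gives $\sum_{i\ne j}\Phi(x_i,x_j)=2\sum_{k=2}^N\sum_{i<k}\Phi(x_i,x_k)$, hence $M_N=\E_{\mu^{\otimes N}}\big[\prod_{k=2}^N e^{\frac aN\sum_{i<k}\Phi(x_i,x_k)}\big]$. Integrating out $x_N,x_{N-1},\dots$ successively, with the coefficient $a/N$ held fixed throughout and fewer and fewer points at each stage, the bound on $M_N$ follows by induction from the single-variable estimate
\[
\sup_{1\le m\le N-1}\ \sup_{y_1,\dots,y_m\in\T^\ds}\ \int_{\T^\ds} e^{\frac aN\sum_{l=1}^m\Phi(x,y_l)}\,d\mu(x)\ \le\ 1+B,
\]
which then gives $M_N\le(1+B)^{N-1}$, provided one has separated off in advance the exponential prefactor (see Step~3). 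In the regime of interest $\be q/N<\ds$, so all the per-variable integrals below are finite; the finitely many small $N$ with $\be q/N\ge \bec$ may be ignored, consistent with the ``$N$ sufficiently large'' hypothesis of \cref{thm:mainMLHLS}.

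The single-variable estimate is the technical heart, and the main obstacle is to keep every integral \emph{subcritical} even though the trading temperature $\be q$ may exceed $\bec$. One splits the domain of integration according to how many of the $y_l$ lie within $\eta$ of $x$: on the ``generic'' part, where at most one $y_l$ is $\eta$-close to $x$, the integrand is a single mild power $(\eta/|x-y_l|)^{a/N}$, integrable with $\int_{B(0,\eta)}(\eta/|z|)^{a/N}\,dz\sim\eta^\ds/(\ds-a/N)$; replacing $d\mu(x)$ by $\mu(y_l)\,dx$ on $B(y_l,\eta)$ at the cost of $\|\nabla\mu\|_{L^\infty}\eta$, summing over $l$, and making a further comparison at the intermediate scale $\ep$ between the genuinely singular $\le\eta$--contribution (estimated at the \emph{safe} inverse temperature $\be<\bec$, which produces the factor $(\bec-\be)^{-1}$, the lower bound $\inf\mu>0$, and the powers of $4$ coming from $\eta\le\ep/4$) and the bounded $\ep$--scale contribution (controlled via the regularity of $\log\mu$, producing $\|\nabla\mu\|_{L^\infty}$, $\ep^{-1}$, and the $\ep^\ds$ term) yields the two terms $C_2 4^{\be q}\|\mu\|_{L^\infty}\ep^\ds$ and $C_2\|\mu\|_{L^\infty}\eta^\ds\big(\tfrac{4^\be\|\mu\|_{L^\infty}}{(\inf\mu)C_1(\bec-\be)}\big)^{q}$ of the parenthetical. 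The remaining ``clustered'' part—of total $\mu$-measure $O(m\|\mu\|_{L^\infty}\ep^\ds)$, where the boundedness and $O(\ep^\ds)$ support of $\g_{(\eta)}-\gep$ is used—is absorbed into the exponential prefactor $\exp\!\big(C_0N\eta\be q(\|\nabla\mu\|_{L^\infty}+1/\ep)\big)$, which after raising to the power $1/q$ becomes the stated $\exp\!\big(C_0N\eta\be(\|\nabla\mu\|_{L^\infty}+1/\ep)\big)$.
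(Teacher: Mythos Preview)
Your approach has a genuine gap at the single-variable estimate in Step~2. Take $y_1=\cdots=y_{N-1}=y$; then the integrand on $B(y,\eta)$ is $(\eta/|x-y|)^{a(N-1)/N}$, which fails to be locally integrable once $a(N-1)/N\ge\ds$, i.e.\ essentially once $a=\be q=\frac{\be\be'}{\be'-\be}\ge\bec/2$. No such restriction is in the hypotheses: for $\be,\be'$ both near $\bec$, $a$ is arbitrarily large. Worse, the moment $M_N$ itself diverges whenever $a>\bec$ (check the simultaneous collapse of all $N$ points: the product singularity scales like $r^{-a(N-1)/2}$ against volume $r^{\ds(N-1)}$), so for such $\be,\be'$ your opening H\"older bound is vacuous. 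The root cause is that trading temperature \emph{first} pushes the close-pair exponent from the safe $\be$ to the dangerous $\be q$; your later assertion that the singular part is ``estimated at the safe inverse temperature $\be$'' has no mechanism behind it once that trade has been made.

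The paper avoids this by reversing the order of the two H\"older steps. It first applies H\"older in each particle variable (all exponents equal to $N$) and uses exchangeability to reduce to a single distinguished particle $x_1$; the close-pair exponent then becomes $\be/2$ per pair $(x_1,x_j)$, subcritical since $\be<\bec=2\ds$. One decomposes according to how many $x_j$ lie within $\eta$ (respectively $\ep$) of $x_1$, replaces those $x_j$ by $x_1$ inside $\Fr_{N,\eta}$ (this is the origin of the $C_0N\eta\be(\|\nabla\mu\|_{L^\infty}+\ep^{-1})$ prefactor), and integrates each close particle out \emph{at the safe exponent $\be/2$}, producing the $(\bec-\be)^{-1}$ and $\inf\mu$ factors. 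Only \emph{afterwards} is the temperature-trading H\"older with exponents $(\be'/\be,\,\be'/(\be'-\be))$ applied, and at that stage it acts on $e^{N\be\Fr_{N,\eta}}$ against bounded indicator functions, so no divergent integral arises.
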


The truncated potential $\g_{(\eta)}$ introduced in \eqref{eq:getadef} is continuous. Hence, $\g_{(\eta)}\ast\nu$ is a continuous function for any $\nu\in\P(\T^\ds)$. So we are justified in introducing the notation
\begin{equation}
\bar{F}_{\eta}(\nu,\mu)\coloneqq \frac12\int_{(\T^\ds)^2}\g_{(\eta)}(x-y)d\paren*{\nu-\mu}^{\otimes 2}(x,y),
\end{equation}
with which we then obtain (using $\g_{(\eta)}(0)= |\log\eta|$)
\begin{equation}
\Fr_{N,\eta}(\ux_N,\mu) = \bar{F}_{\eta}\left(\frac1N\sum_{i=1}^N\d_{x_i},\mu\right) - \frac{|\log\eta|}{2N}.
\end{equation}

Given $\ux_N$, abbreviate $\mu_{\ux_N} \coloneqq \frac1N\sum_{i=1}^N \d_{x_i}$. We want to be able to speak of the relative entropy between $\mu_{\ux_N}$ and $\mu$, but of course this does not make sense since $\mu_{\ux_N}$ is not absolutely continuous with respect to $\mu$. Accordingly, we coarse grain $\mu_{\ux_N}$ to obtain an absolutely continuous measure. Given an integer $M\in\N$, define the \emph{coarse-graining operator at scale $M^{-1/\ds}$},
\begin{equation}\label{eq:CMdef}
\Cc_M: L^1(\T^\ds)\rightarrow L^\infty(\T^\ds), \qquad \Cc_M(f) \coloneqq \sum_{k=1}^M f_{Q_k}\indic_{Q_k},
\end{equation}
where $\{Q_k\}$ is a partition of $\T^\ds$ into $M$ pairwise disjoint half-open cubes of side length $M^{-1/\ds}$ (i.e., translates of $[0, M^{-1/\ds})^\ds$) and $f_{Q_k} \coloneqq \dashint_{Q_k}df$ is the average of $f$ over $Q_k$. If $f\in \dot{C}^{0,\la}(\T^\ds)$, for some $0<\la\leq 1$, then if $x\in Q_k$, 
\begin{equation}
|f(x) - \Cc_M(f)(x)| = |f(x)-f_{Q_k}| = \left|\dashint_{Q_k}\paren*{f(x)-f(y)}dy\right| \lesssim M^{-\la /\ds}\|f\|_{\dot{C}^{0,\la}}.
\end{equation}

Since $\|\g_{(\eta)}\|_{\dot{C}^{0,1}} \leq \frac{1}{\eta}$, we have that
\begin{equation}
\forall \nu\in\P(\T^\ds), \qquad \left|\g_{(\eta)}\ast\nu - \g_{(\eta)}\ast\Cc_M(\nu)\right| \lesssim \frac{M^{-1/\ds}}{\eta}.
\end{equation}
Recalling from \eqref{eq:dcompundo} the definition of the truncated partition function $K_{N,\be,\eta}(\mu)$, the preceding implies
\begin{align}\label{eq:logZNetapre}
\frac{\log K_{N,\be,\eta}(\mu)}{N} \leq \frac{CM^{-1/\ds}}{\eta} + \frac1N\log\E_{\mu^{\otimes N}}\Bigg[e^{N\be \bar{F}_{\eta}(\Cc_M(\mu_{\ux_N}),\mu)}\Bigg].
\end{align}
Introduce the \emph{rate functional}
\begin{equation}\label{eq:Ibeetadef}
I_{\be,\eta}(\mu) \coloneqq \sup_{\nu\in\P(\T^\ds)} \paren*{\be\bar{F}_{\eta}(\nu,\mu) - H(\nu\vert \mu)},
\end{equation}
where $H(\cdot\vert\cdot)$ is the relative entropy for measures on $\T^\ds$. Equivalently, $I_{\be,\eta}(\mu)$ is the opposite of of the infimum of the single-particle modulated free energy with background $\mu$.  Obviously, $I_{\be,\eta}(\mu)=-\infty$ unless the Radon-Nikodym derivative $\frac{d\nu}{d\mu}\in L\log L(\mu)$. Furthermore, since $\be\bar{F}_\eta(\mu,\mu) - H(\mu\vert\mu)=0$, we have that $I_{\be,\eta}(\mu)\geq 0$. Since $\Cc_M(\mu_{\ux_N})\in\P(\T^\ds)$, we have by definition of supremum that
\begin{align}\label{eq:Ivanapp}
&\frac1N\log\E_{\mu^{\otimes N}}\Bigg[e^{N\be \bar{F}_{\eta}(\Cc_M(\mu_{\ux_N}),\mu)}\Bigg] \leq I_{\be,\eta}(\mu) + \frac1N\log\E_{\mu^{\otimes N}}\Bigg[e^{N H(\Cc_M(\mu_{\ux_N})\vert\mu)}\Bigg].
\end{align}
We want to show that the first term on the right-hand side vanishes, which we can show if $\be$ is sufficiently small depending on $\ds$. 

%The first term in the right-hand side \eqref{eq:Ivanapp} vanishes provided that the localization parameter $r>0$ is chosen so as to make $\|V_{r,\eta}\|_{L^1}$ small enough. This is the content of the following lemma adapted from \cite[Lemma 3.2]{BJW2020}.

\begin{lemma}\label{lem:Ivan}
There exists $\beta_0>$, depending on $\ds$, such that for any $\be<\be_0$, there exists $\delta_{\be},\eta_{\be}>0$, depending on $\ds,\be$, such that if $\|\log\mu\|_{L^\infty}\leq\delta_{\be}$ and $0\leq\eta\leq\eta_\be$, then $I_{\be,\eta}(\mu)=0$.
\end{lemma}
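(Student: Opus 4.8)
The plan is to reformulate the vanishing of $I_{\be,\eta}(\mu)$ as a minimization statement, derive the Euler--Lagrange equation for the minimizer, and then rule out nonuniform minimizers by a contraction argument that works precisely because $\be$ is small, in close analogy with the uniqueness proof of \cref{prop:ssunq}.

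\textbf{Reformulation.} Set $\mathsf{G}_{\be,\eta}(\nu)\coloneqq\frac1\be H(\nu\vert\mu)-\bar{F}_\eta(\nu,\mu)$ for $\nu\in\P(\T^\ds)$, the single-particle modulated free energy with background $\mu$; then $I_{\be,\eta}(\mu)=-\be\inf_{\nu}\mathsf{G}_{\be,\eta}(\nu)$. Since $\mathsf{G}_{\be,\eta}(\mu)=0$ and $I_{\be,\eta}(\mu)\ge0$ (as already observed), showing $I_{\be,\eta}(\mu)=0$ amounts to showing that $\mu$ is a global minimizer of $\mathsf{G}_{\be,\eta}$. A minimizer $\nu_*$ exists by the direct method: $\g_{(\eta)}\in C(\T^\ds)$ is bounded, so $\nu\mapsto\bar{F}_\eta(\nu,\mu)$ is weakly continuous on $\P(\T^\ds)$, whereas $\nu\mapsto H(\nu\vert\mu)$ is weakly lower semicontinuous with weakly compact sublevel sets; moreover $\mathsf{G}_{\be,\eta}(\nu_*)\le0$ forces $\tfrac{d\nu_*}{d\mu}\in L\log L(\mu)$.

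\textbf{Euler--Lagrange equation and contraction.} I would first check $\tfrac{d\nu_*}{d\mu}>0$ $\mu$-a.e.\ (otherwise the right-derivative of $H(\cdot\vert\mu)$ along $(1-t)\nu_*+t\mu$ is $-\infty$, contradicting minimality), and then a standard first-variation computation---using that $\bar{F}_\eta(\cdot,\mu)$ has Gateaux differential $\g_{(\eta)}\ast(\cdot-\mu)$ and that $H(\cdot\vert\mu)$ is convex---yields that $\nu_*$ solves the ``modulated Kirkwood--Monroe'' equation
\begin{align}
\nu_* = \frac{e^{\be\,\g_{(\eta)}\ast(\nu_*-\mu)}}{\int_{\T^\ds}e^{\be\,\g_{(\eta)}\ast(\nu_*-\mu)}d\mu}\,\mu;
\end{align}
in particular $\nu_*$ has a continuous, strictly positive density and $\nu_*=\mu$ is always a solution. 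Write $h_*\coloneqq\g_{(\eta)}\ast(\nu_*-\mu)$, which is continuous with $\int_{\T^\ds}h_*=\big(\int\g_{(\eta)}\big)\big(\int d(\nu_*-\mu)\big)=0$, so $\|h_*\|_{L^\infty}\le\osc(h_*)\le2\|h_*\|_{L^\infty}$. Rewriting the equation as $h_*=\g_{(\eta)}\ast\!\big[\mu\big(\tfrac{e^{\be h_*}}{Z}-1\big)\big]$ with $Z=\int\mu e^{\be h_*}\ge e^{-\delta_\be}$ (Jensen), and using the mean value theorem, $\|\mu\|_{L^\infty}\le e^{\delta_\be}$, and $\|\g_{(\eta)}\|_{L^1}\le C_\ds$ uniformly for $\eta\in[0,1]$ (since $\|\g\|_{L^1}<\infty$ and $\|\log\tfrac{|x|}{\max(|x|,\eta)}\|_{L^1}\lesssim\eta^\ds$), I obtain
\begin{align}
\|h_*\|_{L^\infty}\le 2C_\ds\,\be\,e^{3\delta_\be}\,e^{\be M}\,\|h_*\|_{L^\infty},
\end{align}
where $M$ is any a priori bound for $\|h_*\|_{L^\infty}$. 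Choosing $\be_0=\be_0(\ds)$ small enough that $2C_\ds\,\be_0\,e^{\be_0 M}<1$ for the bound $M=M(\ds,\be)$ below, and then $\delta_\be>0$ small, makes the prefactor $<1$, whence $\|h_*\|_{L^\infty}=0$, hence $\nu_*=\mu$, hence $\inf\mathsf{G}_{\be,\eta}=\mathsf{G}_{\be,\eta}(\mu)=0$ and $I_{\be,\eta}(\mu)=0$.

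\textbf{The a priori bound is the crux.} The remaining ingredient---and the main obstacle---is an a priori bound $\|h_*\|_{L^\infty}\le M(\ds,\be)$ that does \emph{not} deteriorate as $\eta\to0$ (where $\g_{(\eta)}=\g$ is unbounded) and satisfies $\be M(\ds,\be)\to0$ as $\be\to0$. Here I would reproduce the argument of \cref{prop:ssunq}: since $\g_{(\eta)}\le\g$ and $\g+\log|x|$ is smooth near the origin by \eqref{eq:ggE}, one has $\g_{(\eta)}(z)\le-\log|z|+C_\ds$ on $[-\tfrac12,\tfrac12]^\ds$ uniformly in $\eta$; applying the elementary inequality $ab\le b\log b-b+e^a$ with $a=-\ep\log|x-y|$ (for a suitable $\ep\in(0,\ds)$) converts the logarithmic singularity into the integrable power $|x-y|^{-\ep}$ at the expense of an entropic term, which is then rewritten through the Euler--Lagrange equation and reabsorbed using $\tfrac1{\cdd}\|h_*\|_{\dot H^{\ds/2}}^2\le\|h_*\|_{L^\infty}$; the matching lower bound $h_*\ge-C_\ds\|\nu_*-\mu\|_{\mathrm{TV}}$ follows from $\g_{(\eta)}\ge-C_\ds$ for $\eta\le1$. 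The only difference from the $\eta=0$ case is a correction of size $O(\eta^\ds\|\mu\|_{L^\infty})$, absorbed by taking $\eta_\be$ small, together with the harmless dependence on $\|\log\mu\|_{L^\infty}\le\delta_\be$; the bookkeeping must be arranged so that $\be_0(\ds)$ is fixed first, and only then $\delta_\be,\eta_\be$ chosen depending on $\ds,\be$.
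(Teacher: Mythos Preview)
Your proposal is correct and follows essentially the same route as the paper: existence of an extremizer for the rate functional, derivation of the Euler--Lagrange equation $\nu_*=\mu\,e^{\be\g_{(\eta)}\ast(\nu_*-\mu)}/Z$, an a priori $L^\infty$ bound on $h_*=\g_{(\eta)}\ast(\nu_*-\mu)$ obtained via Fenchel's inequality, and a contraction argument for $\be$ small. The paper organizes the a priori bound slightly differently---it first invokes the logarithmic HLS inequality (\cref{lem:logHLS}) to get a $\be$-independent bound on $H(\nu_*\vert\mu)$ and only then controls $\|h_*\|_{L^\infty}$---and it closes the contraction on $\|\nu_*-\mu\|_{L^1}$ rather than on $\|h_*\|_{L^\infty}$; both variants lead to the same conclusion. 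Two small imprecisions in your sketch: first, the identity $\tfrac{1}{\cdd}\|h_*\|_{\dot H^{\ds/2}}^2\le\|h_*\|_{L^\infty}$ you invoke relies on $\Dm^{\ds}\g_{(\eta)}=\cdd(\delta_0-1)$, which fails for $\eta>0$; you do not actually need it, since after rewriting $\int\nu_*\log\nu_*$ through the Euler--Lagrange equation the relevant term is simply $\be\int h_*\,d\nu_*\le\be\|h_*\|_{L^\infty}$, which reabsorbs directly for $\be/\ep<1$. Second, Jensen with respect to $\mu$ gives $Z\ge e^{\be\int h_*\,d\mu}$, which is not $e^{-\delta_\be}$ since $\int h_*\,d\mu\ne 0$ in general; the crude bound $Z\ge e^{-\be\|h_*\|_{L^\infty}}\ge e^{-\be M}$ is what you want and is enough.
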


{\begin{remark}
If $\mu=\muu$, then since
\begin{align}
F_\eta(\nu,\muu) \leq \int_{(\T^\ds)^2}\g(x-y)d\nu^{\otimes 2}(x,y) + O(\eta^\ds),
\end{align}
it follows that
\begin{align}
I_{\be,\eta}(\muu) \leq \beta O(\eta^\ds) + \sup_{\nu\in\P(\T^\ds)} \left(\be\bar{F}(\nu,\muu) - H(\nu\vert \muu)\right) = \beta O(\eta^\ds) -\be\Ec_\be^*.
\end{align}
If $\ds=2$, then by the work \cite{LL2006}, it is known that $\Ec_\be^* = 0$ for $\be\leq \bec=4$. Although this does not show that $I_{\be,\eta}(\muu)=0$, except for $\eta=0$, the reader will see below that the $O(\eta^\ds)$ bound is sufficient to prove the mLHLS inequality in this case.
\end{remark}}

Returning to \eqref{eq:Ivanapp}, we see that then
\begin{equation}
\frac1N\E_{\mu^{\otimes N}}\Bigg[e^{N\be \bar{F}_{\eta}(\Cc_M(\mu_{\ux_N}),\mu)}\Bigg]  \leq \frac1N\log\E_{\mu^{\otimes N}}\Bigg[e^{N H(\Cc_M(\mu_{\ux_N})\vert\mu)}\Bigg].
\end{equation}

Next, by definition \eqref{eq:CMdef} of $\Cc_M$, we have
\begin{align}
H(\Cc_M(\mu_{\ux_N})\vert\mu) &= \sum_{Q_k} \frac{M^{-1}|\{x_i \in Q_k\}|}{N}\int_{Q_k}\log\paren*{\frac{\Cc_M(\mu_{\ux_N})}{\mu}}dx\nn\\
&= \sum_{Q_k}\frac{|\{x_i \in Q_k\}|}{N}\paren*{\log\paren*{\frac{M^{-1}|\{x_i \in Q_k\}|}{N}}-\dashint_{Q_k}\log\mu dx} \nn\\
&=\int_{\T^\ds}\log\paren*{\Cc_M(\mu_{\ux_N)}} d\mu_{\ux_N} - \sum_{Q_k}\frac{|\{x_i \in Q_k\}|}{N}\dashint_{Q_k}\log\mu dx.
\end{align}
Since $\log\mu$ is Lipschitz by assumption, we have by mean-value theorem
\begin{align}
\forall x_i \in Q_k, \qquad \left|\dashint_{Q_k}\log\mu dx - \log\mu(x_i)\right| \lesssim \|\nabla\log\mu\|_{L^\infty} M^{-1/\ds},
\end{align}
which implies that
\begin{equation}
\left|\sum_{Q_k}\frac{|\{x_i \in Q_k\}|}{N}\dashint_{Q_k}\log\mu dx - \int_{\T^\ds}\log\mu d\mu_{\ux_N}\right| \lesssim \|\nabla\log\mu\|_{L^\infty}  M^{-1/\ds}.
\end{equation}
Therefore,
\begin{equation}
\frac1N\E_{\mu^{\otimes N}}\Bigg[e^{N H(\Cc_M(\mu_{\ux_N})\vert\mu)}\Bigg] \leq {CM^{-\frac{1}{\ds}}\|\nabla\log\mu\|_{L^\infty}} + \frac1N\log \E_{\mu^{\otimes N}}\Bigg[e^{N\int_{\T^\ds}\log\left(\frac{\Cc_M(\mu_{\ux_N})}{\mu}\right)d\mu_{\ux_N} }\Bigg]. \label{eq:LDcgapp}
\end{equation}

The advantage of coarse graining is the we have essentially reduced to computing large deviations with respect to {the} uniform measure. The following lemma is a reformulation of \cite[Proposition 3.1]{BJW2020}. The proof uses classical large deviation arguments (cf. \cite{BG1999}) and is essentially a combinatorial exercise.

\begin{lemma}\label{lem:LDcg}
There exists a constant $C>0$ depending only on $\ds$, such that for any $\mu\in \P_{ac}(\T^\ds)$ and $M\leq \frac{N}{2}$, it holds that
\begin{equation}
C^{-M}\frac{N^{\frac{M-1}{2}}}{M^{M-\frac{1}{2}}}\leq \E_{\mu^{\otimes N}}\Bigg[e^{N\int_{\T^\ds}\log\paren*{\frac{\Cc_M(\mu_{\ux_N})}{\mu} }d\mu_{\ux_N}}\Bigg]\leq C N^{M+\frac12}.
\end{equation}
\end{lemma}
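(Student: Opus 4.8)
The plan is to reduce the expectation to a purely combinatorial sum and then estimate that sum with Stirling's formula and a classical coefficient-extraction argument; after the reduction this is exactly the type of large-deviation computation carried out in \cite{BG1999}, and the statement is a reformulation of \cite[Proposition 3.1]{BJW2020}.

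\textbf{Reduction.} The first and main step is to observe that $\Cc_M(\mu_{\ux_N})$ depends on $\ux_N$ only through the occupation numbers $n_k\coloneqq\#\{i:x_i\in Q_k\}$, being equal to $\frac{Mn_k}{N}$ on $Q_k$ (since $|Q_k|=M^{-1}$). A direct computation then gives
\begin{equation*}
N\int_{\T^\ds}\log\paren*{\frac{\Cc_M(\mu_{\ux_N})}{\mu}}d\mu_{\ux_N}=\sum_{k=1}^M n_k\log\frac{Mn_k}{N}-\sum_{i=1}^N\log\mu(x_i),
\end{equation*}
so the integrand equals $\prod_k(Mn_k/N)^{n_k}\cdot\prod_i\mu(x_i)^{-1}$. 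The factor $\prod_i\mu(x_i)^{-1}$ exactly cancels the density $\frac{d\mu^{\otimes N}}{d\muu^{\otimes N}}$; since the remaining integrand depends only on $(n_1,\dots,n_M)$, which under $\muu^{\otimes N}$ has the multinomial law with parameters $(N;\frac1M,\dots,\frac1M)$, I would simplify the multinomial coefficients to arrive at
\begin{equation*}
\E_{\mu^{\otimes N}}\Bigg[e^{N\int_{\T^\ds}\log(\Cc_M(\mu_{\ux_N})/\mu)\,d\mu_{\ux_N}}\Bigg]=\frac{N!}{N^N}\sum_{\substack{n_1+\cdots+n_M=N\\ n_k\geq 0}}\prod_{k=1}^M\frac{n_k^{n_k}}{n_k!}=\frac{N!}{N^N}\,[x^N]A(x)^M ,
\end{equation*}
with the conventions $0^0=0!=1$, where $A(x)\coloneqq\sum_{m\geq0}\frac{m^m}{m!}x^m$ has radius of convergence $1/e$. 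The right-hand side is independent of $\mu$ and of $\ds$, so it only remains to estimate it.

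\textbf{Upper and lower bounds.} For the upper bound I would use Cauchy's inequality $[x^N]A(x)^M\leq\rho^{-N}A(\rho)^M$ for $\rho\in(0,1/e)$: bounding $\frac{m^m}{m!}\leq\frac{e^m}{\sqrt{2\pi m}}$ by Stirling and summing a geometric series gives $A(\rho)\leq\frac{C_0}{1-e\rho}$ for an absolute $C_0$, and the choice $\rho=\frac1e(1-\frac MN)$ (so $1-e\rho=\frac MN$), together with $(1-\frac MN)^{-N}\leq e^{3M/2}$ since $M\leq N/2$, yields $[x^N]A(x)^M\leq e^N(C_1 N/M)^M$; combined with $\frac{N!}{N^N}\leq e\sqrt N e^{-N}$ and $(C_1/M)^M\leq\max(1,C_1^{C_1})$ this gives the upper bound $CN^{M+1/2}$. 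For the lower bound I would pass to the tree function $T=xe^T$, for which $A=(1-T)^{-1}$, and use Lagrange inversion, $[x^N]T^j=\frac jN\frac{N^{N-j}}{(N-j)!}$, to obtain the exact identity
\begin{equation*}
\frac{N!}{N^N}[x^N]A(x)^M=\frac1N\sum_{j=1}^N j\binom{M+j-1}{M-1}P_j,\qquad P_j\coloneqq\prod_{i=0}^{j-1}\paren*{1-\tfrac iN}.
\end{equation*}
Restricting the sum to $j\in[\sqrt N,2\sqrt N]$, where $P_j\geq e^{-4}$, $\binom{M+j-1}{M-1}\geq\frac{N^{(M-1)/2}}{(M-1)!}$, and the number of admissible integers $j$ is $\gtrsim\sqrt N$, gives a lower bound $\gtrsim\frac{N^{(M-1)/2}}{(M-1)!}$, hence $\geq C^{-M}\frac{N^{(M-1)/2}}{M^{M-1/2}}$ using $(M-1)!\leq M^{M-1/2}$.

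\textbf{Main obstacle.} The reduction to the multinomial sum is the heart of the matter; once it is done, the rest is routine. The one point requiring care is the uniformity in $M$ of the constants: a naive ``largest term $\times$ number of compositions'' estimate carries a spurious $2^M$ factor (harmless for the application, where $M$ is held fixed as $N\to\infty$, but not enough for the clean form stated), which is why the argument above goes through the Cauchy estimate and the Lagrange-inversion identity rather than pure counting.
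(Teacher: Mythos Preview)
Your proposal is correct. The paper does not supply its own proof of this lemma: it simply states that the result is a reformulation of \cite[Proposition 3.1]{BJW2020} obtained via classical large-deviation arguments (cf.\ \cite{BG1999}), which is exactly the route you take---reduction to the $\mu$- and $\ds$-independent multinomial sum $\frac{N!}{N^N}\sum_{n_1+\cdots+n_M=N}\prod_k\frac{n_k^{n_k}}{n_k!}$, followed by the tree-function/Lagrange-inversion analysis.
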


Applying \cref{lem:LDcg} to the right-hand side of \eqref{eq:LDcgapp} and recalling our starting identity \eqref{eq:Ivanapp}, we find
\begin{equation}
\frac1N\log \E_{\mu^{\otimes N}}\Bigg[e^{N\be \bar{F}_{\eta}(\Cc_M(\mu_{\ux_N}),\mu)}\Bigg] \leq \frac{C\|\nabla\log\mu\|_{L^\infty}}{M^{1/\ds}} + \frac{C(M+\frac{1}{2})\log N}{N},
\end{equation}
for a constant $C>0$ depending only on $\ds$. Applying this bound to the right-hand side of \eqref{eq:logZNetapre}, we obtain
\begin{equation}\label{eq:logZNrbeta'bnd}
\frac{\log K_{N,\be,\eta}(\mu)}{N} \leq \frac{CM^{-1/\ds}}{\eta}  + \frac{C\|\nabla\log\mu\|_{L^\infty}}{M^{1/\ds}} + \frac{C(M+\frac{1}{2})\log N}{N}.
\end{equation}

The conclusion of the proof of \cref{thm:mainMLHLS} is now just a matter of bookkeeping and optimization of parameters, so that all terms involving $\eta,\ep,M,N$ are $O(N^{-\ga})$ (see \cref{sssec:mLHLSconc} below for the details).

\subsubsection{Proof of \cref{lem:expfacbnd}}\label{sssec:mLHLSlem1}
Here, we prove  \cref{lem:expfacbnd}.

By H\"older's inequality in the $x_i$ variable,
\begin{align}
&\mathbb{E}_{\mu^{\otimes N}}\left[\exp\Bigg(\be\sum_{i=1}^N\Bigg(\Fr_{N,\eta}({\ux_N},\mu) + \frac{1}{2N}\sum_{\substack{1\leq  j\neq i\leq N \\ |x_i-x_j|\leq \eta }}\log\frac{\eta}{|x_i-x_j|}\Bigg)\Bigg)\right]\nn\\
&\leq \prod_{i=1}^N  \mathbb{E}_{\mu^{\otimes N}}^{\frac{1}{N}}\left[\exp\Bigg(\be N\Fr_{N,\eta}({\ux_N},\mu) + \frac{\be}{2}\sum_{\substack{1\leq  j\neq i\leq N \\ |x_i-x_j|\leq \eta }}\log\frac{\eta}{|x_i-x_j|}\Bigg)\right]\nn\\
&= \mathbb{E}_{\mu^{\otimes N}}\left[\exp\Bigg(\be N\Fr_{N,\eta}({\ux_N},\mu) + \frac{\be}{2}\sum_{\substack{1<j\leq N \\ |x_1-x_j|\leq \eta }}\log\frac{\eta}{|x_1-x_j|}\Bigg)\right], \label{eq:HolNapp}
\end{align}
where the final line follows from symmetry with respect to exchange of particle labels.

For each $2\leq j\leq N$, write $1 = \indic_{|x_1-x_j|\leq \eta} + \indic_{|x_1-x_j|> \eta} $. Inserting the product of these factors over the range $2\leq j \leq N$ into the expectation, expanding, and using the particle label symmetry plus linearity of expectation,
%Let
%\begin{equation}
%n \coloneqq |\{1\leq j\leq N : j\neq 1 \ \text{and} \ |x_1-x_j|\leq \eta\}|.
%\end{equation}
%In other words, $n$ is the number of points distinct from $x_1$ but which are within $\eta$ distance of $x_1$. Again using the particle label symmetry and linearity of expectation, 
we have that \eqref{eq:HolNapp} equals
\begin{equation}\label{eq:premu1n}
\sum_{n=0}^{N-1} {N-1\choose n}  \mathbb{E}_{\mu^{\otimes N}}\Bigg[ e^{N\be\Fr_{N,\eta}({\ux_N},\mu)} e^{\frac{\be}{2}\sum_{j=2}^{n+1}\log\paren*{\frac{\eta}{|x_1-x_j|}}}\prod_{j=2}^{n+1}\indic_{|x_1-x_j|\leq \eta}\prod_{j=n+2}^{N}\indic_{|x_1-x_j|>\eta}\Bigg].
\end{equation}
We would like to use Fubini-Tonelli to reduce to estimating an expectation of the form
\begin{equation}
\E_{\mu^{\otimes n}}\Bigg[e^{\frac{\be}{2}\sum_{j=2}^{n+1}\log\paren*{\frac{\eta}{|x_1-x_j|}}}\prod_{j=2}^{n+1}\indic_{|x_1-x_j|\leq \eta}\Bigg]  = \E_{\mu^{\otimes n}}\Bigg[\prod_{j=2}^{n+1} \paren*{\frac{\eta}{|x_1-x_j|}}^{\frac{\be}{2}}\indic_{|x_1-x_j|\leq \eta}\Bigg],
\end{equation}
but the issue is that $\Fr_{N,\eta}({\ux_N},\mu)$ in \eqref{eq:premu1n} depends on $x_2,\ldots,x_{n+1}$. However, since $x_1$ and $x_j$ are close, for $2\leq j\leq n+1$, we may hope to replace any such $x_j$ by $x_1$ up to negligible error.

We make this precise as follows. Let $\ux_N^{(1,n)}\coloneqq (x_1^{(1,n)},\ldots,x_N^{(1,n)})\in (\T^\ds)^N$ be defined by
\begin{equation}
x_{j}^{(1,n)} \coloneqq  \begin{cases} x_1, & {1\leq j\leq n+1} \\ x_j, & {n+2\leq j\leq N}. \end{cases}
\end{equation}
The transformation $\ux_N \mapsto \ux_{N}^{(1,n)}$ is evidently measurable. We also introduce an additional scale $\ep\geq 4\eta$. For $2\leq j\leq n+1$ and any $1\leq i\leq N$, observe from \eqref{eq:getadef} that
\begin{equation}
\left|\g_{(\eta)}(x_j-x_i) - \g_{(\eta)}(x_1-x_i)\right|= \left|\log\frac{\max(|x_1-x_i|,\eta)}{\max(|x_j-x_i|,\eta)} \right|.
\end{equation}
Evidently, if $\max\paren*{|x_1-x_i|,|x_j-x_i|} \leq \eta$, then
\begin{equation}
|\g_{(\eta)}(x_j-x_i) - \g_{(\eta)}(x_1-x_i)| = 0.
\end{equation}
Also, without loss of generality, if $|x_1-x_i| > \eta \geq |x_j-x_i|$, then
\begin{equation}
|x_1-x_i| \leq |x_1-x_j| + |x_j-x_i| \leq 2\eta,
\end{equation}
hence
\begin{equation}\label{eq:gdiffji1i}
|\g_{(\eta)}(x_j-x_i) -\g_{(\eta)}(x_1-x_i)| \leq \log 2.
\end{equation}
Finally, if $\min\paren*{|x_1-x_i|,|x_j-x_i|} > \eta$, then since $|x_1-x_j|\leq \eta$ (by assumption $2\leq j\leq n+1$), 
\begin{equation}
|x_1-x_i| \leq |x_1-x_j| + |x_j-x_i| \leq \eta + |x_j-x_i| < 2|x_j-x_i|.
\end{equation}
By the same reasoning, $|x_j-x_i| <2|x_1-x_i|$. So, \eqref{eq:gdiffji1i} also holds. In all cases we have,
\begin{align}\label{eq:getadiff}
|\g_{(\eta)}(x_j-x_i) - \g_{(\eta)}(x_1-x_i)| \leq \log 2.
\end{align}
Suppose $|x_1-x_i| \geq \ep\geq 4|x_1-x_j|$. Then since $\ep\geq 4\eta$, it follows that $|x_j-x_i| \geq \frac{3\ep}{4}\geq 3\eta$. So, by the mean-value theorem,
\begin{align}\label{eq:getaMVT}
|\g_{(\eta)}(x_j-x_i) - \g_{(\eta)}(x_1-x_i)| &= \log\left(\frac{|x_j-x_i|}{|x_1-x_i|}\right)\leq  \frac{2\eta}{\ep}.
\end{align}

Inserting $\prod_{j=n+2}^{N}\left(\indic_{\eta<|x_1-x_j|\leq \ep} + \indic_{|x_1-x_j|>\ep}\right)$ into \eqref{eq:premu1n} and expanding the product \eqref{eq:FFmu1n}, it follows that \eqref{eq:premu1n} is $\leq$
\begin{multline}\label{eq:preinsert}
\sum_{n=0}^{N-1}  \sum_{m=0}^{N-1-n} {N-1\choose n} {N-n-1\choose m} \mathbb{E}_{\mu^{\otimes N}}\Bigg[ e^{N\be\Fr_{N,\eta}({\ux_N},\mu)} e^{\frac{\be}{2}\sum_{j=2}^{n+1}\log\paren*{\frac{\eta}{|x_1-x_j|}}}\\
\prod_{j=2}^{n+1}\indic_{|x_1-x_j|\leq \eta}\prod_{j=n+2}^{n+m+1}\indic_{\eta<|x_1-x_j|\leq \ep}  \prod_{j=n+m+2}^N \indic_{\ep< |x_1-x_j|}\Bigg].
\end{multline}
Using \eqref{eq:getadiff}, \eqref{eq:getaMVT}, we see that
\begin{equation}\label{eq:FFmu1n}
\Fr_{N,\eta}(\ux_N,\mu) \leq \Fr_{N,\eta}(\ux_N^{(1,n)},\mu) +  \frac{Cn\eta\|\nabla\mu\|_{L^\infty}}{N} + \frac{C\eta}{\ep} + \frac{(n^2+nm)\log 2}{N^2},
\end{equation}
and by reversing our steps, also that
\begin{equation}\label{eq:Fmu1nF}
\Fr_{N,\eta}(\ux_N^{(1,n)},\mu) \leq \Fr_{N,\eta}(\ux_N,\mu) +  \frac{Cn\eta\|\nabla\mu\|_{L^\infty}}{N} +\frac{C\eta}{\ep} + \frac{(n^2+nm)\log 2}{N^2}.
\end{equation}
Inserting the bound \eqref{eq:FFmu1n} into \eqref{eq:preinsert}, we see that \eqref{eq:preinsert} is $\leq$
\begin{multline}\label{eq:FNretax1n}
\sum_{n=0}^{N-1} \sum_{m=0}^{N-1-n} {N-1\choose n} {N-n-1\choose m} e^{CN\eta\be\paren*{\|\nabla\mu\|_{L^\infty} + \frac{1}{\ep} }} 2^{\be(n+m)}  \\
\E_{\mu^{\otimes N}}\Bigg[e^{N\be\Fr_{N,\eta}(\ux_N^{(1,n)},\mu)} \prod_{j=2}^{n+1} \paren*{\frac{\eta}{|x_1-x_j|}}^{\frac{\be}{2}}\indic_{|x_1-x_j|\leq \eta} \prod_{j=n+2}^{n+m+1}\indic_{\eta<|x_1-x_j|\leq \ep}  \prod_{j=n+m+2}^N \indic_{\ep< |x_1-x_j|}\Bigg].
\end{multline}
%Let $m$ denote the number of points $x_i$ whose distance to $x_1$ satisfies $\eta< |x_1-x_i|\leq \ep$:
%\begin{equation}
%m \coloneqq |\left\{n+2\leq i\leq N : \eta< |x_1-x_i| \leq \ep \right\} |,
%\end{equation}
%where we know that $i$ cannot be $\leq n+1$ since otherwise that would imply $|x_1-x_i|\leq \eta$. Without loss of generality by %particle label symmetry, suppose that $\eta< |x_1-x_i|\leq \ep$ for $i=n+2,\ldots,n+m+1$.

We use Fubini-Tonelli to write the expectation as an iterated integral:
\begin{multline}
\int_{\T^\ds}d\mu(x_1)\int_{(\T^\ds)^{N-n-m-1}}d\mu^{\otimes N-n-m-1}(\ux_{n+2+m;N}) \prod_{j=n+m+2}^N \indic_{\ep< |x_1-x_j|}\\
\int_{(\T^\ds)^{m}}d\mu^{\otimes m}(\ux_{n+2;n+1+m})e^{N\be\Fr_{N,\eta}(\ux_N^{(1,n)},\mu)}\prod_{j=n+2}^{n+m+1}\indic_{\eta<|x_1-x_j|\leq \ep}  \\
\int_{(\T^\ds)^{n}}d\mu^{\otimes n}(\ux_{2;n+1})\prod_{j=2}^{n+1} \paren*{\frac{\eta}{|x_1-x_j|}}^{\frac{\be}{2}}\indic_{|x_1-x_j|\leq \eta},
\end{multline}
where {we use the notation $X_{l;k} \coloneqq (x_{l},x_{l+1},\ldots,x_{k})$, for $l\leq k$}. Provided $\be<\bec {= 2\ds}$, we have by direct computation,
\begin{align}
&\int_{(\T^\ds)^{n}}d\mu^{\otimes n}(\ux_{2;n+1})\prod_{j=2}^{n+1} \paren*{\frac{\eta}{|x_1-x_j|}}^{\frac{\be}{2}}\indic_{|x_1-x_j|\leq \eta} \nn\\
&\leq \left(\frac{\|\mu\|_{L^\infty}}{C_1(\bec-\be)}\eta^{\ds}\right)^n\nn\\
&=\left(\frac{\|\mu\|_{L^\infty}}{C_1(\bec-\be)}\right)^n\int_{(\T^\ds)^n}\prod_{j=2}^{n+1}\indic_{|x_1-x_j|\leq \eta} d\ux_N\nn\\
&=\left(\frac{\|\mu\|_{L^\infty}}{(\inf\mu)C_1(\bec-\be)}\right)^n \int_{(\T^\ds)^n}\prod_{j=2}^{n+1}\indic_{|x_1-x_j|\leq \eta} d\mu^{\otimes n}(\ux_{2;n+1}), \label{eq:C1cons}
\end{align}
where the ultimate line follows from using $1\leq \frac{\mu}{\inf\mu}$. Using the inequality \eqref{eq:Fmu1nF} to replace $\Fr_{N,\eta}(\ux_N^{(1,n)},\mu)$  by $\Fr_{N,\eta}(\ux_N,\mu)$, we see that \eqref{eq:FNretax1n} is $\leq$
\begin{multline}\label{eq:recallbkp}
\sum_{n=0}^{N-1} \sum_{m=0}^{N-1-n} {N-1\choose n} {N-n-1\choose m} e^{2CN\eta\be\paren*{\|\nabla\mu\|_{L^\infty} + \frac{1}{\ep} }} 2^{2\be(n+m)} \left(\frac{\|\mu\|_{L^\infty}}{(\inf\mu)C_1(\bec-\be)}\right)^n\\
\E_{\mu^{\otimes N}}\Bigg[e^{N\be \Fr_{N,\eta}(\ux_N,\mu)}\prod_{j=2}^{n+1}\indic_{|x_1-x_j|\leq \eta}\prod_{j=n+m+2}^N \indic_{|x_1-x_j|>\ep}\prod_{j=n+2}^{n+m+1}\indic_{\eta<|x_1-x_j|\leq \ep}\Bigg].
\end{multline}

Let $\bec>\be'>\be$. Applying H\"older's inequality with exponents $\frac{\be'}{\be}$ and $\frac{\be'}{\be'-\be}$, which are obviously conjugate to $1$, we find that
\begin{align}
&\E_{\mu^{\otimes N}}\Bigg[e^{N\be \Fr_{N,\eta}(\ux_N,\mu)}\prod_{j=2}^{n+1}\indic_{|x_1-x_j|\leq \eta}\prod_{j=n+m+2}^N \indic_{|x_1-x_j|>\ep}\prod_{j=n+2}^{n+m+1}\indic_{\eta<|x_1-x_j|\leq \ep}\Bigg]\nn\\
&\leq \E_{\mu^{\otimes N}}^{\frac{\be}{\be'}}\Bigg[e^{N\be' \Fr_{N,\eta}(\ux_N,\mu)}\prod_{j=2}^{n+1}\indic_{|x_1-x_j|\leq \eta}\prod_{j=n+m+2}^N \indic_{|x_1-x_j|>\ep}\prod_{j=n+2}^{n+m+1}\indic_{\eta<|x_1-x_j|\leq \ep}\Bigg] \nn\\
&\ph\qquad \times\E_{\mu^{\otimes N}}^{1-\frac{\be}{\be'}}\Bigg[\prod_{j=2}^{n+1}\indic_{|x_1-x_j|\leq \eta}\prod_{j=n+m+2}^N \indic_{|x_1-x_j|>\ep}\prod_{j=n+2}^{n+m+1}\indic_{\eta<|x_1-x_j|\leq \ep}\Bigg] \nn\\
&\leq \left(C_2^{n+m}\|\mu\|_{L^\infty}^{n+m}  \eta^{\ds n}\ep^{\ds m}\right)^{1-\frac{\be}{\be'}}\E_{\mu^{\otimes N}}^{\frac{\be}{\be'}}\Bigg[e^{N\be'\Fr_{N,\eta}(\ux_N,\mu)}\prod_{j=2}^{n+1}\indic_{|x_1-x_j|\leq \eta} \nn\\
&\ph\qquad \prod_{j=n+m+2}^N\indic_{|x_1-x_j|>\ep} \prod_{j=n+2}^{n+m+1}\indic_{\eta<|x_1-x_j|\leq \ep}\Bigg].
\end{align}
To obtain the final inequality, we have implicitly bounded $\mu(x_j) \leq \|\mu\|_{L^\infty}$ for $j\leq n+m+1$ and used that $\int_{|x_1-x_j|>\ep}d\mu(x_j) \leq 1$, since $\mu$ is a probability density, for $n+m+2\leq j\leq N$. Recalling the constant $C_1$ from \eqref{eq:C1cons}, we use H\"older's inequality again with respect to the summations over $n,m$ to obtain
\begin{multline}
\sum_{n=0}^{N-1} \sum_{m=0}^{N-1-n} {N-1\choose n} {N-n-1\choose m} 4^{\be(n+m)}\left(\frac{\|\mu\|_{L^\infty}}{(\inf\mu)C_1(\bec-\be)}\right)^n  \paren*{C_2^{n+m}\|\mu\|_{L^\infty}^{n+m} \eta^{\ds n}\ep^{\ds m}}^{1-\frac{\be}{\be'}}\\
 \E_{\mu^{\otimes N}}^{\frac{\be}{\be'}}\Bigg[e^{N\be' \Fr_{N,\eta}(\ux_N,\mu)}\prod_{j=2}^{n+1}\indic_{|x_1-x_j|\leq \eta}\prod_{j=n+m+2}^N \indic_{|x_1-x_j|>\ep}\prod_{j=n+2}^{n+m+1}\indic_{\eta<|x_1-x_j|\leq \ep}\Bigg] \\
\leq \Bigg[\sum_{n=0}^{N-1} \sum_{m=0}^{N-1-n} {N-1\choose n} {N-n-1\choose m} \paren*{4^{\be(n+m)}\left(\frac{\|\mu\|_{L^\infty}}{(\inf\mu)C_1(\bec-\be)}\right)^n}^{\frac{\be'}{\be'-\be}}  \\
 \paren*{C_2^{n+m}\|\mu\|_{L^\infty}^{n+m} \eta^{\ds n}\ep^{\ds m}}\Bigg]^{1-\frac{\be}{\be'}} \Bigg[\sum_{n=0}^{N-1} \sum_{m=0}^{N-1-n} {N-1\choose n} {N-n-1\choose m}\E_{\mu^{\otimes N}}\Bigg[e^{N\be'\Fr_{N,\eta}(\ux_N,\mu)}\prod_{j=2}^{n+1}\indic_{|x_1-x_j|\leq \eta} \\
 \prod_{j=n+m+2}^N\indic_{|x_1-x_j|>\ep} \prod_{j=n+2}^{n+m+1}\indic_{\eta<|x_1-x_j|\leq \ep}\Bigg]\Bigg]^{\frac{\be}{\be'}}. \label{eq:posHoldcomp}
\end{multline}

Using the binomial formula $(a+b)^M = \sum_{\ell=0}^M {M\choose \ell}a^{\ell}b^{M-\ell}$, we find that
\begin{align}
&\sum_{n=0}^{N-1} \sum_{m=0}^{N-1-n} {N-1\choose n} {N-n-1\choose m} \paren*{4^{\be(n+m)}\left(\frac{\|\mu\|_{L^\infty}}{(\inf\mu)C_1(\bec-\be)}\right)^n}^{\frac{\be'}{\be'-\be}}   \paren*{C_2^{n+m}\|\mu\|_{L^\infty}^{n+m} \eta^{\ds n}\ep^{\ds m}} \nn\\
&=\sum_{n=0}^{N-1} \sum_{m=0}^{N-1-n} {N-1\choose n} {N-n-1\choose m}\left( C_2\|\mu\|_{L^\infty}\eta^\ds\left(\frac{4^\be\|\mu\|_{L^\infty}}{(\inf\mu)C_1(\bec-\be)}\right)^{\frac{\be'}{\be'-\be}} \right)^n \left(C_24^{\frac{\be\be'}{\be'-\be}} \|\mu\|_{L^\infty}\ep^\ds\right)^m \nn\\
&= \sum_{n=0}^{N-1} {N-1\choose n}\paren*{1+C_24^{\frac{\be\be'}{\be'-\be}} \|\mu\|_{L^\infty}\ep^\ds}^{N-n-1}\left( C_2\|\mu\|_{L^\infty}\eta^\ds\left(\frac{4^\be\|\mu\|_{L^\infty}}{(\inf\mu)C_1(\bec-\be)}\right)^{\frac{\be'}{\be'-\be}} \right)^n \nn\\
&= \paren*{1+C_24^{\frac{\be\be'}{\be'-\be}} \|\mu\|_{L^\infty}\ep^\ds +  C_2\|\mu\|_{L^\infty}\eta^\ds\left(\frac{4^\be\|\mu\|_{L^\infty}}{(\inf\mu)C_1(\bec-\be)}\right)^{\frac{\be'}{\be'-\be}}}^{N-1}. \label{eq:Holdcomp}
\end{align}

Reversing the decompositions $1=\indic_{|x_1-x_j|\leq \eta}+\indic_{|x_1-x_j|>\eta}$ and $\indic_{|x_1-x_j|>\eta} = \indic_{|x_1-x_j|>\ep} + \indic_{\eta<|x_1-x_j|\leq \ep}$,
\begin{multline}\label{eq:dcompundo}
\sum_{n=0}^{N-1} \sum_{m=0}^{N-1-n} {N-1\choose n} {N-n-1\choose m}\E_{\mu^{\otimes N}}\Bigg[e^{N\be'\Fr_{N,\eta}(\ux_N,\mu)}\prod_{j=2}^{n+1}\indic_{|x_1-x_j|\leq \eta} \prod_{j=n+m+2}^N\indic_{|x_1-x_j|>\ep} \\
\prod_{j=n+2}^{n+m+1}\indic_{\eta<|x_1-x_j|\leq \ep}\Bigg]  =\E_{\mu^{\otimes N}}\Bigg[e^{N\be'\Fr_{N,\eta}(\ux_N,\mu)}\Bigg]= K_{N,\be',\eta}(\mu).
\end{multline}

Applying \eqref{eq:Holdcomp}, \eqref{eq:dcompundo} to the right-hand side of \eqref{eq:posHoldcomp} yields
\begin{equation}
K_{N,\be',\eta}(\mu)^{\frac{\be}{\be'}} \paren*{1+C_24^{\frac{\be\be'}{\be'-\be}} \|\mu\|_{L^\infty}\ep^\ds +  C_2\|\mu\|_{L^\infty}\eta^\ds\left(\frac{4^\be\|\mu\|_{L^\infty}}{(\inf\mu)C_1(\bec-\be)}\right)^{\frac{\be'}{\be'-\be}}}^{(N-1)\left(1-\frac{\be}{\be'}\right)}.
\end{equation}
Recalling \eqref{eq:recallbkp} and performing a little bookkeeping completes the proof of \cref{lem:expfacbnd}.

\subsubsection{Proof of \cref{lem:Ivan}}\label{sssec:mLHLSlem2}
Here, we prove \cref{lem:Ivan}.

Let $\nu\in\P(\T^\ds)$. Then unpacking the definition of the left-hand side,
\begin{align}\label{eq:FEmaxLHS}
\be\bar{F}_{\eta}(\nu,\mu) - H(\nu\vert\mu) &= \paren*{\frac{\be}{2}\int_{(\T^\ds)^2}\g(x-y) d\nu^{\otimes 2}(x,y) -\frac{\be}{\bec}\int_{\T^\ds}\log(\nu)d\nu}  \nn\\
&\ph +\frac{\be}{2}\int_{(\T^\ds)^2}\paren*{\g_{(\eta)}(x-y)-\g(x-y)}d\nu^{\otimes 2}(x,y) \nn\\
&\ph-\be\int_{(\T^\ds)^2}\g_{(\eta)}(x-y)d\nu(x)d\mu(y) + \frac{\be}{2}\int_{(\T^\ds)^2}\g_{(\eta)}(x-y)d\mu^{\otimes 2}(x,y) \nn\\
&\ph + \frac{\be}{\bec}\int_{\T^\ds}\log(\mu)d\nu- \paren*{1-\frac{\be}{\bec}}\int_{\T^\ds}\log\paren*{\frac{d\nu}{d\mu}} d\nu.
\end{align}
From \cref{lem:logHLS} and \eqref{eq:ggE}, we know that
\begin{align}\label{eq:CLHLSdef}
\sup_{\rho\in\P_{ac}} \paren*{\frac{1}{2}\int_{(\T^\ds)^2}\g(x-y) d\rho^{\otimes 2}(x,y) -\frac{1}{\bec}\int_{\T^\ds}\log(\rho)d\rho}\eqqcolon C_{LHLS} \in [0,\infty).
\end{align}
From \eqref{eq:getadef}, we know that for $\ep\in (0,\ds)$,
\begin{align}
\int_{(\T^\ds)^2}\left|\g_{(\eta)}(x-y)-\g(x-y)\right|d\nu^{\otimes 2}(x,y) &\leq -\int_{(\T^\ds)^2}\log\left(\frac{|x-y|}{\max(|x-y|,\eta)}\right)d\nu^{\otimes 2}(x,y) \nn\\
&\leq \int_{|x|\leq \eta} \left(\frac{|x|}{\eta}\right)^{-\ep} + \frac1\ep\int_{\T^\ds}\log(\nu)d\nu \nn\\
&\leq C_\ep\eta^{\ds}+ \frac1\ep\int_{\T^\ds}\log(\frac{d\nu}{d\mu})d\nu + \frac1\ep\|\log\mu\|_{L^\infty}, \label{eq:getagdiffFennu}
\end{align}
where we have used Fenchel's inequality. Since we also have by \eqref{eq:getadef} that
\begin{align}\label{eq:getamuLinf}
\|\g_{(\eta)}\ast\mu\|_{L^\infty} \leq \eta^{\ds}\|\mu\|_{L^\infty}\int_{|x|\leq 1}|\log|x||+ \|\g\ast\mu\|_{L^\infty},
\end{align}
it follows that
\begin{multline}
\be\bar{F}_{\eta}(\nu,\mu) - H(\nu\vert\mu) \leq \be C_{LHLS} + \frac{C_\ep \be \eta^{\ds}}{2} + \frac{3\be}{2}\Big(C\eta^{\ds}\|\mu\|_{L^\infty}+ \|\g\ast\mu\|_{L^\infty}\Big)\\
 + \left(\frac{\be}{\bec}+\frac{\be}{2\ep}\right)\|\log\mu\|_{L^\infty} - \left(1-\frac{\be}{\bec} - \frac{ \be}{2\ep}\right)\int_{\T^\ds}\log(\frac{d\nu}{d\mu})d\nu.
\end{multline}

{We now show that the left-hand side achieves a maximum at some $\nu$.} Let $\nu_n\in \P(\T^\ds)$ be a maximizing, increasing sequence for the left-hand side of \eqref{eq:FEmaxLHS}. Without loss of generality, we may assume that $\be\bar{F}_{\eta}(\nu_0,\mu) - H(\nu_0\vert\mu)\geq 0$. Then if $\be<\frac{\bec}{2}$, we may choose $\ep\in (0,\ds)$ sufficiently small, so that $1-\frac{\be}{\bec}-\frac{\be}{2\ep}>0$ and
\begin{multline}\label{eq:REnun}
\paren*{1-\frac{\be}{\bec} -\frac{\be}{2\ep}}\int_{\R^\ds}\log\paren*{\frac{d\nu_n}{d\mu}} d\nu_n \leq C_{LHLS} + \frac{C_\ep \be \eta^{\ds}}{2} \\
+ \frac{3\be}{2}\Big(C\eta^{\ds}\|\mu\|_{L^\infty}+ \|\g\ast\mu\|_{L^\infty}\Big)  + \left(\frac{\be}{\bec}+\frac{\be}{2\ep}\right)\|\log\mu\|_{L^\infty},
\end{multline}
implying $\nu_n$ is uniformly bounded in $L\log L(\T^\ds)$. The Dunford-Pettis theorem implies that there is a $\nu\in\P_{ac}(\T^\ds)$ such that (up to passing to a subsequence) $\nu_n \rightharpoonup \nu$ as $n\rightarrow\infty$. Since $\g_{(\eta)}$ is continuous {and  the relative entropy is lower semicontinuous with respect to weak convergence of measures, it is straightforward to show that
%\begin{equation}
%\lim_{n\rightarrow\infty} \bar{F}_{\eta}(\nu_n,\mu) = \bar{F}_{\eta}(\nu,\mu).
%\end{equation}
%By t,
%\begin{equation}
%-H(\nu\vert \mu)\geq  -\lim_{n\rightarrow\infty}H(\nu_n \vert \mu).
%\end{equation}
%Hence,
\begin{align}
\sup_{\rho\in\P(\T^\ds)} \paren*{\be\bar{F}_{\eta}(\rho,\mu) - H(\rho\vert\mu)} &= \lim_{n\rightarrow\infty}\Bigg(\be\bar{F}_{\eta}(\nu_n,\mu) -H(\nu_n\vert\mu)\Bigg) \leq \be\bar{F}_{\eta}(\nu,\mu) -  H(\nu\vert\mu).
\end{align}}
This implies that $\nu$ is a maximizer. 

By classical arguments (e.g., see the proof of \cite[Lemma 2.1]{AS2022}),
\begin{equation}\label{eq:minmeasid}
\log\paren*{\frac{d\nu}{d\mu}} -\be\g_{(\eta)}\ast (\nu-\mu) =  -\log\E_{\mu}\Bigg[e^{\be\g_{(\eta)}\ast(\nu-\mu)}\Bigg] \qquad \text{a.e.} \ x\in \supp(\mu).
\end{equation}
Since $\|\log\mu\|_{L^\infty}<\infty$ by assumption, and therefore $\mathrm{essinf}_{x\in\T^\ds}\mu(x)>0$ on $\T^\ds$, the identity \eqref{eq:minmeasid} holds a.e. on $\T^\ds$. Moreover, since $\g_{(\eta)}\ast (\nu-\mu)$ is continuous, \eqref{eq:minmeasid} shows that $\log\frac{d\nu}{d\mu}$ coincides a.e. with a continuous function. In particular, this implies that there are $c_1,c_2>0$ such that $c_1\leq \frac{d\nu}{d\mu}\leq c_2$. By exponentiating both sides of \eqref{eq:minmeasid}, we see that $\nu$ is a solution to
\begin{align}
\nu = \mu \frac{e^{\be\g_{(\eta)}\ast(\nu-\mu)}}{\int_{\T^\ds}e^{\be\g_{(\eta)}\ast(\nu-\mu)}d\mu}.
\end{align}

We now want to prove that $\nu=\mu$, in particular that the maximizer is unique. The proof of this step is similar to that of \cref{prop:ssunq}, except we no longer need to work so hard to establish $\beta$-independent bounds for $\|\g_{(\eta)}\ast\nu\|_{L^\infty}, \int_{\T^\ds}d\nu(\g_{(\eta)}\ast\nu)$, as these are provided by the minimal value of the free energy. Reusing notation, set $\hs \coloneqq \g_{(\eta)}\ast(\nu-\mu)$.
%\begin{align}
%\Dm^{\ds}\hs = \frac{\rho_\be e^{\be\hs}}{\int_{\T^\ds}\rho_\be e^{\be\hs}} - 1, \qquad \text{where} \ \rho_\be \coloneqq e^{-\be\g\ast%\mu}\mu.
%\end{align}
By Jensen's inequality and Fubini-Tonelli,
\begin{align}\label{eq:Jensenhslb}
\int_{\T^\ds}e^{\be\hs}d\mu\geq e^{\int_{\T^\ds}\be\hs d\mu} \geq e^{-\be\|\g_{(\eta)}\ast\mu\|_{L^\infty}\|\nu-\mu\|_{L^1}} \geq e^{-2\be\|\g_{(\eta)}\ast\mu\|_{L^\infty}}.
\end{align} 
Write
\begin{align}
(\nu-\mu)(x) = \mu(x)\Bigg[\frac{e^{\be\hs(x)}}{\int_{\T^\ds}e^{\be\hs}d\mu}-1\Bigg] = \mu(x)\Bigg[\frac{\int_{\T^\ds}[e^{\be\hs(x)} - e^{\be\hs(y)}]d\mu(y)}{\int_{\T^\ds}e^{\be\hs}d\mu}\Bigg].
\end{align}
By the mean value theorem,
\begin{align}
\left|e^{\be\hs(x)} - e^{\be\hs(y)}\right| \leq \be e^{\be\|\hs\|_{L^\infty}} |\hs(x)-\hs(y)| \leq \be e^{\be\|\hs\|_{L^\infty}}(|\hs(x)| + |\hs(y)|).
\end{align}
By Young's and Holder's inequalities,
\begin{align}
%\int_{\T^\ds}|\hs(x)|dx \leq \|\g_{(eta)}\|_{L^1} \|\nu-\mu\|_{L^1}, \\
\int_{\T^\ds}|\hs(y)|d\mu(y) \leq \|\g_{(\eta)}\|_{L^1}\|\mu\|_{L^\infty} \|\nu-\mu\|_{L^1}.
\end{align}
Therefore,
\begin{align}
\|\nu-\mu\|_{L^1} &\leq \frac{1}{\int_{\T^\ds}e^{\be\hs}d\mu} \int_{(\T^\ds)^2}\left|e^{\be\hs(x)} - e^{\be\hs(y)}\right| d\mu(x)d\mu(y) \nn\\
&\leq  \frac{\be e^{\be\|\hs\|_{L^\infty}}}{\int_{\T^\ds}e^{\be\hs}d\mu} \int_{(\T^\ds)^2}(|\hs(x)|+|\hs(y)|)d\mu(x)d\mu(y) \nn\\
&\leq  \frac{2\be e^{\be\|\hs\|_{L^\infty}}}{\int_{\T^\ds}e^{\be\hs}d\mu} \|\g_{(\eta)}\|_{L^1}\|\mu\|_{L^\infty} \|\nu-\mu\|_{L^1}, \label{eq:numuL1pre}
\end{align}
where in obtaining the last line we also use that $\int_{\T^\ds}\mu=1$. Similar to \eqref{eq:getagdiffFennu} and \eqref{eq:getamuLinf}, we have
\begin{align}
\|\hs\|_{L^\infty} &\leq \|(\g-\g_{(\eta)})\ast \mu\|_{L^\infty} + \|(\g-\g_{(\eta)})\ast \nu\|_{L^\infty} + \|\g\ast\mu\|_{L^\infty}+ \|\g\ast\nu\|_{L^\infty} \nn\\
&\leq  C_{\ds}\eta^{\ds}\|\mu\|_{L^\infty} + C_\ep\eta^{\ds}+ \frac2\ep\Big(\int_{\T^\ds}\log(\frac{d\nu}{d\mu})d\nu +\|\log\mu\|_{L^\infty}\Big) + \|\g\ast\mu\|_{L^\infty} + \|e^{\ep\g}\|_{L^1},
\end{align}
for any $\ep\in (0,\ds)$. Since the bound \eqref{eq:REnun} holds with $\nu_n$ replaced by $\nu$, we may apply it to the relative entropy term on the preceding right-hand side to obtain
\begin{multline}\label{eq:hsLinfinf}
\|\hs\|_{L^\infty} \leq \frac{2}{\ep}\paren*{1-\frac{\be}{\bec} -\frac{\be}{2\ep}}^{-1}\Bigg(C_{LHLS} + \frac{C_\ep \be \eta^{\ds}}{2} + \frac{3\be}{2}\Big(C\eta^{\ds}\|\mu\|_{L^\infty}+ \|\g\ast\mu\|_{L^\infty}\Big) \\
 + \left(\frac{\be}{\bec}+\frac{\be}{2\ep}\right)\|\log\mu\|_{L^\infty}\Bigg) + C_{\ds}\eta^{\ds}\|\mu\|_{L^\infty} + C_\ep\eta^{\ds}+ \frac2\ep\|\log\mu\|_{L^\infty} + \|\g\ast\mu\|_{L^\infty} + \|e^{\ep\g}\|_{L^1}.
\end{multline}
Recalling from above that
\begin{align}
\|\g_{(\eta)}\|_{L^1} \leq C_{\ds}\eta^\ds + \|\g\|_{L^1}
\end{align}
and applying the bounds \eqref{eq:Jensenhslb}, \eqref{eq:hsLinfinf} to \eqref{eq:numuL1pre}, we arrive at
\begin{multline}
\|\nu-\mu\|_{L^1} \leq \|\nu-\mu\|_{L^1}2\be\|\mu\|_{L^\infty}(C_{\ds}\eta^\ds + \|\g\|_{L^1})\exp\Bigg[\be\Bigg(\frac{2}{\ep}\paren*{1-\frac{\be}{\bec} -\frac{\be}{2\ep}}^{-1}\Bigg(C_{LHLS} + \frac{C_\ep \be \eta^{\ds}}{2}\\
 + \frac{3\be}{2}\Big(C\eta^{\ds}\|\mu\|_{L^\infty}+ \|\g\ast\mu\|_{L^\infty}\Big)  + \left(\frac{\be}{\bec}+\frac{\be}{2\ep}\right)\|\log\mu\|_{L^\infty}\Bigg) + C_{\ds}\eta^{\ds}\|\mu\|_{L^\infty} + C_\ep\eta^{\ds}+ \frac2\ep\|\log\mu\|_{L^\infty}\\
  + \|\g\ast\mu\|_{L^\infty} + \|e^{\ep\g}\|_{L^1}\Bigg)\Bigg]e^{-2\be\left(\|\g\ast\mu\|_{L^\infty} - C_{\ds}\eta^\ds\|\mu\|_{L^\infty}\right)}.
\end{multline}
Fixing some choice of $\ep$, let $\be_{0}<\frac{\bec}{2}$ be such that
\begin{align}
2\be_0\|\g\|_{L^1}e^{\frac{2\be_0}{\ep}\left(1-\frac{\be_0}{\bec}-\frac{\be_0}{2\ep}\right)^{-1}C_{LHLS} + \be_{0}\|e^{\ep\g}\|_{L^1}} = 1.
\end{align}
Then for $\be<\be_{0}$, we may find $\d_{\be},\eta_\be>0$ such that for $\|\log\mu\|_{L^\infty}\leq \d_\be$ and $\eta\leq \eta_\be$, we have that there exists a $c\in [0,1)$ such that
\begin{align}
\|\nu-\mu\|_{L^1} \leq  c\|\nu-\mu\|_{L^1},
\end{align}
which implies that $\nu=\mu$. This completes the proof of \cref{lem:Ivan}.

{\begin{remark}
If $\ds=2$, we may take advantage of the fact that $C_{LHLS}$ defined \eqref{eq:CLHLSdef} equals $0$. This is a consequence of the uniqueness of stationary solutions to the Kirkwood-Monroe equation in the critical case $\be=\bec$, which has been discussed above. 
\end{remark}

\begin{remark}\label{rem:KWequiv}
If $\ds=2$, then equation \eqref{eq:minmeasid} with $\eta=0$ is equivalent to the Kazhdan-Warner equation \cite{KW1974}. Indeed, for reference density $\mu$ and replacing $\g$ by $\frac1\cdd\g$, consider a solution $\nu$ of
\begin{align}
\nu = \frac{\mu}{M_\nu} e^{\beta \g\ast (\nu-\mu)}, \qquad M_\nu \coloneqq \int_{\T^2}e^{\beta\g\ast(\nu-\mu)}d\mu,
\end{align}
Setting $\mathsf{u} \coloneqq \beta\g\ast \nu$, we see that $\mathsf{u}$ is a solution of
\begin{align}
-\Delta\mathsf{u} = \beta\left(K\frac{e^{\mathsf{u}}}{M_\mathsf{u}}-1\right), \qquad M_{\mathsf{u}} \coloneqq \int_{\T^2}K e^{\mathsf{u}}dx,
\end{align}
where $K \coloneqq \mu e^{-\beta\g\ast\mu}$. Note that $\frac{K e^\mathsf{u}}{M_\mathsf{u}} = e^{\mathsf{u}+\log(K) - \log(M_\mathsf{u})}$. Setting $\mathsf{w} \coloneqq \mathsf{u}+\log(K) - \log(M_\mathsf{u})$, one finds
\begin{align}\label{eq:wKWeqn}
\Delta\mathsf{w} + \beta e^{\mathsf{w}} = \beta+\Delta\log(K).
\end{align}

In \cite{GM2019}, the symmetry of solutions of \eqref{eq:wKWeqn} has been studied under the condition $\beta+\Delta\log(K) \geq 0$ with $\be\leq 4$. This condition may be ensured when the reference measure $\mu$ is not too far from being uniform. More precisely, let $\varphi \in C^2$, $\ep>0$, and write $\mu = 1+\ep\varphi$. By chain rule,
\begin{align}
\Delta\log(K) = \Delta\left(\log(\mu) - \beta\g\ast\mu\right) &= \div\left(\frac{\nabla\mu}{\mu}-\beta\nabla\g\ast\mu \right) =\frac{\Delta\mu}{\mu} - \frac{|\nabla\mu|^2}{\mu^2} + \beta\mu,
\end{align}
which implies that
\begin{align}
\beta + \Delta\log(K) = \be(1+\mu) + \frac{\Delta\mu}{\mu} - \frac{|\nabla\mu|^2}{\mu^2}  &= 2\beta + \ep\be\varphi + \ep \frac{\Delta\varphi}{1+\ep\varphi} - \ep^2\frac{|\nabla \varphi|^2}{(1+\ep\varphi)^2}.
\end{align}
Fixing $\beta$ and $\varphi$, we can always take $\ep>0$ sufficiently small so that the right-hand side is $\geq 0$. Unfortunately, only uniqueness for the case $K\equiv 1$ is treated in \cite{GM2019}---nor seemingly elsewhere in the literature---under the condition $\beta+\Delta\log(K) \geq 0$ for nonconstant $K$, and it is unclear to us how to generalize the proof of uniqueness for constant $K$.\footnote{The condition $K\equiv 1$ implies that $\mu=\muu$. Indeed, since $\mu$ is a probability density, we have $\int_{\T^\ds}e^{\be\g\ast\mu}=1$. By Jensen's inequality, $\int_{\T^\ds}e^{\be\g\ast\mu}\geq e^{\be\int_{\T^\ds}\g\ast\mu} = 1$, since $\int_{\T^\ds}\g=0$. This means we have an equality case for Jensen's inequality. Since $e^z$ is not affine, this implies that $\g\ast\mu$ is constant a.e. Hence, $\Dm^{\ds}\g\ast\mu = \cdd(\mu-1) = 0$, proving the claim.}
\end{remark}}

\subsubsection{Conclusion of proof}\label{sssec:mLHLSconc}
We now conclude the proof of \cref{thm:mainMLHLS}.

 Recalling our starting point \eqref{eq:gr1F} and using \eqref{eq:ZNretabetapre},
\begin{multline}
\E_{f_N}\left[\Fr_N(\XN,\mu)\right] \leq \frac{1}{\be}H_N(f_N \vert \mu^{\otimes N}) + {CN\be\|\mu\|_{L^\infty}\eta^\ds}\\
+\log\mathbb{E}_{\mu^{\otimes N}}\left[\exp\Bigg(N\be\Fr_{N,\eta}({\ux_N},\mu) + \frac{\be}{2N}\sum_{\substack{1\leq i\neq j\leq N \\ |x_i-x_j|\leq \eta }}\log\frac{\eta}{|x_i-x_j|}\Bigg)\right].
\end{multline}
Next, applying \cref{eq:expfacbnd} to the second term on the preceding right-hand side, we find
\begin{multline}
\E_{f_N}\left[\Fr_N(\XN,\mu)\right] \leq \frac{1}{\be}H_N(f_N \vert \mu^{\otimes N}) + \frac{\be\log K_{N,\be',\eta}(\mu)}{\be' N} + {C_0\eta\be\paren*{\|\nabla\mu\|_{L^\infty} + \frac{1}{\ep} }+C\be\|\mu\|_{L^\infty}\eta^\ds} \\
+{\left(1-\frac{\be}{\be'}\right)}\log\paren*{1+C_24^{\frac{\be\be'}{\be'-\be}} \|\mu\|_{L^\infty}\ep^\ds +  C_2\|\mu\|_{L^\infty}\eta^\ds\left(\frac{4^\be\|\mu\|_{L^\infty}}{(\inf\mu)C_1(\bec-\be)}\right)^{\frac{\be'}{\be'-\be}}}
\end{multline}
for any $\be'\in (\be,\bec)$. Applying the estimate \eqref{eq:logZNrbeta'bnd} to the $\log K_{N,\be',\eta}(\mu)$ term in the first line above, we obtain
\begin{multline}\label{eq:MHLSpresub}
\E_{f_N}\left[\Fr_N(\XN,\mu)\right] \leq \frac{1}{\be}H_N(f_N \vert \mu^{\otimes N}) +\frac{\be}{\be'}\left(\frac{C}{M^{1/\ds}\eta}  + \frac{C\|\nabla\log\mu\|_{L^\infty}}{M^{1/\ds}} + \frac{C(M+\frac{1}{2})\log N}{N}\right) \\
+{\left(1-\frac{\be}{\be'}\right)}\log\paren*{1+C_24^{\frac{\be\be'}{\be'-\be}} \|\mu\|_{L^\infty}\ep^\ds +  C_2\|\mu\|_{L^\infty}\eta^\ds\left(\frac{4^\be\|\mu\|_{L^\infty}}{(\inf\mu)C_1(\bec-\be)}\right)^{\frac{\be'}{\be'-\be}}}\\
+{C_0\eta\be\paren*{\|\nabla\mu\|_{L^\infty} + \frac{1}{\ep} }+C\be\|\mu\|_{L^\infty}\eta^\ds}.
\end{multline}

Optimize the choices of $\eta,\ep, M$ as follows. Setting
\begin{equation}
4^{\ds+1}\frac{\eta}{\ep} = \ep^\ds \Longrightarrow \ep = 4\eta^{\frac{1}{\ds+1}} \Longrightarrow 4^{\ds+1}\frac{\eta}{\ep} = \ep^\ds  = 4^\ds\eta^{\frac{\ds}{\ds+1}}.
\end{equation}
Now setting
\begin{equation}
\frac{1}{\eta M^{1/\ds}} = \eta^{\frac{\ds}{\ds+1}} \Longrightarrow \eta = M^{-\frac{\ds+1}{\ds(2\ds+1)}} \Longrightarrow \frac{1}{\eta M^{1/\ds}} = \eta^{\frac{\ds}{\ds+1}} = M^{-\frac{1}{2\ds+1}}.
\end{equation}
Finally, setting
\begin{equation}
\frac{M}{N} = M^{-\frac{1}{2\ds+1}} \Longrightarrow M = N^{\frac{2\ds+1}{2(\ds+1)}} \Longrightarrow \frac{M}{N} =  M^{-\frac{1}{2\ds+1}} =  N^{-\frac{1}{2(\ds+1)}}.
\end{equation}
We see that the above choices for $\eta,\ep,M$, satisfy all the constraints imposed above. Substituting these choices into the right-hand side of \eqref{eq:MHLSpresub} then Taylor expanding $\log(1+z)$ and performing a little algebra, we see that the proof of \cref{thm:mainMLHLS} is complete.

\section{The modulated free energy and mean-field convergence}\label{sec:MFE}
In this section, we prove \cref{thm:mainUT}. Recall from the introduction the dissipation inequality for the modulated free energy:
\begin{multline}\label{gronwalla}
E_N(f_N^t, \mu^t) -  E_N(f_N^0,\mu^0) \leq -\frac{1}{\beta^2 N} \int_0^t \int_{(\T^\ds)^N}   \left| \nab \sqrt{ \frac{f_N^\tau}{\mathbb{Q}_{N,\beta} (\mu^\tau) } } \right|^2 d \mathbb{Q}_{N,\beta}(\mu^\tau)d\tau\\
 +\frac{1}{2}\int_0^t\E_{f_N^\tau}\Bigg[\int_{(\T^\ds)^2\setminus\triangle} (u^\tau(x)-u^\tau(y))\cdot \nabla\g(x-y) d\left(\frac1N\sum_{i=1}^N\d_{x_i}- \mu^\tau\right)^{\otimes 2}(x,y)\Bigg]d\tau,
\end{multline}
where $u^t \coloneqq \nabla\log\mu^t - \be\nabla \g \ast \mu^t$. The relative Fisher information term on the right-hand side is nonpositive, so we discard it.\footnote{It would be interesting to exploit this term to via a modulated LSI as in \cite{RS2023lsi}, but this appears difficult. Even for $\ds=1$, a LSI for the Gibbs measure $\mathbb{Q}_{N,\be}(\muu)$ does not seem known.} The second term is handled by means of a functional inequality.

\subsection{Functional inequality}
The following lemma is implicit in \cite{BJW2020}---and is really a consequence of the earlier work \cite{JW2018}. We sketch its proof for the reader's convenience.

\begin{lemma}\label{lem:logFI}
Let $\g \in C^1(\T^\ds\setminus\{0\})$ and satisfy the bound $\left|\nabla\g(x-y)\right| \leq \frac{C}{|x-y|}$ for some constant $C>0$. Then for any $f_N \in \P_{ac}((\T^\ds)^N)$, $\mu\in \P_{ac}(\T^\ds)$, and Lipschitz vector field $v$,
\begin{multline}\label{eq:logFI}
\left|\E_{f_N}\Bigg[\int_{(\T^\ds)^2\setminus\triangle} (v(x)-v(y))\cdot \nabla\g(x-y) d\left(\frac1N\sum_{i=1}^N\d_{x_i}- \mu\right)^{\otimes 2}(x,y)\Bigg]\right| \\
\leq {C\sqrt{2\mathsf{C}_0}\|\nabla v\|_{L^\infty}}\left(H_N(f_N \vert \mu^{\otimes N}) + \frac{\log 4}{N}\right).
\end{multline}
where $\mathsf{C}_0>0$ depends only on $\ds$.
\end{lemma}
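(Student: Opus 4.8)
\textbf{Proof plan for \cref{lem:logFI}.}
The plan is to follow the strategy of \cite{JW2018,BJW2020}, splitting the commutator term into a diagonal-type piece and an off-diagonal piece, and then controlling the off-diagonal piece by the relative entropy via the Donsker--Varadhan variational principle. First I would introduce the function
\begin{equation}
\psi(x,y) \coloneqq (v(x)-v(y))\cdot\nabla\g(x-y),
\end{equation}
which by hypothesis satisfies the pointwise bound $|\psi(x,y)| \leq C\|\nabla v\|_{L^\infty}$ for $x\neq y$, since $|v(x)-v(y)|\leq \|\nabla v\|_{L^\infty}|x-y|$ and $|\nabla\g(x-y)|\leq C/|x-y|$ (distances measured on the torus). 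So $\psi \in L^\infty((\T^\ds)^2\setminus\triangle)$, and moreover $\psi$ is bounded by a constant multiple of $\|\nabla v\|_{L^\infty}$. Expanding the tensor product $\left(\frac1N\sum_i \d_{x_i}-\mu\right)^{\otimes 2}$ and using that $\int_{(\T^\ds)^2}\psi\,d\mu^{\otimes 2}$ and the mixed terms $\frac1N\sum_i\int \psi(x_i,\cdot)d\mu$ can be absorbed, the key quantity to estimate is (after centering $\psi$ so that $\int\psi(x,y)d\mu(x)=\int\psi(x,y)d\mu(y)=0$, which is harmless by the pointwise bound)
\begin{equation}
\E_{f_N}\Bigg[\frac{1}{N^2}\sum_{1\leq i\neq j\leq N}\psi(x_i,x_j)\Bigg].
\end{equation}

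Next I would apply the Donsker--Varadhan lemma: for any $\lambda>0$,
\begin{equation}
\E_{f_N}\Bigg[\frac{1}{N^2}\sum_{i\neq j}\psi(x_i,x_j)\Bigg] \leq \frac{1}{\lambda N}H_N(f_N\vert\mu^{\otimes N}) + \frac{1}{\lambda N^2}\log\E_{\mu^{\otimes N}}\Bigg[\exp\Big(\frac{\lambda}{N}\sum_{i\neq j}\psi(x_i,x_j)\Big)\Bigg].
\end{equation}
The main step is then to show the exponential-moment (large-deviation) bound
\begin{equation}
\E_{\mu^{\otimes N}}\Bigg[\exp\Big(\frac{\lambda}{N}\sum_{i\neq j}\psi(x_i,x_j)\Big)\Bigg] \leq 2^{N}
\end{equation}
provided $\lambda$ is chosen so that $\lambda\|\psi\|_{L^\infty}$ is below an absolute constant; this is exactly \cite[Theorem 4 / Lemma]{JW2018} (see also the presentation in \cite[Section 4]{BJW2020}), proved by expanding the exponential, using the combinatorial bound on the number of terms with a given multigraph structure together with the vanishing of ``leaf'' contributions because $\psi$ has been centered against $\mu$, and the boundedness of $\psi$. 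Taking $\lambda \sim 1/(C\|\nabla v\|_{L^\infty})$ optimally — more precisely $\lambda = \big(\sqrt{2\mathsf{C}_0}\,C\|\nabla v\|_{L^\infty}\big)^{-1}$ for the constant $\mathsf{C}_0$ coming from the large-deviation estimate — the exponential moment is bounded by $4^N$ (the constant $4$ rather than $2$ accounting for the factor-of-two slack), and substituting back yields
\begin{equation}
\Big|\E_{f_N}\big[\cdots\big]\Big| \leq C\sqrt{2\mathsf{C}_0}\,\|\nabla v\|_{L^\infty}\Big(H_N(f_N\vert\mu^{\otimes N}) + \frac{\log 4}{N}\Big),
\end{equation}
after also treating $-\psi$ to get the absolute value.

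The main obstacle is the large-deviation / exponential-moment estimate: one must verify that $\psi(x,y)$, after centering, genuinely satisfies the hypotheses under which the Jabin--Wang-type bound applies — in particular that centering only costs a bounded additive constant (true here by the $L^\infty$ bound on $\psi$) and that the relevant norm controlling the exponential moment is $\|\psi\|_{L^\infty}$ rather than some stronger norm. Since $\nabla\g$ is merely $L^1_{loc}$ with a $1/|x|$ singularity, it is important that multiplication by the Lipschitz factor $v(x)-v(y)$ exactly cancels the singularity, rendering $\psi$ bounded; this is the one place where the logarithmic (as opposed to more singular Riesz) nature of the interaction is used, and it is what allows us to bypass the more delicate functional inequalities of \cite{NRS2021,RS2022}. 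The rest is bookkeeping: expanding the tensor product, handling the diagonal exclusion, and tracking constants. I would refer to \cite{JW2018,BJW2020} for the detailed combinatorial argument rather than reproduce it.
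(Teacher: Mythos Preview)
Your approach is essentially the same as the paper's: bound $\psi(x,y)=(v(x)-v(y))\cdot\nabla\g(x-y)$ in $L^\infty$ via the Lipschitz--singularity cancellation, center it to have vanishing $\mu$-marginals, apply Donsker--Varadhan, invoke the Jabin--Wang exponential-moment estimate \cite[Theorem 4]{JW2018}, and optimize the parameter. Two small corrections to make when writing it out: (i) your Donsker--Varadhan display has an extra factor of $1/N$ in both terms---the right normalization gives $\frac{1}{\lambda}H_N+\frac{1}{\lambda N}\log\E_{\mu^{\otimes N}}[\cdots]$; (ii) the Jabin--Wang bound yields $\E_{\mu^{\otimes N}}\big[e^{\frac{\lambda}{N}\sum_{i\neq j}\psi}\big]\le \frac{2}{1-\mathsf{C}_0\lambda^2\|\psi\|_{L^\infty}^2}=O(1)$, not $2^N$ or $4^N$, and it is precisely this $O(1)$ (equal to $4$ at the optimal $\lambda$) that produces the $\frac{\log 4}{N}$ term after dividing by $N$.
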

\begin{proof}
Define the function $\Phi:(\T^\ds)^N\rightarrow\R$,
\begin{equation}
\Phi(\ux_N) \coloneqq \int_{(\T^\ds)^2\setminus\triangle} (v(x)-v(y))\cdot \nabla\g(x-y) d\left(\frac1N\sum_{i=1}^N\d_{x_i} - \mu\right)^{\otimes 2}(x,y).
\end{equation}
By the Donsker-Varadhan lemma, for any $\eta>0$,
\begin{equation}\label{eq:preetaBJW}
\E_{f_N}[\Phi] \leq  \frac{1}{\eta}\paren*{H_N(f_N\vert \mu^{\otimes N}) + \frac1N\log \E_{\mu^{\otimes N}}[e^{\eta N\Phi}] }  .
\end{equation}
Introduce the function $\phi: (\T^\ds)^2 \rightarrow \R$,
\begin{multline}
\phi(x,y) \coloneqq (v(x)-v(y))\cdot \nabla\g(x-y) - \int_{\T^\ds}(v(x)-v(z))\cdot\nabla\g(x-z)d\mu(z) \\
- \int_{\T^\ds}(v(z)-v(y))\cdot\nabla\g(z-y)d\mu(z) + \int_{(\T^\ds)^2}(v(w)-v(z))\cdot\nabla\g(w-z)d\mu^{\otimes 2}(w,z).
\end{multline}
We adopt the convention that $\phi(x,x)\coloneqq 0$. With this notation, we see that
\begin{equation}
\Phi(\ux_N) = \frac{1}{N^2}\sum_{1\leq i,j\leq N} \phi(x_i,x_j) = \E_{\mu_{\ux_N}^{\otimes 2}}[\phi].
\end{equation}
By direct computation, one checks that $\int_{\T^\ds}\phi(x,y)d\mu(y) = \int_{\T^\ds}\phi(x,y)d\mu(x)=0$. Furthermore, by the mean-value theorem and our assumption on $\nabla\g$,
\begin{align}
\forall x\neq y\in\T^\ds,\qquad \left|\phi(x,y)\right| &\leq \|\nabla v\|_{L^\infty} |x-y||\nabla\g(x-y)| \leq C\|\nabla v\|_{L^\infty}. \label{eq:PhiMVT}
\end{align}
According to \cite[Theorem 4]{JW2018} (see also \cite[Section 5]{LLN2020} for a simpler proof), there is a universal constant $\mathsf{C}_0>0$, such that if $\sqrt{\mathsf{C}_0}\eta\|\phi\|_{L^\infty}<1$, then
\begin{align}
\log\mathbb{E}_{\mu^{\otimes N}}\left[e^{\eta N\Phi}\right] \leq \log\left(\frac{2}{1-\mathsf{C}_0\eta^2\|\phi\|_{L^\infty}^2}\right).
\end{align}
Replacing $\Phi$ by $-\Phi$ and repeating the above reasoning, we then find that
\begin{align}
\left|\mathbb{E}_{f_N}\left[\Phi\right]\right| \leq \frac{1}{\eta}H_N(f_N\vert\mu^{\otimes N}) + \frac{1}{\eta N}\log\left(\frac{2}{1-\mathsf{C}_0\eta^2\|\phi\|_{L^\infty}^2}\right).
\end{align}
%\begin{align}
%E_N(f_N, \mu) \geq \left(\frac{1}{\beta}-\frac1\eta\right)H_N(f_N\vert\mu^{\otimes N}) -  \frac{1}{\eta N}\log\left(\frac{2}{1-C_0\|\phi_\eta\|_{L^\infty}^2}\right).
%\end{align}
Choosing $1/\eta = {C\sqrt{2\mathsf{C}_0}\|\nabla v\|_{L^\infty}}$, we see that \eqref{eq:logFI} holds.
\end{proof}

\subsection{Gr\"{o}nwall argument}
Applying the functional inequality \eqref{eq:logFI} to the second term on the right-hand side of \eqref{gronwalla} pointwise in $\tau$, we find
\begin{multline}\label{eq:GronFI}
\frac{1}{2}\int_0^t\E_{f_N^\tau}\Bigg[\int_{(\T^\ds)^2\setminus\triangle} (u^\tau(x)-u^\tau(y))\cdot \nabla\g(x-y) d\left(\frac1N\sum_{i=1}^N\d_{x_i}- \mu^\tau\right)^{\otimes 2}(x,y)\Bigg]d\tau \\
\leq \frac{{C\sqrt{2\mathsf{C}_0}}}{2}\int_0^t \|\nabla u^\tau\|_{L^\infty}\left(H_N(f_N^\tau \vert (\mu^\tau)^{\otimes N}) + \frac{\log 4}{N}\right)d\tau.
\end{multline}

Fix $\be<\be' < {\bei}$, where {$\bei$} is the threshold from \cref{thm:mainMLHLS}. The reason for the two inverse temperatures will become clear momentarily. Let $\delta_{\be'}$ also be as in \cref{thm:mainMLHLS}, and we suppose that we have an initial datum $\mu^0 \in W^{2,\infty}$. By \cref{rem:W2infglob}, we then have that
\begin{align}\label{eq:MFElogmuW2inf}
\forall t\geq 0, \qquad \|\log\mu^t\|_{W^{2,\infty}} \leq \W(\beta, \|\log\mu^0\|_{W^{2,\infty}})e^{-\mathsf{c}(\beta, \|\log\mu^0\|_{W^{2,\infty}}) t}
\end{align}
for some continuous, nonincreasing functions $\W, \mathsf{c} :[0,\infty)^2\rightarrow [0,\infty)$, which vanish if any of their arguments are zero. Supposing that $\mu^0$ is such that
\begin{align}
\W(\beta, \|\log\mu^0\|_{W^{2,\infty}})\leq \delta_{\be'},
\end{align}
 we then have that $\|\log\mu^t\|_{W^{2,\infty}}\leq \delta_{\be'}$ for every $t\geq 0$.

As a substitute for the modulated free energy,  introduce the auxiliary quantity
\begin{align}
\mathcal{E}_N^t \coloneqq  E_N(f_N^t,\mu^t) +\mathbf{C}(\beta',\sup_{\tau\geq 0}\|\log\mu^\tau\|_{W^{1,\infty}}) N^{-\ga}
\end{align}
where $\mathbf{C}(\cdot), \ga$ are just as in the error term in \eqref{eq:mainMLHLS1}. This additive error ensures that $\mathcal{E}_N^t\geq 0$, which allows to perform a Gr\"onwall argument on this quantity. Returning to our starting integral identity \eqref{gronwalla}, using triangle inequality and \eqref{eq:GronFI}, we have
\begin{align}\label{eq:HNpreins}
\mathcal{E}_N^t \leq \mathcal{E}_N^0 + \frac{{C\sqrt{2\mathsf{C}_0}}}{2}\int_0^t \|\nabla u^\tau\|_{L^\infty}\left(H_N(f_N^\tau \vert (\mu^\tau)^{\otimes N}) + \frac{\log 4}{N}\right)d\tau.
\end{align}
By \cref{thm:mainMLHLS},
\begin{align}
E_N(f_N^t, \mu^t) &\geq \left(\frac1\be - \frac{1}{\be'}\right)H_N(f_N^t \vert (\mu^t)^{\otimes N}) - \mathbf{C}(\beta',\|\log\mu^t\|_{W^{1,\infty}}) N^{-\ga} \nn\\
&\geq\left(\frac1\be - \frac{1}{\be'}\right)H_N(f_N^t \vert (\mu^t)^{\otimes N}) - \mathbf{C}(\beta',\sup_{\tau\geq 0}\|\log\mu^\tau\|_{W^{1,\infty}}) N^{-\ga},
\end{align}
which implies that
\begin{align}\label{eq:HNEcN}
H_N(f_N^t \vert (\mu^t)^{\otimes N}) \leq \frac{\be\be'}{\be'-\be}\Big(E_N(f_N^t, \mu^t) + \mathbf{C}(\beta',\sup_{\tau\geq 0}\|\log\mu^\tau\|_{W^{1,\infty}}) N^{-\ga}\Big) = \frac{\be\be'}{\be'-\be}\mathcal{E}_N^t.
\end{align}
Inserting this bound into the right-hand side of \eqref{eq:HNpreins}, we find that
\begin{equation}
\mathcal{E}_N^t \leq \mathcal{E}_N^0 + \frac{{C\sqrt{2\mathsf{C}_0}}}{2}\frac{\be\be'}{\be'-\be}\int_0^t \|\nabla u^\tau\|_{L^\infty}\mathcal{E}_{N}^\tau d\tau.
\end{equation}
By the Gr\"onwall-Belllman lemma,
\begin{equation}\label{eq:EcGBfin}
\mathcal{E}_N^t \leq \mathcal{E}_N^0 e^{\frac{{C\sqrt{2\mathsf{C}_0}}}{2}\frac{\be\be'}{\be'-\be}\int_0^t \|\nabla u^\tau\|_{L^\infty}d\tau}.
\end{equation}
Applying \eqref{eq:HNEcN} to $\mathcal{E}_N^t$ on the left-hand side, then unpacking the definition of $\mathcal{E}_{N}^0$ on the right-hand side, we obtain from \eqref{eq:EcGBfin} that
\begin{align}
&\frac{\be'-\be}{\be\be'}H_N(f_N^t \vert (\mu^t)^{\otimes N}) \nn\\
&\leq \Big(E_N(f_N^0,\mu^0) +\mathbf{C}(\beta',\sup_{\tau\geq 0}\|\log\mu^\tau\|_{W^{1,\infty}}) N^{-\ga}\Big)e^{\frac{{C\sqrt{2\mathsf{C}_0}}}{2}\frac{\be\be'}{\be'-\be}\int_0^t \|\nabla u^\tau\|_{L^\infty}d\tau} \nn\\
&\leq \Big(E_N(f_N^0,\mu^0) +\mathbf{C}(\be',\W(\beta, \|\log\mu^0\|_{W^{1,\infty}}))N^{-\ga}\Big)e^{\frac{{C\sqrt{2\mathsf{C}_0}}}{2}\frac{\be\be'}{\be'-\be}\int_0^t \|\nabla u^\tau\|_{L^\infty}d\tau}. \label{eq:HNGBfin}
\end{align}

Recalling from above the definition of $u$, the triangle inequality gives
\begin{align}
\|\nabla u^\tau\|_{L^\infty} \leq \frac1\be\|\nabla^{\otimes 2}\log\mu^\tau\|_{L^\infty} + \|\nabla^{\otimes 2}\g\ast\mu^\tau\|_{L^\infty}.
\end{align}
The first term on the right-hand side is handled by \eqref{eq:MFElogmuW2inf}. For the second term, note that since $\g\in L^1$, we may crudely estimate
\begin{align}
\|\nabla^{\otimes 2}\g\ast\mu^\tau\|_{L^\infty} \lesssim \|\nabla^{\otimes 2}\mu^\tau\|_{L^\infty} &\lesssim \Big(\|\nabla^{\otimes 2}\log \mu^\tau\|_{L^\infty} + \|\nabla\log\mu^\tau\|_{L^\infty}^2\Big) e^{\|\log\mu^\tau\|_{L^\infty}} \nn\\
&\leq \tl{\W}(\beta, \|\log\mu^0\|_{W^{2,\infty}})e^{-\mathsf{c}(\beta, \|\log\mu^0\|_{W^{2,\infty}}) \tau},
\end{align}
where the penultimate inequality follows from writing $\mu = e^{\log \mu}$ and using product rule. Here, $\tl{\W}:[0,\infty)^2\rightarrow [0,\infty)$ is some other continuous, increasing function of its arguments, vanishing if any of them are zero. Dropping the $\tilde{}$ superscript, we therefore find that
\begin{align}
\int_0^\infty \|\nabla u^\tau\|_{L^\infty}d\tau \leq {\W}(\beta, \|\log\mu^0\|_{W^{2,\infty}})\int_0^\infty e^{-\mathsf{c}(\beta, \|\log\mu^0\|_{W^{2,\infty}}) \tau}d\tau \leq \frac{{\W}(\beta, \|\log\mu^0\|_{W^{2,\infty}})}{\mathsf{c}(\beta, \|\log\mu^0\|_{W^{2,\infty}}) }.
\end{align}
Substituting this bound into the right-hand side of \eqref{eq:HNGBfin} completes the proof of \cref{thm:mainUT}. 

\begin{comment}
Since
\begin{align}
\|\nabla\g\ast u^\tau\|_{L^\infty} \lesssim \begin{cases} \|\nabla\mu^\tau\|_{C^{0+}, & {\ds=1} \\ \|\mu^\tau\|_{\dot{C}^{0+}}, & {\ds =2 } \\  \|\mu^\tau-1\|_{L^?}, & {\ds\geq 3} \end{cases}
\end{align}
it follows from triangle inequality that in all cases of $\ds$,
\begin{align}
\|\nabla u^\tau\|_{L^\infty} \lesssim 
\end{align}
\end{comment}

\subsection{Counterexample to uniform-in-time propagation of chaos}
We now demonstrate the impossibility of uniform-in-time propagation of chaos if $\be>\bels$ using our nonlinear instability result \cref{prop:NLinstab}.

Let $\mu_\ep^0 \coloneqq 1+2\ep\cos(2\pi k\cdot x)$, for $|k|=1$, and let $\mu_\ep^t$ be the unique solution to equation \eqref{eq:lima} with initial datum $\mu_\ep^0$. By \cref{prop:NLinstab}, we know that provided $\ep>0$ is sufficiently small, there is a $t_\ep = O(\log(1/\ep))$ such that $\|\mu^{t_\ep}-\muu\|_{L^1}\geq \frac12$. Let $f_N^t$ be an entropy solution to \eqref{eq:Lioua} with initial datum $(\mu_\ep^0)^{\otimes N}$. By triangle inequality,
\begin{align}\label{eq:fN1muepmuu}
\|f_{N;1}^{t_\ep}-\mu_\ep^{t_\ep}\|_{L^1} + \|f_{N;1}^{t_\ep}-\muu\|_{L^1}  \geq \|\mu_{\ep}^{t_\ep} - \muu\|_{L^1} \geq \frac12.
\end{align}
On the other hand, $H_{N}(f_N^0 \vert (\mu_\ep^0)^{\otimes N}) = 0$, and similar to the computation in the proof of \cref{prop:ssnounq}, we may estimate
\begin{align}
H_N(f_N^0 \vert \muu^{\otimes N}) = \int_{\T^\ds}\log(\mu_\ep^0)d\mu_\ep^0 \leq \frac{\ep^2\paren*{1+8\ep} }{\be}.
\end{align}
Thus, the left-hand side of \eqref{eq:fN1muepmuu} cannot be bounded solely in terms of $N$, $H_N(f_N^0,\muu^{\otimes N})$. 

%Suppose that \eqref{eq:EntUT} holds. Then by subadditivity of relative entropy and Pinsker's inequality,
%\begin{align}\label{eq:Pindis}
%\|f_{N;1}^t-\mu^t\|_{L^1}^2 \leq C_1\paren*{o_N(1) + H_N(f_N^0 \vert (\mu^0)^{\otimes N})}.
%\end{align}
% Combining by the reverse triangle inequality with \eqref{eq:Pindis}, we find
%\begin{align}\label{eq:fN1muWTC}
%\|f_{N;1}^{t_\ep}-\muu\|_{L^1} \geq \frac12 - \sqrt{C\paren*{N^{-\al} + H_N(f_N^0\vert (\mu_\ep^0)^{\otimes N})}} = \frac12 - \sqrt{CN^{-\al}} > \frac14,
%\end{align}
%provided $N$ is taken sufficiently large. Using the relation \eqref{eq:Pindis} again with $\mu^t=\muu$, we also have
%\begin{align}\label{eq:fN1mupresub}
%\|f_{N;1}^{t_\ep} - \muu\|_{L^1} \leq \sqrt{C\paren*{N^{-\al} + H_N(f_N^0\vert \muu^{\otimes N})}}.
%\end{align}

%Inserting this estimate into the right-hand side of \eqref{eq:fN1mupresub} and taking $N$ sufficiently large and $\ep>0$ sufficiently small, we obtain a contradiction to \eqref{eq:fN1muWTC}.

\bibliographystyle{alpha}
\bibliography{../../MASTER}
\end{document}